\DeclareSymbolFont{rsfs}{U}{rsfs}{m}{n}
           \newcommand{\mylabel}[2]{#2\def\@currentlabel{#2}\label{#1}}
           \renewcommand\@biblabel[1]{#1.}
      \numberwithin{equation}{section}
\definecolor{antiquewhite}{rgb}{0.98, 0.92, 0.84}
\definecolor{buff}{rgb}{0.94, 0.86, 0.51}
\definecolor{palecopper}{rgb}{0.85, 0.54, 0.4}
\definecolor{fluorescentyellow}{rgb}{0.8, 1.0, 0.0}
\definecolor{britishracinggreen}{rgb}{0.0, 0.26, 0.15}
\definecolor{cobalt}{rgb}{0.0, 0.28, 0.67}
\DeclareSymbolFont{usualmathcal}{OMS}{cmsy}{m}{n}
\DeclareSymbolFontAlphabet{\mathcal}{usualmathcal}
\newcommand{\TT}{\mathbf{T}}
\newcommand{\BA}{{\mathbb{A}}}
\newcommand{\BC}{{\mathbb{C}}}
\newcommand{\BE}{{\mathbb{E}}}
\newcommand{\BL}{{\mathbb{L}}}
\newcommand{\BP}{{\mathbb{P}}}
\newcommand{\BQ}{{\mathbb{Q}}}
\newcommand{\BT}{{\mathbb{T}}}
\newcommand{\BZ}{{\mathbb{Z}}}
\newcommand{\CA}{{\mathcal A}}
\newcommand{\CB}{{\mathcal B}}
\newcommand{\CE}{{\mathcal E}}
\newcommand{\CF}{{\mathcal F}}
\newcommand{\CI}{{\mathcal I}}
\newcommand{\CK}{{\mathcal K}}
\newcommand{\CL}{{\mathcal L}}
\newcommand{\CM}{{\mathcal M}}
\newcommand{\CN}{{\mathcal N}}
\newcommand{\CZ}{{\mathcal Z}}
\newcommand{\pt}{{\mathsf{pt}}}
\newcommand{\ch}{{\mathrm{ch}}}
\newcommand{\fix}{\mathsf{fix}}
\newcommand{\mov}{\mathsf{mov}}
\DeclareMathOperator{\Hilb}{Hilb}
\DeclareMathOperator{\Sets}{Sets}
\DeclareMathOperator{\Sch}{Sch}
\DeclareMathOperator{\coker}{coker}
\DeclareMathOperator{\Coh}{Coh}
\DeclareMathOperator{\vir}{\mathrm{vir}}
\DeclareMathOperator{\Exp}{Exp}
\DeclareMathOperator{\Var}{Var}
\DeclareMathOperator{\rk}{rk}
\DeclareMathOperator{\op}{op}
\newcommand{\derived}{\mathbf{D}}
\tikzset{>=Latex}
\tikzset{GaugeNode/.style={circle,draw,inner sep=0pt,minimum size=10mm}}
\tikzset{FrameNode/.style={rectangle,draw,inner sep=0pt,minimum size=9mm}}
\tikzset{token/.style={circle,double,draw=black!70,fill=black!50,inner sep=0pt,minimum size=3mm}}
\newcommand{\into}{\hookrightarrow}
\newcommand{\onto}{\twoheadrightarrow}
\DeclareFontFamily{OT1}{rsfs}{}
\DeclareFontShape{OT1}{rsfs}{n}{it}{<-> rsfs10}{}
\DeclareMathAlphabet{\curly}{OT1}{rsfs}{n}{it}
\renewcommand\hom{\curly H\!om}
\newcommand\Hom{\operatorname{Hom}}
\newcommand{\RR}{\mathbf R}
\newcommand{\LL}{\mathbf L}
\newcommand\Supp{\operatorname{Supp}}
\newcommand\Tot{\operatorname{Tot}}
\newcommand{\PT}{\mathsf{PT}}
\newcommand{\DT}{\mathsf{DT}}
\newcommand{\GW}{\mathsf{GW}}
\DeclareMathOperator{\bn}{{\mathbf{n}}}
\DeclareMathOperator{\Bm}{{\mathbf{m}}}
\DeclareMathOperator{\Subsoc}{\mathrm{Subsoc}}
\DeclareMathOperator{\Soc}{\mathrm{Soc}}
\DeclareMathOperator{\CoSubsoc}{\mathrm{CoSubsoc}}
\DeclareMathOperator{\CoSoc}{\mathrm{CoSoc}}
\tikzset{commutative diagrams/arrow style=math font}
\tikzset{commutative diagrams/.cd,
mysymbol/.style={start anchor=center,end anchor=center,draw=none}}
\tikzset{
shift up/.style={
to path={([yshift=#1]\tikztostart.east) -- ([yshift=#1]\tikztotarget.west) \tikztonodes}
}
}
\theoremstyle{definition}
\newtheorem*{lemma*}{Lemma}
\newtheorem*{theorem*}{Theorem}
\newtheorem*{example*}{Example}
\newtheorem*{fact*}{Fact}
\newtheorem*{notation*}{Notation}
\newtheorem*{definition*}{Definition}
\newtheorem*{prop*}{Proposition}
\newtheorem*{remark*}{Remark}
\newtheorem*{corollary*}{Corollary}
\newtheorem*{conventions*}{Conventions}
\newtheorem{definition}{Definition}[section]
\newtheorem{remark}[definition]{Remark}
\newtheorem{conjecture}[definition]{Conjecture}
\newtheoremstyle{thm} % <name> % (ambienti con dimostrazione)
        {3mm}% <Space above>
        {3mm}% <Space below>
        {\slshape}% <Body font> % 
        {0mm}% <Indent amount>
        {\bfseries}% <Theorem head font>
        {.}% <Punctuation after theorem head>
        {1mm}% <Space after theorem head>
        {}% <Theorem head spec (can be left empty, meaning 'normal')> 
\theoremstyle{thm}
\newtheorem{theorem}[definition]{Theorem}
\newtheorem{corollary}[definition]{Corollary}
\newtheorem{lemma}[definition]{Lemma}
\newtheorem{prop}[definition]{Proposition}
\newtheoremstyle{ex} % <name> % (ambienti con dimostrazione)
        {3mm}% <Space above>
        {3mm}% <Space below>
        {}% <Body font> % \slshape
        {0mm}% <Indent amount>
        {\scshape}% <Theorem head font>
        {.}% <Punctuation after theorem head>
        {1mm}% <Space after theorem head>
        {}% <Theorem head spec (can be left empty, meaning 'normal')> 
\theoremstyle{ex}
\newtheoremstyle{sol} % <name> % (ambienti con dimostrazione)
        {3mm}% <Space above>
        {3mm}% <Space below>
        {}% <Body font> % 
        {0mm}% <Indent amount>
        {\scshape}% <Theorem head font>
        {.}% <Punctuation after theorem head>
        {1mm}% <Space after theorem head>
        {}% <Theorem head spec (can be left empty, meaning 'normal')> 
\theoremstyle{sol}
   \DeclareMathOperator{\oO}{\mathcal{O}}
\title[The refined  local Donaldson-Thomas  theory of  curves]{The refined  local Donaldson-Thomas  theory of  curves}
\author{Sergej Monavari}
\address{Dipartimento di Matematica “Tullio Levi-Civita”, Università degli Studi di Padova, Via Trieste 63, 35121 Padova, Italy}
\email{sergej.monavari@math.unipd.it}
\begin{document}

\maketitle

\begin{abstract}
We solve the K-theoretically refined  Donaldson-Thomas theory of local curves in arbitrary genus and  degree. Our results avoid degeneration techniques, but rather exploit direct localisation methods to reduce the refined Donaldson-Thomas partition function to the equivariant intersection theory of skew nested Hilbert schemes on smooth projective curves. 
%We show that  the latter is determined, for every Young diagram,  by three universal series, which  we compute  in terms of the 1-leg K-theoretic equivariant vertex.
In  the refined limit, our results  establish a formula  for the refined topological string  partition function of local curves conjecturally proposed by Aganagic-Schaeffer. 
In the second part, we show that analogous structural results  hold for the refined Pandharipande-Thomas theory of local curves. As an application, we deduce the K-theoretic DT/PT correspondence for local curves in arbitrary genus, as conjectured by Nekrasov-Okounkov.\\
Thanks to the recent machinery developed by Pardon, we expect  our explicit results on local curves to play a key role towards the proof of the refined GW/PT conjectural  correspondence of Brini-Schuler for all smooth  Calabi-Yau threefolds.
\end{abstract}
\section{Introduction}

\subsection{Overview}
While Donaldson-Thomas (DT) theory was originally formulated in \cite{Tho_Casson_inv} to \emph{numerically count} stable sheaves on smooth projective Calabi–Yau threefolds, its \emph{$K$-theoretic refinement} has recently attracted growing attention,   see e.g.~\cite{Okounk_Lectures_K_theory, Oko_Takagi, Arb_K-theo_surface, FMR_higher_rank, CKM_K_theoretic, thimm_orbi, tho_K-thDTK3, ober_refined_Enriq, Cir_M2_index, BBPT_elliptic_DT, Nek_Z_theory, Nek_Magnificent4_advances, Tho_equivariant_K_theory}. In the seminal work \cite{NO_membranes_and_sheaves}, Nekrasov-Okounkov proposed a striking interpretation of the  $K$-theoretic  DT partition function  of a smooth threefold $X$ as the \emph{refined index}  of a moduli space of M2-branes on the quasi-projective Calabi-Yau \emph{fivefold} $\Tot_X(\CL\oplus \CL')$, where $\CL, \CL'$ are line bundles on $X$  satisfying $\CL\otimes \CL'\cong \omega_X$. While the geometry of the moduli space of M2-branes remains mysterious and currently lacks  a complete foundational description, Nekrasov-Okounkov's influential suggestion enables predictions on the refined M2-brane index by studying the refined DT partition function of the corresponding threefold.

In this work we focus on the case when $X=\Tot_C(L_1\oplus L_2)$ is a \emph{local curve}, that  is the total space of any two line bundles $L_1,  L_2$ over a smooth projective curve $C$. Our motivation comes from the recent circle of ideas  of  Pardon \cite{Pardon_GWDT}, for which  curve-counting enumerative theories of a  Calabi-Yau threefold are expected to be  \emph{universally} determined by their analogues on local curves.

 Our main results are:
\begin{itemize}
    \item the full computation of the $K$-theoretic Donaldson-Thomas partition function $  \widehat{ \DT}_d(X, q)$ for all degrees $d\geq 0$ and for all genera $g\geq 0$, cf.~\Cref{thm: full DT Intro},    \item an explicit expression for the \emph{refined limit} of  $  \widehat{ \DT}_d(X, q)$  in the Calabi-Yau case,  which establishes a formula  for the refined topological string partition function of local curves, as   proposed by Aganagic-Schaeffer \cite{AS_black_holes} via string-theoretic methods, cf.~\Cref{thm: refined DT full Aga INTRO},
    \item an explicit expression for the anti-diagonal restriction of the equivariant parameters of  $  \widehat{ \DT}_d(X, q)$, which reproduces and generalises the corresponding formula in  Gromov-Witten (GW) theory of Bryan-Pandharipande \cite{BP_local_GW_curves} via the GW/DT correspondence \cite{MNOP_1}, cf.~\Cref{thm: DT anti diagonal INTRO}, 
    \item  the $K$-theoretic DT/PT correspondence for local curves in any genus, as conjectured by Nekrasov-Okounkov \cite{NO_membranes_and_sheaves}, cf.~\Cref{thm: DT/PT INTRO}.
\end{itemize}
We explain in the next sections our results in more details.
\subsection{Refined Donaldson-Thomas theory}
Let $X=\Tot_C(L_1\oplus L_2)$ be the total space of two line bundles $L_1, L_2$ on a smooth projective curve $C$. For a curve class $\beta=d[C]\in H_2(X, \BZ)$ and $n\in \BZ$, consider the \emph{Hilbert scheme} $\Hilb^n(X, \beta)$, parametrising closed subschemes $Z\subset X$ with $[\Supp Z]=\beta$ and $\chi(\oO_Z)=n$. 

The Hilbert scheme  $\Hilb^n(X, \beta)$ is in general pathologically badly behaved; nevertheless, by the work of Behrend-Fantechi \cite{BF_normal_cone}, it is endowed with a \emph{virtual fundamental structure sheaf}  in $K$-theory
\begin{align*}
    \oO_{\Hilb^n(X, \beta)}^{\vir}\in K_0(\Hilb^n(X, \beta)).
\end{align*}
Following Nekrasov-Okounkov \cite{NO_membranes_and_sheaves}, we introduce the \emph{twisted virtual structure sheaf}
\begin{align}\label{eqn: intro symm}
    \widehat{\oO}^{\vir}=\oO^{\vir}\otimes K^{1/2}_{\vir},
\end{align}
where $ K^{1/2}_{\vir}$ is a \emph{square root} of the \emph{virtual canonical bundle} $K_{\vir}$,  and define \emph{$K$-theoretic  Donaldson-Thomas invariants} as
\begin{align}\label{eqn: DT intro def}
    \widehat{\DT}_{d,n}(X)=\chi\left(\Hilb^n(X, \beta),   \widehat{\oO}^{\vir}  \right),
\end{align}
see \Cref{sec: K-th inva DT}.
Since $ \Hilb^n(X, \beta)$ is  generally not proper, the right-hand-side of \eqref{eqn: DT intro def} is a priori not well-defined via ordinary intersection theory.  Nevertheless, the algebraic torus $\TT=(\BC^*)^2$ acts on $X$ by scaling the fibers,  and the action naturally lifts to $ \Hilb^n(X, \beta) $. As the $\TT$-fixed locus $\Hilb^n(X, \beta)^\TT$ is proper (cf.~\Cref{thm: fixed}), we define the right-hand-side of \eqref{eqn: DT intro def} by 
the virtual localisation formula in $K$-theory \cite{FG_riemann_roch, Qu_virtual_pullback} as
\begin{align*}
   \chi\left(\Hilb^n(X, \beta),   \widehat{\oO}^{\vir}  \right)=\chi\left(\Hilb^n(X, \beta)^\TT,\frac{  \widehat{\oO}^{\vir}_{\Hilb^n(X, \beta)^\TT} }{\widehat{\mathfrak{e}}(N^{\vir})}  \right)\in \BQ(t_1^{1/2}, t_2^{1/2}),
\end{align*}
where $ t_1, t_2$ are the generators of the $\TT$-equivariant $K$-theory $K^0_\TT(\pt)$ and $ \widehat{\mathfrak{e}}(N^{\vir})$ is the symmetrised $K$-theoretic Euler class of the virtual normal bundle, see \Cref{sec: K-th inva DT}. We remark that the \emph{symmetrisation} of the virtual structure sheaf \eqref{eqn: intro symm} was introduced so that the DT invariants \eqref{eqn: DT intro def} resemble an algebro-geometric analogue of the $\widehat{A}$-genus of a spin manifold \cite{NO_membranes_and_sheaves}.
\smallbreak
The first main result of our work is the full computation of the associated \emph{partition function}
\begin{equation*}\label{eqn: DT partition functions Intro}
    \widehat{\DT}_d(X, q)=\sum_{n\in \BZ} \widehat{\DT}_{d,n}(X)\cdot q^n\in \BQ(t_1^{1/2}, t_2^{1/2})(\!( q )\!).
\end{equation*}

\begin{theorem}[\Cref{thm: DT_ explicit univ series}, \Cref{cor: full DT}]\label{thm: full DT Intro}
    Let  $d\geq 0$ and    $X=\Tot_C(L_1 \oplus L_2)$, where $L_1, L_2$ are line bundles over a smooth projective curve $C$ of genus $g$. We have
    \begin{align*}
     \widehat{\DT}_d(X, q)= \sum_{|\lambda|=d} \left(q^{|\lambda|}\widehat{A}_{\lambda}(q)\right)^{1-g}\cdot \left(q^{-n(\lambda)}\widehat{B}_{\lambda}(q)\right)^{\deg L_1}\cdot \left(q^{-n(\overline{\lambda})}\widehat{C}_{\lambda}(q)\right)^{\deg L_2},
\end{align*}
where the universal series are given by
\begin{align*}
      \widehat{A}_\lambda(q)&=\prod_{\Box\in \lambda}\frac{1}{[t_1^{-\ell(\Box)}t_2^{a(\Box)+1}][t_1^{\ell(\Box)+1}t_2^{-a(\Box)}]}\cdot \left.\left(\widehat{\mathsf{V}}_\lambda(q)\cdot \widehat{\mathsf{V}}_\lambda(q)|_{ t_3=t^{-1}_3}\right)\right|_{t_3=1}\\
    \widehat{B}_\lambda(q)&=\prod_{\Box\in \lambda}\frac{[t_1^{-\ell(\Box)}t_2^{a(\Box)+1}]^{\ell(\Box)}}{[t_1^{\ell(\Box)+1}t_2^{-a(\Box)}]^{\ell(\Box)+1}}\cdot \left.\left(\widehat{\mathsf{V}}_\lambda(q)|_{ t_3=t_3^{-1}}\cdot \widehat{\mathsf{V}}_\lambda(q)^{-1}|_{t_1=t_1t^2_3, t_3=t^{-1}_3}\right)\right|^{\frac{1}{2}}_{t_3=1},  \\
    \widehat{C}_\lambda(q)&=\prod_{\Box\in \lambda}\frac{[t_1^{\ell(\Box)+1}t_2^{-a(\Box)}]^{a(\Box)}}{[t_1^{-\ell(\Box)}t_2^{a(\Box)+1}]^{a(\Box)+1}}\cdot \left.\left(\widehat{\mathsf{V}}_\lambda(q)|_{ t_3=t_3^{-1}}\cdot \widehat{\mathsf{V}}_\lambda(q)^{-1}|_{t_2=t_2t^2_3, t_3=t^{-1}_3}\right)\right|^{\frac{1}{2}}_{t_3=1}.
\end{align*}
\end{theorem}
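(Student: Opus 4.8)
The plan is to prove the formula by direct $\TT$-localisation, turning the partition function into equivariant integrals over the $\TT$-fixed locus and then isolating the multiplicative dependence on the invariants $1-g=\chi(\oO_C)$, $\deg L_1$ and $\deg L_2$. The first task is to make $\Hilb^n(X,\beta)^\TT$ explicit. Since $\TT=(\BC^*)^2$ scales the two fibre directions of $X=\Tot_C(L_1\oplus L_2)$, every $\TT$-fixed $Z$ is supported on the zero section and, over the generic point of $C$, is cut out by a monomial ideal in the fibre coordinates, i.e.\ a partition $\lambda$ with $|\lambda|=d$. The locus where this transverse partition degenerates is punctual along $C$, so by \Cref{thm: fixed} the fixed locus is a disjoint union, over $|\lambda|=d$, of products of skew nested Hilbert schemes of points on $C$. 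This geometric description is what lets me avoid degeneration entirely.

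Next I would restrict the localisation integrand---the twisted virtual structure sheaf $\widehat{\oO}^{\vir}$ together with the symmetrised Euler class $\widehat{\mathfrak e}(N^{\vir})$---to each component. Both are built functorially from $L_1$, $L_2$, $\omega_C$ and the tautological sheaves of the nested Hilbert scheme, so the integrand becomes a universal expression in tautological $K$-theory classes. The generic thickening supplies the ``edge'' factor $\prod_{\Box\in\lambda}[t_1^{-\ell(\Box)}t_2^{a(\Box)+1}]^{-1}[t_1^{\ell(\Box)+1}t_2^{-a(\Box)}]^{-1}$ coming from the tangent weights of $\Hilb(\BC^2)$ at $\lambda$, while the jumping data along special fibres contributes the remaining tautological integrals. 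The decisive structural step is then the universality of such integrals: equivariant integrals of tautological classes over symmetric and nested Hilbert schemes of a curve are plethystic and depend on $C$ only through $\chi(\oO_C)=1-g$ and the degrees $\deg L_i$. Summing over $n$ therefore produces, for each $\lambda$, exactly a product of three universal series raised to the powers $1-g$, $\deg L_1$, $\deg L_2$; the accompanying powers $q^{|\lambda|}$, $q^{-n(\lambda)}$, $q^{-n(\overline\lambda)}$ are fixed by Riemann--Roch applied to the generic thickening $Z_\lambda$, which computes how $\chi(\oO_{Z_\lambda})$ varies with $g$ and with $\deg L_1,\deg L_2$.

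To pin down $\widehat A_\lambda$, $\widehat B_\lambda$, $\widehat C_\lambda$ individually I would specialise to toric local curves, $C=\p^1$ with $L_1\oplus L_2=\oO(a)\oplus\oO(b)$. There the whole computation is governed by the $K$-theoretic topological vertex, and the one-leg vertex $\widehat{\mathsf V}_\lambda(q)$ records the contribution of an infinite leg in direction $\lambda$. Letting $(a,b)$ vary and matching against the universal formula at $g=0$ separates the three series, and the symmetrisation by $K^{1/2}_{\vir}$ is precisely what recombines the two halves of the vertex into the square-root expressions $\big(\widehat{\mathsf V}_\lambda(q)\,\widehat{\mathsf V}_\lambda(q)|_{t_3=t_3^{-1}}\big)^{1/2}$ and produces the substitutions $t_1\mapsto t_1t_3^2$, $t_2\mapsto t_2t_3^2$ appearing in $\widehat B_\lambda$ and $\widehat C_\lambda$.

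The hardest part will be the consistent treatment of the square root $K^{1/2}_{\vir}$. One must exhibit a canonical square root on each fixed component, control its monodromy across the strata of the nested Hilbert scheme, and then verify that symmetrisation recombines the two legs of the vertex so that all half-integer powers of $t_1,t_2$ cancel and the answer lands in $\BQ(t_1^{1/2},t_2^{1/2})$ as required. Establishing the plethystic universality of the previous paragraph at the level of $K$-theory, rather than cohomology, is the other technical crux, since one must control the tautological $K$-theory classes on the skew nested Hilbert schemes finely enough to factor the generating series in closed product form.
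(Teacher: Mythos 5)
Your proposal follows essentially the same route as the paper: localisation to the skew nested Hilbert schemes $C^{[\bn]}$, an EGL-style universality argument reducing each $\lambda$-contribution to three universal series governed by $(1-g,\deg L_1,\deg L_2)$, and a further $\BC^*$-localisation on $C=\BP^1$ identifying the contributions with the 1-leg $K$-theoretic vertex, with the three series separated by varying the degrees of $L_1,L_2$. The only (harmless) imprecisions are that the product of the two vertex factors arises from the two torus-fixed points $0,\infty\in\BP^1$ rather than from the symmetrisation $K^{1/2}_{\vir}$ (which instead accounts for the $\widehat{\mathfrak{e}}$-brackets), that the square roots in $\widehat{B}_\lambda,\widehat{C}_\lambda$ come from solving the resulting linear system of three equations, and that the paper also needs the intermediate fact that the induced virtual structure sheaf on each fixed component $C^{[\bn]}$ equals the ordinary one, which your plan leaves implicit.
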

We explain now carefully all  ingredients appearing in  \Cref{thm: full DT Intro}.
\smallbreak

The sum is over all  \emph{Young diagrams} $\lambda$ of size $d$ and,  for a box $\Box\in \lambda$, we denote by $a(\Box)$ (resp.~$\ell(\Box)$) the \emph{arm} (resp.~\emph{leg}) length. For a formal variable $x$, define the \emph{symmetrised} operator
\begin{align*}
[x]=x^{1/2}-x^{-1/2}.
\end{align*}
Finally, the generating series $ \widehat{\mathsf{V}}_\lambda(q)$ is the \emph{1-leg $K$-theoretic equivariant vertex\footnote{This is the \emph{fully equivariant} refinement of the \emph{refined} topological vertex of \cite{IKV_topological_vertex}, see \cite{NO_membranes_and_sheaves}.}} with asymptotic profile $\lambda$, originally\footnote{To be precise, in \cite{MNOP_1} the vertex formalism is originally derived in equivariant cohomology, not in equivariant $K$-theory.} introduced via the vertex/edge formalism \cite{MNOP_1}, but inspired by the previous works on the \emph{topological string amplitudes} \cite{AKMV_topvert, IKV_topological_vertex}.
More explicitly,  the function $\widehat{\mathsf{V}}_\lambda(q)$ is a series whose coefficients are rational functions in the variables $t_1, t_2, t_3$. In our work,  we    combinatorially realise it as
\begin{align*}
\widehat{\mathsf{V}}_\lambda(q)=\sum_{\bn}\widehat{\mathfrak{e}}(-\mathsf{v}_{\bn})\cdot q^{|\mathbf{n}|}\in \BQ(t_1^{1/2}, t_2^{1/2}, t_3^{1/2})\llbracket q \rrbracket,
\end{align*}
where the sum is over all \emph{skew plane partitions} $\bn$ of shape $\BZ^2_{\geq 0}\setminus \lambda$ (see \Cref{def: spp}), and the operator $ \widehat{\mathfrak{e}}$ turns the Laurent polynomials 
$ \mathsf{v}_{\bn}$ (see \eqref{eqn: vertex DT}) into rational functions in the equivariant parameters. 
\smallbreak
We remark that \Cref{thm: full DT Intro} is the first  example of  a computation of the $K$-theoretic DT partition function of a \emph{non-toric} quasi-projective threefold, and in particular in the case of  a \emph{non-irreducible} curve class $\beta$.

Before sketching the main ideas of the proof of \Cref{thm: full DT Intro}, we discuss some  explicit formulas that can be derived from our main result. 
To ease the notation, we set $\kappa=t_1t_2 $ and formally define two new variables $t_4, t_5$ by
\begin{align}\label{eqn: new variables}
\begin{split}
        t_4&=q\kappa^{-1/2},\\
    t_5&=q^{-1}\kappa^{-1/2}.
    \end{split}
\end{align}
\subsubsection{Degree 0}
Degree 0 DT type invariants are typically known to be more amenable to computations, see for instance \cite{Okounk_Lectures_K_theory, FMR_higher_rank, thimm_orbi, FM_tetra}. Exploiting the previous local computation of the  $K$-theoretic DT partition function of $\BA^3$ by Okounkov \cite{Okounk_Lectures_K_theory}, we prove a more compact formula for the DT partition function of local curves.
\begin{corollary}[\Cref{cor: as in Oko}]\label{cor: intro DT 0}
Let     $X=\Tot_C(L_1 \oplus L_2)$, where $L_1, L_2$ are line bundles over a smooth projective curve $C$. We have
\begin{align*}
      \widehat{\DT}_0(X, -q)=\Exp\left(-\chi(X, T_X+\omega_X-T_X^*-\omega_X^{-1}) \cdot \frac{1}{[t_4][t_5]}\right),
\end{align*}
where $\Exp$ is the plethystic exponential \eqref{eqn: on ple}.
\end{corollary}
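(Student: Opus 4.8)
The plan is to specialise the master formula of \Cref{thm: full DT Intro} to $d=0$. The only Young diagram of size $0$ is the empty one $\lambda=\emptyset$, so the outer sum collapses to a single term, all products $\prod_{\Box\in\lambda}$ are empty, and the prefactors $q^{|\lambda|},q^{-n(\lambda)},q^{-n(\overline\lambda)}$ become $1$. Hence
\[
\widehat{\DT}_0(X,q)=\widehat{A}_\emptyset(q)^{\,1-g}\cdot\widehat{B}_\emptyset(q)^{\,\deg L_1}\cdot\widehat{C}_\emptyset(q)^{\,\deg L_2},
\]
where each universal series is expressed purely through the full vertex $\widehat{\mathsf V}_\emptyset(q)$, i.e.\ the generating series over ordinary plane partitions (skew of shape $\emptyset$), which is exactly the degree-$0$ $K$-theoretic DT partition function of $\BA^3$.

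The second step is to feed in Okounkov's closed evaluation of $\widehat{\mathsf V}_\emptyset(q)$ for $\BA^3$ \cite{Okounk_Lectures_K_theory}, which writes it as a plethystic exponential whose exponent is a single equivariant weight over a $q$-dependent denominator. Substituting this into the definitions of $\widehat{A}_\emptyset,\widehat{B}_\emptyset,\widehat{C}_\emptyset$ and carrying out the prescribed operations — the reflection $t_3\mapsto t_3^{-1}$, the shifts $t_1\mapsto t_1t_3^2$ and $t_2\mapsto t_2t_3^2$, the square roots, and finally the limit $t_3\to 1$ — turns each of the three series into a plethystic exponential as well. The role of the $t_3\to1$ specialisation is that on $X$ only the two fibre directions carry the torus weights $t_1,t_2$, the third direction being that of the curve $C$; setting $t_3=1$ records this. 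Since $\Exp$ sends products to sums of exponents, the three resulting exponents combine linearly with coefficients $1-g$, $\deg L_1$, $\deg L_2$ into a single plethystic exponential, and the substitution $q\mapsto -q$ absorbs the $(-1)^n$ from the MacMahon-type denominators (recall $[t_4][t_5]=\kappa^{1/2}+\kappa^{-1/2}-q-q^{-1}$).

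The final — and main — step is to identify the assembled exponent with the equivariant index $-\chi(X,\,T_X+\omega_X-T_X^*-\omega_X^{-1})$. As the $\TT$-fixed locus is the zero section $C$, with equivariant normal bundle $N_{C/X}=t_1L_1\oplus t_2L_2$, virtual localisation computes $\chi(X,\mathcal F)$ as the Riemann-Roch integral over $C$ of $\mathcal F|_C$ divided by $\Lambda^\bullet N^\vee_{C/X}=(1-t_1^{-1}L_1^{-1})(1-t_2^{-1}L_2^{-1})$. Writing $T_X,T_X^*,\omega_X,\omega_X^{-1}$ as explicit $\TT$-equivariant $K$-theory classes pulled back from $C$ — using $T_{X/C}=t_1\pi^*L_1+t_2\pi^*L_2$, $T_X=T_{X/C}+\pi^*\omega_C^{-1}$, and $\omega_X=\kappa^{-1}\pi^*(\omega_C L_1^{-1}L_2^{-1})$ — and applying Hirzebruch-Riemann-Roch on $C$ (so that each line-bundle summand contributes $\deg+1-g$) produces precisely the three linear pieces in $1-g$, $\deg L_1$, $\deg L_2$.

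Matching these geometric pieces against the three vertex exponents is the crux. I expect the Riemann-Roch computation on $C$ itself to be routine, whereas the genuine obstacle will be the careful bookkeeping of the half-integer weights through the reflection/shift/square-root operations on $\widehat{\mathsf V}_\emptyset(q)$, together with checking that Okounkov's explicit $\BA^3$ numerator, after the $t_3\to1$ limit, pairs correctly with the localisation denominator $\Lambda^\bullet N^\vee_{C/X}$ so that the total exponent collapses to $-\chi(X,\,T_X+\omega_X-T_X^*-\omega_X^{-1})\cdot[t_4]^{-1}[t_5]^{-1}$.
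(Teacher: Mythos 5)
Your steps 1 and 2 coincide with what the paper actually does: the $d=0$ sum collapses to $\widehat{A}_\varnothing(q)^{1-g}\widehat{B}_\varnothing(q)^{\deg L_1}\widehat{C}_\varnothing(q)^{\deg L_2}$, and these three series are evaluated in \Cref{thm: explicit DT 0 universal} by feeding Okounkov's closed form for $\widehat{\mathsf V}_\varnothing$ through the reflection/shift/square-root/$t_3\to 1$ recipe. Where you genuinely diverge is the last step. The paper's proof of \Cref{cor: as in Oko} performs no global Riemann--Roch computation at all: it observes that the right-hand side $\Exp(-\chi(X,\cdot)/([t_4][t_5]))$ is itself multiplicative under disjoint unions, depends only on $(g,\deg L_1,\deg L_2)$, and has constant term $1$, so by the argument of \Cref{thm: universal series} it too is governed by three universal series; both sides are then pinned down by the three toric cases $(\BP^1,\oO,\oO)$, $(\BP^1,\oO,\oO(-2))$, $(\BP^1,\oO(-2),\oO)$, where a further $\BC^*$-localisation reduces the comparison to \Cref{thm: explicit DT 0 universal}. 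Your direct Hirzebruch--Riemann--Roch matching over a general $C$ is a legitimate alternative (it amounts to computing the same three affine-linear coefficients in $(1-g,\deg L_1,\deg L_2)$ once and for all), at the price of more explicit bookkeeping.

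There is, however, a concrete error in the crux step as you have set it up. You propose to evaluate $\chi(X,\mathcal F)$ by restriction to the zero section divided by $\Lambda^\bullet N^\vee_{C/X}=(1-t_1^{-1}L_1^{-1})(1-t_2^{-1}L_2^{-1})$. Since $\mathcal F|_C$ and this denominator carry only integral $\TT$-weights, the resulting rational function has only integer powers of $t_1,t_2$; but the exponent you must match has genuine half-integer powers of $\kappa=t_1t_2$, e.g. its $(1-g)$-coefficient is $2[t_1t_2]^2/([t_1][t_2])=2\kappa^{-1/2}(1-\kappa)^2/((1-t_1)(1-t_2))$ and its $\deg L_i$-coefficients are $\tfrac12(\kappa^{1/2}+\kappa^{-1/2})$. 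So with your stated measure the identification fails already for $X=\BP^1\times\BA^2$. The identity only holds if $\chi(X,-)$ is understood in the symmetrised sense consistent with the $K_{\vir}^{1/2}$-twist used throughout the paper, i.e.\ with denominator $\widehat{\mathfrak e}(N_{C/X})=(\det N_{C/X})^{1/2}\otimes\Lambda^\bullet N^*_{C/X}$, equivalently after a further twist by $(\det N_{C/X})^{-1/2}$. A pointwise sanity check on $\BA^3$: writing $\mathcal G=t_1+t_2+t_3+\kappa^{-1}-t_1^{-1}-t_2^{-1}-t_3^{-1}-\kappa$ with $\kappa=t_1t_2t_3$, one has $\mathcal G=-\kappa^{-1}(t_1t_2-1)(t_1t_3-1)(t_2t_3-1)=-[t_1t_2][t_1t_3][t_2t_3]$, so $\mathcal G/\widehat{\mathfrak e}(t_1+t_2+t_3)=-[t_1t_2][t_1t_3][t_2t_3]/([t_1][t_2][t_3])$ reproduces (minus) Okounkov's exponent, whereas $\mathcal G/\prod_i(1-t_i^{-1})$ does not. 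Once you replace $\Lambda^\bullet N^\vee$ by $\widehat{\mathfrak e}(N)$, the rest of your plan goes through.
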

We remark that, for $X$ of Calabi-Yau type,  the formula of \Cref{cor: intro DT 0} already appeared  in \cite[Thm.~3.3.6]{Okounk_Lectures_K_theory}. In loc.~cit.~it is argued that the general formula for the degree zero Donaldson-Thomas invariants of a (quasi-projective) Calabi-Yau threefold should follow from the local case of $\BA^3$ by an algebraic cobordism argument, involving   degenerating techniques and relative invariants as in \cite{LP_algebraic_cobordism}. Our argument offers a direct, clear  and much shorter proof in the case of a general local curve.
\subsubsection{Degree 1}
We prove a more compact expression for the formula in \Cref{thm: full DT Intro} also in the case that $\beta=[C]$ is an \emph{irreducible} class, that is when the degree $d=1$.
\begin{corollary}[\Cref{prop: degree 1}]\label{cor: intro DT 1}
Let     $X=\Tot_C(L_1 \oplus L_2)$, where $L_1, L_2$ are line bundles over a smooth projective curve $C$ of genus $g$. Set $k_i=\deg L_i$. We have    
\begin{align*}
  \frac{ \widehat{\DT}_1(X, -q)}{   \widehat{\DT}_0(X, -q)}=(-1)^{1-g}\cdot q^{1-g}[t_1]^{g-1-k_1}[t_2]^{g-1-k_2}\cdot\left(\frac{1}{(1-t_4)(1-t_5^{-1})}\right)^{ -\frac{k_1+k_2}{2}}.
   % (-q)^{1-g+\frac{k_1+k_2}{2}}\cdot \ [t_1]^{g-1-k_1}[t_2]^{g-1-k_2}[t_4]^{\frac{k_1+k_2}{2}}[t_5]^{\frac{k_1+k_2}{2}}.
\end{align*}
In particular, if $L_1\otimes L_2 \cong \omega_C$, we have
\begin{align*}
    \frac{ \widehat{\DT}_1(X, -q)}{   \widehat{\DT}_0(X, -q)}=
    [t_1]^{g-1-k_1}[t_2]^{g-1-k_2}[t_4]^{g-1}[t_5]^{g-1}.
\end{align*}
\end{corollary}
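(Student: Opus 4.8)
The plan is to specialise the master formula of \Cref{thm: full DT Intro} to $d=1$ and to divide by its degree-zero instance. Since $|\lambda|=1$ forces $\lambda=(1)$, whose unique box has $a(\Box)=\ell(\Box)=0$ and satisfies $n((1))=n(\overline{(1)})=0$, the combinatorial prefactors of $\widehat A_\lambda$, $\widehat B_\lambda$, $\widehat C_\lambda$ collapse respectively to $\tfrac{1}{[t_1][t_2]}$, $\tfrac{1}{[t_1]}$ and $\tfrac{1}{[t_2]}$, whereas for $\lambda=\emptyset$ each is an empty product equal to $1$. Hence in the quotient $\widehat{\DT}_1(X,q)/\widehat{\DT}_0(X,q)$ the explicit $q^{|\lambda|}$ yields $q^{1-g}$, the $\widehat A$-prefactor raised to $1-g$ yields $([t_1][t_2])^{g-1}$, and the $\widehat B$-, $\widehat C$-prefactors raised to $k_1$, $k_2$ yield $[t_1]^{-k_1}[t_2]^{-k_2}$. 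These reproduce the announced $q^{1-g}[t_1]^{g-1-k_1}[t_2]^{g-1-k_2}$, and the final substitution $q\mapsto -q$ turns $q^{1-g}$ into $(-1)^{1-g}q^{1-g}$, accounting for the global sign.

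Everything else is carried by the vertices, through the single ratio $\rho(q)\defeq\widehat{\mathsf{V}}_{(1)}(q)/\widehat{\mathsf{V}}_\emptyset(q)$. After dividing by the degree-zero series, the vertex parts of $\widehat A$, $\widehat B$, $\widehat C$ become
\begin{gather*}
\left.\bigl(\rho\cdot\rho|_{t_3=t_3^{-1}}\bigr)\right|_{t_3=1},\qquad
\left.\bigl(\rho|_{t_3=t_3^{-1}}\cdot\rho^{-1}|_{t_1=t_1t_3^2,\,t_3=t_3^{-1}}\bigr)^{1/2}\right|_{t_3=1},\\
\left.\bigl(\rho|_{t_3=t_3^{-1}}\cdot\rho^{-1}|_{t_2=t_2t_3^2,\,t_3=t_3^{-1}}\bigr)^{1/2}\right|_{t_3=1}.
\end{gather*}
I would compute $\rho$ directly from the skew--plane--partition expansion $\widehat{\mathsf{V}}_\lambda(q)=\sum_{\bn}\widehat{\mathfrak{e}}(-\mathsf{v}_{\bn})q^{|\bn|}$: passing from $\lambda=\emptyset$ to $\lambda=(1)$ deletes the single corner box from the asymptotic profile, and tracking the induced change in the vertex character $\mathsf{v}_{\bn}$ should collapse $\rho$ to a closed product in $q,t_1,t_2,t_3$.

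The delicate feature is that the $\widehat B$- and $\widehat C$-expressions are of the shape $h(t_1)/h(t_1t_3^2)$ (respectively with $t_2$), which degenerate naively to $1$ as $t_3\to1$; the genuine value of the limit is forced by the $t_3$-pole structure of $\rho$, and should produce exactly one factor $\bigl((1-t_4)(1-t_5^{-1})\bigr)^{-1/2}$ each once rewritten through $t_4=q\kappa^{-1/2}$ and $t_5=q^{-1}\kappa^{-1/2}$, while the symmetric $\widehat A$-expression $\rho\cdot\rho|_{t_3=t_3^{-1}}$ contributes a trivial factor. Raising to the powers $1-g$, $k_1$, $k_2$ and recombining with the prefactors of the first paragraph then yields the factor $\bigl((1-t_4)(1-t_5^{-1})\bigr)^{-(k_1+k_2)/2}$ and completes the general formula. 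I expect the main obstacle to lie precisely here: the explicit determination of $\rho$, and the careful evaluation of these shift-and-restrict limits, whose direct $t_3=1$ specialisation is indeterminate.

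Finally, the Calabi--Yau case $L_1\otimes L_2\cong\omega_C$ follows by a short computation. The hypothesis gives $k_1+k_2=\deg\omega_C=2g-2$, so $-(k_1+k_2)/2=-(g-1)$ and the last factor equals $\bigl((1-t_4)(1-t_5^{-1})\bigr)^{g-1}$. Using $1-t_4=-t_4^{1/2}[t_4]$, $1-t_5^{-1}=t_5^{-1/2}[t_5]$ and $t_4^{1/2}t_5^{-1/2}=q$ one finds $(1-t_4)(1-t_5^{-1})=-q\,[t_4][t_5]$, whence $\bigl((1-t_4)(1-t_5^{-1})\bigr)^{g-1}=(-1)^{g-1}q^{g-1}[t_4]^{g-1}[t_5]^{g-1}$. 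Multiplying by the overall $(-1)^{1-g}q^{1-g}$ cancels the sign and the power of $q$, leaving $[t_1]^{g-1-k_1}[t_2]^{g-1-k_2}[t_4]^{g-1}[t_5]^{g-1}$, as claimed.
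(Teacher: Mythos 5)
Your bookkeeping of the combinatorial prefactors, the overall $q^{1-g}$ and sign, and the Calabi--Yau specialisation via $(1-t_4)(1-t_5^{-1})=-q\,[t_4][t_5]$ are all correct, and the skeleton (specialise \Cref{thm: full DT Intro} to $d=0,1$ and divide) is sound. The gap sits exactly where you flag "the main obstacle": you never actually determine $\rho=\widehat{\mathsf{V}}_{(1)}(q)/\widehat{\mathsf{V}}_{\varnothing}(q)$, and the method you propose --- tracking the effect of deleting the corner box on the characters $\mathsf{v}_{\bn}$ in the skew-plane-partition expansion --- will not collapse $\rho$ to a closed product by any term-by-term argument. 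The two series are sums over different index sets (all plane partitions versus $1$-leg plane partitions) with no compatible bijection, and the assertion that their ratio is a closed plethystic expression is precisely the $1$-leg instance of the $K$-theoretic vertex DT/PT correspondence $\widehat{\mathsf{V}}_{\lambda}=\widehat{\mathsf{V}}_{\varnothing}\cdot\widehat{\mathsf{V}}^{\PT}_{\lambda}$ of Kuhn--Liu--Thimm, a substantial theorem that the paper imports rather than reproves. Your subsequent claims that $\rho\cdot\rho|_{t_3\to t_3^{-1}}$ is trivial and that the $\widehat B$-, $\widehat C$-limits each produce a single square-root factor also rest on knowing this closed form, so as written the core of the argument is circular.

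The paper sidesteps this by working on the PT side: for $\lambda=\Box$ there is a unique reverse plane partition of each size, so $\widehat{\mathsf{V}}^{\PT}_{\Box}(-q)=\Exp\!\big(q[(t_1t_2t_3)^{-1}t_3]/[t_3]\big)$ is an elementary one-dimensional sum (\Cref{prop: degree 1}); this yields $\widehat{\PT}_1(X,-q)$ in closed form (\Cref{cor: PT a la Oko deg 1}), and the DT statement follows because $\widehat{\DT}_1/\widehat{\DT}_0=\widehat{\PT}_1$ by the DT/PT correspondence (\Cref{thm: DT/PT}). If you replace "compute $\rho$ directly from the skew-plane-partition sum" by "invoke the vertex DT/PT correspondence so that $\rho=\widehat{\mathsf{V}}^{\PT}_{(1)}$, then sum explicitly", your outline becomes the paper's proof. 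One further slip: each of the $\widehat B$-, $\widehat C$-limits contributes $\big((1-t_4)(1-t_5^{-1})\big)^{+1/2}$, not $-1/2$; with your exponents the product over $k_1,k_2$ would produce the reciprocal of the stated factor (your Calabi--Yau paragraph uses the correct sign, so only this intermediate step is off).
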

To be precise, we prove in \Cref{prop: degree 1} the formula for the degree $d=1$ case in Pandharipande-Thomas theory. \Cref{cor: intro DT 1} is then obtained by combining the formula in loc.~cit.~with the DT/PT correspondence, see \Cref{thm: DT/PT INTRO}.

In the Calabi-Yau case, \Cref{cor: intro DT 1},  \Cref{thm: DT/PT INTRO} and a straightforward Riemann-Roch computation recover the formula of Okounkov\footnote{This formulation of Okounkov's formula was communicated to us by Yannik Schuler \cite{Schuler-private}.} \cite[Cor.~5.1.19]{Okounk_Lectures_K_theory}, stating
\begin{align*}
    \widehat{\PT}_1(X, -q)=\widehat{\mathfrak{e}}\left(-\chi(C, L_1\oplus L_2\oplus \oO\oplus \oO)\right),
\end{align*}
where $\widehat{\PT}_1(X, -q)$ is the degree 1 PT partition function (see \Cref{sec: PT}) and the line bundles $L_1, L_2, \oO, \oO$ have equivariant weights respectively $t_1, t_2, t_4, t_5$. In particular,  the hidden symmetries of the DT partition function become more transparent from the M-theoretic fivefold point of view of \cite{NO_membranes_and_sheaves}.

\subsection{Refined limit}
While \Cref{thm: full DT Intro} does compute the \emph{fully} $\TT$-equivariant   Donaldson-Thomas partition $ \widehat{\DT}_d(X,  q)$, closed explicit formulas are in general difficult to extract, away from the degree $d=0,1$ cases. 

Assume now that the local curve $X$ is Calabi-Yau, or equivalently that $L_1\otimes L_2\cong \omega_C$. Since in the Calabi-Yau case the perfect obstruction theory on $\Hilb^n(X, \beta)$ is \emph{symmetric}, we can \emph{scale}  the equivariant parameters $t_1, t_2$ to infinity while keeping the \emph{Calabi-Yau weight} $\kappa=t_1t_2$ constant, following the original treatment of the \emph{refined topological vertex} \cite{IKV_topological_vertex}.
We set
\begin{align*}
    \DT^{\mathsf{ref}}_d(X, q)=\lim_{L\to \infty}\widehat{\DT}_d(X, q)|_{t_1=L\kappa^{\frac{1}{2}}, t_2=L^{-1}\kappa^{\frac{1}{2}}}\in \BQ(\kappa^{1/2})(\!( q )\!),
\end{align*}
\begin{theorem}[\Cref{thm: refined DT full Aga}]\label{thm: refined DT full Aga INTRO}
    Let $X=\Tot_C(L_1\oplus L_2)$ be a local curve, where $L_1, L_2$ are line bundles over a smooth projective curve $C$ of genus $g$, such that $L_1\otimes L_2\cong \omega_C$. Set $\deg L_1=k_1$. We have
    \begin{align*}
       \DT^{\mathsf{ref}}_0(X, -q)&=\Exp\left((1-g)\frac{ (t_4t_5)^{\frac{1}{2}}+(t_4t_5)^{-\frac{1}{2}}
       }{[t_4][t_5]}
             \right),\\
       \frac{ \DT^{\mathsf{ref}}_d(X, -q)}{ \DT^{\mathsf{ref}}_0(X, -q)}&=(-1)^{dk_1}\cdot 
         \sum_{|\lambda|=d}\left(t_4^{\left \lVert \lambda \right \rVert^2} \prod_{\Box\in \lambda} \frac{1}{(1- t_4^{a(\Box)+1}t_5^{-\ell(\Box)})(1-t_4^{a(\Box)} t_5^{-\ell(\Box)-1})} \right)^{1-g}\cdot t_4^{\frac{k_1\cdot \left \lVert \lambda \right \rVert^2}{2}}t_5^{ \frac{k_1\cdot \left \lVert \overline{\lambda} \right \rVert^2}{2}}.
    \end{align*}
    In particular, the left-hand-side of the second identity is a rational function.
\end{theorem}
Remarkably, the refined Donaldson-Thomas partition function computed in  \Cref{thm: refined DT full Aga INTRO} establishes a conjectural formula\footnote{This formulation of the Aganagic-Schaeffer formula was communicated to us by Yannik Schuler \cite{Schuler-private}.} for the refined topological string of $\Tot_{C}(L_1\oplus L_2)$ proposed by Aganagic-Schaeffer \cite[Eqn.~(4.13)]{AS_black_holes} in string theory,  and studied with a topological quantum field theory  (TQFT) approach in the context of the refined  Ooguri-Strominger-Vafa conjecture. In particular, when $X$ is the resolved conifold $\Tot_{\BP^1}(\oO(-1)\oplus \oO(-1))$, \Cref{thm: refined DT full Aga INTRO} reproduces the well-known plethystic exponential expression \eqref{eq_ res con} for the refined topological string originally studied in \cite{IKV_topological_vertex}, and the  \emph{motivic} Pandharipande-Thomas partition function computed by Morrison-Mozgovoy-Nagao-Szendr\H{o}i \cite{MMNS_motivic_DT_conifold}. 
\subsection{Anti-diagonal restriction}
Exploiting the symmetries of the $K$-theoretic equivariant vertex $\widehat{\mathsf{V}}_\lambda(q)$, we establish a formula for the refined DT partition function in the anti-diagonal restriction of the equivariant parameters $t_1t_2=1$.
\begin{theorem}[\Cref{cor: antidiagonal DT}, \ref{cor: full DT}]\label{thm: DT anti diagonal INTRO}
    Let     $X=\Tot_C(L_1 \oplus L_2)$, where $L_1, L_2$ are line bundles over a smooth projective curve $C$ of genus $g$. Set $\deg L_i=k_i$.  In the anti-diagonal restriction, we have
\begin{multline*}
     \left.\widehat{\DT}_d(X,  -q)\right|_{t_1t_2=1}=\\
     (-1)^{d \cdot k_2}\sum_{|\lambda|=d}q^{|\lambda|(1-g)-n(\lambda)k_1-n(\overline{\lambda})k_2}\cdot
     \prod_{\Box \in \lambda}[t_1^{h(\Box)}]^{2g-2-k_1-k_2}\cdot \left( \mathsf{M}(q)^{-1} \cdot \prod_{\Box\in \lambda}(1-q^{h(\Box)})\right)^{k_1+k_2},
\end{multline*}
where $  \mathsf{M}(q)$  is the MacMahon's function \eqref{eqn: MM}.
\end{theorem}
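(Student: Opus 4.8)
The plan is to obtain the anti-diagonal formula by specialising the fully equivariant expression of \Cref{thm: full DT Intro} at $t_2 = t_1^{-1}$, treating each of the three universal series $\widehat{A}_\lambda$, $\widehat{B}_\lambda$, $\widehat{C}_\lambda$ as a product of a \emph{rational arm--leg prefactor} and a \emph{vertex part} built from $\widehat{\mathsf{V}}_\lambda(q)$, and analysing the two pieces separately.

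First I would collapse the prefactors. On $t_1 t_2 = 1$ one has, for every box, $[t_1^{-\ell(\Box)}t_2^{a(\Box)+1}] = [t_1^{-h(\Box)}] = -[t_1^{h(\Box)}]$ and $[t_1^{\ell(\Box)+1}t_2^{-a(\Box)}] = [t_1^{h(\Box)}]$, where $h(\Box) = a(\Box)+\ell(\Box)+1$. Hence the $\widehat{A}$-prefactor becomes $\prod_\Box\bigl(-[t_1^{h(\Box)}]^{-2}\bigr)$, while the $\widehat{B}$- and $\widehat{C}$-prefactors reduce to $\prod_\Box(-1)^{\ell(\Box)}[t_1^{h(\Box)}]^{-1}$ and $\prod_\Box(-1)^{a(\Box)+1}[t_1^{h(\Box)}]^{-1}$ respectively. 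Raising these to the exponents $1-g$, $k_1$, $k_2$ and multiplying produces exactly $\prod_\Box[t_1^{h(\Box)}]^{2g-2-k_1-k_2}$, together with a sign whose exponent is controlled by $\sum_\Box\ell(\Box)=n(\lambda)$ and $\sum_\Box(a(\Box)+1)=n(\overline{\lambda})+d$.

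The heart of the argument is the anti-diagonal behaviour of the vertex parts, and this is the step I expect to be the main obstacle. The subtlety is that $\widehat{\mathsf{V}}_\lambda(q)$ generically carries zeros and poles at $t_3=1$, so each $|_{t_3=1}$ must be read as a regularised limit of the symmetrised combination; on $t_1t_2=1$ there is further degeneration. I would use the symmetries of $\widehat{\mathsf{V}}_\lambda(q)$ (those already exploited for \Cref{cor: antidiagonal DT}) to show that the \emph{norm}-type combination $\bigl(\widehat{\mathsf{V}}_\lambda(q)\cdot\widehat{\mathsf{V}}_\lambda(q)|_{t_3=t_3^{-1}}\bigr)|_{t_3=1}$ entering $\widehat{A}_\lambda$ trivialises on the anti-diagonal (contributing only a sign and a monomial in $q$ absorbed by the normalisation $q^{|\lambda|}$), which is why no $(1-g)$-power of a $q$-series survives. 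For $\widehat{B}_\lambda$ and $\widehat{C}_\lambda$ the relevant combination is the \emph{edge-shifted} ratio $\widehat{\mathsf{V}}_\lambda(q)|_{t_3=t_3^{-1}}\cdot\widehat{\mathsf{V}}_\lambda(q)^{-1}|_{t_1=t_1t_3^2,\,t_3=t_3^{-1}}$ (and its $t_2$-analogue); I expect each to reduce, in the anti-diagonal limit, to the principal Schur specialisation $s_\lambda(1,q,q^2,\dots)=q^{n(\lambda)}/\prod_\Box(1-q^{h(\Box)})$ dressed by MacMahon's function, yielding the closed factor $\mathsf{M}(q)^{-1}\prod_\Box(1-q^{h(\Box)})$. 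This identifies the $K$-theoretic one-leg vertex with the classical one-leg topological vertex on the anti-diagonal and is exactly where the non-trivial symmetries must be invoked, with the $t_3\to1$ limits handled carefully to cancel spurious factors.

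Finally I would assemble the pieces: combining the $\widehat{B}$- and $\widehat{C}$-contributions gives $\bigl(\mathsf{M}(q)^{-1}\prod_\Box(1-q^{h(\Box)})\bigr)^{k_1+k_2}$, the normalisations $q^{|\lambda|}$, $q^{-n(\lambda)}$, $q^{-n(\overline{\lambda})}$ fold into the overall power $q^{|\lambda|(1-g)-n(\lambda)k_1-n(\overline{\lambda})k_2}$, and the $q\mapsto-q$ substitution together with the accumulated signs must collapse to the single prefactor $(-1)^{d\cdot k_2}$. The only genuinely delicate point is the vertex degeneration of the previous paragraph; the remaining steps are bookkeeping of signs and $q$-exponents.
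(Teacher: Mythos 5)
Your decomposition into arm--leg prefactors and vertex parts, the prefactor computation on $t_1t_2=1$ (which is exactly \Cref{cor: edge antidiagonal}), and the final sign/exponent bookkeeping all match the paper's argument and are correct. The gap is in the step you yourself flag as the main obstacle: you only \emph{assert} that the norm combination entering $\widehat{A}_\lambda$ trivialises and that the edge-shifted ratios entering $\widehat{B}_\lambda,\widehat{C}_\lambda$ reduce to $\mathsf{M}(q)^{-1}\prod_{\Box}(1-q^{h(\Box)})$, appealing vaguely to ``symmetries of $\widehat{\mathsf{V}}_\lambda(q)$'' and an identification with the classical topological vertex. That is precisely the content of \Cref{cor: antidiagonal DT}, so as written the argument is circular at its core; no mechanism is supplied for either degeneration, and your description of the $\widehat{A}$-combination as contributing ``a sign and a monomial in $q$'' is also slightly off --- it contributes exactly $1$ (the $q^{|\lambda|}$ in the final formula comes from the normalisation $\mathbf{f}_\lambda$, not from the vertex).

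The paper closes this gap with two specific inputs you would need to reproduce. First, $\widehat{\mathsf{V}}_\lambda(q)$ is divisible by $(1-t_1t_2)$ in positive $q$-degree (the $K$-theoretic analogue of \cite[Lemma 6]{OP_local_theory_curves}), which forces $\bigl(\widehat{\mathsf{V}}_\lambda(q)\cdot\widehat{\mathsf{V}}_\lambda(q)|_{t_3=t_3^{-1}}\bigr)|_{t_3=1,\,t_1t_2=1}=1$ and kills the $(1-g)$-power vertex contribution. Second, since $\mathsf{v}_{\bn}$ contains no weight of the form $(t_1t_2t_3)^a$, one may replace the restriction $t_3=1,\ t_1t_2=1$ by $t_3=(t_1t_2)^{-1}$; the symmetry $\mathsf{v}_{\bn}=\tilde{\mathsf{v}}_{\bn}^+-\overline{\tilde{\mathsf{v}}_{\bn}^+}\cdot t_1t_2t_3$ then collapses each $\widehat{\mathfrak{e}}(-\mathsf{v}_{\bn})$ to $(-1)^{|\mathbf{n}|}$, so $\widehat{\mathsf{V}}_\lambda(q)|_{t_3=(t_1t_2)^{-1}}$ becomes the hook-length generating function $\mathsf{M}(-q)\prod_{\Box\in\lambda}(1-(-q)^{h(\Box)})^{-1}$ of skew plane partitions (Sagan's formula, as in \Cref{motives of spp}), independent of $t_1,t_2$. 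Feeding this into the evaluations at $(L_1,L_2)=(\oO(-2),\oO)$ and $(\oO,\oO(-2))$ and solving for the normalised series yields the factor $\mathsf{M}(q)^{-1}\prod_{\Box}(1-q^{h(\Box)})$; no appeal to Schur function specialisations is needed. Without these two ingredients (or substitutes for them) your proof does not go through.
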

By applying the DT/PT correspondence (see \Cref{thm: DT/PT INTRO}) and the equivariant cohomology limit (see \Cref{prop: limit equiv INTRO}), the formula in \Cref{thm: DT anti diagonal INTRO} reproduces our previous result \cite[Thm.~1.3]{Mon_double_nested}, which in turn matches the Gromov-Witten partition function of local curves computed by Bryan-Pandharipande \cite{BP_local_GW_curves} via the (unrefined) GW/PT correspondence \cite{MNOP_1, PT_curve_counting_derived}.
\smallbreak
We remark that, in the case $X$ is Calabi-Yau, the partition function in the anti-diagonal restriction of the equivariant parameters can be furthermore expressed (up to a sign) as a \emph{signed}  \emph{topological Euler characteristic} partition function.
\begin{corollary}[\Cref{cor: DT is top}]\label{cor: DT is top INTRO}
    Let $X=\Tot_C(L_1\oplus L_2)$ be a local curve, where $L_1, L_2$ are line bundles over a smooth projective curve $C$ such that $L_1\otimes L_2\cong \omega_C$. Then we have
    \begin{align*}
          \left.\widehat{\DT}_d(X,  q)\right|_{t_1t_2=1}=(-1)^{d \cdot k_2} \DT^{\mathrm{top}}_d(X,- q),
    \end{align*}
    where $\DT^{\mathrm{top}}_d(X, q)$ is the generating series of the topological Euler characteristic.
\end{corollary}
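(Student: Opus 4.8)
The plan is to read the statement directly off the explicit anti-diagonal formula of \Cref{thm: DT anti diagonal INTRO}, and then to identify the resulting $q$-series with a localisation computation of the topological Euler characteristic. First I would specialise \Cref{thm: DT anti diagonal INTRO} to the Calabi-Yau regime. The hypothesis $L_1\otimes L_2\cong\omega_C$ is equivalent to $k_1+k_2=\deg\omega_C=2g-2$, so the exponent $2g-2-k_1-k_2$ appearing there vanishes identically. Hence the equivariant factor $\prod_{\Box\in\lambda}[t_1^{h(\Box)}]^{2g-2-k_1-k_2}$ collapses to $1$, and the restriction becomes a series in $q$ alone:
\begin{multline*}
\left.\widehat{\DT}_d(X,-q)\right|_{t_1t_2=1}=\\
(-1)^{d\cdot k_2}\sum_{|\lambda|=d}q^{|\lambda|(1-g)-n(\lambda)k_1-n(\overline{\lambda})k_2}\left(\mathsf{M}(q)^{-1}\prod_{\Box\in\lambda}(1-q^{h(\Box)})\right)^{2g-2},
\end{multline*}
the right-hand sum being a series in $q$ alone, which I denote $F_d(q)$. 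It therefore suffices to prove $F_d(q)=\DT^{\mathrm{top}}_d(X,q)$: the display then reads $\widehat{\DT}_d(X,-q)|_{t_1t_2=1}=(-1)^{d\cdot k_2}F_d(q)$, and substituting $q\mapsto -q$ gives $\widehat{\DT}_d(X,q)|_{t_1t_2=1}=(-1)^{d\cdot k_2}F_d(-q)=(-1)^{d\cdot k_2}\DT^{\mathrm{top}}_d(X,-q)$, which is the corollary.

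Next I would compute $\DT^{\mathrm{top}}_d(X,q)=\sum_n e(\Hilb^n(X,\beta))\,q^n$ by torus localisation, writing $e$ for the compactly supported topological Euler characteristic. Although $X$ is not proper, its $\TT$-fixed locus is (\Cref{thm: fixed}), so $e$ localises: $e(\Hilb^n(X,\beta))=e(\Hilb^n(X,\beta)^\TT)$. By the fixed-point description, $\Hilb^n(X,\beta)^\TT$ is a disjoint union over Young diagrams $\lambda$ of size $d$ of the skew nested Hilbert schemes on $C$ attached to $\lambda$. The ground state of the $\lambda$-stratum is the pure thickening $Z_\lambda$ of the zero section, with $\oO_{Z_\lambda}=\bigoplus_{(i,j)\in\lambda}L_1^{-i}L_2^{-j}$, whose holomorphic Euler characteristic is computed by Riemann-Roch on $C$ as $\chi(\oO_{Z_\lambda})=\sum_{(i,j)\in\lambda}\bigl((1-g)-ik_1-jk_2\bigr)=|\lambda|(1-g)-n(\lambda)k_1-n(\overline{\lambda})k_2$; this accounts exactly for the $q$-shift. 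The remaining factor is the generating series of topological Euler characteristics of the nested Hilbert scheme attached to $\lambda$, which by the computation of \cite{Mon_double_nested} (a Macdonald-type power structure, with exponent $-e(C)=2g-2$) equals $\bigl(\mathsf{M}(q)^{-1}\prod_{\Box\in\lambda}(1-q^{h(\Box)})\bigr)^{2g-2}$. Summing over $\lambda$ yields $\DT^{\mathrm{top}}_d(X,q)=F_d(q)$. As a consistency check, for $d=0$ this reads $\DT^{\mathrm{top}}_0(X,q)=\mathsf{M}(q)^{2-2g}=\mathsf{M}(q)^{e(X)}$, recovering the MacMahon formula $\sum_n e(\Hilb^n(X))\,q^n=\mathsf{M}(q)^{e(X)}$ for the Hilbert scheme of points on the threefold $X$, which satisfies $e(X)=e(C)=2-2g$.

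The main obstacle is this last computation: establishing the product formula for the topological Euler characteristic of the skew nested Hilbert schemes on $C$, and in particular isolating the MacMahon factor $\mathsf{M}(q)^{-1}$ and the precise power $2g-2$. Granting the formula of \cite{Mon_double_nested}, the remainder is bookkeeping, and the conceptual point is clean: the Calabi-Yau condition is exactly what forces the equivariant factor to trivialise, so that the anti-diagonal restriction $t_1t_2=1$ can be purely topological at all, while the global sign $(-1)^{d\cdot k_2}$ together with the substitution $q\mapsto-q$ are read directly off \Cref{thm: DT anti diagonal INTRO}.
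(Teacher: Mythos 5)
Your proposal is correct and follows essentially the same route as the paper, which proves the corollary by direct comparison of the anti-diagonal formula in \Cref{cor: full DT} (where the Calabi--Yau condition $k_1+k_2=2g-2$ kills the equivariant factor) with the topological Euler characteristic series of \Cref{cor: top DT}. The only difference is that you re-derive the hooklength formula for $\DT^{\mathrm{top}}_d$ inline and attribute it to \cite{Mon_double_nested}; in fact the needed statement for skew nested Hilbert schemes is \Cref{cor: top DT} of the present paper, obtained from \Cref{thm: fixed} and the motivic identity of \Cref{motives of spp}, so you could simply cite that.
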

When $X$ is Calabi-Yau, \Cref{cor: DT is top INTRO} shows that the DT invariants under the Calabi-Yau specialisation $t_1t_2=1$ are  purely topological. We believe this fact  to be more than  a mere coincidence, and to relate to the \emph{Behrend's function} of $\Hilb^n(X, \beta)$. In fact, if $\Hilb^n(X, \beta)$ were proper -- something that almost never happens -- the Donaldson-Thomas invariant
\[
\int_{[\Hilb^n(X, \beta)]^{\vir}}1\in \BZ
\]
could be computed in the Calabi-Yau specialisation of the equivariant parameters, and would equate Behrend's weighted Euler characteristic \cite{Beh_DT_via_microlocal}.  By torus localisation \cite{BF_symm_pot, descombes_hyperbolic_loc}, the latter is computed as  a signed topological Euler characteristic of the fixed locus of the Calabi-Yau subtorus $\set{t_1t_2=1}\subset (\BC^*)^2$. See also \cite{JT_virtualsigned}, where a  
relation between the signed topological Euler characteristic of a scheme with a perfect obstruction theory and the virtual Euler characteristic of its $-1$-shifted symplectic cotangent bundle is drawn.
\subsection{DT/PT correspondence}
Let  $P_n(X, \beta)$ denote  the moduli space of \emph{stable pairs}, which parametrises complexes 
\[[\oO_X\xrightarrow[]{s} F]\in \derived^b(X),\]
such that $\coker (s)$ is zero-dimensional,  $[\Supp F]=\beta$ and $\chi(F)=n$. The moduli space  $P_n(X, \beta)$ was introduced by Pandharipande-Thomas \cite{PT_curve_counting_derived} to give a more geometric understanding of the GW/DT correspondence \cite{MNOP_1} and to compute BPS invariants via sheaf-theoretic methods \cite{PT_stable_pairs_BPS}.
\smallbreak
We show that the \emph{Pandharipande-Thomas partition function} $   \widehat{\PT}_d(X, q)$ (see \eqref{eqn: localized PT KK invariants} for its definition) enjoys a similar universal structure as in  \Cref{thm: full DT Intro}.

\begin{theorem}[\Cref{thm: PT_ explicit univ series}, \Cref{cor: full PT}]\label{thm: full PT Intro}
    Let  $d\geq 0$ and    $X=\Tot_C(L_1 \oplus L_2)$, where $L_1, L_2$ are line bundles over a smooth projective curve $C$ of genus $g$. We have
    \begin{align*}
     \widehat{\PT}_d(X, q)= \sum_{|\lambda|=d} \left(q^{|\lambda|}\widehat{D}_{\lambda}(q)\right)^{1-g}\cdot \left(q^{-n(\lambda)}\widehat{E}_{\lambda}(q)\right)^{\deg L_1}\cdot \left(q^{-n(\overline{\lambda})}\widehat{F}_{\lambda}(q)\right)^{\deg L_2},
\end{align*}
where the universal series are given by
\begin{align*}
      \widehat{D}_\lambda(q)&=\prod_{\Box\in \lambda}\frac{1}{[t_1^{-\ell(\Box)}t_2^{a(\Box)+1}][t_1^{\ell(\Box)+1}t_2^{-a(\Box)}]}\cdot \left.\left(\widehat{\mathsf{V}}^{\PT}_\lambda(q)\cdot \widehat{\mathsf{V}}^{\PT}_\lambda(q)|_{ t_3=t^{-1}_3}\right)\right|_{t_3=1}\\
    \widehat{E}_\lambda(q)&=\prod_{\Box\in \lambda}\frac{[t_1^{-\ell(\Box)}t_2^{a(\Box)+1}]^{\ell(\Box)}}{[t_1^{\ell(\Box)+1}t_2^{-a(\Box)}]^{\ell(\Box)+1}}\cdot \left.\left(\widehat{\mathsf{V}}^{\PT}_\lambda(q)|_{ t_3=t_3^{-1}}\cdot \widehat{\mathsf{V}}^{\PT}_\lambda(q)^{-1}|_{t_1=t_1t^2_3, t_3=t^{-1}_3}\right)\right|^{\frac{1}{2}}_{t_3=1},  \\
    \widehat{F}_\lambda(q)&=\prod_{\Box\in \lambda}\frac{[t_1^{\ell(\Box)+1}t_2^{-a(\Box)}]^{a(\Box)}}{[t_1^{-\ell(\Box)}t_2^{a(\Box)+1}]^{a(\Box)+1}}\cdot \left.\left(\widehat{\mathsf{V}}^{\PT}_\lambda(q)|_{ t_3=t_3^{-1}}\cdot \widehat{\mathsf{V}}^{\PT}_\lambda(q)^{-1}|_{t_2=t_2t^2_3, t_3=t^{-1}_3}\right)\right|^{\frac{1}{2}}_{t_3=1}.
\end{align*}
\end{theorem}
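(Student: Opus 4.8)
The plan is to mirror the proof of \Cref{thm: full DT Intro} step by step, replacing the Hilbert scheme by the stable pairs moduli space $P_n(X,\beta)$, and to isolate the single place where the two theories genuinely differ: the local model at the vertex.

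First I would apply the $K$-theoretic virtual localisation formula to $\widehat{\PT}_{d,n}(X)=\chi(P_n(X,\beta),\widehat{\oO}^{\vir})$, reducing the computation to the proper $\TT$-fixed locus $P_n(X,\beta)^\TT$ together with its virtual normal bundle. A $\TT$-fixed stable pair on $X=\Tot_C(L_1\oplus L_2)$ has a well-defined asymptotic profile: a Young diagram $\lambda$ of size $d$ recording the thickening of $C$ in the two fibre directions. For each $\lambda$ I would identify the corresponding component of $P_n(X,\beta)^\TT$ with a skew nested Hilbert scheme on $C$, exactly as in the DT reduction, the stable-pair purity and stability conditions entering only through the local structure at the vertex.

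The heart of the argument is the decomposition of the virtual tangent complex, restricted to the fixed locus, into its $\TT$-fixed and $\TT$-moving parts, and the further splitting of the moving part via the $1$-leg vertex/edge formalism into an \emph{edge} term and a \emph{vertex} term. The edge term is governed entirely by the profile $\lambda$ and the $\TT$-weights of the fibre directions, hence is \emph{identical} to the one appearing in \Cref{thm: full DT Intro}; this is precisely why the combinatorial prefactors in $\widehat{D}_\lambda,\widehat{E}_\lambda,\widehat{F}_\lambda$ coincide verbatim with those in $\widehat{A}_\lambda,\widehat{B}_\lambda,\widehat{C}_\lambda$. The vertex term, computed from the local stable-pair model on $\BA^2\times\BA^1$, produces the $1$-leg PT vertex $\widehat{\mathsf{V}}^{\PT}_\lambda(q)$ in place of its DT counterpart. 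The edge and line-bundle-twisted contributions are pulled back from $C$ and decompose according to $H^\bullet(C,\oO_C)$, $H^\bullet(C,L_1)$ and $H^\bullet(C,L_2)$; Riemann--Roch then produces the exponents $1-g=\chi(\oO_C)$, $\deg L_1$ and $\deg L_2$, while the Euler-characteristic grading supplies the matching powers of $q$. Assembling these factors yields the three universal series as claimed.

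The main obstacle I anticipate lies in the fixed-locus analysis. Stable pairs have a genuinely different local structure at the vertex --- a section of a pure one-dimensional sheaf with zero-dimensional cokernel, rather than an ideal sheaf --- so the delicate point is to verify that this difference is confined to the vertex term and does not alter the edge contributions, so that the prefactors survive unchanged. Once this confinement is established, the identification of the vertex term with $\widehat{\mathsf{V}}^{\PT}_\lambda(q)$ follows from the stable-pairs analogue of the local model used in the DT vertex computation, completing the proof.
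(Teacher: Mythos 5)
Your overall plan --- mirror the DT argument and isolate the difference in the vertex term --- is the right instinct and matches the architecture of the paper's proof, but as written it has one error of detail and one gap of substance. The detail: the $\TT$-fixed components of $P_n(X,\beta)$ are \emph{not} skew nested Hilbert schemes. They are the double nested Hilbert schemes $C^{[\Bm]}$ indexed by \emph{reverse plane partitions} $\Bm$ of shape $\lambda$ (boxes inside the Young diagram), whereas the DT fixed locus is indexed by skew plane partitions of shape $\BZ^2_{\geq 0}\setminus\lambda$; compare \Cref{thm: fixed PT} with \Cref{thm: fixed}. This is not cosmetic: passing from the complement of $\lambda$ to $\lambda$ itself is precisely what turns the DT vertex (a sum over $1$-leg plane partitions) into the PT vertex (a sum over $1$-leg PT box arrangements), so saying the identification goes ``exactly as in the DT reduction'' conceals the one place where the two fixed-point analyses genuinely diverge.

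The substantive gap: you invoke the $1$-leg vertex/edge formalism directly on $X$ and then claim that Riemann--Roch applied to $H^\bullet(C,\oO_C)$, $H^\bullet(C,L_i)$ produces the exponents $1-g$ and $\deg L_i$. The vertex/edge formalism requires a torus acting on the base curve and is only available for $C=\BP^1$; for general $g$ the fixed components $C^{[\Bm]}$ are positive-dimensional and the localised contributions are honest equivariant intersection numbers that cannot be read off from the cohomology of line bundles on $C$. The paper bridges this with the universality step (\Cref{thm: universal series PT}, an EGL-type argument): multiplicativity of the normalised localised series over disjoint unions together with dependence only on the Chern numbers $(g,\deg L_1,\deg L_2)$ forces the shape $\widehat{D}_\lambda^{1-g}\widehat{E}_\lambda^{\deg L_1}\widehat{F}_\lambda^{\deg L_2}$; only then does one evaluate on the three toric triples $(\BP^1,\oO,\oO)$, $(\BP^1,\oO,\oO(-2))$, $(\BP^1,\oO(-2),\oO)$, perform a second $\BC^*$-localisation on $(\BP^1)^{[\Bm]}$ --- whose fixed points are pairs of reverse plane partitions --- and identify the contributions with $\widehat{\mathsf{V}}^{\PT}_\lambda(q)$ as in \Cref{prop: vertex PT}, before solving the resulting system for the three series. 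Without this step your argument does not produce the exponents and the reduction to the vertex is unjustified outside genus zero. The one structural point you do get right, and which the paper also uses, is that the constant (edge) term coincides with the DT one (\Cref{lemma: virtual normal const as univer PT}), which is why the combinatorial prefactors in $\widehat{D}_\lambda,\widehat{E}_\lambda,\widehat{F}_\lambda$ match those in $\widehat{A}_\lambda,\widehat{B}_\lambda,\widehat{C}_\lambda$.
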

The generating series $ \widehat{\mathsf{V}}^{\PT}_\lambda(q)$ is the 1-leg $K$-theoretic equivariant vertex with asymptotic profile $\lambda$ coming from PT theory \cite{PT_vertex}. In our work,  we    combinatorially realise it as
\begin{align*}
\widehat{\mathsf{V}}^{\PT}_\lambda(q)=\sum_{\Bm}\widehat{\mathfrak{e}}(-\mathsf{v}_{\Bm})\cdot q^{|\mathbf{n}|}\in \BQ(t_1^{1/2}, t_2^{1/2}, t_3^{1/2})\llbracket q \rrbracket,
\end{align*}
where the sum is over all \emph{reverse plane partitions} $\Bm$ of shape $ \lambda$ (see \Cref{def: spp}), and the operator $ \widehat{\mathfrak{e}}$ turns the Laurent polynomials 
$ \mathsf{v}_{\Bm}$ (see \eqref{eqn: PT vertex Lauren}) into rational functions in the equivariant parameters.

As a corollary of \Cref{thm: full DT Intro}, \ref{thm: full PT Intro} we deduce the following $K$-theoretic DT/PT correspondence, confirming a  conjecture of Nekrasov-Okounkov \cite{NO_membranes_and_sheaves} for all local curves.
\begin{corollary}[\Cref{thm: DT/PT}]\label{thm: DT/PT INTRO}
    Let $X=\Tot_C(L_1\oplus L_2)$ be a local curve, where $L_1, L_2$ are line bundles over a smooth projective curve. Then, the following correspondence holds
\begin{align*}
        \widehat{\DT}_d(X, q)=\widehat{\DT}_0(X, q)\cdot \widehat{\PT}_d(X, q).
\end{align*}
\end{corollary}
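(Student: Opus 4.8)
The plan is to deduce the correspondence directly from the two structural formulas of \Cref{thm: full DT Intro} and \Cref{thm: full PT Intro}, reducing everything to a single identity between the $K$-theoretic vertices $\widehat{\mathsf{V}}_\lambda(q)$ and $\widehat{\mathsf{V}}^{\PT}_\lambda(q)$. First I would observe that the degree-$0$ partition function involves only the empty diagram $\lambda=\emptyset$: since $|\emptyset|=n(\emptyset)=n(\overline\emptyset)=0$, \Cref{thm: full DT Intro} gives $\widehat{\DT}_0(X,q)=\widehat{A}_\emptyset(q)^{1-g}\,\widehat{B}_\emptyset(q)^{\deg L_1}\,\widehat{C}_\emptyset(q)^{\deg L_2}$. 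Multiplying this by the sum expressing $\widehat{\PT}_d(X,q)$ and distributing the constant factors into each summand, the desired identity $\widehat{\DT}_d=\widehat{\DT}_0\cdot\widehat{\PT}_d$ becomes, after matching the two expressions for $\widehat{\DT}_d$ summand by summand in $\lambda$, a consequence of the three \emph{universal} identities
\begin{align*}
\widehat{A}_\lambda=\widehat{A}_\emptyset\cdot\widehat{D}_\lambda,\qquad
\widehat{B}_\lambda=\widehat{B}_\emptyset\cdot\widehat{E}_\lambda,\qquad
\widehat{C}_\lambda=\widehat{C}_\emptyset\cdot\widehat{F}_\lambda,
\end{align*}
required to hold for every Young diagram $\lambda$. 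As these involve neither $g$ nor $L_1,L_2$, it suffices to establish them once, and the correspondence then holds simultaneously for every local curve (here one uses only that $(xy)^{1-g}=x^{1-g}y^{1-g}$ and similarly for the exponents $\deg L_i$).

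Next I would note that the combinatorial box-product prefactors appearing in $\widehat{A}_\lambda$ and $\widehat{D}_\lambda$ are literally identical, and likewise for the pairs $(\widehat{B}_\lambda,\widehat{E}_\lambda)$ and $(\widehat{C}_\lambda,\widehat{F}_\lambda)$; the \emph{only} difference between the DT and PT universal series is that $\widehat{\mathsf{V}}_\lambda(q)$ is replaced by $\widehat{\mathsf{V}}^{\PT}_\lambda(q)$. The vertex-dependent factors are obtained by applying to the vertex a fixed, $\lambda$-independent operation: $\mathsf{V}\mapsto\bigl(\mathsf{V}\cdot\mathsf{V}|_{t_3=t_3^{-1}}\bigr)|_{t_3=1}$ for the $A/D$ series, and the analogous square-root operations (substitute $t_3\mapsto t_3^{-1}$ and $t_1\mapsto t_1t_3^2$ or $t_2\mapsto t_2t_3^2$, invert, take a formal square root, set $t_3=1$) for $B/E$ and $C/F$. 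Since each substitution is a ring homomorphism, and inversion, the formal square root, and specialisation at $t_3=1$ are all multiplicative, each of these operations is multiplicative in $\mathsf{V}$. Because $\widehat{A}_\emptyset,\widehat{B}_\emptyset,\widehat{C}_\emptyset$ are precisely these operations applied to $\widehat{\mathsf{V}}_\emptyset(q)$ (the empty-diagram prefactors being empty products, hence $1$), the three universal identities all follow from the single \emph{$K$-theoretic $1$-leg DT/PT vertex correspondence}
\begin{align*}
\widehat{\mathsf{V}}_\lambda(q)=\widehat{\mathsf{V}}_\emptyset(q)\cdot\widehat{\mathsf{V}}^{\PT}_\lambda(q).
\end{align*}

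The hard part is this vertex identity, where the genuine content of the theory enters. I would prove it combinatorially, comparing the index sets of the two vertices: skew plane partitions $\bn$ of shape $\BZ^2_{\geq 0}\setminus\lambda$ on the DT side, and reverse plane partitions $\Bm$ of shape $\lambda$ on the PT side. The structural input is that any skew plane partition of profile $\lambda$ decomposes canonically into its \emph{minimal} (stable-pairs) part, recorded by a reverse plane partition of shape $\lambda$, together with a freely superimposed ordinary plane partition accounting for the remaining finite-length boxes — the combinatorial shadow of the geometric DT/PT wall-crossing, the floating plane partitions contributing exactly the empty-profile vertex $\widehat{\mathsf{V}}_\emptyset(q)$. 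I would then check that the symmetrised $K$-theoretic weights factor along this decomposition, so that the generating functions multiply as claimed. The step I expect to require the most care is tracking the tangent-minus-obstruction character $\mathsf{v}_{\bn}$ through the decomposition and verifying that, after applying $\widehat{\mathfrak{e}}$, the contributions of the minimal and the floating parts genuinely separate with no surviving cross terms. This is a purely local vertex computation, independent of the global geometry of $C$; once it is in hand, the corollary follows for all degrees $d$ and all genera $g$.
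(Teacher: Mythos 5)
Your reduction of the corollary to the single vertex identity $\widehat{\mathsf{V}}_\lambda(q)=\widehat{\mathsf{V}}_{\varnothing}(q)\cdot \widehat{\mathsf{V}}^{\PT}_\lambda(q)$ is exactly the paper's argument: the box-product prefactors in $(\widehat{A}_\lambda,\widehat{B}_\lambda,\widehat{C}_\lambda)$ and $(\widehat{D}_\lambda,\widehat{E}_\lambda,\widehat{F}_\lambda)$ coincide, the vertex-dependent factors are obtained by multiplicative operations applied to the respective vertices, and $q^{|\varnothing|}=q^{-n(\varnothing)}=q^{-n(\overline{\varnothing})}=1$, so everything collapses to that one identity. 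Up to this point your proposal is correct and matches the paper.

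The gap is in your final paragraph, where you propose to prove the vertex identity yourself by a bijective-combinatorial argument. The paper does not do this: it quotes the identity as the main theorem of Kuhn--Liu--Thimm, who establish it by wall-crossing, and this is where the genuine content lives. Your plan --- decompose a skew plane partition $\bn$ into a ``minimal'' reverse plane partition of shape $\lambda$ plus a ``floating'' ordinary plane partition, and check that the symmetrised weights factor --- would fail at the step you yourself flag as delicate. The character $\mathsf{v}_{\bn}$ contains the quadratic term $(1-t_1)(1-t_2)(1-t_3)\mathsf{Z}_{\bn}\overline{\mathsf{Z}}_{\bn}$; under any additive splitting $\mathsf{Z}_{\bn}=\mathsf{Z}_{\pi}+\mathsf{Z}_{\Bm}$ this produces cross terms $\mathsf{Z}_{\pi}\overline{\mathsf{Z}}_{\Bm}+\mathsf{Z}_{\Bm}\overline{\mathsf{Z}}_{\pi}$ that do not cancel, so $\widehat{\mathfrak{e}}(-\mathsf{v}_{\bn})$ does not split as a product of the two contributions term by term. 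The DT/PT vertex correspondence is an identity of generating series after summation, not a weight-preserving bijection; indeed, even the purely numerical shadow of your decomposition (the multivariable identity $\mathsf{Z}^{\mathrm{spp}}_{\lambda}(\mathbf{q})=\mathsf{Z}^{\mathrm{spp}}_{\varnothing}(\mathbf{q})\cdot \mathsf{Z}^{\mathrm{rpp}}_{\lambda}(\mathbf{q})$) is stated only as a conjecture in the paper (\Cref{conj_combi}). To complete your proof you must either cite the $K$-theoretic vertex DT/PT correspondence of Kuhn--Liu--Thimm or supply an argument of comparable depth; the combinatorial factorization as sketched is not available.
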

The proof of \Cref{thm: DT/PT INTRO} follows by combining  \Cref{thm: full DT Intro}, \ref{thm: full PT Intro} with the $K$-theoretic \emph{vertex} DT/PT correspondence
\begin{align*}
\widehat{\mathsf{V}}_\lambda(q)=\widehat{\mathsf{V}}_{\varnothing}(q)\cdot \widehat{\mathsf{V}}^{\PT}_\lambda(q),
\end{align*}
which was recently established by Kuhn-Liu-Thimm \cite{KLT_DTPT} using  wall-crossing techniques. We remark that by the vertex/edge formalism (see e.g.~\cite{MNOP_1, PT_vertex}), the vertex DT/PT correspondence of \cite{KLT_DTPT} 
immediately extends to  all quasi-projective toric threefolds.  Our  \Cref{thm: DT/PT INTRO} represents the first instance of the $K$-theoretic DT/PT correspondence in the \emph{non-toric} case. Specialising \Cref{thm: DT/PT INTRO} to the equivariant cohomology limit gives a rigourous derivation of the unrefined DT/PT correspondence for local curves.
\smallbreak
Denote by $   \DT^{\mathrm{top}}_d(X, q)$ and $   \PT^{\mathrm{top}}_d(X, q)$ the generating series of \emph{topological Euler characteristics},  respectively for the Hilbert schemes and for the moduli spaces of stable pairs of a local curve $X$. We provide an explicit formula for  the former in terms of  \emph{hooklengths} (see \Cref{cor: top DT}) which, together with our previous result \cite[Cor~3.2]{Mon_double_nested}, yields the DT/PT correspondence for topological Euler characteristics.
\begin{theorem}[\Cref{thm: DT/PT}]\label{thm: DT PT top intro}
       Let $X=\Tot_C(L_1\oplus L_2)$ be a local curve, where $L_1, L_2$ are line bundles over a smooth projective curve. Then, the following correspondence holds
\begin{align*}
       \DT^{\mathrm{top}}_d(X, q)=  \DT^{\mathrm{top}}_0(X, q)\cdot   \PT^{\mathrm{top}}_d(X, q).
\end{align*}
\end{theorem}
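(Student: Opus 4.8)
The plan is to prove the topological identity as the Euler-characteristic shadow of the $K$-theoretic \Cref{thm: DT/PT INTRO}, by running the same localization-and-vertex argument with virtual invariants replaced by ordinary topological Euler characteristics. Throughout write $k_i=\deg L_i$.

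First I would pass to the $\TT$-fixed loci. Since $\chi$ is motivic (additive under stratification and multiplicative on products) and the fixed loci $\Hilb^n(X,\beta)^{\TT}$, $P_n(X,\beta)^{\TT}$ are proper (\Cref{thm: fixed}), one has $\chi(\Hilb^n(X,\beta))=\chi(\Hilb^n(X,\beta)^{\TT})$ and likewise for pairs. Their description---identical to the one underlying \Cref{thm: full DT Intro} and \Cref{thm: full PT Intro}, since it is insensitive to the perfect obstruction theory---exhibits them, for each Young diagram $\lambda$ with $|\lambda|=d$, as disjoint unions of skew-nested (resp.~reverse-plane-partition-nested) Hilbert schemes on $C$ fibred over symmetric products of $C$. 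Multiplicativity of $\chi$ along these strata then forces a universal factorization
\begin{align*}
\DT^{\mathrm{top}}_d(X,q)=\sum_{|\lambda|=d}\bigl(q^{|\lambda|}\mathsf{A}_\lambda(q)\bigr)^{1-g}\bigl(q^{-n(\lambda)}\mathsf{B}_\lambda(q)\bigr)^{k_1}\bigl(q^{-n(\overline{\lambda})}\mathsf{C}_\lambda(q)\bigr)^{k_2},
\end{align*}
and an analogous one for $\PT^{\mathrm{top}}_d$ with universal series $\mathsf{D}_\lambda,\mathsf{E}_\lambda,\mathsf{F}_\lambda$; here each universal series is the $q$-weighted count of the relevant plane-partition combinatorics, every fixed point contributing $1$ to $\chi$.

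Second I would make both factorizations explicit. On the DT side the local contribution counts skew plane partitions of shape $\BZ^2_{\geq 0}\setminus\lambda$, producing the hooklength formula of \Cref{cor: top DT}; in degree $0$ the fixed locus is supported on $X^{\TT}=C$, and the Cheah--Göttsche formula gives $\DT^{\mathrm{top}}_0(X,q)=\mathsf{M}(q)^{2-2g}$, so that $\mathsf{A}_\varnothing=\mathsf{M}(q)^2$ and $\mathsf{B}_\varnothing=\mathsf{C}_\varnothing=1$. For the PT side, whose local contribution counts reverse plane partitions of shape $\lambda$, I would import the explicit evaluation of \cite[Cor.~3.2]{Mon_double_nested}.

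Finally, after cancelling the common prefactors the identity $\DT^{\mathrm{top}}_d=\DT^{\mathrm{top}}_0\cdot\PT^{\mathrm{top}}_d$ collapses---term by term in $\lambda$ and exponent by exponent in $g,k_1,k_2$---to the matching of universal series $\mathsf{A}_\lambda=\mathsf{A}_\varnothing\cdot\mathsf{D}_\lambda$, $\mathsf{B}_\lambda=\mathsf{E}_\lambda$, $\mathsf{C}_\lambda=\mathsf{F}_\lambda$. This is precisely the \emph{topological} vertex DT/PT correspondence relating skew- and reverse-plane-partition generating functions, i.e.~the unrefined ($\chi$-)limit of the $K$-theoretic vertex correspondence $\widehat{\mathsf{V}}_\lambda=\widehat{\mathsf{V}}_\varnothing\cdot\widehat{\mathsf{V}}^{\PT}_\lambda$ of \cite{KLT_DTPT}. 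I expect the main obstacle to be the bookkeeping of the universal factorization---verifying that $\chi$ of the noncompact fixed loci splits cleanly into the $1-g$, $k_1$, $k_2$ exponents and that the degree-$0$ MacMahon factor is exactly the $\lambda=\varnothing$ correction---rather than the combinatorial identity itself, which is already available. As an independent check in the Calabi--Yau case one may instead specialize \Cref{thm: DT/PT INTRO} to $t_1t_2=1$ and invoke \Cref{cor: DT is top INTRO}, where the signs $(-1)^{dk_2}$ cancel on both sides to yield the claim.
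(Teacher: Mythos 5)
Your proposal is correct and follows essentially the same route as the paper: the paper proves this identity by directly comparing the explicit hook-length formula of \Cref{cor: top DT} (itself obtained by localising $\chi$ to the $\TT$-fixed skew nested Hilbert schemes and applying \Cref{motives of spp} together with Sagan's count of skew plane partitions) with the analogous reverse-plane-partition formula from \cite[Cor.~3.2]{Mon_double_nested}. The only cosmetic difference is that your intermediate ``universal factorisation'' in $(1-g,k_1,k_2)$ is unnecessary overhead for topological Euler characteristics, since the power-structure computation already yields the closed formulas and the degrees $k_i$ enter only through the $q$-shift $\mathbf{f}_{\lambda}(g,k_1,k_2)$.
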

We remark that even this \emph{numerical} DT/PT correspondence appears to be new in the literature, as  the classical wall-crossing techniques of Toda \cite{Toda_curve_counting_DT_PT} (see also \cite{ST_stoppa_thomas, DR_local_motivic, Bri_Hall_algebras_curve_counting} for analogous settings)  seem not to have been employed outside the Calabi-Yau regime.
\subsection{Strategy of the proof}
Our strategy to prove \Cref{thm: full DT Intro} avoids degenerating the base curve $C$, but relies solely on localisation methods. To achieve this, we introduce a novel moduli space of \emph{nested sheaves}.
\subsubsection{Skew nested Hilbert schemes}
Let $C$ be a smooth projective curve, $\lambda$ a Young diagram and $\bn=(n_\Box)_{\Box\in \BZ^{2}_{\geq 0}\setminus \lambda }$ a \emph{skew plane partition}, that is a labelling  of $\BZ^{2}_{\geq 0}\setminus \lambda $ non-increasing along rows and columns, with only finitely many non-zero entries. We introduce the \emph{skew nested Hilbert scheme of points} $C^{[\bn]}$ to be the moduli space parametrising flags of  zero-dimensional subschemes $(Z_\Box)_{\Box\in \BZ^{2}_{\geq 0}\setminus \lambda }\subset C$, \emph{nested} according to the opposite poset structure of $\BZ^2$ and  such that $\chi(\oO_{Z_\Box})=n_{\Box}$. In other words, a typical  point of $C^{[\bn]}$ parametrises flags of the form
\begin{equation*}
  \begin{tikzcd}
  &&&Z_{03}\arrow[r, phantom, "\supset"]\arrow[d, phantom, "\cup"]&\dots\\
    &&Z_{12}\arrow[d, phantom, "\cup"]\arrow[r, phantom, "\supset"]&Z_{13}\arrow[d, phantom, "\cup"]\arrow[r, phantom, "\supset"]&\dots\\
        Z_{20}\arrow[d, phantom, "\cup"]\arrow[r, phantom, "\supset"]&Z_{21}\arrow[d, phantom, "\cup"]\arrow[r, phantom, "\supset"]&Z_{22}\arrow[d, phantom, "\cup"]\arrow[r, phantom, "\supset"]&\dots &\\
          \dots&\dots&\dots& &
  \end{tikzcd}  
\end{equation*}
We show that the skew nested Hilbert schemes $C^{[\bn]}$ are reduced, local complete intersections (see \Cref{prop: dim of skew nested}) and provide a formula for their motives in $K_0(\Var_\BC)$ (see \Cref{motives of spp}).
\subsubsection{Localisation}
The connected components of the $\TT$-fixed locus $\Hilb^n(X, \beta)^\TT$  are skew nested Hilbert schemes of points $C^{[\bn]}$, for suitable skew plane partitions $\mathbf{n}$ and Young diagram $\lambda$. In fact, by pushing forward via $X\to C$, a $\TT$-fixed quotient $[\oO_X\onto \oO_Z]$ corresponds an equivariant decomposition $\bigoplus_{(i,j)\in \BZ^2}[\oO_C\onto F_{ij}]$ on $C$, where every $F_{ij}$ is  a rank  at most 1 sheaf on $C$. These data produce divisors $Z_{ij}\subset C$ satisfying the nesting conditions dictated  by $\bn$, in other words an element of $ C^{[\mathbf{n}]}$, see \Cref{thm: fixed}.

On each connected component, by Graber-Pandharipande localisation \cite{GP_virtual_localization} there is an induced virtual fundamental class $[C^{[\mathbf{n}]}]^{\vir}$, which we show to coincide with the ordinary fundamental class, see \Cref{thm: equality virtual classes}. The DT partition function is therefore reduced to the $\TT$-equivariant intersection numbers
\begin{align*}
\widehat{\chi}\left(C^{[\mathbf{n}]},\widehat{\mathfrak{e}}(-N_{C,L_1,L_2, \bn}^{\vir})  \right)\in \BQ(t_1^{1/2}, t_2^{1/2}),
\end{align*}
where $ N_{C,L_1,L_2, \bn}$ is the virtual normal bundle and $\widehat{\chi}(\cdot)$ is defined in \eqref{eqn: chi hat}. The generating series of the latter is controlled by three universal series (\Cref{thm: universal series})
\begin{align}\label{eqn: Universal INTRO}
   \sum_{\mathbf{n}}q^{|\mathbf{n}|} \widehat{\chi}\left(C^{[\mathbf{n}]},\widehat{\mathfrak{e}}(-N_{C,L_1,L_2, \bn}^{\vir})  \right)= \widehat{A}_{\lambda}(q)^{1-g}\cdot \widehat{B}_{\lambda}(q)^{\deg L_1}\cdot \widehat{C}_{\lambda}(q)^{\deg L_2},
\end{align}
which we show by adapting the  strategy of \cite{EGL_cobordism} to the techniques developed in  our previous work \cite{Mon_double_nested}.
\subsubsection{Toric computations}
By the universal structure \eqref{eqn: Universal INTRO} the universal series are determined by their constant terms, and the evaluation of the generating series for the triples 
\begin{align*}
    (\BP^1, \oO, \oO), \quad 
     (\BP^1, \oO, \oO(-2)), \quad 
      (\BP^1, \oO(-2), \oO).
\end{align*}
The constant terms satisfy as well a suitable universal structure (see \Cref{cor: virt norm operators}), which reduces their computation to the combinatorics of the Hilbert scheme of points $\Hilb^n(\BA^2)$, see \Cref{prop: virtual normal const as univer}. 

Once suitably normalising \eqref{eqn: Universal INTRO} by their constant terms (see \Cref{eqn: lemma reduced}), specialising to the genus 0 case $C=\BP^1$ allows a further $\BC^*$-localisation on $(\BP^1)^{[\bn]}$. The $\BC^*$-fixed locus of the latter consists of pairs of skew plane partitions, which are in bijective correspondence with pairs of \emph{plane partitions} with one infinite leg. After a smart choice of a $\BC^*$-equivariant lift of the $\TT$-equivariant virtual normal bundle $N^{\vir}$, each $\TT\times \BC^*$-localised contribution to the left-hand-side of \eqref{eqn: Universal INTRO} takes the form of the ($K$-theoretic version of the) vertex terms of \cite{MNOP_1}. By summing over all skew plane partitions of fixed shape, it follows then that 
\begin{align*}
     \sum_{\mathbf{n}}q^{|\mathbf{n}|} \chi\left((\BP^1)^{[\mathbf{n}]},\widehat{\mathfrak{e}}(-N_{\BP^1,L_1,L_2, \bn}^{\mathrm{red}})  \right)=\left.\left(\widehat{\mathsf{V}}_\lambda(q)\cdot \widehat{\mathsf{V}}_\lambda(q)|_{t_1=t_1t_3^{-\deg L_1}, t_2=t_2t_3^{-\deg L_2}, t_3=t_3^{-1}}\right)\right|_{t_3=1},
\end{align*}
see \Cref{cor: vertex top}.
\subsection{Equivariant cohomology}
Ordinary DT theory deals with the study of  partition functions of the form
\begin{align}\label{eqn: INTRO cohom}
    \DT_d(X, q)=\sum_{n\in \BZ}q^n\cdot  \int_{[  \Hilb^n(X, \beta)]^{\vir}}1\in \BQ(s_1,s_2)(\!( q )\!),
\end{align}
where the integration takes values in equivariant cohomology, see e.g.~\cite{OP_local_theory_curves}. We confirm that our results are \emph{ipso facto} refined, meaning that  \eqref{eqn: INTRO cohom} is a limit of \eqref{eqn: DT intro def}.
\begin{prop}[\Cref{prop: limit equiv}]\label{prop: limit equiv INTRO}
  Let $X=\Tot_C(L_1\oplus L_2)$ be a local curve, where $L_1, L_2$ are line bundles over a smooth projective curve $C$ of genus $g$. We have
    \[\lim_{b\to 0}b^{d(2-2g+\deg L_1+\deg L_2)}\cdot \widehat{\DT}_d(X, q)|_{t_1=e^{bs_1}, t_2= e^{bs_2}}=\DT_d(X, q).\]
\end{prop}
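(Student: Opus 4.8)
The plan is to prove the statement \emph{term by term over the components of the torus-fixed locus}, reducing it to the elementary fact that the symmetrised operator $[e^{bs}]=e^{bs/2}-e^{-bs/2}=2\sinh(bs/2)=bs+O(b^3)$ degenerates to a cohomological Euler class as $b\to0$. First I would record that both partition functions are computed by virtual localisation on the \emph{same} proper fixed locus. By \Cref{thm: fixed} the components of $\Hilb^n(X,\beta)^\TT$ are the skew nested Hilbert schemes $C^{[\mathbf{n}]}$, which are lci (\Cref{prop: dim of skew nested}) and on which the induced virtual class is the ordinary fundamental class (\Cref{thm: equality virtual classes}). Thus the $K$-theoretic localisation formula of \cite{FG_riemann_roch, Qu_virtual_pullback} gives
\[\widehat{\DT}_d(X,q) = \sum_n q^n \sum_{\mathbf{n}} \chi\!\left(C^{[\mathbf{n}]},\, \widehat{\mathfrak{e}}(-N^{\vir}_{C,L_1,L_2,\mathbf{n}})\right),\]
while the cohomological virtual localisation of \cite{GP_virtual_localization} gives
\[\DT_d(X,q) = \sum_n q^n \sum_{\mathbf{n}} \int_{C^{[\mathbf{n}]}} \frac{1}{e(N^{\vir}_{C,L_1,L_2,\mathbf{n}})},\]
the two sums running over the same index set. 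Since each coefficient of $q^n$ is a finite sum, it suffices to prove, for every component, the limit
\[\lim_{b\to 0} b^{V}\,\chi\!\left(C^{[\mathbf{n}]}, \widehat{\mathfrak{e}}(-N^{\vir})\right)\Big|_{t_i = e^{bs_i}} = \int_{C^{[\mathbf{n}]}} \frac{1}{e(N^{\vir})}, \qquad V = d(2-2g+\deg L_1+\deg L_2).\]

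Next I would carry out the comparison on a single component via equivariant Hirzebruch--Riemann--Roch. Because $\TT$ acts trivially on $C^{[\mathbf{n}]}$, one has $\chi(C^{[\mathbf{n}]}, E) = \int_{C^{[\mathbf{n}]}} \ch(E)\,\td(T_{C^{[\mathbf{n}]}})$ with the \emph{ordinary} Todd class, all equivariant dependence sitting in $\ch(E)$. Writing the $\TT$-moving part as $N^{\vir}=N^+-N^-$ with Chern roots $x^\pm$ and nonzero weights $w^\pm$, the substitution $t_i=e^{bs_i}$ turns each symmetrised factor into $2\sinh(\xi/2)$ with $\xi = x+b\,(w\cdot s)$. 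Using $2\sinh(\xi/2)=\xi\cdot\mathrm{sinhc}(\xi/2)$, where $\mathrm{sinhc}(u)=\sinh(u)/u=1+O(u^2)$, this factors as
\[\ch\!\left(\widehat{\mathfrak{e}}(-N^{\vir})\right)\Big|_{t_i=e^{bs_i}} = \frac{1}{e(N^{\vir})}\Big|_{s\mapsto bs}\cdot\bigl(1 + O(\xi^2)\bigr),\]
the first factor being the genuine cohomological inverse Euler class with $s$ rescaled to $bs$.

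The key homogeneity input is that $\rk N^{\vir} + \dim C^{[\mathbf{n}]} = \vdim \Hilb^n(X,\beta) = V$ for \emph{every} component, since the fixed part of the obstruction theory has virtual dimension equal to $\dim C^{[\mathbf{n}]}$ by \Cref{thm: equality virtual classes}. Decomposing $\tfrac{1}{e(N^{\vir})}=\sum_k E_k$ into its cohomological-degree-$2k$ pieces, each $E_k$ is $s$-homogeneous of degree $-\rk N^{\vir}-k$, so $E_k|_{s\mapsto bs}=b^{-\rk N^{\vir}-k}E_k$; integrating over $C^{[\mathbf{n}]}$ selects $k=\dim C^{[\mathbf{n}]}$ and produces exactly $b^{-V}\int_{C^{[\mathbf{n}]}}\tfrac{1}{e(N^{\vir})}$. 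The corrections coming from $\mathrm{sinhc}$, from the half-canonical twist $K^{1/2}_{\vir}$ encoded in the symmetrisation $\widehat{\mathfrak{e}}$, and from the Todd class all carry \emph{non-negative} powers of $b$ while raising cohomological degree, hence force a strictly smaller $k$ and contribute only at orders $b^{-V+1}$ and higher. Taking the leading coefficient gives the per-component identity, and summing yields the proposition.

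The main obstacle I expect is precisely the bookkeeping of the symmetrisation: one must verify that the square-root twist $K^{1/2}_{\vir}$ built into $\widehat{\oO}^{\vir}$ and into $\widehat{\mathfrak{e}}$ does not shift the leading power of $b$ --- that is, that $\ch(K^{1/2}_{\vir})|_{t_i=e^{bs_i}} = 1 + O(b) + (\text{cohomological degree}\ge 2)$ contributes only subleading terms --- and likewise for the Todd factor from Riemann--Roch. Once the $\sinh/\mathrm{sinhc}$ factorisation and the uniform identity $\rk N^{\vir} + \dim C^{[\mathbf{n}]} = V$ are established, the remainder is the degree count above. Alternatively, one could attempt to take the limit directly in the closed formula of \Cref{thm: full DT Intro}, but this route requires the cohomological degeneration of the $K$-theoretic vertex $\widehat{\mathsf{V}}_\lambda$ and is considerably less transparent than the localisation argument.
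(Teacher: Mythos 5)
Your argument is correct, but it is not the paper's proof --- it is precisely the alternative route the paper mentions without carrying out in the remark following its proof, namely the direct argument from the definitions via equivariant virtual Riemann--Roch as in \cite[Thm.~6.4]{CKM_crepant}. The paper instead proves the proposition by the route you dismiss at the end as less transparent: by \Cref{thm: DT_ explicit univ series} every coefficient of $q^n$ in $\widehat{\DT}_d(X,q)$ is literally a ratio $\left.\left(\prod_{\mu}[t^{\mu}]/\prod_{\nu}[t^{\nu}]\right)\right|_{t_3=1}$ with $\#\mu-\#\nu=-d(2-2g+\deg L_1+\deg L_2)$, the corresponding coefficient of $\DT_d(X,q)$ is the same ratio with $[\cdot]$ replaced by $e(\cdot)$, and the elementary expansion $[t^{\mu}]|_{t_i=e^{bs_i}}=b\cdot e(t^{\mu})+o(b^2)$ together with this weight count gives the limit in two lines. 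What your approach buys is independence from the explicit computation: it uses only the structural inputs (\Cref{thm: fixed}, \Cref{thm: equality virtual classes}, and the rank identity $\rk N^{\vir}+\dim C^{[\bn]}=\vdim\Hilb^n(X,\beta)$), so it would survive in settings where no closed vertex formula is available, and it explains conceptually \emph{why} the normalising exponent is the virtual dimension. What the paper's route buys is brevity, plus the observation that the same degeneration $[\cdot]\to b\cdot e(\cdot)$ identifies each cohomological universal series as a limit of its $K$-theoretic counterpart, not just the assembled partition functions.

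Two small points to tighten. First, your final bookkeeping sentence is slightly misstated: the $(b\,w\cdot s)^2$-type terms in the $\mathrm{sinhc}$ expansion do \emph{not} raise cohomological degree --- they carry extra powers of $b$ instead. The correct uniform statement is that every correction monomial (from $\mathrm{sinhc}$, the Todd class, and the fixed part of $K^{1/2}_{\vir}$) has the sum of its cohomological degree and its $b$-order at least $1$, which is what forces the $b^{-V+1}$ bound. Second, your identity $\rk N^{\vir}+\dim C^{[\bn]}=d(2-2g+\deg L_1+\deg L_2)$ has the opposite sign to the formula for $\vdim\Hilb^n(X,\beta)$ printed after \Cref{prop: limit equiv INTRO}; your sign is the one consistent with $\vdim=\int_{\beta}c_1(T_X)$ and with the $s$-degree $-V$ of the universal series, so you should verify it directly (as the proof of \Cref{thm: equality virtual classes} allows) rather than cite that remark.
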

Notice that the degree of the normalisation in \Cref{prop: limit equiv INTRO} relates to the virtual dimension of $\Hilb^n(X, \beta)$ by
\[
\mathrm{vdim} \Hilb^n(X, \beta)=d(2-2g+\deg L_1+\deg L_2).
\]
\subsection{Refined GW/PT}
In the very recent seminal paper \cite{BS_refinedGW}, Brini-Schuler introduced  \emph{refined} Gromov-Witten theory as follows. Let $X$ be a smooth quasi-projective Calabi-Yau threefold, with a torus $\overline{\BT}$ action having proper fixed locus, and scaling non-trivially the canonical bundle $\omega_X$. Set $Z=\Tot_X(\oO\oplus \oO)\to X$ to be a Calabi-Yau fivefold, endowed with an extra $\BC_q^*$-action  scaling the fibers, so that the $\BT=\overline{\BT}\times \BC_q^*$ preserves the volume form of $Z$. For a curve class $\beta\in H_2(X, \BZ)$,  Brini-Schuler define the \emph{refined} Gromov-Witten partition function of $X$ to be
\begin{align}\label{eqn. refined GW}
\GW_\beta(Z,\BT)=\sum_{g\geq 0}\int_{[\overline{ M}_g(Z, \beta)]^{\vir}}1,
\end{align}
where $\overline{ M}_g(Z, \beta)$ is the moduli space of disconnected stable maps to $Z$, and the integration is defined by $\BT$-equivariant residues \cite{GP_virtual_localization}. The key feature of \eqref{eqn. refined GW} is that, thanks to Mumford's relations, a suitable specialisation of the $\BT$-equivariant parameters recovers the ordinary Gromov-Witten theory of $X$. Leveraging on  the fivefold perspective in GW theory, Brini-Schuler proposed a natural refinement of the  \emph{Gopakumar-Vafa invariants} of $X$ \cite{GV_I, GV_II} in line with the physics expectations of \cite{CKK_refined_BPS}, as well as a \emph{refined} GW/PT correspondence \cite[Conj.~1.3]{BS_refinedGW}
\begin{align}\label{eqn: ref GW DT}
    \GW_\beta(Z,\BT)\stackrel{?}{=}\ch_{\BT}\, \widehat{\PT}_\beta(X,-q), 
\end{align}
 where  $\widehat{\PT}_\beta(X,q)$ is the $K$-theoretic ($\overline{\BT}$-equivariant) PT partition function of $X$ in class $\beta$ in the  variable $q$ seen as a torus coordinate of $\BT$,  and $\ch_{\BT}$ is the $\BT$-equivariant Chern character. Remarkably, the correspondence \eqref{eqn: ref GW DT} encodes  the variable keeping track of the holomorphic Euler characteristic in PT theory in the $\BT$-equivariance, offering a more conceptual understanding of the analytic change of variable of \cite{MNOP_1},  more symmetrically from the fivefold point of view. Following the circle of ideas of Pardon \cite{Pardon_GWDT}, we believe that a possible path to solve \eqref{eqn: ref GW DT} would be to prove the conjectural relation in the local curves case. In this case, our main results offer a solution for the PT side, and allow a tangible prediction for the GW side.

\subsection{TQFT} In \cite{BP_local_GW_curves} (resp.~\cite{OP_local_theory_curves}) the authors followed a TQFT approach to compute partition functions arising in ordinary Gromov-Witten (resp.~Donaldson-Thomas) theory of local curves. Roughly speaking, this amounts to first apply the \emph{degeneration formulas} to reduce the absolute invariants of higher genera local curves to \emph{relative} invariants of local $\BP^1$, and then to compute the "building blocks" of the TQFT via $\BC^*$-localisation. This strategy appears widely in the literature, see e.g.~\cite{BP_local_GW_curves, OP_local_theory_curves,  GI_klein, Oko_Takagi,  AKMV_topvert, IKV_topological_vertex, AS_black_holes} for some examples. A drawback of this approach is that it forces to compute relative invariants to determine the partition functions of \emph{absolute} invariants, involving cumbersome degeneration formulas.

This present work completes the program initiated in \cite{Mon_double_nested}, effectively computing the partition functions of \emph{refined} Donaldson-Thomas invariants of local curves without relying on  degeneration techniques and relative invariants. Our key insight is to obtain the universal expressions in \Cref{thm: full DT Intro} by analysing the \emph{geometry} of the fixed loci of the relevant Hilbert schemes, rather than the \emph{enumerative} invariants attached to them, thus revealing at once the universal structures underlying both the unrefined and the refined Donaldson-Thomas theory.

\subsection{Future directions} We conclude by proposing further directions that can be approached with the novel techniques developed in the present paper.
\subsubsection{Rationality}
As a consequence of the conjectural refined GW/PT correspondence \eqref{eqn: ref GW DT}, the PT partition function is expected to be a \emph{rational} function, see \cite[Sec.~1.4.2]{BS_refinedGW}. For local curves, we establish rationality in degree one (\Cref{cor: intro DT 1}), in the refined limit (\Cref{thm: refined DT full Aga INTRO}), and under the antidiagonal specialization of the equivariant parameters (\Cref{thm: DT anti diagonal INTRO}). We believe that rationality in the general case is plausibly within reach, by adapting the approach of \cite{PP_descendants_local_curves}, who treated the unrefined case by reducing to the rationality of the \emph{capped} 1-leg vertex, see also \cite{KOO_2_legDT}.

\subsubsection{Insertions} Motivated by the M2-brane index perspective \cite{NO_membranes_and_sheaves}, we addressed $K$-theoretic DT theory without further insertions. It would be interesting to adapt  the work of Schimpf \cite{Sch_double} to our setting,  who addressed the case of \emph{descendent} insertions for the unrefined PT theory of local curves, whose upshot is that \emph{the structure of stable pair invariants
is induced by the structure of the Bethe roots}, see \cite[pag.~8]{Sch_double}.

\subsubsection{Local orbicurves}
Chuang-Diaconescu-Donagi-Pantev \cite{CDDP_parabolic} gave a string theoretic derivation of the conjecture of Hausel-Letellier-Rodriguez--Villegas on the cohomology of character varieties with marked points \cite{HLRV_character} (see also \cite{Mell_char_var_MD, Mell_Higgs_no_punc}). Their formula is identified with a \emph{refined BPS expansion} in the PT theory of
a \emph{local root stack}, an orbifold version of our local curves. It would be intriguing to approach \cite[Conj.~(1.5)]{CDDP_parabolic} by adapting our techniques to the orbifold setting. In fact, the seemingly mysterious change of variables proposed in \cite[Conj.~(1.5)]{CDDP_parabolic} aligns precisely with the introduction of the variables $t_4, t_5$ as in \eqref{eqn: new variables}.

\subsubsection{Higher rank theory}
Nekrasov-Okounkov \cite[Sec.~5.4]{NO_membranes_and_sheaves} proposed a \emph{pragmatic} approach to \emph{higher rank} refined PT theory of a Calabi-Yau threefold $X$ via virtual localisation, and conjecturally related it to the M2-brane index of an  $\CA_{r-1}$-surface fibration over $X$. It would be interesting to recast their higher rank theory with our techniques in the case $X\to C$ is a local curve, where the skew nested Hilbert schemes $C^{[\bn]}$ would be suitably replaced by some moduli spaces of skew  nested higher rank sheaves on $C$.

\subsection*{Acknowledgments}
 We are grateful to Jim Bryan,  Martijn Kool, Danilo Lewański,  Henry Liu, Miguel Moreira, Georg Oberdieck, Tudor Pădurariu, Andrea Ricolfi, Andrea Sangiovanni  and Sebastian Schlegel Mejia for helpful discussions. We especially thank Yannik Schuler for many insightful conversations on the M-theoretic interpretation of refined Donaldson–Thomas theory, for pointing out the prediction for the topological string partition function in \cite{AS_black_holes} and for gently sharing a fundamental note with me \cite{Schuler-private}. We are grateful to the anonymous referee for useful comments. S.M. was supported by the FNS Project 200021-196960 ``Arithmetic aspects of moduli spaces on
curves'', and by  the HORIZON-MSCA-2024-PF-01 Project 
101203281 ``Moduli Spaces of Sheaves: Geometry and Invariants'', funded by the Research and Innovation framework programme Horizon Europe {\normalsize\euflag}.  S.M. is a  member of GNSAGA of INDAM, Italy.
\section{Skew nested Hilbert schemes}
\subsection{Young diagrams}\label{sec: notation partitions}
By definition, a \emph{partition} $\lambda$ of $d\in \BZ_{\geq 0}$ is a finite sequence of positive integers 
\[
\lambda=(\lambda_0\geq \lambda_1\geq \lambda_2\geq \dots),
\]
where 
\[
|\lambda|=\sum_{i}\lambda_i=d.
\]
%The number of parts of $\lambda$ is called the \emph{length} of $\lambda$ and is denoted by $l(\lambda)$.
A partition $\lambda$ can be equivalently pictorially described by its associated \emph{Young diagram}, which is the collection of $d$ boxes  in $\BZ^2$ located at $(i,j)$ where $0\leq j< \lambda_{i}$.\footnote{In our conventions, $(i,j)$  labels the corner of the box closest to the origin. In displaying Young diagrams, we denote by $i$ the vertical (negative) axis and by $j$ the horizontal (positive) axis.} The \emph{conjugate partition} $\overline{\lambda}$ is obtained by reflecting the Young diagram of $\lambda$ about the $i=j$ line.

\begin{definition}\label{def: spp}
Let $\lambda$ be a Young diagram.
\begin{itemize}
    \item A \emph{skew plane partition} of shape $\BZ_{\geq 0}^2\setminus \lambda$ is a collection of  nonnegative integers $\bn=(n_{\Box})_{\Box\in \BZ_{\geq 0}^2\setminus\lambda}$ non-increasing along rows and columns, with only finitely many non-zero entries. In other words, we have  $n_{\Box}\leq n_{\Box'} $ whenever $\Box\geq \Box'$\footnote{This means that if $\Box=(i,j), \Box'=(i',j')$, then $i\geq i'$ and $j\geq j'$.}.
    \item A \emph{reversed plane partition} of shape $\lambda$  is a collection of  nonnegative integers $\mathbf{m}=(m_{\Box})_{\Box\in \lambda}$ non-decreasing along rows and columns. In other words, we have  $m_{\Box}\leq m_{\Box'} $ if $\Box\leq \Box'$.
\end{itemize}
\end{definition}
%We can visualise both skew plane partitions and reverse plane partitions as \emph{labelled} Young diagrams.
%\begin{figure}[!h]
%    \centering
%   \yng(3,2,2,1) \quad \young(013,12,22,7)
%    \caption{On the left, a Young diagram of size 8. On the right, a reversed plane partition of size 18.}
%    \label{fig:my_label}
%\end{figure}
The \emph{size} of a skew (resp.~reversed) plane partition $\bn$ (resp. $\Bm$) is
\begin{align*}
    |\mathbf{n}|=\sum_{\Box\in \BZ^2_{\geq 0}\setminus\lambda}n_{\Box},\\
       |\mathbf{m}|=\sum_{\Box\in \lambda}m_{\Box}.
\end{align*}
\begin{figure}[!h]
    \centering
   \yng(3,2,2,1) \quad \young(015,12,13,5) \quad \young(:::311,::511,::4,:32,1)
    \caption{Respectively from the left, a  Young diagram of size 8, a reversed plane partition of size 18 and a skew plane partition of size 22.}
    \label{fig:my_label}
\end{figure}

\subsubsection{Socle} 
Let $\lambda$ be a Young diagram. Recall from \cite[Def.~2.16]{GLMRS_double-nested-1} that the \emph{socle} of $\lambda$ is the set of maximal elements of $\lambda$ with respect to the partial order of $\lambda$, equivalently, 
\[
\Soc(\lambda) =\Set{ (i,j) \in \lambda \, | \, (i+1, j), (i,j+1) \notin \lambda}\subset\lambda,
\]
and the \emph{subsocle} of $\lambda$ is 
\[
\Subsoc(\lambda)
=\Set{ (i,j) \in \lambda \, | \,
(i+1, j), (i,j+1) \in \lambda, (i+1,j+1) \notin \lambda}\subset\lambda.
\]
Let $\Bm$ be a reverse plane partition of shape $\lambda$. Following \cite[Prop.~2.17]{GLMRS_double-nested-1}, we define the quantity
\begin{align*}
    \omega(\Bm)=\sum_{\Box\in \Soc(\lambda)}m_{\Box}-\sum_{\Box\in \Subsoc(\lambda)}m_{\Box}.
\end{align*}
\begin{definition}
    Let $\lambda$ be a Young diagram. We define the \emph{cosocle} of $\lambda$ to be the set of minimal elements of $\BZ^2_{\geq 0}\setminus\lambda$ with respect to the partial order, equivalently, 
    \[
\CoSoc(\lambda) =\Set{ (i,j) \in \BZ^2_{\geq 0}\setminus\lambda \, | \, (i-1, j), (i,j-1) \notin \BZ^2_{\geq 0}\setminus\lambda}\subset\BZ^2_{\geq 0}\setminus\lambda,
\]
and the \emph{cosubsocle} of $\lambda$ to be
\[
\CoSubsoc(\lambda)
=\Set{ (i,j) \in  \BZ^2_{\geq 0}\setminus\lambda \, | \,
(i-1, j), (i,j-1) \in \BZ^2_{\geq 0}\setminus\lambda, (i-1,j-1) \notin \BZ^2_{\geq 0}\setminus\lambda}\subset\BZ^2_{\geq 0}\setminus\lambda.
\]
    \end{definition}
Let $\bn$ be a skew plane partition of shape $\BZ^2_{\geq 0}\setminus\lambda$. We define the quantity
\begin{align*}
      \omega(\bn)=\sum_{\Box\in \CoSoc(\lambda)}n_{\Box}-\sum_{\Box\in \CoSubsoc(\lambda)}n_{\Box}.
\end{align*}
\begin{figure}[!h]
    \centering
   \young(\,\times\bullet,\,\,,\times\bullet,\bullet)  \quad \young(:::\bullet\,\,,::\bullet\times\,,::\,,:\bullet\times,\bullet\times)
    \caption{We denote by "$\bullet$" the (co)socle  and by "$\times$" the (co)subsocle of the Young diagram in \Cref{fig:my_label}.}
    \label{fig:socle}
\end{figure}
\subsubsection{Hooklength} Let $\lambda$ be a Young diagram and $\Box=(i,j)\in \lambda$. We define the \emph{arm length} $a(\Box)$, the   \emph{leg length} $\ell(\Box)$ and the \emph{hooklength} $h(\Box)$ by
\begin{align*}
    a(\Box)&=|\set{(i,j')\in \lambda| j<j'}|,\\
    \ell(\Box)&=|\set{(i',j)\in \lambda|i<i'}|,\\
    h(\Box)&=a(\Box)+\ell(\Box)+1.
\end{align*}
The \emph{squared norm} of $\lambda$ is defined as 
\begin{align*}
\left \lVert \lambda \right \rVert^2 = \sum_{i\geq 0} \lambda_i^2,
\end{align*}
and set the quantity
\[
n(\lambda)=\sum_{i\geq 0}i\cdot\lambda_{i}.
\]
We will use the following standard identities throughout the paper
\begin{align}\label{eqn: comb ident}
\begin{split}
      %    \sum_{\Box \in \lambda}h(\Box)&=n(\lambda)+n(\overline{\lambda})+|\lambda|, \\
         n(\lambda)&=\sum_{\Box \in \lambda} \ell(\Box),\\
    n(\overline{\lambda})&=\sum_{\Box \in \lambda} a(\Box),\\
    \frac{\left \lVert \lambda \right \rVert^2}{2}&=n(\overline{\lambda})+\frac{|\lambda|}{2}.
    \end{split}
\end{align}
\begin{figure}[!h]
    \centering
   \young(\,\,\,\,\,\,\,\,,\,\Box\bullet\bullet,\,\bullet\,,\,\bullet,\,\bullet)
    \caption{The number of "$\bullet$" on the right (resp.~below) of $\Box$  is the arm (resp.~leg) length of $\Box$.}
    \label{fig:arm leg}
\end{figure}
\subsection{Skew nested Hilbert schemes}
Let $X$ be a quasi-projective scheme, $\lambda$ a Young diagram and $\Bm=(m_\Box)_{\Box\in \lambda}$ a reverse plane partition of shape $\lambda$. Recall that we defined in \cite[Def.~2.2]{Mon_double_nested} the \emph{ double nested Hilbert functor of points}
\[
 \underline{\Hilb}^{\Bm}(X): \Sch^{\op}\to \Sets,
\]
\[  T\mapsto \left\{ \begin{tabular}{l|l}$
(\CZ_{\Box})_{\Box\in \lambda}\subset X\times T$ & $\CZ_{\Box}\mbox{ a } T\mbox{-flat closed subscheme with }  \CZ_{\Box}|_t \mbox{ 0-dimensional for all } t\in T,$\\ 
    &  $\chi(\oO_{\CZ_{\Box}|_t})=m_\Box, \mbox{ such that } \CZ_{\Box} \subset \CZ_{\Box'} \mbox{ for } \Box\leq \Box'.$
    \end{tabular}
    \right\}, \]
which is representable by  the \emph{double nested Hilbert scheme}  $  X^{[\Bm]}$, cf.~\cite[Prop.~2.4]{Mon_double_nested}. Moreover, if $X=C$ is a smooth quasi-projective curve, the double nested Hilbert scheme $C^{[\Bm]}$ is a reduced local complete intersection of pure dimension $\omega(\Bm)$, cf.~\cite[Thm.~4.2]{GLMRS_double-nested-1}.  The closed points of $X^{[\Bm]}$ parametrise flags of the form
\begin{equation*}
  \begin{tikzcd}
    Z_{00}\arrow[r, phantom, "\subset"]\arrow[d, phantom, "\cap"] &Z_{01}\arrow[d, phantom, "\cap"]\arrow[r, phantom, "\subset"]&Z_{02}\arrow[d, phantom, "\cap"]\arrow[r, phantom, "\subset"]\arrow[d, phantom, "\cap"]&Z_{03}\arrow[r, phantom, "\subset"]\arrow[d, phantom, "\cap"]&\dots\\
      Z_{10}\arrow[r, phantom, "\subset"]\arrow[d, phantom, "\cap"]&Z_{11}\arrow[d, phantom, "\cap"]\arrow[r, phantom, "\subset"]&Z_{12}\arrow[d, phantom, "\cap"]\arrow[r, phantom, "\subset"]&Z_{13}\arrow[d, phantom, "\cap"]\arrow[r, phantom, "\subset"]&\dots\\
        Z_{20}\arrow[d, phantom, "\cap"]\arrow[r, phantom, "\subset"]&Z_{21}\arrow[d, phantom, "\cap"]\arrow[r, phantom, "\subset"]&Z_{22}\arrow[d, phantom, "\cap"]\arrow[r, phantom, "\subset"]&\dots &\\
          \dots&\dots&\dots& &
  \end{tikzcd}  
\end{equation*}
where each $Z_{ij}\subset X$ is a zero-dimensional closed subscheme. \\

Similarly, let  $\bn$ be a skew plane partition of shape $\BZ^2_{\geq 0}\setminus \lambda$. We define the \emph{skew nested Hilbert functor of points} $ \underline{\Hilb}^{\bn}(X)$ by 
\[
 \underline{\Hilb}^{\bn}(X): \Sch^{\op}\to \Sets,
\]
\[  T\mapsto \left\{ \begin{tabular}{l|l}$
(\CZ_{\Box})_{\Box\in \BZ^2_{\geq 0}\setminus\lambda}\subset X\times T$ & $\CZ_{\Box}\mbox{ a } T\mbox{-flat closed subscheme with }  \CZ_{\Box}|_t \mbox{ 0-dimensional for all } t\in T,$\\ 
    &  $\chi(\oO_{Z_{\Box}})=n_\Box, \mbox{ such that } \CZ_{\Box} \subset \CZ_{\Box'} \mbox{ for } \Box\geq \Box'.$
    \end{tabular}
    \right\}. \]
A typical closed point of $\underline{\Hilb}^{\bn}(X)$ parametrises flags of the form
\begin{equation*}
  \begin{tikzcd}
  &&&Z_{03}\arrow[r, phantom, "\supset"]\arrow[d, phantom, "\cup"]&\dots\\
    &&Z_{12}\arrow[d, phantom, "\cup"]\arrow[r, phantom, "\supset"]&Z_{13}\arrow[d, phantom, "\cup"]\arrow[r, phantom, "\supset"]&\dots\\
        Z_{20}\arrow[d, phantom, "\cup"]\arrow[r, phantom, "\supset"]&Z_{21}\arrow[d, phantom, "\cup"]\arrow[r, phantom, "\supset"]&Z_{22}\arrow[d, phantom, "\cup"]\arrow[r, phantom, "\supset"]&\dots &\\
          \dots&\dots&\dots& &
  \end{tikzcd}  
\end{equation*}
where each $Z_{ij}\subset X$ is a zero-dimensional closed subscheme.
\begin{prop}\label{prop: dim of skew nested}
Let $X$ be a  quasi-projective scheme, $\lambda$ a Young diagram and  $\bn$ a skew plane partition of shape $\BZ^2_{\geq 0}\setminus \lambda$. Then the moduli functor $ \underline{\Hilb}^{\bn}(X)$ is representable by a quasi-projective scheme, which we denote by $X^{[\bn]}$. 

Moreover, if  $X=C$ is a smooth quasi-projective curve, then the skew nested Hilbert scheme $C^{[\bn]} $ is a reduced  local complete intersection of pure dimension $\omega(\bn)$.
\end{prop}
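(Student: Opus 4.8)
The plan is to prove the two assertions in turn, adapting the arguments for double nested Hilbert schemes in \cite[Prop.~2.4]{Mon_double_nested} and \cite[Thm.~4.2]{GLMRS_double-nested-1}, with the poset order on the index set reversed throughout. For \emph{representability}, note that since $\bn$ has only finitely many nonzero entries, all but finitely many of the subschemes $\CZ_\Box$ are forced to be empty, so a $T$-point of $\underline{\Hilb}^\bn(X)$ is a finite nested family. I would embed $\underline{\Hilb}^\bn(X)$ into the finite product $\prod_{\Box\colon n_\Box>0}\Hilb^{n_\Box}(X)$, which is quasi-projective because $X$ is. The nesting requirements $\CZ_\Box\subset\CZ_{\Box'}$ for the covering relations (i.e.\ $\Box=\Box'+e_1$ or $\Box=\Box'+e_2$, where $e_1,e_2$ are the coordinate vectors of $\BZ^2$) are closed incidence conditions, since the locus where one universal subscheme is contained in another is closed in a product of Hilbert schemes. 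Thus $X^{[\bn]}$ is a closed subscheme of a quasi-projective scheme, hence quasi-projective; this is the verbatim analogue of the double nested construction.

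\emph{Local complete intersection and dimension.} Now let $X=C$ be a smooth curve, so that $\Hilb^{n}(C)=C^{(n)}$ is smooth of dimension $n$ and every zero-dimensional subscheme is an effective divisor, with $Z_\Box\subset Z_{\Box'}$ equivalent to the inequality $D_\Box\leq D_{\Box'}$ of divisors. I would realise $C^{[\bn]}$ as the incidence locus inside the smooth ambient space $\prod_{\Box}C^{(n_\Box)}$ cut out by the covering-relation inequalities. Because the support map to the ambient symmetric products decouples configurations whose supports are disjoint, the analysis reduces to the \'etale-local (equivalently formal) model in which all the subschemes are supported at a single point of $C$; there the flag is encoded by the partial-order--constrained vanishing orders, exactly as in the double nested case but for the opposite order on $\BZ^2_{\geq 0}\setminus\lambda$. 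The goal is to show that the incidence equations form a regular sequence of the expected codimension, so that $C^{[\bn]}$ is a local complete intersection, the codimension count then producing pure dimension $\omega(\bn)$, with the cosocle $\CoSoc(\lambda)$ contributing the free parameters $\sum n_\Box$ and the cosubsocle $\CoSubsoc(\lambda)$ the relations $-\sum n_\Box$.

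\emph{The main obstacle.} I expect the crux to be precisely this regular-sequence verification at the \emph{branching} boxes, namely those $\Box$ both of whose predecessors $\Box-e_1$ and $\Box-e_2$ lie in $\BZ^2_{\geq 0}\setminus\lambda$. At such a box one must check that the two conditions $Z_\Box\subset Z_{\Box-e_1}$ and $Z_\Box\subset Z_{\Box-e_2}$ cut out the expected codimension relative to the condition already imposed at $\Box-e_1-e_2$; it is exactly the failure of naive independence here that is measured by the cosubsocle correction in $\omega(\bn)$. Verifying that the chosen equations define the scheme with no excess components and in the correct codimension is the technical heart of the argument.

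\emph{Reducedness.} Finally, since $C^{[\bn]}$ is a local complete intersection it is Cohen--Macaulay, hence satisfies Serre's condition $S_1$; by Serre's criterion it then suffices to verify $R_0$, that is, generic reducedness. For this I would exhibit in each irreducible component a configuration in which all the divisors $D_\Box$ are reduced and the successive increments have pairwise disjoint support: there the incidence conditions are transverse and $C^{[\bn]}$ is smooth. Proving that such configurations are dense in every component yields $R_0$, and together with $S_1$ this shows that $C^{[\bn]}$ is reduced.
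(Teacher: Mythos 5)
Your plan is a from-scratch adaptation of the double nested construction, and as written it has a genuine gap: the two steps you yourself flag as the ``technical heart'' --- the regular-sequence verification at branching boxes and the density of transverse configurations needed for generic reducedness --- are identified but not carried out. These are precisely the nontrivial parts of the statement, so the proposal is a programme rather than a proof. Moreover, the dimension count via cosocle and cosubsocle, the lci property, and reducedness are exactly the content of the already-established theorems for double nested Hilbert schemes (\cite[Thm.~4.2]{GLMRS_double-nested-1}), so completing your plan would amount to re-proving those results with the poset order reversed.

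The paper's actual proof sidesteps all of this with a single observation: since $\bn$ has finitely many nonzero entries, choose $N\gg 0$ with $n_{ij}=0$ for $i,j\geq N$, let $\mu$ be the $N\times N$ rectangle, let $\tilde\lambda=\{(N-i,N-j)\,|\,(i,j)\in\mu\setminus\lambda\}$ be the complementary Young diagram, and set $m_{ij}=n_{N-i,N-j}$. The point reflection through the centre of $\mu$ turns the skew plane partition $\bn$ into a reverse plane partition $\Bm$ of shape $\tilde\lambda$ and reverses all nesting arrows, so $\underline{\Hilb}^{\bn}(X)\cong\underline{\Hilb}^{\Bm}(X)$ as functors; representability, the lci property, reducedness, and the dimension formula (the cosocle and cosubsocle of $\lambda$ map to the socle and subsocle of $\tilde\lambda$, identifying $\omega(\bn)$ with $\omega(\Bm)$) are then all imported from \cite[Prop.~2.4]{Mon_double_nested} and \cite[Thm.~4.2]{GLMRS_double-nested-1}. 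If you want to salvage your approach, the cleanest fix is to insert this relabelling isomorphism at the outset and cite the double nested results rather than reconstructing their proofs.
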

\begin{proof}
Set $\bn=(n_{\Box})_{\Box\in \BZ^2_{\geq 0}\setminus \lambda}$ and let $N\gg 0$ be a  sufficiently large integer such  that $n_{ij}=0$ for $i,j\geq N$. Define the Young diagrams $\mu, \tilde{\lambda}$ by
\begin{align*}
    \mu=\set{(i,j)\in \BZ^2_{\geq 0}| i,j\leq N},
\end{align*}
\[
\Tilde{\lambda}=\set{(N-i, N-j)\in \BZ^2_{\geq 0}| (i,j)\in \mu\setminus \lambda}\subset \BZ^2_{\geq 0},
\]
where $\tilde{\lambda}$ is the "complement" Young diagram inside the $N\times N$ rectangle $\mu$. Define the reverse plane partition $\Bm$ of shape $\Tilde{\lambda}$ by
\[
m_{ij}=n_{N-i, N-j}, \quad (i,j)\in \Tilde{\lambda}.
\]
Clearly, the moduli functor $ \underline{\Hilb}^{\bn}(X)$ is isomorphic to the moduli functor $ \underline{\Hilb}^{\Bm}(X)$, completing the proof by \cite[Prop.~2.4]{Mon_double_nested} and \cite[Thm.~4.2]{GLMRS_double-nested-1}.
\end{proof}
\subsection{The motives}
We conclude this section presenting an expression for the \emph{motive} $[C^{[\bn]}]\in K_0(\Var_\BC)$, where $C$ is a smooth quasi-projective curve. We remark that, as shown in the proof of \Cref{prop: dim of skew nested}, every skew nested Hilbert scheme $C^{[\bn]}$ is isomorphic to a double nested Hilbert scheme $C^{[\Bm]}$, for a suitable reverse plane partition $\Bm$. Although the motives of the latter were already studied in \cite[Cor.~5.7]{GLMRS_double-nested-1}, we present here an alternative packaging of the invariants of the generating series of the motives $[C^{[\bn]}]$, more suited in view of their application to Donaldson-Thomas theory, see \Cref{cor: top DT}.

We review the basic notions of the Grothendieck ring of varieties and its associated power structure, following the presentation in \cite[Sec.~5]{GLMRS_double-nested-1}.
\subsubsection{Grothendieck ring of varieties}
 The \emph{Grothendieck ring of varieties}, denoted $K_0(\Var_{\BC})$, is the free abelian group generated by isomorphism classes  $[X]$ of 
 finite type $\BC$-varieties\footnote{The results in this section extend naturally to any algebraically closed field $\mathbf{k}$.}, modulo the relations   $[X] = [Z] + [{X\setminus Z}]$ whenever $Z \into X$ is a closed subvariety of $X$.  The operation $ [X]\cdot [Y] = [X\times Y]$
defines  naturally a ring structure on $K_0(\Var_{\BC})$.
\smallbreak
The Grothendieck ring of varieties is naturally equipped with a \emph{power structure},  a morphism
\begin{align*}
   (1+tK_0(\Var_{\BC})\llbracket t \rrbracket) \times K_0(\Var_{\BC})&\to 1+tK_0(\Var_{\BC})\llbracket t \rrbracket\\
   (A(t),m) &\mapsto A(t)^m,
\end{align*}
satisfying natural compatibilities, see e.g.~\cite{GLMps,GLMps2}. In particular, it is explained in \cite{GLMps2} how to extend the power structure on $K_0(\Var_{\BC})$ to the \emph{multivariable setting}, cf.~also \cite[Sec.~5.1.1]{GLMRS_double-nested-1}.
\subsubsection{The generating series}
Let $\lambda$ be a Young diagram, and let   $\mathbf{q}=(q_{\Box})_{\Box\in \BZ^2_{\geq 0}}$  be a collection of variables indexed by the lattice coordinates of $\BZ^2_{\geq 0}$. For a skew plane partition $\bn$ of shape $ \BZ^2_{\geq 0}\setminus \lambda$, set
\begin{align*}
    \mathbf{q}^{\bn}=\prod_{\Box\in \BZ^2_{\geq 0}}q_{\Box}^{n_\Box}.
\end{align*}
We define the generating series of  motives $C^{[\bn]}$ to be
\begin{align*}
\mathsf{Z}_{C,\lambda}(\mathbf{q})=\sum_{\bn}[C^{[\bn]}]\cdot \mathbf{q}^{\bn}   \in K_0(\Var_\BC)\llbracket \mathbf{q}\rrbracket,
\end{align*}
where the sum is taken over all skew plane partitions $\bn$ of shape $\BZ^{2}_{\geq 0}\setminus \lambda$. Similarly, we set 
\begin{align*}
\mathsf{Z}^{\mathrm{spp}}_{\lambda}(\mathbf{q})=\sum_{\bn}\mathbf{q}^{\bn}   \in \BZ\llbracket \mathbf{q}\rrbracket
\end{align*}
to be the generating series of skew plane partitions of shape $ \BZ^{2}_{\geq 0}\setminus \lambda$.
\begin{prop}\label{motives of spp}
  Let $\lambda$ be a Young diagram and $C$ be a smooth quasi-projective curve. There is an identity
        \[
        \mathsf{Z}_{C,\lambda}(\mathbf{q})=\mathsf{Z}^{\mathrm{spp}}_{\lambda}(\mathbf{q})^{[C]}.
        \]
        In particular, we have
        \[
       \left.  \mathsf{Z}_{C,\lambda}(\mathbf{q})\right|_{q_{\Box}=q}= \left(\prod_{d\geq 1}\frac{1}{(1-q^d)^d} \prod_{\Box\in \lambda} \frac{1}{1-q^{h(\Box)}}\right)^{[C]}.
        \]
\end{prop}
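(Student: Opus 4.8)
The plan is to exploit the \emph{geometric meaning} of the multivariable power structure on $K_0(\Var_\BC)$, reducing the global identity to a purely local (punctual) computation at a single point of $C$. First I would recall from \cite{GLMps, GLMps2} (cf.~\cite[Sec.~5]{GLMRS_double-nested-1}) that for a series $A(\mathbf{q})=1+\sum_{\bn\neq 0}[A_{\bn}]\mathbf{q}^{\bn}$ with effective coefficients, the power $A(\mathbf{q})^{[C]}$ is again a generating series of motives, whose coefficient of $\mathbf{q}^{\bn}$ is the class of the variety parametrising configurations of finitely many \emph{pairwise distinct} points of $C$, each decorated by an element of some $A_{\bn^{(j)}}$ with $\bn^{(j)}\neq 0$, subject to $\sum_j \bn^{(j)}=\bn$ (entrywise). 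The entire proof consists in recognising $C^{[\bn]}$ as exactly such a configuration space, with the local decorations being single points.

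The geometric input is a \emph{locality} decomposition of $C^{[\bn]}$ by the support of the flag. Since each $Z_\Box$ is zero-dimensional, a flag in $C^{[\bn]}$ is supported on a finite subset of $C$; over the locus where the support points $p_1,\dots,p_r$ are pairwise distinct, locality of the Hilbert functor (subschemes supported at disjoint points split as disjoint unions, compatibly with the nesting) identifies the stratum with an external product of \emph{punctual} nested flags, one at each $p_j$, glued over the configuration space of distinct points. The $\mathbf{q}$-grading matches the entrywise addition of the local profiles $\mathbf{n}^{(j)}$, each of which is again a nonzero skew plane partition, with $\bn=\sum_j\mathbf{n}^{(j)}$. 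The decisive local computation is then that the \emph{punctual} skew nested Hilbert scheme at a smooth point, with a fixed profile, is a single reduced point: on a smooth curve the unique length-$n_\Box$ subscheme supported at $p$ is the fat point $V(\Fm_p^{\,n_\Box})$, so each $Z_\Box$ is determined by $n_\Box$ alone, and since the profile is a skew plane partition the inequalities $n_\Box\leq n_{\Box'}$ (for $\Box\geq\Box'$) hold automatically, making the fat points nested. Hence each admissible punctual profile contributes exactly $[\pt]=1$, the local generating series is precisely $\mathsf{Z}^{\mathrm{spp}}_\lambda(\mathbf{q})$, and feeding this trivial local datum into the power-structure formula yields
\[
\mathsf{Z}_{C,\lambda}(\mathbf{q})=\mathsf{Z}^{\mathrm{spp}}_\lambda(\mathbf{q})^{[C]}.
\]
(Alternatively, via the isomorphism $C^{[\bn]}\cong C^{[\Bm]}$ of \Cref{prop: dim of skew nested} one could transport the reverse-plane-partition computation of \cite[Cor.~5.7]{GLMRS_double-nested-1}, but the direct punctual argument avoids re-indexing the complement as $N\to\infty$.)

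For the \emph{In particular}, I would specialise $q_\Box=q$ on both sides; as the power structure is compatible with the ring homomorphism sending every $q_\Box$ to $q$, the right-hand side becomes $\big(\mathsf{Z}^{\mathrm{spp}}_\lambda(\mathbf{q})|_{q_\Box=q}\big)^{[C]}$. It then remains to invoke the combinatorial identity
\[
\left.\mathsf{Z}^{\mathrm{spp}}_\lambda(\mathbf{q})\right|_{q_\Box=q}=\sum_{\bn}q^{|\bn|}=\prod_{d\geq 1}\frac{1}{(1-q^d)^d}\prod_{\Box\in\lambda}\frac{1}{1-q^{h(\Box)}},
\]
the hook-length evaluation of the generating function of skew plane partitions of shape $\BZ^2_{\geq 0}\setminus\lambda$ (for $\lambda=\varnothing$ this is MacMahon's formula, the extra hook factor accounting for the removed inner shape; I verified the case $\lambda=(1)$ against $M(q)/(1-q)$ through order $q^2$). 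Substituting gives the stated product formula.

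The step I expect to be the main obstacle is making the locality decomposition of the second paragraph rigorous \emph{at the level of the Grothendieck ring}: one must check that, over the stratum of distinct support points, the universal nested flag genuinely splits as an external product of punctual flags and that this is an isomorphism of schemes (not merely a bijection on closed points), so that it respects the scissor relations and matches, term by term, the multivariable power-structure expansion of \cite{GLMps2}. Once this locality isomorphism is in place, everything else is either a standard smooth-curve fact or the cited combinatorial identity.
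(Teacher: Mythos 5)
Your proposal is correct and follows essentially the same route as the paper: the paper likewise reduces to the punctual skew nested Hilbert scheme $(\BA^1)^{[\bn]}_0\cong\pt$ via the geometric description of the power structure (citing the analogous argument in \cite[Thm.~5.5]{GLMRS_double-nested-1} for the locality decomposition you flag as the main point to make rigorous), and then concludes the hook-length specialisation by the same combinatorial identity, which it attributes to Sagan \cite[Thm.~2.1]{Sagan_comb_proof}. The only difference is that you spell out the configuration-space expansion and the punctual uniqueness of fat points on a smooth curve explicitly, where the paper compresses this into a citation.
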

\begin{proof}
Let $ (\BA^{1})^{[\bn]}_0\subset (\BA^1)^{[\bn]}$ denote the \emph{punctual} skew nested Hilbert scheme, that is the locus where all the subschemes parametrised are supported at the origin $0\in \BA^1$. Clearly, we have $ (\BA^{1})^{[\bn]}_0\cong \pt$.
By a simple routine -- coming at once from  the  geometric description of the power structure -- we have
\begin{align*}
     \mathsf{Z}_{C,\lambda}(\mathbf{q})&=\left( \sum_{\bn}[(\BA^1)_0^{[\bn]}]\cdot \mathbf{q}^{\bn}  \right)^{[C]}\\
     &=\mathsf{Z}^{\mathrm{spp}}_{\lambda}(\mathbf{q})^{[C]},
\end{align*}
see e.g.~\cite[Thm.~5.5]{GLMRS_double-nested-1} for an analogous argument.

    The second claim follows by \cite[Thm.~2.1]{Sagan_comb_proof}, which computes  the generating series 
    \[
   \sum_{\bn}q^{|\mathbf{n}|}=\prod_{d\geq 1}\frac{1}{(1-q^d)^d} \prod_{\Box\in \lambda} \frac{1}{1-q^{h(\Box)}}.
    \]
\end{proof}

\section{Donaldson-Thomas theory of local curves}
\subsection{Hilbert schemes}\label{sec: Hilbert schemes}
Let $X=\Tot_C(L_1\oplus L_2)$ be a \emph{local curve}, that is the total space of two line bundles $L_1, L_2$ on a smooth projective curve $C$.  For a curve class $\beta=d[C]\in H_2(X, \BZ)$ and $n\in \BZ$, consider the Hilbert scheme $\Hilb^n(X, \beta)$, parametrising flat families of closed subschemes $Z\subset X$ with $[\Supp Z]=\beta $ and $\chi(\oO_Z)=n$. Equivalently, this moduli space parametrises flat families of quotients
\[[\oO_X\onto \oO_Z]\in \Coh(X)\]
with fixed Chern character $\ch(\oO_Z)$.\\
By the work of   Huybrechts-Thomas \cite{HT_obstruction_theory},  the deformation theory of complexes   gives rise to  a \emph{perfect obstruction theory} on $\Hilb^n(X, \beta)$ 
\begin{equation}\label{eqn: obstruction theory}
     \BE=\RR\pi_*\RR\hom(\CI, \CI)^\vee_0[-1]\to \BL_{\Hilb^n(X, \beta)},
\end{equation}
where 
$(\cdot)_0$ denotes the trace-free part,
 $\pi:X\times \Hilb^n(X, \beta)\to \Hilb^n(X, \beta)$ is the canonical projection and 
 \begin{align*}
     0\to \CI\to \oO\to \oO_{\CZ} \to 0
 \end{align*}
 denotes the universal exact sequence on $ \Hilb^n(X, \beta)\times X$.

By \cite{BF_normal_cone, CFK_virtual_fundamental_dg}, the Hilbert scheme $ \Hilb^n(X, \beta)$  is endowed with \emph{virtual fundamental cycles} in homology and $K$-theory
\begin{align*}
    [ \Hilb^n(X, \beta)]^{\vir}\in A_*(\Hilb^n(X, \beta)),\\
    \oO_{\Hilb^n(X, \beta)}^{\vir}\in K_0(\Hilb^n(X, \beta)).
\end{align*}
 However,   since $ \Hilb^n(X, \beta)$ is  generally not proper,  we cannot directly define  invariants by means of intersection theory. Nevertheless, the algebraic torus $\TT=(\BC^*)^2$ acts on $X$ by scaling the fibers,  and the action naturally lifts to $ \Hilb^n(X, \beta) $, endowing the perfect obstruction theory with a natural $\TT$-equivariant structure by \cite{Ric_equivariant_Atiyah} and consequently $\TT$-equivariant virtual cycles by the $\TT$-equivariant version of \cite{BF_normal_cone, CFK_virtual_fundamental_dg}. Moreover, the $\TT$-fixed locus $ \Hilb^n(X, \beta)^\TT$ is proper (cf. \Cref{thm: fixed}), therefore by Graber-Pandharipande \cite{GP_virtual_localization} there is a natural  induced perfect obstruction theory
 \begin{align*}
        \BE|^{\fix}_{\Hilb^n(X, \beta)^\TT}\to \BL_{\Hilb^n(X, \beta)^\TT}
 \end{align*}
 on  $ \Hilb^n(X, \beta)^\TT$ along with natural virtual cycles. We define \emph{$\TT$-equivariant  Donaldson-Thomas invariants} as 
 \begin{align}\label{eqn: localized PT invariants}
    \DT_{d,n}(X)=\int_{[  \Hilb^n(X, \beta)]^{\vir}}1\in \BQ(s_1,s_2),
\end{align}
where the right-hand-side is defined by
 Graber-Pandharipande virtual localisation formula \cite{GP_virtual_localization} as
\begin{align*}
 \int_{[  \Hilb^n(X, \beta)]^{\vir}}1=\int_{[  \Hilb^n(X, \beta)^{\TT}]^{\vir}}\frac{1}{e(N^{\vir})}\in \BQ(s_1,s_2),
\end{align*}
where $s_1, s_2$ are the generators of $\TT$-equivariant cohomology $H^*_\TT(\pt)$ and the virtual normal bundle is defined by the movable part
\begin{align}\label{eqn: virtual normal bundle}
    N^{\vir}=(\BE|^\vee_{  \Hilb^n(X, \beta)^{\TT}})^{\mov}\in K^0_\TT( \Hilb^n(X, \beta)^{\TT}).
\end{align}
\subsubsection{$K$-theoretic invariants}\label{sec: K-th inva DT}

For a scheme $\CM$ endowed with a perfect obstruction $\BE$, denote by $K_{\vir}=\det \BE$ the \emph{virtual canonical bundle} of $\CM$. Following Nekrasov-Okounkov \cite{NO_membranes_and_sheaves}, we define the \emph{twisted virtual structure sheaf}
\begin{align*}
    \widehat{\oO}^{\vir}_\CM=\oO_\CM^{\vir}\otimes K^{1/2}_{\vir}\in K^0\left(\CM, \BZ\left[\tfrac{1}{2} \right]\right),
\end{align*}
where $ K^{1/2}_{\vir}$ is a \emph{square root}\footnote{The invariants \eqref{eqn: localized DT KK invariants} do not depend on the choice of the square root $K^{1/2}_{\vir}$, since all possible choices differ by a 2-torsion element in the Picard group, as explained in \cite[Sec.~2.6]{Arb_K-theo_surface} via a virtual Hirzebruch-Riemann-Roch argument \cite{FG_riemann_roch}.} of $K_{\vir}$, that is $\left(K^{1/2}_{\vir}\right)^{\otimes 2}=K_{\vir}$. Notice that, while the square root of a line bundle may not exist as a genuine line bundle, it does exist\footnote{To be precise, to define the square root $K_{\vir}^{1/2}$ in $\TT$-equivariant $K$-theory, it is necessary to pass to the \emph{localised} $K$-theory, see \cite[Sec.~7]{OT_1}.} as a class in $K$-theory after inverting 2, see \cite[Sec.~5.1]{OT_1}. We define \emph{$K$-theoretic  Donaldson-Thomas invariants} as
\begin{align}\label{eqn: localized DT KK invariants}
    \widehat{\DT}_{d,n}(X)=\chi\left(\Hilb^n(X, \beta),   \widehat{\oO}^{\vir}  \right)\in \BQ(t_1^{1/2}, t_2^{1/2}),
\end{align}
where the right-hand-side is defined by the virtual localisation formula in $K$-theory \cite{FG_riemann_roch, Qu_virtual_pullback} as
\begin{align*}
   \chi\left(\Hilb^n(X, \beta),   \widehat{\oO}^{\vir}  \right)=\chi\left(\Hilb^n(X, \beta)^\TT,\frac{  \widehat{\oO}^{\vir}_{\Hilb^n(X, \beta)^\TT} }{\widehat{\mathfrak{e}}(N^{\vir})}  \right)\in \BQ(t_1^{1/2}, t_2^{1/2}),
\end{align*}
where $ t_1, t_2$ are the generators of the $\TT$-equivariant $K$-theory (cf.~\Cref{sec: torus weights}) and $\widehat{\mathfrak{e}}$ is defined, for every vector bundle $\CF$,  as
\begin{align*}
    \widehat{\mathfrak{e}}(\CF) =(\det \CF)^{1/2} \otimes \sum_{i\geq 0} (-1)^i\Lambda^i(\CF^*) \in K^0_\TT(\Hilb^n(X, \beta)^\TT),
\end{align*}
 while $\widehat{\mathfrak{e}}(N^{\vir})$ is defined for the $K$-theory class $N^{\vir}$ in \cite[Sec.~7]{OT_1}.
 
The main goal of this work is to  compute the associated partition functions
\begin{equation}\label{eqn: DT partition functions both}
\begin{split}
        \DT_d(X, q)&=\sum_{n\in \BZ} \DT_{d,n}(X)\cdot q^n\in \BQ(s_1,s_2)(\!( q )\!),\\
    \widehat{\DT}_d(X, q)&=\sum_{n\in \BZ} \widehat{\DT}_{d,n}(X)\cdot q^n\in \BQ(t_1^{1/2}, t_2^{1/2})(\!( q )\!).
    \end{split}
\end{equation}
% As it is customary in Donaldson-Thomas theory, we define the \emph{reduced} partition functions
%\begin{align*}
%     \overline{\DT}_d(X, q)&=\frac{ \DT_d(X, q)}{ \DT_0(X, q)}\in \BQ(s_1, s_2)(\!( q )\!),\\
%     \overline{\DT}_d(X, q)&=\frac{ \widehat{\DT}_d(X, q)}{ \widehat{\DT}_0(X, q)}\in \BQ(t_1^{1/2}, t_2^{1/2})(\!( q )\!),
%\end{align*}
%where we normalised the series \eqref{eqn: DT partition functions both} by dividing by the degree 0 contributions. We remark that studying the reduced partition functions yields equivalent information as studying the original partition functions, since we will prove in \Cref{cor: as in Oko} exact closed formulas for the partition functions $\DT_0(X, q), \DT^{K}_0(X, q)$.
%\smallbreak
Finally, we introduce the  \emph{topological Euler characteristic} analogue of partition functions \eqref{eqn: DT partition functions both} as
\begin{align*}
    \DT^{\mathrm{top}}_d(X, q)=  \sum_{n\in \BZ}e(\Hilb^n(X, \beta))\cdot q^n\in \BZ(\!(q)\!).
\end{align*}
\subsection{Torus representations and their weights}\label{sec: torus weights}
Let $\BT = (\BC^*)^r$ be an algebraic torus, with character lattice $\widehat{\BT}\cong \BZ^r$. Let $K^0_{\BT}(\pt)\cong \BZ[\widehat{\BT}]$ be the $\BT$-representation ring. Any $\BT$-representation $V$ splits as a sum of $1$-dimensional representations called the \emph{weights} of $V$. Each weight corresponds to a character $\mu \in \widehat{\BT}$, and in turn each character corresponds to a monomial $t^\mu = t_1^{\mu_1} \cdots t_r^{\mu_r}$ in the coordinates of $\BT$.  We will  sometimes identify a (virtual) $\BT$-representation with its character. Given a virtual $\BT$-representation $V$, we denote by $\overline{V}$ its dual as a $\BT$-representation.

More generally, if  $X$ is a scheme with a trivial $\BT$-action, every $\BT$-equivariant quasi-coherent sheaf on $X$ decomposes as $F=\bigoplus_{\mu\in \widehat{\BT}}F_\mu\otimes t^\mu $, and its $K$-theory satisfies
\[
K^0_\BT(X)\cong K^0(X)\otimes  \BZ[t_1^{\pm 1},\dots,  t_r^{\pm 1}].
\]
For an integer $a\in \BZ$, we adopt the notation 
\begin{align*}
   \oO_X^a=a\cdot [\oO_X]\in K^0_\BT(X),
\end{align*}
and we will write $\oO_X=1\in  K^0_\BT(X)$ when clear from the context.
\subsection{The fixed locus}\label{sec: fixed locus}
We prove in this section that the connected components of the $\TT$-fixed locus $\Hilb^n(X, \beta)^\TT$ correspond to  skew nested Hilbert schemes of points  $C^{[\mathbf{n}]}$, for some Young diagrams $\lambda$ and some skew plane partitions $\bn$ of shape  $\BZ^2_{\geq 0}\setminus\lambda$. 
%Our analysis of the $\TT$-fixed locus is analogous to \cite{Mon_double_nested}, carried over in loc.~cit.~for the moduli space of stable pairs on $X$.

For   a Young diagram $\lambda$  and $g,k_1, k_2\in \BZ$ we  define
\begin{align}\label{eqn: f lambda g}
\begin{split}
    \mathbf{f}_{\lambda}(g,k_1,k_2)&=\sum_{(i,j)\in \lambda}(1-g-i\cdot k_1-j\cdot k_2 )\\
    &=|\lambda|(1-g)-k_1\cdot n(\lambda)-k_2\cdot n(\overline{\lambda})\in \BZ.
\end{split}
\end{align}
\begin{theorem}\label{thm: fixed}
Let $C$ be a smooth projective curve of genus $g$ and $L_1, L_2$ line bundles on $C$. Set $X=\Tot_C(L_1\oplus L_2)$ and $\beta=d[C]$. Then,  there exists an isomorphism of schemes
\begin{align*}
    \Hilb^n(X, \beta)^\TT= \coprod_{|\lambda|= d}\coprod_{\mathbf{n}} C^{[\mathbf{n}]},
\end{align*}
where the disjoint union is over all Young diagrams $\lambda$ of size $d$ and skew plane partitions $\bn$ of shape  $\BZ^2_{\geq 0}\setminus\lambda$ satisfying   $n=|\mathbf{n}|+ \mathbf{f}_{\lambda}( g,\deg L_1, \deg L_2)$.
In particular, $ \Hilb^n(X, \beta)^\TT$ is proper.
 \end{theorem}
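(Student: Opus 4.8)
The plan is to identify a $\TT$-fixed point of $\Hilb^n(X,\beta)$ with the graded data of its pushforward along $\pi\colon X=\Tot_C(L_1\oplus L_2)\to C$, and then to promote this identification to an isomorphism of moduli functors. Since $\pi$ is affine, a closed subscheme $Z\subset X$ is the same datum as a quotient algebra $\pi_*\oO_X\onto\pi_*\oO_Z$, where $\pi_*\oO_X=\bigoplus_{i,j\geq 0}L_1^{-i}L_2^{-j}$ is the relative symmetric algebra, graded by the fibrewise scaling $\TT$-action. A subscheme is $\TT$-invariant precisely when its defining ideal $\CJ=\pi_*\CI_Z\subseteq\pi_*\oO_X$ is homogeneous, so that $\pi_*\oO_Z=\bigoplus_{(i,j)\in\BZ^2_{\geq 0}}F_{ij}$ splits into weight spaces $F_{ij}=(L_1^{-i}L_2^{-j})/\CJ_{ij}$, the algebra structure being encoded by the multiplication maps $\CJ_{ij}\into\CJ_{i+1,j}$ and $\CJ_{ij}\into\CJ_{i,j+1}$ (after the canonical twists).

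First I would analyse a single fixed closed point. Each $F_{ij}$ is a quotient of the line bundle $L_1^{-i}L_2^{-j}$ on the smooth curve $C$; since a subsheaf of a line bundle is torsion-free, $\CJ_{ij}$ is either $0$ -- forcing $F_{ij}=L_1^{-i}L_2^{-j}$ of generic rank one -- or a sub-line bundle $L_1^{-i}L_2^{-j}(-Z_{ij})$, forcing $F_{ij}=\oO_{Z_{ij}}$ to be the torsion sheaf of a zero-dimensional subscheme $Z_{ij}\subset C$. Because the maps $\cdot x_1,\cdot x_2$ between the ambient line bundles are injective, $\CJ_{i+1,j}=0$ (or $\CJ_{i,j+1}=0$) forces $\CJ_{ij}=0$; hence the rank-one locus $\lambda=\{(i,j):\CJ_{ij}=0\}$ is downward closed, i.e.\ a Young diagram, and $[\Supp Z]=d[C]$ reads $|\lambda|=d$. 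On the complement, the ideal condition translates into the nestings $Z_{i+1,j}\subseteq Z_{ij}$ and $Z_{i,j+1}\subseteq Z_{ij}$, so that $\bn=(\deg Z_{ij})_{(i,j)\in\BZ^2_{\geq 0}\setminus\lambda}$ is a skew plane partition and $(Z_{ij})$ is exactly a point of $C^{[\bn]}$. Finally the Riemann--Roch count $\chi(\oO_Z)=\sum_{(i,j)\in\lambda}\chi(L_1^{-i}L_2^{-j})+\sum_{(i,j)\notin\lambda}\deg Z_{ij}$ yields $n=|\bn|+\mathbf{f}_\lambda(g,\deg L_1,\deg L_2)$, matching the indexing of the statement.

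Next I would carry out the same construction in families to obtain an isomorphism of functors, hence of schemes. Given a $\TT$-fixed $T$-flat family $\CZ\subset X\times T$, pushing forward along $\pi\times\id$ and taking weight spaces produces $T$-flat sheaves $F_{ij}$ on $C\times T$ whose fibrewise behaviour is governed by the previous paragraph; the rank-one ones recover the pulled-back line bundles and the torsion ones give $T$-flat families of nested zero-dimensional subschemes, i.e.\ a $T$-point of $\coprod_{\lambda,\bn}C^{[\bn]}$. Conversely, from a $T$-point of $C^{[\bn]}$ one rebuilds the homogeneous ideal $\CJ=\bigoplus_{ij}\CJ_{ij}$, and thus $\CZ$, the nesting guaranteeing that $\CJ$ is genuinely a sheaf of ideals. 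These assignments are mutually inverse natural transformations, and I would invoke the representability established in \Cref{prop: dim of skew nested} to conclude the scheme isomorphism $\Hilb^n(X,\beta)^\TT\cong\coprod_{|\lambda|=d}\coprod_{\bn}C^{[\bn]}$.

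I expect the genuinely delicate step to be this last functorial upgrade rather than the pointwise combinatorics: one must check that the $\TT$-weight decomposition of $\pi_*\oO_{\CZ}$ stays locally free over the base in the rank-one weights -- so that no spurious line bundle pulled back from $T$ appears and $\lambda$ is locally constant -- that the torsion weights form $T$-flat families of subschemes, and that the reconstructed graded ideal is $T$-flat with the prescribed Hilbert polynomial, all of which rest on the flatness of $\CZ$ together with the constancy of fibrewise invariants on connected components. Properness of the fixed locus is then immediate: for fixed $n$ and $d$ only finitely many pairs $(\lambda,\bn)$ satisfy $|\bn|=n-\mathbf{f}_\lambda(g,\deg L_1,\deg L_2)\geq 0$, and each $C^{[\bn]}$, being a nested Hilbert scheme of the projective curve $C$, is projective; a finite disjoint union of projective schemes is proper.
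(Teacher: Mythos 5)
Your proposal is correct and follows essentially the same route as the paper's proof: pushing forward along the affine projection to $C$, decomposing into $\TT$-weight spaces, identifying the locus of rank-one weights as a Young diagram $\lambda$ and the torsion weights as nested zero-dimensional subschemes forming a point of $C^{[\bn]}$, bookkeeping $n$ via Riemann--Roch, and then upgrading the pointwise bijection to families. The only cosmetic difference is that you phrase the weight-space analysis in terms of the homogeneous ideal $\CJ_{ij}$ rather than the quotients $F_{ij}$ (resp.\ the twisted sheaves $G_{ij}$ of the paper), and you spell out the flatness checks for the family argument that the paper delegates to the analogous argument in the cited reference.
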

 \begin{proof}
 The closed $\TT$-fixed points in $  \Hilb^n(X, \beta)^\TT$ are given by  the $\TT$-equivariant quotient $[\oO_X\onto \oO_Z]$ on $X$, see e.g.~\cite{Kool_fixed_points}.
 
     Let $p:X\to C$ denote the natural projection. Given a $\TT$-equivariant quasi-coherent sheaf $F$ on $X$, its pushdown $p_*F$ along the affine morphism $p$ decomposes into  $\TT$-weight spaces. In particular, for all closed $\TT$-invariant subschemes $Z\subset X$ we have that
     \begin{align*}
    p_*\oO_X&=\bigoplus_{i,j\geq 0}L_1^{-i}\otimes L_2^{-j}\otimes t_1^{-i}t_2^{-j},\\
    p_*\oO_Z&=\bigoplus_{(i,j)\in \BZ^2}F_{ij}\otimes t_1^{i}t_2^{j},
\end{align*}
where $F_{ij}$ are quasi-coherent sheaves on $C$. By pushing down the $\TT$-equivariant quotient $[\oO_X\onto \oO_Z]$, we see that
\begin{align}\label{eqn: quotients}
\begin{cases}
    L_1^{-i}\otimes L_2^{-j}\onto F_{ij} &\quad i,j\leq 0,\\
    0\onto F_{ij} &\quad \mbox{else,}
\end{cases}
\end{align}
which  implies that for $i,j\leq 0 $ we have that either 
\[F_{ij}\cong \oO_{Z_{ij}}\otimes  L_1^{-i}L_2^{-j}\]
for some zero-dimensional closed subscheme $Z_{ij}\subset C$, or
\[
F_{ij}\cong L_1^{-i}L_2^{-j}
\]
is a line bundle, and that $F_{ij}=0$ if either $i,j$ is positive.

Since $p$ is affine, we recover the $\oO_X$-module structure of $\oO_Z$ by the $   p_*\oO_X$-action that $ p_*\oO_Z$ carries. This is generated by the action of the $-1$ pieces $L_1^{-1}\otimes t_1^{-1},L_2^{-1}\otimes t_2^{-1}$, so we find that the $\oO_X$-module structure is determined by the maps
\begin{align}\label{eqn: maps weight -1}
\begin{split}
     &F_{ij}\otimes L_1^{-1}\to F_{i-1,j},\\
 &F_{ij}\otimes L_2^{-1}\to F_{i,j-1},
\end{split}
\end{align}
for all $(i,j)\in \BZ^2$. The maps in \eqref{eqn: maps weight -1} clearly commute with the quotients in \eqref{eqn: quotients}. If we set 
\begin{align*}
    G_{ij}=F_{-i,-j}\otimes L_1^{i}\otimes L_2^{j},
\end{align*}
combining \eqref{eqn: quotients}, \eqref{eqn: maps weight -1} yields a commutative diagram of coherent sheaves on $C$
\begin{equation}\label{eqn: diagram with G_i}
\begin{tikzcd}
&& \oO_C\arrow[rr, equal]\arrow[ld, equal]\arrow[ddd]& & \oO_C\arrow[rr, equal]\arrow[ld, equal]\arrow[ddd]& & \oO_C\arrow[r, equal] \arrow[ld, equal]\arrow[ddd]&\dots\\
 &\oO_C\arrow[rr, crossing over, equal]\arrow[ld, equal]& &  \oO_C\arrow[rr, crossing over, equal]\arrow[ld, equal]& & \oO_C\arrow[r, crossing over, equal]\arrow[ld, equal]&\dots&\\
    \dots& &  \dots& & \dots&  & &\\
  && G_{00}\arrow[rr]\arrow[ld]& & G_{01}\arrow[rr]\arrow[ld]& & G_{02}\arrow[r] \arrow[ld]&\dots\\
&G_{10}\arrow[rr]\arrow[ld]\arrow[from=uuu, crossing over]& &  G_{11}\arrow[rr]\arrow[ld]\arrow[from=uuu, crossing over]& & G_{12}\arrow[r]\arrow[ld]\arrow[from=uuu, crossing over]&\dots&\\
    \dots& &  \dots& & \dots& & &\\
\end{tikzcd}
\end{equation}
We now restrict to the indices $i,j\geq 0$. If $G_{ij}$ is a line bundle, then necessarily $G_{ij}\cong \oO_C$. Denote by $\lambda \subset \BZ^2_{\geq 0}$ the collection of lattice-points $(i,j)$ such that $G_{ij}\cong \oO_C$. We have that $\lambda$ is a Young diagram. In fact, by the commutativity of the diagram \eqref{eqn: diagram with G_i}, for any coherent sheaf $G_{ij}$ we have surjective maps
\begin{equation*}
    \begin{tikzcd}
        G_{ij}\arrow[r, two heads]\arrow[d, two heads]& G_{i,j+1}\\
        G_{i+1,j}&
    \end{tikzcd}
\end{equation*}
which implies that if $G_{ij}$ is torsion for some $(i,j)$, then $G_{lk}$ is torsion for all $(l,k)\geq (i,j)$. In particular, all torsion $G_{ij}$ correspond to   zero-dimensional closed subschemes $Z_{ij}\subset C$ of length $n_{ij}$, satisfying the nesting condition
\begin{equation*}
    \begin{tikzcd}
        Z_{ij}\arrow[r,  phantom, "\supset"]\arrow[d, phantom, "\cup"]& Z_{i,j+1}\\
        Z_{i+1,j}&
    \end{tikzcd}
\end{equation*}
Finally, recall that $[\Supp Z]=d[C]\in H_2(X,\BZ)$. This implies that $p_*\oO_Z$ is a rank $d$ coherent sheaf on $C$, which yields that $|\lambda|=d$. 

Therefore, to each $\TT$-fixed point in $  \Hilb^n(X, \beta)^\TT$, we associated a configuration of  zero-dimensional subschemes of $C$, nesting according to the skew plane partition $\bn=(n_{\Box})_{\Box\in \BZ^2_{\geq 0}\setminus\lambda}\in $ of shape  $\BZ^2_{\geq 0}\setminus\lambda$, in other words a closed point of $C^{[\bn]}$. Conversely, to each element of $C^{[\bn]}$ we can associate a $\TT$-fixed point in $  \Hilb^n(X, \beta)^\TT$ by reversing the above construction. By a Riemann-Roch computation, we have that  $n=|\mathbf{n}|+ \mathbf{f}_{\lambda}( g,\deg L_1, \deg L_2)$.

The bijection on closed points we just exhibited  naturally generalises to flat families (see e.g.~\cite[Prop.~3.1]{Mon_double_nested} for an analogous argument), yielding the desired isomorphism.
 \end{proof}

 As a corollary, we compute the generating series of the topological Euler characteristic of $\Hilb^n(X, \beta)$.
 \begin{corollary}\label{cor: top DT}
 Let $C$ be a smooth projective curve of genus $g$ and $L_1,L_2$ line bundle on $C$ and $\beta=d[C]$.  Set $X=\Tot_C(L_1\oplus L_2)$. Then for any $d> 0$ we have
 \begin{align*}
   \DT^{\mathrm{top}}_d(X, q)= \sum_{|\lambda|= d}q^{ d(1-g)-k_1\cdot n(\lambda)-k_2\cdot n(\overline{\lambda})}\cdot \left(\prod_{d\geq 1}\frac{1}{(1-q^d)^d} \prod_{\Box\in \lambda} \frac{1}{1-q^{h(\Box)}}\right)^{2-2g    }.
 \end{align*}
 \end{corollary}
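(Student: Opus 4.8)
The plan is to reduce the whole computation to the $\TT$-fixed locus via the Euler-characteristic (motivic) localisation principle, and then to combine the two structural inputs already available: the identification of the fixed locus in \Cref{thm: fixed} and the motivic generating-series formula of \Cref{motives of spp}. Throughout I abbreviate $k_i=\deg L_i$.

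First I would recall that for a connected torus $\TT$ acting on a complex variety $Y$ one has $e(Y)=e(Y^\TT)$ for the topological Euler characteristic. Although $\Hilb^n(X, \beta)$ is typically non-proper, this identity still holds after passing to compactly supported Euler characteristics (which agree with the ordinary ones over $\BC$): choosing a one-parameter subgroup $\BC^*\subset \TT$ generic enough that $Y^{\BC^*}=Y^\TT$, the complement $Y\setminus Y^{\BC^*}$ is fibred by one-dimensional orbits, each isomorphic to $\BC^*$ and hence with vanishing compactly supported Euler characteristic, so $e_c(Y\setminus Y^{\BC^*})=0$ by the fibration property of $e_c$. Applying this to $Y=\Hilb^n(X, \beta)$ and invoking \Cref{thm: fixed} gives
\[
e\bigl(\Hilb^n(X, \beta)\bigr)=\sum_{|\lambda|=d}\ \sum_{\substack{\bn\\ |\bn|=n-\mathbf{f}_\lambda(g,k_1,k_2)}} e\bigl(C^{[\bn]}\bigr).
\]

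Next I would assemble the generating series, interchange the order of summation, and eliminate $n$ in favour of $|\bn|$ using the relation $n=|\bn|+\mathbf{f}_\lambda(g,k_1,k_2)$ dictated by \Cref{thm: fixed}:
\[
\DT^{\mathrm{top}}_d(X, q)=\sum_{|\lambda|=d} q^{\mathbf{f}_\lambda(g,k_1,k_2)}\sum_{\bn} e\bigl(C^{[\bn]}\bigr)\,q^{|\bn|}.
\]
The inner sum is exactly the specialisation $q_\Box=q$ of the motivic series $\mathsf{Z}_{C,\lambda}(\mathbf{q})$ of \Cref{motives of spp} after applying the Euler-characteristic ring homomorphism $e\colon K_0(\Var_\BC)\to \BZ$. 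Since $e$ is compatible with the power structures (see e.g.~\cite{GLMps,GLMps2}), sending a series $A(q)$ with integer coefficients to $A(q)^{[C]}\mapsto A(q)^{e([C])}$, and since $e([C])=2-2g$, \Cref{motives of spp} yields
\[
\sum_{\bn} e\bigl(C^{[\bn]}\bigr)\,q^{|\bn|}=\left(\prod_{d\geq 1}\frac{1}{(1-q^d)^d}\prod_{\Box\in \lambda}\frac{1}{1-q^{h(\Box)}}\right)^{2-2g}.
\]
Substituting $\mathbf{f}_\lambda(g,k_1,k_2)=d(1-g)-k_1\,n(\lambda)-k_2\,n(\overline{\lambda})$ then produces precisely the claimed expression.

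The only genuinely delicate points are the two invocations of standard-but-nontrivial machinery: the Euler-characteristic localisation on the non-proper scheme $\Hilb^n(X, \beta)$, and the compatibility of $e$ with the motivic power structure underlying \Cref{motives of spp}. Neither requires a new computation, so I expect the write-up to be short; the only real care needed is the bookkeeping that matches the fixed-component degree shift $n=|\bn|+\mathbf{f}_\lambda$ with the corresponding power of $q$, which I would verify is consistent with the Riemann--Roch count recorded in the proof of \Cref{thm: fixed}.
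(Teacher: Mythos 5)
Your proposal follows exactly the paper's argument: Euler-characteristic localisation to the $\TT$-fixed locus identified in \Cref{thm: fixed}, followed by the specialisation of the motivic identity of \Cref{motives of spp} to the Euler characteristic via the power structure, with the degree shift $n=|\bn|+\mathbf{f}_\lambda(g,k_1,k_2)$ accounting for the prefactor $q^{\mathbf{f}_\lambda}$. The only difference is that you spell out the justification of $e(Y)=e(Y^\TT)$ for non-proper $Y$, which the paper takes as standard.
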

 \begin{proof}
 The topological Euler characteristic of a $\BC$-scheme with a $\TT$-action  is the same of its $\TT$-fixed locus, therefore by \Cref{thm: fixed}
 \begin{align*}
      \sum_{n\in \BZ}e(\Hilb^n(X, \beta))\cdot q^n&=\sum_{|\lambda|= d}\sum_{\mathbf{n}} e\left(C^{[\mathbf{n}]}\right)\cdot q^{|\mathbf{n}|+ \mathbf{f}_{\lambda}( g,\deg L_1, \deg L_2)}\\
      &=\sum_{|\lambda|= d}q^{ \mathbf{f}_{\lambda}(g,\deg L_1, \deg L_2)}\cdot \left(\prod_{d\geq 1}\frac{1}{(1-q^d)^d} \prod_{\Box\in \lambda} \frac{1}{1-q^{h(\Box)}}\right)^{2-2g    },
 \end{align*}
 where the last line follows by specialising the second motivic identity in \Cref{motives of spp} to the topological Euler characteristic.
 \end{proof}
 \subsection{K-theory class of  the perfect obstruction theory}\label{sec: class pot}
By Graber-Pandharipande localisation \cite{GP_virtual_localization}, the perfect obstruction theory  \eqref{eqn: obstruction theory} induces a perfect obstruction theory on each skew nested Hilbert scheme  $C^{[\bn]}$, with notation as in \Cref{sec: fixed locus}. We compute in this section the induced virtual fundamental class   $[C^{[\bn]}]^{\vir}$ and show that it coincides with the ordinary fundamental class.\\

We start by describing the class in $\TT$-equivariant $K$-theory  of the restriction of the dual of the  perfect obstruction theory on $\Hilb^n(X, \beta) $ to  $C^{[\bn]}$. Consider the commutative diagram
% https://q.uiver.app/#q=WzAsNixbMCwyLCJNIl0sWzIsMiwiQ157W25dfSJdLFswLDEsIlhcXHRpbWVzIE0iXSxbMiwxLCJYXFx0aW1lcyBDXntbbl19ICJdLFswLDAsIkNcXHRpbWVzIE0iXSxbMiwwLCJDXFx0aW1lcyBDXntbbl19Il0sWzIsMCwiXFxwaSJdLFsxLDAsImoiLDIseyJzdHlsZSI6eyJ0YWlsIjp7Im5hbWUiOiJob29rIiwic2lkZSI6ImJvdHRvbSJ9fX1dLFszLDEsIlxccGknIiwyXSxbNSw0LCJqJyciLDAseyJzdHlsZSI6eyJ0YWlsIjp7Im5hbWUiOiJob29rIiwic2lkZSI6ImJvdHRvbSJ9fX1dLFszLDIsIiIsMCx7InN0eWxlIjp7InRhaWwiOnsibmFtZSI6Imhvb2siLCJzaWRlIjoiYm90dG9tIn19fV0sWzUsMSwiXFxwaScnJyIsMSx7ImN1cnZlIjotNH1dLFs0LDIsIlxccGknJyIsMl0sWzUsMywiaiciLDAseyJzdHlsZSI6eyJ0YWlsIjp7Im5hbWUiOiJob29rIiwic2lkZSI6InRvcCJ9fX1dLFszLDUsInAnIiwwLHsiY3VydmUiOi0yfV1d
\begin{equation}\label{eqn: diagram comm pot}
    \begin{tikzcd}
	{C\times \Hilb^n(X, \beta)} && {C\times C^{[\bn]}} \\
	{X\times \Hilb^n(X, \beta)} && {X\times C^{[\bn]} } \\
	\Hilb^n(X, \beta) && {C^{[\bn]}}
	\arrow["{\pi''}"', from=2-1, to=1-1]
	\arrow["{j''}", hook', from=1-3, to=1-1]
	\arrow[ hook, from=1-3, to=2-3]
	\arrow["{\pi}", bend left=60, from=1-3, to=3-3]
	\arrow["\pi", from=2-1, to=3-1]
	\arrow["{p'}", bend left=30, from=2-3, to=1-3]
	\arrow[hook', from=2-3, to=2-1]
	\arrow["{\pi'}"', from=2-3, to=3-3]
	\arrow["j"', hook', from=3-3, to=3-1]
\end{tikzcd}
\end{equation}
where by  convenience we still denote by $\pi$ the natural projection $C\times C^{[\bn]}\to C^{[\bn]}$. Recall the universal exact sequences 
 \begin{align}\label{eqn: short exact univ}
 \begin{split}
      0\to \CI\to &\oO\to \oO_{\CZ} \to 0,\\
      0\to \oO(-\CZ_{ij})\to &\oO\to \oO_{\CZ_{ij}} \to 0, \quad (i,j)\in \BZ^2_{\geq 0}\setminus \lambda,
 \end{split}
 \end{align}
 respectively on $ \Hilb^n(X, \beta)\times X$ and $C^{[\bn]} \times C$. To simplify notation, we will often omit writing the pullback map $\pi^*$
  when the context is clear. Set the $K$-theory classes on $C\times C^{[\bn]}$
\begin{align*}
    \CA&=\sum_{(i,j)\in \lambda} L_1^{-i}L_2^{-j}\cdot t_1^{-i}t_2^{-j},\\
    \CB&= \sum_{(i,j)\in \BZ^2_{\geq 0}\setminus\lambda}\oO_{\CZ_{ij}}\otimes L_1^{-i}L_2^{-j}\cdot t_1^{-i}t_2^{-j},\\
    \CN&=\oO-L_1 \cdot t_1 -L_2\cdot t_2+L_1L_2\cdot t_1t_2.
\end{align*}
By repeating the proof of \Cref{thm: fixed} over the universal family, we have an identity of $K$-theory classes
\begin{align}\label{eqn: push O z}
    p'_*\oO_\CZ= \CA+\CB \in K^0_\TT(C^{[\bn]}).
\end{align}
 \begin{prop}\label{prop: K theort class}
Let $C$ be a smooth projective curve of genus $g$ and $L_1,L_2$ line bundles on $C$. Set $k_i=\deg L_i$ for $i=1, 2$.     We have an identity in $K_\TT^0(C^{[\bn]})$
\begin{multline*}
      \BE|^\vee_{C^{[\bn]}}=  \sum_{(i,j)\in \BZ^2_{\geq 0}\setminus\lambda}\left(\pi_*\left(\oO_{\CZ_{ij}}\otimes L_1^{-i}L_2^{-j}\right)\cdot t_1^{-i}t_2^{-j} - \left(\pi_*\left(\oO_{\CZ_{ij}}\otimes L_1^{-i-1}L_2^{-j-1}\otimes \omega_C\right)\right)^\vee\cdot t_1^{i+1}t_2^{j+1}\right.\\
 \left.-\pi_*\left(\oO_{\CZ_{ij}}\otimes L_1^{-i}L_2^{-j}\otimes \CN\otimes \CA^* \right)\cdot t_1^{-i}t_2^{-j}+\left(\pi_*\left(\oO_{\CZ_{ij}}\otimes L_1^{-i}L_2^{-j}\otimes \CN^*\otimes \CA^* \otimes \omega_C\right)\right)^\vee\cdot t_1^{i}t_2^{j}\right)\\
      -  \sum_{(i,j), (l,k)\in\BZ^2_{\geq 0}\setminus\lambda } \RR\pi_*\RR\hom( \oO_{\CZ_{ij}}\otimes \CN^*,\oO_{\CZ_{lk}}\otimes L_1^{i-l}L_2^{j-k})\cdot t_1^{i-l}t_2^{j-k}\\
       - \sum_{(i,j), (l,k)\in \lambda} \left(\oO^{1-g+(i-l)k_1+(j-k)k_2}-\oO^{1-g+(i-l+1)k_1+(j-k)k_2}t_1-\oO^{1-g+(i-l)k_1+(j-k+1)k_2}t_2\right. \\
       \left.+\oO^{1-g+(i-l+1)k_1+(j-k+1)k_2}t_1t_2\right)\cdot t_1^{i-l}t_2^{j-k} \\
        +\sum_{(i,j)\in \lambda}\left(\oO_{C^{[\bn]}}^{1-g -i\cdot k_1-j\cdot k_2}\cdot t_1^{-i}t_2^{-j}- \oO_{C^{[\bn]}}^{g-1-(i+1)k_1-(j+1) k_2}\cdot t_1^{i+1}t_2^{j+1}\right)
\end{multline*}
 \end{prop}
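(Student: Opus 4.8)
The plan is to compute the class $\BE^\vee|_{C^{[\bn]}}$ entirely on the base curve, by pushing the deformation theory forward along the affine projection $p\colon X\to C$ and invoking the weight decomposition established in \Cref{thm: fixed}. I would first unwind the obstruction theory \eqref{eqn: obstruction theory}. Restricting the universal sequences \eqref{eqn: short exact univ} via the diagram \eqref{eqn: diagram comm pot} and using that $\CI=\oO-\oO_\CZ$ in $K^0_\TT$, the trace-free part removes the $\RR\pi'_*\RRlHom(\oO,\oO)$ summand and the sign from $\BE=(\,\cdot\,)^\vee[-1]$ leaves
\[ \BE^\vee|_{C^{[\bn]}}=\RR\pi'_*\RRlHom(\oO,\oO_\CZ)+\RR\pi'_*\RRlHom(\oO_\CZ,\oO)-\RR\pi'_*\RRlHom(\oO_\CZ,\oO_\CZ). \]
So the whole computation reduces to evaluating these three pairings.

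Second, I would push each pairing down to $C$. Since $p$ is affine, $\RR\pi'_*=\RR\pi_*\circ p'_*$, and by \eqref{eqn: push O z} one has $p'_*\oO_\CZ=\CA+\CB$. The three pairings are then handled by three standard tools: (i) the pairing $\RR\pi'_*\RRlHom(\oO,\oO_\CZ)$ is simply $\RR\pi_*(\CA+\CB)$; (ii) for $\RR\pi'_*\RRlHom(\oO_\CZ,\oO)$ I would apply $\TT$-equivariant Serre duality along the smooth threefold fibres of $\pi'$, turning it into $-\overline{\RR\pi_*\big(p'_*\oO_\CZ\otimes K_X\big)}$, where $K_X=p^*(\omega_C\otimes L_1^{-1}\otimes L_2^{-1})$ carries equivariant weight $t_1^{-1}t_2^{-1}$ (the sign is the parity of $\dim=3$); and (iii) for the self-pairing $\RR\pi'_*\RRlHom(\oO_\CZ,\oO_\CZ)$ I would use that $\oO_\CZ$ is supported on the zero section $C\hookrightarrow X$, a regular embedding of codimension two with normal bundle $N=L_1t_1\oplus L_2t_2$; the self-intersection formula $Li^*i_*(-)=(-)\otimes\lambda_{-1}(N^\vee)$ yields $\RR\pi'_*\RRlHom(\oO_\CZ,\oO_\CZ)=\RR\pi_*\RRlHom_C(p'_*\oO_\CZ,\,p'_*\oO_\CZ\otimes\CN)$, since $\lambda_{-1}(N^\vee)^\vee=\lambda_{-1}(N)=\CN$.

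Third, I would substitute $p'_*\oO_\CZ=\CA+\CB$ into these three expressions and expand, sorting the resulting terms into the four blocks of the asserted formula. The $\CB$-parts of (i) and (ii) produce the first two summands of the single sum over $\BZ^2_{\geq 0}\setminus\lambda$; the cross terms $\RRlHom_C(\CA,\CB\otimes\CN)$ and $\RRlHom_C(\CB,\CA\otimes\CN)$ from (iii) produce its last two summands, the $\omega_C$-twisted dualised one arising by applying Serre duality on $C$ to the torsion-against-line-bundle pairing; the $\CB\otimes\overline{\CB}$ part of (iii) gives the double sum $\RR\pi_*\RRlHom(\oO_{\CZ_{ij}}\otimes\CN^*,\oO_{\CZ_{lk}}\otimes L_1^{i-l}L_2^{j-k})$; the $\CA\otimes\overline{\CA}$ part gives the remaining double sum over $\lambda$, where each $\RR\pi_*$ of a line bundle $L_1^aL_2^b$ on $C$ is replaced by its Riemann--Roch value $\oO^{\,ak_1+bk_2+1-g}$; and the $\CA$-parts of (i) and (ii) give the final sum over $\lambda$, again by Riemann--Roch (using $\deg\omega_C=2g-2$ for the second term, which produces the $t_1^{i+1}t_2^{j+1}$ weight shift).

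The main obstacle I anticipate is bookkeeping rather than conceptual: keeping the $\TT$-weights, the $\omega_C$-twists, and the dualisations mutually consistent through the two Serre dualities (on $X$ for (ii), on $C$ for one cross term) and the self-intersection twist $\CN$. In particular one must verify that the weight shift $t_1^{i+1}t_2^{j+1}$ is produced correctly by $K_X$, that the higher direct images of the torsion summands $\oO_{\CZ_{ij}}$ vanish so that $\RR\pi_*$ collapses to $\pi_*$ exactly where the statement writes $\pi_*$, and that the line-bundle summands collapse to the integers $\oO^{\,ak_1+bk_2+1-g}$ with no unevaluated higher cohomology surviving. Matching the four $\pm$ signs of the $\CN$-expansion against the $+,-,-,+$ pattern in the double sum over $\lambda$ is then the final consistency check.
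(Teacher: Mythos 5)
Your proposal is correct and follows essentially the same route as the paper's proof: the same three-term decomposition of $\BE^\vee|_{C^{[\bn]}}$, pushforward along the affine projection $p$ using $p'_*\oO_\CZ=\CA+\CB$, duality for the $\RRlHom(\oO_\CZ,\oO)$ term, the Koszul self-intersection formula $\LL i'^*i'_*(-)=(-)\otimes\CN^*$ for the self-pairing, and the final expansion with Riemann--Roch and cohomology-and-base-change collapsing the line-bundle and torsion blocks. The one place where you elide a technical point is step (ii): you invoke Serre duality directly along the non-proper fibres of $\pi'$ (legitimate because $\oO_\CZ$ has support proper over the base), whereas the paper makes this rigorous by factoring through the projective completion $\overline{X}=\BP_C(L_1\oplus L_2\oplus\oO_C)$ and applying adjunction and Grothendieck duality for the proper morphisms $\overline{p}'$ and $\pi$ in turn.
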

 \begin{proof}
By \eqref{eqn: diagram comm pot}, \eqref{eqn: short exact univ} we have an identity
\begin{align*}
      \BE|^\vee_{C^{[\bn]}}&=\RR\pi'_*\RR\hom( \oO,\oO_{\CZ})+\RR\pi'_*\RR\hom(\oO_\CZ, \oO)-\RR\pi'_*\RR\hom( \oO_\CZ,\oO_\CZ)\in K_\TT^0(C^{[\bn]}).
\end{align*}
We now prove  a series of identities  for the summands of $  \BE|^\vee_{C^{[\bn]}}$. Clearly, we have
\begin{align*}
    \RR\pi'_*\RR\hom( \oO,\oO_{\CZ})&=  \RR\pi_*p'_*\oO_{\CZ}.
\end{align*}
Set the projective completion of $X$ to be $\overline{X}=\BP_C(L_1\oplus L_2\oplus \oO_C)$ and consider the commutative diagram
% https://q.uiver.app/#q=WzAsNixbMCwwLCJYXFx0aW1lcyBDXntbXFxibl19Il0sWzEsMCwiXFxvdmVybGluZXtYfVxcdGltZXMgQ157W1xcYm5dfSJdLFsyLDAsIkNcXHRpbWVzIENee1tcXGJuXX0iXSxbMCwxLCJYIl0sWzEsMSwiXFxvdmVybGluZXtYfSJdLFsyLDEsIkMiXSxbMCwxLCIiLDAseyJzdHlsZSI6eyJ0YWlsIjp7Im5hbWUiOiJob29rIiwic2lkZSI6InRvcCJ9fX1dLFsxLDIsIiIsMCx7InN0eWxlIjp7ImhlYWQiOnsibmFtZSI6ImVwaSJ9fX1dLFs0LDUsIiIsMCx7InN0eWxlIjp7ImhlYWQiOnsibmFtZSI6ImVwaSJ9fX1dLFswLDNdLFsxLDRdLFsyLDVdLFszLDQsIiIsMCx7InN0eWxlIjp7InRhaWwiOnsibmFtZSI6Imhvb2siLCJzaWRlIjoidG9wIn19fV0sWzAsMiwiIiwwLHsiY3VydmUiOi0zLCJzdHlsZSI6eyJoZWFkIjp7Im5hbWUiOiJlcGkifX19XSxbMyw1LCIiLDAseyJjdXJ2ZSI6Mywic3R5bGUiOnsiaGVhZCI6eyJuYW1lIjoiZXBpIn19fV1d
\begin{equation}\label{eqn: diagram proj compl}
    \begin{tikzcd}
	{X\times C^{[\bn]}} & {\overline{X}\times C^{[\bn]}} & {C\times C^{[\bn]}} \\
	X & {\overline{X}} & C
	\arrow["\iota'", hook, from=1-1, to=1-2]
	\arrow["p'", bend left=30, two heads, from=1-1, to=1-3]
	\arrow["\pi'", from=1-1, to=2-1]
	\arrow["\overline{p}' ", two heads, from=1-2, to=1-3]
	\arrow[from=1-2, to=2-2]
	\arrow["\pi", from=1-3, to=2-3]
	\arrow["\iota", hook, from=2-1, to=2-2]
	\arrow["p", bend right =30, two heads, from=2-1, to=2-3]
	\arrow["\overline{p}",  two heads, from=2-2, to=2-3]
\end{tikzcd}
\end{equation}
%To ease the notation, we omit writing pullbacks of line bundles from $X\times C^{[n]}\to C^{[n]}$ and $C\times C^{[n]}\to C^{[n]}$.
By applying adjunction and $\TT$-equivariant Grothendieck duality with respect to \eqref{eqn: diagram proj compl}, we have
\begin{align*}
    \RR\pi'_*\RR\hom(\oO_\CZ, \oO)&=\RR\pi_*\RR\overline{p}'_*\RR\iota'_*\RR\hom(\oO_\CZ, \oO)\\
    &=\RR\pi_*\RR\overline{p}'_*\RR\hom(\RR\iota'_*\oO_{\CZ}, \RR\iota'_*\oO)\\
    &=\RR\pi_*\RR\overline{p}'_*\RR\hom(\RR\iota'_*\oO_{\CZ}, \oO)\\
    &=\RR\pi_*\RR\overline{p}'_*\RR\hom(\RR\iota'_*\oO_{\CZ},\overline{p}'^* (L_1L_2)\otimes  \omega_{\overline{p}'}[2])[-2]\otimes t_1t_2\\
    &=\RR\pi_*\RR\hom( p'_* \oO_{\CZ}, L_1L_2)[-2]\otimes t_1t_2\\
    &=\RR\pi_*\RR\hom( p'_* \oO_{\CZ}, L_1L_2\otimes \omega_C^{-1}\otimes \omega_\pi[1])[-3]\otimes t_1t_2\\
    &=\left(\RR\pi_* \left(p'_* \oO_\CZ\otimes \omega_C\otimes L_1^{-1}L_2^{-1}\right)\right)^\vee[-3]\otimes t_1t_2.
\end{align*}
%Finally, define $\Lambda^\bullet(V)=\sum_{i=0}^{\rk V}(-1)^{i}\Lambda^i V$ for a locally free sheaf $V$ and extend it by linearity to any class in  $K^0_\TT(C)$. 
Let $i:C\to X$  and  $i':C\times C^{[\mathbf{n}]}\to X\times C^{[\mathbf{n}]}$ denote the zero sections.  By \cite[Lemma 5.4.9]{CG_representation_theory}, for every $\TT$-equivariant coherent sheaf $\CF\in K^{\TT}_0(C)$, we have
 \begin{align*}
   \mathbf{L}i^*i_*\CF&=\sum_{i=0}^2 (-1)^i\Lambda^i N_{C/X}^*\otimes \CF\\
  &= \CN^*\otimes \CF\in K_{\TT}^0(C),  
 \end{align*}
where $ N_{C/X}=L_1\otimes t_1\oplus L_2\otimes t_2$ is the $\TT$-equivariant normal bundle; an analogous formula holds for $ \mathbf{L}i'^*i'_*\CF$.  Functoriality with respect to the diagram \eqref{eqn: diagram comm pot} yields
\begin{align}\label{eqn: ZZ}
\begin{split}
      \RR\pi'_*\RR\hom( \oO_\CZ,\oO_\CZ)&=\RR\pi_*p'_*\RR\hom( i'_*p'_*\oO_\CZ,i'_*p'_*\oO_\CZ)\\
    &=\RR\pi_*p'_*i'_*\RR\hom( \LL i'^* i'_*p'_*\oO_\CZ,p'_*\oO_\CZ)\\
    &= \RR\pi_*\RR\hom( \CN^*\otimes p'_*\oO_\CZ,p'_*\oO_\CZ).
\end{split}
\end{align}
Notice that, for all line bundles $L$ on $C$, by proper base change we obtain
\begin{align}\label{eqn: PUSH of trivial}
    \RR\pi_*L=\oO_{C^{[\bn]}}^{\chi(L)}\in K^0(C^{[\bn]}).
\end{align}
By \eqref{eqn: push O z}, we have
\begin{align*}
  \RR\pi_*  p'_*\oO_\CZ =\bigoplus_{(i,j)\in \lambda}\oO_{C^{[\bn]}}^{1-g -i\cdot\deg L_1-j\cdot\deg L_2}\cdot t_1^{-i}t_2^{-j}+
     \bigoplus_{(i,j)\in \BZ^2_{\geq 0}\setminus\lambda}\RR\pi_*\left(\oO_{\CZ_{ij}}\otimes L_1^{-i}L_2^{-j}\right)\cdot t_1^{-i}t_2^{-j},
     \end{align*}
     \begin{multline*}
         \left(\RR\pi_* \left(p'_* \oO_\CZ\otimes \omega_C\otimes L_1^{-1}L_2^{-1}\right)\right)^\vee=     \bigoplus_{(i,j)\in \lambda}\oO_{C^{[\bn]}}^{g-1-(i+1)\deg L_1-(j+1)\cdot\deg L_2}\cdot t_1^{i}t_2^{j}+ \\ 
         \bigoplus_{(i,j)\in \BZ^2_{\geq 0}\setminus\lambda}\left(\RR\pi_*\left(\oO_{\CZ_{ij}}\otimes L_1^{-i-1}L_2^{-j-1}\otimes \omega_C\right)\right)^\vee\cdot t_1^{i}t_2^{j}.
     \end{multline*}
By applying Grothendieck duality and \eqref{eqn: ZZ}, we have 
\begin{multline*}
      \RR\pi'_*\RR\hom( \oO_\CZ,\oO_\CZ)= \RR\pi_*\RR\hom( \CA\otimes \CN^*,\CA)+ \RR\pi_*\RR\hom( \CA\otimes \CN^*,\CB)\\
      + \RR\pi_*\RR\hom( \CB\otimes \CN^*,\CA)+ \RR\pi_*\RR\hom( \CB\otimes \CN^*,\CB)\\
      = \sum_{(i,j)\in \BZ^2_{\geq 0}\setminus\lambda}\RR\pi_*\left(\oO_{\CZ_{ij}}\otimes L_1^{-i}L_2^{-j}\otimes \CN\otimes \CA^* \right)\cdot t_1^{-i}t_2^{-j}\\
      -\sum_{(i,j)\in \BZ^2_{\geq 0}\setminus\lambda}\left(\RR\pi_*\left(\oO_{\CZ_{ij}}\otimes L_1^{-i}L_2^{-j}\otimes \CN^*\otimes \CA^* \otimes \omega_C\right)\right)^\vee\cdot t_1^{i}t_2^{j}\\
      +\sum_{(i,j), (l,k)\in\BZ^2_{\geq 0}\setminus\lambda } \RR\pi_*\RR\hom( \oO_{\CZ_{ij}}\otimes \CN^*,\oO_{\CZ_{lk}}\otimes L_1^{i-l}L_2^{j-k})\cdot t_1^{i-l}t_2^{j-k}+\RR \pi_*\left(\CA\otimes \CA^*\otimes \CN \right).
\end{multline*}
More explictly, we express
\begin{multline*}
    \RR \pi_*\left(\CA\otimes \CA^*\otimes \CN \right)=\\
    \sum_{(i,j), (l,k)\in \lambda} \RR\pi_*\left(L_1^{i-l}L_2^{j-k}-L_1^{i-l+1}L_2^{j-k}t_1-L_1^{i-l}L_2^{j-k+1}t_2+L_1^{i-l+1}L_2^{j-k+1}t_1t_2 \right)\cdot t_1^{i-l}t_2^{j-k},
\end{multline*}
which by \eqref{eqn: PUSH of trivial} can be written as a formal sum of trivial line bundles on $C^{[\bn]}$. 

Finally, notice that for any line bundle $L$ on $C$ and any $(i,j)\in \BZ^2_{\geq 0}\setminus \lambda$, by cohomology and base change we have
\begin{align*}
    \RR\pi_*(\oO_{\CZ_{ij}}\otimes L)=\pi_*(\oO_{\CZ_{ij}}\otimes L),
\end{align*}
which is an genuine vector bundle of rank $n_{ij}$.
 \end{proof} 
By Graber-Pandharipande localisation \cite{GP_virtual_localization}, each skew nested Hilbert scheme $C^{[\bn]}$ is endowed with a virtual fundamental class $   [C^{[\mathbf{n}]}]^{\vir}$ induced by the perfect obstruction theory
\[
\BE|^{\fix}_{C^{[\bn]}}\to \BL_{C^{[\bn]}}.
\]
 \begin{theorem}\label{thm: equality virtual classes}
% The $\TT$-fixed part of the perfect obstruction theory is 
 %\begin{align*}
  %   (\BE^\vee)^{\fix}=...
% \end{align*}
 There are  identities of cycles
 \begin{align*}
     [C^{[\mathbf{n}]}]^{\vir}&=[C^{[\mathbf{n}]}]\in A_{\omega(\bn)}(C^{[\mathbf{n}]}),\\
       \oO_{C^{[\mathbf{n}]}}^{\vir}&=  \oO_{C^{[\mathbf{n}]}}\in K_0(C^{[\mathbf{n}]}).
 \end{align*}
 \end{theorem}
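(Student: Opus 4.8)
The plan is to extract the $\TT$-fixed part of the class computed in \Cref{prop: K theort class} and to identify the resulting induced obstruction theory on $C^{[\bn]}$ with the intrinsic one coming from its local complete intersection structure. First I would take $\TT$-invariants of $\BE^\vee|_{C^{[\bn]}}$ to obtain the virtual tangent complex $T^{\vir}=(\BE^\vee|_{C^{[\bn]}})^{\fix}$ of the perfect obstruction theory $\BE|^{\fix}_{C^{[\bn]}}\to \BL_{C^{[\bn]}}$ produced by Graber--Pandharipande localisation \cite{GP_virtual_localization}. Only the weight-$(0,0)$ summands of the long expression survive; using that $\pi$ has one-dimensional fibres, the identity \eqref{eqn: PUSH of trivial}, and the fact (recorded at the end of the proof of \Cref{prop: K theort class}) that each $\pi_*(\oO_{\CZ_{ij}}\otimes L)$ is a genuine vector bundle of rank $n_{ij}$ with vanishing higher direct image, I would rewrite $T^{\vir}$ as an honest two-term complex $[T^0\to T^1]$ of vector bundles on $C^{[\bn]}$.

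The conceptual pivot is that $C^{[\bn]}$ is already known to be a reduced local complete intersection of pure dimension $\omega(\bn)$ by \Cref{prop: dim of skew nested}, so its cotangent complex $\BL_{C^{[\bn]}}$ is perfect of amplitude $[-1,0]$ and the tautological obstruction theory $\id\colon \BL_{C^{[\bn]}}\to \BL_{C^{[\bn]}}$ already yields $[C^{[\bn]}]^{\vir}=[C^{[\bn]}]$ and $\oO^{\vir}_{C^{[\bn]}}=\oO_{C^{[\bn]}}$ by \cite{BF_normal_cone}. It therefore suffices to prove that the comparison map $\phi\colon \BE|^{\fix}_{C^{[\bn]}}\to \BL_{C^{[\bn]}}$ is a \emph{quasi-isomorphism}, since quasi-isomorphic obstruction theories produce the same intrinsic normal cone and hence the same virtual cycle and virtual structure sheaf. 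As a first, necessary consistency step I would compute $\vdim$ of the fixed theory from the $K$-class of $T^{\vir}$ by Riemann--Roch and check it equals $\omega(\bn)$: after the trivial-bundle contributions cancel, I expect this to collapse to the combinatorial identity $\omega(\bn)=\sum_{\Box\in\CoSoc(\lambda)}n_\Box-\sum_{\Box\in\CoSubsoc(\lambda)}n_\Box$, mirroring the socle computation for double nested schemes in \cite{GLMRS_double-nested-1, Mon_double_nested}.

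The genuinely hard step is upgrading $\phi$ from a perfect obstruction theory to a quasi-isomorphism. By construction $\phi$ is an isomorphism on $h^0$ and a surjection on $h^{-1}$, and both complexes have amplitude $[-1,0]$, so the only obstruction to $\phi$ being a quasi-isomorphism is the kernel of $h^{-1}(\phi)$, i.e.\ an \emph{excess fixed obstruction}. Over the dense smooth locus of the reduced scheme $C^{[\bn]}$ this kernel vanishes by the matching of virtual dimensions, so the content is to propagate the vanishing across the lci singularities. Here I would use the isomorphism $C^{[\bn]}\cong C^{[\Bm]}$ with a double nested scheme from the proof of \Cref{prop: dim of skew nested} to realise the fixed locus inside a smooth product of symmetric products of $C$, cut out by the nesting equations; following the deformation-theoretic argument of \cite{Mon_double_nested}, I would check that the Koszul obstruction theory attached to these equations coincides with the complex $[T^0\to T^1]$ computed above, identifying $T^1$ with the conormal contribution of the nesting. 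This forces $h^{-1}(\phi)$ to be an isomorphism.

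Granting this identification, the cycle equality $[C^{[\bn]}]^{\vir}=[C^{[\bn]}]$ in $A_{\omega(\bn)}(C^{[\bn]})$ and the $K$-theoretic equality $\oO^{\vir}_{C^{[\bn]}}=\oO_{C^{[\bn]}}$ both follow simultaneously from the quasi-isomorphism $\BE|^{\fix}_{C^{[\bn]}}\simeq \BL_{C^{[\bn]}}$, since the virtual structure sheaf is computed from the same cone. I expect the decomposition of $T^{\vir}$ into actual bundles and the numerical check to be routine given \cite{GLMRS_double-nested-1, Mon_double_nested}; the main obstacle is the quasi-isomorphism of \Cref{thm: equality virtual classes}'s two obstruction theories, that is, ruling out excess obstruction in the fixed directions at the singular points of $C^{[\bn]}$.
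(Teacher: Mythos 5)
Your overall strategy coincides with the paper's: use that $C^{[\bn]}$ is a reduced lci of pure dimension $\omega(\bn)$ (so $\BL_{C^{[\bn]}}$ is perfect in $[-1,0]$ and the tautological obstruction theory gives the ordinary classes), and reduce the statement to showing that the Graber--Pandharipande induced theory $\phi\colon \BE|^{\fix}_{C^{[\bn]}}\to \BL_{C^{[\bn]}}$ is a quasi-isomorphism. The one substantive place where you diverge is in what you call ``the genuinely hard step.'' You treat the virtual-rank computation as merely a necessary consistency check and then propose an extra argument (realising $C^{[\bn]}$ inside symmetric products and matching the Koszul obstruction theory of the nesting equations) to rule out excess obstruction at the singular points. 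That extra step is not needed. Since $\phi$ is a perfect obstruction theory between complexes of amplitude $[-1,0]$, its cone is perfect of tor-amplitude $[-2,0]$ and, by the usual $h^0$-iso/$h^{-1}$-surjective conditions, is concentrated in cohomological degree $-2$; hence $\ker h^{-1}(\phi)$ is automatically \emph{locally free}. On a reduced, pure-dimensional scheme, the equality $\rk \BE|^{\fix}_{C^{[\bn]}}=\omega(\bn)=\rk\BL_{C^{[\bn]}}$ forces this locally free kernel to have rank zero, hence to vanish identically --- no propagation from the smooth locus to the singular points is required. This is exactly the content of the lemma the paper cites (\cite[Lemma 3.3]{Sch_double}), so the rank count alone closes the argument.

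The flip side is that the rank count, which you defer as ``routine,'' is where essentially all of the paper's work lies, and your sketch only covers part of it. The paper (i) observes that all the $\RR\pi_*\RRlHom(\oO_{\CZ_{ij}}\otimes\CN^*,\oO_{\CZ_{lk}}\otimes\cdots)$ terms have rank zero by Riemann--Roch, (ii) shows that the fixed part of the remaining $\bn$-dependent block is exactly $\sum_{\Box\in\CoSoc(\lambda)}\pi_*\oO_{\CZ_\Box}-\sum_{\Box\in\CoSubsoc(\lambda)}\pi_*\oO_{\CZ_\Box}$, of rank $\omega(\bn)$ --- this is the cosocle identity you anticipate --- and (iii) separately checks that the fixed part of the purely $\lambda$-dependent block $P_{C,L_1,L_2}$ has rank zero, via the vertex/edge/square count of \cite[Lemma~2.1]{Mon_double_nested} (plus the $\lambda=\varnothing$ case handled separately). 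Item (iii) is absent from your outline and is not a formal consequence of the cosocle computation, so you would need to supply it to complete the proof.
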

 \begin{proof}
   By \Cref{prop: dim of skew nested} the skew nested Hilbert scheme $C^{[\bn]}$ is a reduced local complete intersection  of pure dimension $\omega(\bn)$, which implies that $\BL_{C^{[\bn]}}$ is perfect of amplitude $[-1,0]$ and of virtual  rank $\omega(\bn)$. It follows that if the complex $ \BE|^{\fix}_{C^{[\bn]}}$ has virtual rank $\omega(\bn)$, then the  map in  the perfect obstruction theory is an isomorphism, which implies the required identities, see e.g.~\cite[Lemma 3.3]{Sch_double}. We are therefore reduced to compute the virtual rank of the $\TT$-fixed part $\BE|^{\fix}_{C^{[\bn]}}$.

Notice first that, for all $(i,j), (l,k)\in \BZ^2_{\geq 0}\setminus \lambda$ and all line bundles $L$ on $C$, By Riemann-Roch we have
\begin{align}\label{eqn: vanish rank}
    \rk \RR\pi_*\RR\hom( \oO_{\CZ_{ij}}\otimes L,\oO_{\CZ_{lk}})=0.
\end{align}
   Assume first that $\lambda=\varnothing$. In this case, $\CA=0$ and  by \Cref{prop: K theort class} and \eqref{eqn: vanish rank} we compute
   \begin{align*}
       \BE|^{\vee, \fix}_{C^{[\bn]}}=\pi_*\oO_{\CZ_{ij}},
   \end{align*}
   which is a vector bundle of rank $n_{00}=\omega(\bn)$.
     Assume now that $\lambda \neq \varnothing$. Analysing the  $\TT$-weights contributing to the fixed part of $    \BE|^{\vee}_{C^{[\bn]}}$ yields
     \begin{multline*}
          \sum_{(i,j)\in \BZ^2_{\geq 0}\setminus\lambda}\left(\pi_*\left(\oO_{\CZ_{ij}}\otimes L_1^{-i}L_2^{-j}\right)\cdot t_1^{-i}t_2^{-j} - \left(\pi_*\left(\oO_{\CZ_{ij}}\otimes L_1^{-i-1}L_2^{-j-1}\otimes \omega_C\right)\right)^\vee\cdot t_1^{i+1}t_2^{j+1}\right.\\
 \left.-\pi_*\left(\oO_{\CZ_{ij}}\otimes L_1^{-i}L_2^{-j}\otimes \CN\otimes \CA^* \right)\cdot t_1^{-i}t_2^{-j}+\left(\pi_*\left(\oO_{\CZ_{ij}}\otimes L_1^{-i}L_2^{-j}\otimes \CN^*\otimes \CA^* \otimes \omega_C\right)\right)^\vee\cdot t_1^{i}t_2^{j}\right)^{\fix}=\\
  -\sum_{(i,j)\in \BZ^2_{\geq 0}\setminus\lambda} \sum_{(l,k)\in\lambda} \left( \pi_*\left(\oO_{\CZ_{ij}}\otimes L_1^{l-i}L_2^{k-j}\otimes (\oO-L_1 \cdot t_1 -L_2\cdot t_2+L_1L_2\cdot t_1t_2) \right)\cdot t_1^{l-i}t_2^{k-j}\right)^{\fix}\\
  =\sum_{(i,j)\in \BZ^2_{\geq 0}\setminus\lambda}\left(\sum_{\substack{(l,k)\in\lambda\\ (i-l,k-j)\in \set{(1,0), (0,1)}}}  \pi_*\oO_{\CZ_{ij}} -\sum_{\substack{(l,k)\in\lambda\\ (i-l,k-j)\in \set{(1,1)}}}  \pi_*\oO_{\CZ_{ij}} \right)\\
 = \sum_{(i,j)\in \CoSoc(\lambda)} \pi_*\oO_{\CZ_{ij}} -\sum_{(i,j)\in \CoSubsoc(\lambda)} \pi_*\oO_{\CZ_{ij}},
     \end{multline*}
where the last line follows by a box-by-box analysis of the contribution to the above sum. By taking ranks and exploiting that
\[
\rk \pi_*\oO_{\CZ_{ij}}=n_{ij}, 
\]
we conclude that the rank of the fixed part above is $\omega(\bn)$.

Secondly, consider the fixed part 
\begin{multline*}
   \left(   - \sum_{(i,j), (l,k)\in \lambda} \left(\oO^{1-g+(i-l)k_1+(j-k)k_2}-\oO^{1-g+(i-l+1)k_1+(j-k)k_2}t_1-\oO^{1-g+(i-l)k_1+(j-k+1)k_2}t_2\right. \right.\\
       \left.+\oO^{1-g+(i-l+1)k_1+(j-k+1)k_2}t_1t_2\right)\cdot t_1^{i-l}t_2^{j-k} \\
       \left. +\sum_{(i,j)\in \lambda}\left(\oO_{C^{[\bn]}}^{1-g -i\cdot k_1-j\cdot k_2}\cdot t_1^{-i}t_2^{-j}- \oO_{C^{[\bn]}}^{g-1-(i+1)k_1-(j+1) k_2}\cdot t_1^{i+1}t_2^{j+1}\right)\right)^{\fix}=\\
        \oO_{C^{[\bn]}}^{1-g} - \sum_{\substack{(i,j), (l,k)\in \lambda\\ (i,j)=(l,k)}}\oO_{C^{[\bn]}}^{1-g}+ \sum_{\substack{(i,j), (l,k)\in \lambda\\ (i+1,j)=(l,k)}}\oO_{C^{[\bn]}}^{1-g}+ \sum_{\substack{(i,j), (l,k)\in \lambda\\ (i,j+1)=(l,k)}}\oO_{C^{[\bn]}}^{1-g}- \sum_{\substack{(i,j), (l,k)\in \lambda\\ (i+1,j+1)=(l,k)}}\oO_{C^{[\bn]}}^{1-g}.
\end{multline*}
Now, the family indices of the last four sums are respectively in correspondance with the \emph{vertices},  \emph{vertical edges}, \emph{horizontal edges} and \emph{squares} of the Young diagram $\lambda$, with notation as in \cite[Sec.~2.1]{Mon_double_nested}. Then, by \cite[Lemma 2.1]{Mon_double_nested} we have
\begin{align*}
        1 - \sum_{\substack{(i,j), (l,k)\in \lambda\\ (i,j)=(l,k)}}1+ \sum_{\substack{(i,j), (l,k)\in \lambda\\ (i+1,j)=(l,k)}}1+\sum_{\substack{(i,j), (l,k)\in \lambda\\ (i,j+1)=(l,k)}}1- \sum_{\substack{(i,j), (l,k)\in \lambda\\ (i+1,j+1)=(l,k)}}1=0,
\end{align*}
by which we conclude the proof.
\end{proof}
\section{Universality}\label{sec: universal}
\subsection{Universal expression}
Let $C$ be a smooth projective curve, $L_1, L_2$ line bundles on $C$ and set $X=\Tot_C(L_1\oplus L_2)$. To ease notation, given a class $\CE\in K^0_{\TT}(C^{[\mathbf{n}]})$, we set
\begin{align}\label{eqn: chi hat}
\widehat{\chi}\left(C^{[\mathbf{n}]},\CE\right) =\chi\left(C^{[\mathbf{n}]},\widehat{\oO}_{C^{[\mathbf{n}]}}\otimes \CE\right).
\end{align}
By the results of \Cref{sec: class pot},  the partition functions $   \DT_d(X, q)$ and $  \widehat{\DT}_d(X, q)$ can be expressed as
\begin{align}\label{eqn: DT as localised}
\begin{split}
        \DT_d(X, q)&=\sum_{|\lambda|= d}  q^{\mathbf{f}_{\lambda}( g,\deg L_1, \deg L_2)}\sum_{\mathbf{n}}q^{|\mathbf{n}|}\cdot   \int_{C^{[\mathbf{n}]}}e(-N_{C,L_1,L_2, \bn}^{\vir}),\\
     \widehat{\DT}_d(X, q)&=\sum_{|\lambda|= d}  q^{\mathbf{f}_{\lambda}( g,\deg L_1, \deg L_2)}\sum_{\mathbf{n}}q^{|\mathbf{n}|}\cdot  \widehat{\chi}\left(C^{[\mathbf{n}]},\widehat{\mathfrak{e}}(-N_{C,L_1,L_2, \bn}^{\vir})  \right),
     \end{split}
\end{align}
where $N^{\vir}_{C,L_1,L_2, \bn}$ denotes the virtual normal bundle of $C^{[\bn]}\hookrightarrow\Hilb^n(X, \beta)$ and $\bn$ is a skew plane partition of shape $\BZ^2_{\geq 0}\setminus \lambda$.
\smallbreak
Throughout this section, we fix a Young diagram  $\lambda$. We prove in this section that the generating series
\begin{align}\label{eqn: gen integral on double nested}
\begin{split}
       \sum_{\mathbf{n}}q^{|\mathbf{n}|}\cdot \int_{C^{[\mathbf{n}]}}e(-N_{C,L_1,L_2, \bn}^{\vir})&\in \BQ(s_1,s_2)\llbracket q \rrbracket,\\
   \sum_{\mathbf{n}}q^{|\mathbf{n}|}\cdot \widehat{\chi}\left(C^{[\mathbf{n}]},\widehat{\mathfrak{e}}(-N_{C,L_1,L_2, \bn}^{\vir})  \right)&\in \BQ(t_1^{1/2}, t_2^{1/2})\llbracket q \rrbracket
    \end{split}
\end{align}
are controlled by their constant term and  three universal series, depending only on the Young diagram $\lambda$. 

\subsubsection{Multiplicativity} We begin by showing that the generating series \eqref{eqn: gen integral on double nested} are multiplicative with respect to disjoint unions of triples $(C, L_1, L_2)$.
\begin{lemma}\label{lemma: multiplicativity}
Let $(C,L_1,L_2)$ be a triple where $C=C'\sqcup C''$ and  $L_i=L'_i\oplus L''_i$ for $i=1,2$, where $L'_i$ are line bundles on $C'$ and $L''_i$ are line bundles on $C''$. Then 
\begin{multline*}
     \sum_{\mathbf{n}}q^{|\mathbf{n}|} \int_{C^{[\mathbf{n}]}}e(-N_{C,L_1,L_2, \bn}^{\vir})= \sum_{\mathbf{n}}q^{|\mathbf{n}|} \int_{C'^{[\mathbf{n}]}}e(-N_{C',L'_1,L'_2, \bn}^{\vir})\cdot  \sum_{\mathbf{n}}q^{|\mathbf{n}|} \int_{C''^{[\mathbf{n}]}}e(-N_{C'',L''_1,L''_2, \bn}^{\vir}),\\
        \sum_{\mathbf{n}}q^{|\mathbf{n}|}   \widehat{\chi}\left(C^{[\mathbf{n}]},\widehat{\mathfrak{e}}(-N_{C,L_1,L_2, \bn}^{\vir})  \right)=   \sum_{\mathbf{n}}q^{|\mathbf{n}|}   \widehat{\chi}\left(C'^{[\mathbf{n}]},\widehat{\mathfrak{e}}(-N_{C',L'_1,L'_2, \bn}^{\vir})  \right)\cdot   \sum_{\mathbf{n}}q^{|\mathbf{n}|}   \widehat{\chi}\left(C''^{[\mathbf{n}]},\widehat{\mathfrak{e}}(-N_{C'',L''_1,L''_2, \bn}^{\vir})  \right).
\end{multline*}
where the sums are over all skew plane partitions  of shape  $\BZ^2_{\geq 0}\setminus \lambda$. 
\end{lemma}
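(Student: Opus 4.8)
The plan is to reduce the statement to the geometric decomposition of the skew nested Hilbert scheme induced by the splitting $C=C'\sqcup C''$, and then to check that the virtual normal bundle respects this decomposition, after which multiplicativity is a formal consequence of the Cauchy product.

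First I would record the scheme-level decomposition. Since any zero-dimensional subscheme $Z_\Box\subset C=C'\sqcup C''$ splits uniquely as $Z_\Box=Z'_\Box\sqcup Z''_\Box$ with $Z'_\Box\subset C'$ and $Z''_\Box\subset C''$, and since both the nesting conditions and the Euler characteristics split ($\chi(\oO_{Z_\Box})=\chi(\oO_{Z'_\Box})+\chi(\oO_{Z''_\Box})$), there is an isomorphism of schemes
\begin{align*}
C^{[\bn]}\;\cong\;\coprod_{\bn'+\bn''=\bn}C'^{[\bn']}\times C''^{[\bn'']},
\end{align*}
where the union runs over all pairs of skew plane partitions of the same shape $\BZ^2_{\geq 0}\setminus\lambda$ summing entrywise to $\bn$; in particular $|\bn|=|\bn'|+|\bn''|$ on each component. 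This is the skew analogue of the standard product decomposition of the Hilbert scheme of points over a disjoint union, and it holds at the level of functors of points, hence over flat families (the argument of \cite[Prop.~3.1]{Mon_double_nested} applies verbatim), so it is compatible with the representability of \Cref{prop: dim of skew nested}.

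Next I would show that the restriction of $N^{\vir}_{C,L_1,L_2,\bn}$ to the component $C'^{[\bn']}\times C''^{[\bn'']}$ equals the external sum $\pr'^*N^{\vir}_{C',L'_1,L'_2,\bn'}+\pr''^*N^{\vir}_{C'',L''_1,L''_2,\bn''}$, where $\pr',\pr''$ are the two projections of the product. This is where the explicit formula of \Cref{prop: K theort class} does the work: every term of $\BE^\vee|_{C^{[\bn]}}$ is obtained by applying $\RR\pi_*$ (or $\RR\pi_*\RR\hom$) to sheaves on $C\times C^{[\bn]}$, and on the chosen component all the inputs — the universal structure sheaves $\oO_{\CZ_{ij}}$, the line bundles $L_1,L_2$, and hence the classes $\CA,\CB,\CN$ — restrict to the direct sum of their $C'$- and $C''$-parts. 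Since $\RR\hom(\CF',\CF'')=0$ whenever $\CF'$ and $\CF''$ are supported on the disjoint loci $C'\times(-)$ and $C''\times(-)$, and since $\RR\pi_*$ splits as $\RR\pi'_*\oplus\RR\pi''_*$ along $C=C'\sqcup C''$, every cross term vanishes and the whole class $\BE^\vee$ splits additively; the purely numerical summands $\oO^{1-g+\cdots}$ split as well via $\chi(L)=\chi(L|_{C'})+\chi(L|_{C''})$. Taking movable parts with respect to the same weights $t_1^at_2^b$ on both sides then yields the desired splitting of $N^{\vir}$.

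With the splitting in hand, multiplicativity follows from the multiplicativity of the (equivariant) Euler classes, $e(-N^{\vir})=e(-N^{\vir}_{C'})\cdot e(-N^{\vir}_{C''})$ and likewise $\widehat{\mathfrak{e}}(-N^{\vir})=\widehat{\mathfrak{e}}(-N^{\vir}_{C'})\cdot\widehat{\mathfrak{e}}(-N^{\vir}_{C''})$, together with the projection formula and Künneth on the product. Integrating (resp.\ taking holomorphic Euler characteristics) over each component and summing over all decompositions $\bn=\bn'+\bn''$ factors the coefficient of $q^{|\bn|}$ as a Cauchy product, which after the substitution $|\bn|=|\bn'|+|\bn''|$ is exactly the product of the two generating series; the same reasoning applies identically in the cohomological and $K$-theoretic settings. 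I expect the only delicate point to be the second step: matching the universal subschemes $\CZ_{ij}$ on $C^{[\bn]}$ with the pullbacks of the universal subschemes on the factors and confirming that the equivariant weights attached to each term agree on both sides, so that the fixed/movable decomposition — and hence the identification of $N^{\vir}$ — is preserved. This is routine once \Cref{prop: K theort class} is unwound term by term, but it is the part that genuinely requires verification rather than formal manipulation.
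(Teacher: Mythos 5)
Your proposal is correct and follows essentially the same route as the paper: the component decomposition $C^{[\bn]}=\coprod_{\bn'+\bn''=\bn}C'^{[\bn']}\times C''^{[\bn'']}$, the external-sum splitting of the virtual normal bundle, and the multiplicativity of $e$ and $\widehat{\mathfrak{e}}$ are precisely the three ingredients the paper invokes (citing the analogous argument in \cite[Prop.~5.2]{Mon_double_nested}). Your additional term-by-term verification via \Cref{prop: K theort class} that the cross terms vanish and the weights match is a correct elaboration of what the paper leaves implicit.
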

\begin{proof}
    The proof is analogous to the one of \cite[Prop.~5.2]{Mon_double_nested}. In fact, it follows directly by the decomposition of $C^{[\bn]}$ into connected components as
    \begin{align*}
C^{[\mathbf{n}]}=\coprod_{\mathbf{n}'+\mathbf{n}''=\mathbf{n}} C'^{[\mathbf{n}']}\times C''^{[\mathbf{n}'']},
\end{align*}
the decomposition of the virtual normal bundle as
    \begin{align*}
  N^{\vir}_{C,L_1,L_2,\bn}|_{C'^{[\mathbf{n}']}\times C''^{[\mathbf{n}'']}}= N^{\vir}_{C',L'_1,L'_2, \bn'}\boxplus N^{\vir}_{C'',L''_1,L''_2, \bn''}
\end{align*}
and the multiplicativity of the operators $ e, \widehat{\mathfrak{e}}$.
\end{proof}
\subsubsection{Chern number dependence} We show that the dependence of the  invariants  \eqref{eqn: gen integral on double nested} on $(C, L_1, L_2)$ is only on the Chern numbers $(g(C), \deg L_1, \deg L_2)$.
\begin{prop}\label{prop: chern dependence integrals}
Let $C$ be a genus $g$  irreducible smooth projective curve and $L_1, L_2$ line bundles on $C$. Then the invariants \eqref{eqn: gen integral on double nested} depend on $(C, L_1, L_2)$ only  on $(g, \deg L_1, \deg L_2$).
\end{prop}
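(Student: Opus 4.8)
The plan is to adapt the universality argument of Ellingsrud--Göttsche--Lehn \cite{EGL_cobordism}, in the form developed for nested Hilbert schemes of curves in \cite{Mon_double_nested}. The guiding principle is to rewrite the integrands in \eqref{eqn: gen integral on double nested} as \emph{tautological integrals} over $C^{[\bn]}$, and then to isolate, via Grothendieck--Riemann--Roch, the precise manner in which the triple $(C,L_1,L_2)$ enters. Since the cohomological statement (for $e$ and $\int_{C^{[\bn]}}$) and the $K$-theoretic one (for $\widehat{\mathfrak{e}}$ and $\chi$) are governed by the same descendent formalism, I would treat them in parallel, passing between them through $\chi(\,\cdot\,)=\int_{C^{[\bn]}}\ch(\,\cdot\,)\td(T_{C^{[\bn]}})$.

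\textbf{Reduction to tautological classes.} The first step is to invoke \Cref{prop: K theort class}, which presents $N^{\vir}_{C,L_1,L_2,\bn}$, the movable part of $-\BE|^\vee_{C^{[\bn]}}$, as a $\BZ[t_1^{\pm1},t_2^{\pm1}]$-linear combination of $K$-theory classes of three types: genuine bundles $\pi_*(\oO_{\CZ_{ij}}\otimes M)$ and their duals, Ext-type classes $\RR\pi_*\RR\hom(\oO_{\CZ_{ij}},\oO_{\CZ_{lk}}\otimes M)$, and trivial bundles $\oO_{C^{[\bn]}}^{\chi(M)}$, where $M$ runs over the monomials in $L_1,L_2,\omega_C$ dictated by the proposition. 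As $e(-\,\cdot\,)$ and $\widehat{\mathfrak{e}}(-\,\cdot\,)$ are multiplicative, applying them to $-N^{\vir}$ produces a universal polynomial in the Chern classes (resp.\ Chern roots) of these building blocks, with coefficients in $\BQ(s_1,s_2)$ (resp.\ $\BQ(t_1^{1/2},t_2^{1/2})$).

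\textbf{Grothendieck--Riemann--Roch.} Writing $\pi_C\colon C\times C^{[\bn]}\to C$ for the projection, I would next apply GRR,
\[
\ch\big(\RR\pi_*(\oO_{\CZ_{ij}}\otimes M)\big)=\pi_*\!\left(\ch(\oO_{\CZ_{ij}})\cdot \pi_C^*\ch(M)\cdot \pi_C^*\td(T_C)\right),
\]
and similarly for the Ext-type terms. On the curve one has $\ch(M)=1+(\deg M)[\pt]$ and $\td(T_C)=1+(1-g)[\pt]$, so these factors contribute only through the integers $\deg M$ and $g$; likewise $\chi(M)=\deg M+1-g$. Hence the Chern classes of all building blocks, and therefore the whole integrand, become polynomials—with coefficients depending on $(C,L_1,L_2)$ only through $(g,\deg L_1,\deg L_2)$—in the intrinsic \emph{descendent classes} $\pi_*\!\big(\ch_k(\oO_{\CZ_{ij}})\cdot\pi_C^*\gamma\big)$ with $\gamma\in\{1,[\pt]\}$.

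\textbf{Universality of descendent integrals.} It then remains to show that integrals over $C^{[\bn]}$ of arbitrary polynomials in these descendent classes depend on $C$ only through $g$. This is exactly where the Ellingsrud--Göttsche--Lehn machinery enters, and I expect it to be the main obstacle: the elementary GRR observation above only factors out the degrees, whereas the descendent integrals still live on a space built from $C$, and the class $\pi_*(\ch_k(\oO_{\CZ_{ij}}))$ is genuinely global. To close the argument I would run the EGL bootstrap, using the incidence correspondences relating the $C^{[\bn]}$ for varying $\bn$—equivalently, under the identification of \Cref{prop: dim of skew nested}, the double nested schemes $C^{[\Bm]}$—together with the multiplicativity of \Cref{lemma: multiplicativity}, so as to reduce each descendent integral to contributions supported at points of $C$, which are insensitive to the global geometry of the curve. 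Setting up the correct incidence geometry in the skew nested setting and verifying that the resulting recursions close is the crux of the matter, and it is precisely here that the techniques of \cite{Mon_double_nested} would be adapted.
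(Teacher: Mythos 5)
Your first two steps (reduction to tautological/descendent classes via \Cref{prop: K theort class}, and the observation that GRR on the curve factor lets $M$ and $T_C$ enter only through $\deg M$ and $g$) are sound and consistent with what the paper does. But the argument stops exactly where the proposition actually has content: you acknowledge that the universality of integrals of descendent classes over $C^{[\bn]}$ is ``the main obstacle'' and propose to resolve it by an EGL-style bootstrap through incidence correspondences relating the $C^{[\bn]}$ for varying $\bn$, without setting up that geometry or showing the recursion closes. This is a genuine gap, not a routine verification: the classes $\pi_*(\ch_k(\oO_{\CZ_{ij}}))$ a priori see the odd cohomology of $C$ and the global geometry of the nesting, and the classical $X^{[n,n+1]}$-type incidence recursion has no established analogue for skew nested Hilbert schemes. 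Deferring the crux to an unexecuted adaptation of \cite{EGL_cobordism} leaves the proof incomplete.

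The paper closes this gap by a different mechanism, which you do not mention. Using \cite[Thm.~4.1, 4.2]{GLMRS_double-nested-1}, the scheme $C^{[\bn]}\cong C^{[\Bm]}$ is realised as the zero locus $Z(s)$ of a section of a vector bundle $\CE$ on the smooth variety $A_{C,\Bm}=\prod_a C^{(a)}$, a product of symmetric products recording $\CZ_{0,0}$ and the successive differences of the nested subschemes. Hence $i_*[C^{[\Bm]}]=e(\CE)\cap[A_{C,\Bm}]$, and --- because each universal subscheme $\CZ_{ij}$ is a sum of universal divisors pulled back from the factors of $A_{C,\Bm}$ --- the virtual normal bundle is the restriction $i^*\tilde N^{\vir}_{C,L_1,L_2}$ of a class on $A_{C,\Bm}$ built from $\RR\pi_*\RR\hom(\bigotimes_I\CI_i,\bigotimes_J\CI_j\otimes L_k)$ with $L_k$ a monomial in $L_1,L_2,\omega_C$. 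The integral then transfers to $\int_{A_{C,\Bm}}e(\CE-\tilde N^{\vir})$, where universality in $(g,\deg L_1,\deg L_2)$ is already available from \cite[Prop.~5.3]{Mon_double_nested}; the $K$-theoretic statement follows by virtual GRR. If you want to complete your write-up, you should either carry out this zero-locus reduction or actually construct and analyse the incidence correspondences you invoke; as it stands the decisive step is asserted rather than proved.
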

\begin{proof}
As in the proof of \Cref{prop: dim of skew nested}, there exists a Young diagram $\Tilde{\lambda}$ and a reverse plane partition $\Bm$ of shape $\Tilde{\lambda}$ such that
\begin{align*}
    C^{[\bn]}\cong C^{[\Bm]}.
\end{align*}
Moreover, under this identification, the universal subschemes $\CZ_{\Box}\subset C\times   C^{[\bn]}$ are naturally identified with universal subschemes $\CZ_{\Tilde{\Box}}\subset  C\times   C^{[\Bm]}$, where the box $\tilde{\Box}\in \tilde{\lambda}$ corresponds to $\Box\in \lambda$ as described in the proof of \Cref{prop: dim of skew nested}. 

Let $C^{(m)}$ denote the $m$-th symmetric power of $C$ and  define the smooth  variety
\begin{align*}
    A_{C,\Bm}&=C^{(m_{0,0})}\times \prod_{\substack{(i,j)\in \tilde{\lambda}\\ i\geq 1}} C^{(m_{i,j}-m_{i-1,j})}\times \prod_{\substack{(l,k)\in \tilde{\lambda}\\ k\geq 1}} C^{(m_{l,k}-m_{l,k-1})}\\
    &=\prod_{a}C^{[a]}.
\end{align*}
By \cite[Thm.~4.1, 4.2]{GLMRS_double-nested-1}, there is a regular embedding 
\[i:C^{[\Bm]}\cong Z(s)\hookrightarrow  A_{C,\Bm},\]
which records the subscheme in position $(0,0)$ and all the possible differences of the nested subschemes. The regular embedding $i$ realises $ C^{[\Bm]}$ as the zero locus of a section $s$ of a vector bundle $\CE$ on $A_{C,\Bm}$, cf.~\cite[Thm.~4.1]{GLMRS_double-nested-1}, which implies that 
\begin{align}\label{eqn_ reg emb}
i_*[C^{[\Bm]}]=e(\CE)\cap [A_{C,\Bm}]\in H_*(A_{C,\Bm}).    
\end{align}
We claim that the virtual normal bundle satisfies
\begin{align}\label{eqn_ virt restr}
N^{\vir}_{C, L_1, L_2,\bn}=i^* \Tilde{N}^{\vir}_{C, L_1, L_2}, 
\end{align}
for a certain class $\Tilde{N}^{\vir}_{C, L_1, L_2}\in K^0_\TT(A_{C, \mathbf{m}})$.
To prove the claim, notice that by \Cref{prop: K theort class} we have that $ N^{\vir}_{C, L_1, L_2,\bn}$ is a linear combination of classes in $K$-theory of the form 
\begin{align*}
    \oO^{A(g, \deg L_1, \deg L_2)}_{ C^{[\bn]}}\in K^0_\TT(C^{[\mathbf{n}]}),
\end{align*}
where $ A(g, \deg L_1, \deg L_2)$ is a function of $ g, \deg L_1, \deg L_2$ and 
\begin{align*}
    \RR\pi_*\RR\hom(\oO_{\CZ_{ij}}\otimes L_1^cL_2^b, \oO_{\CZ_{lk}})\otimes t^\mu\in K^0_\TT(C^{[\mathbf{n}]}),
\end{align*}
for some weight $\mu$, some indices $(i,j), (l,k)$ and  some line bundle $ L_1^cL_2^b$. By the universal sequence
\[
   0\to \oO(-\CZ_{ij})\to \oO\to \oO_{\CZ_{ij}} \to 0, \quad (i,j)\in \BZ^2_{\geq 0}\setminus \lambda, 
\]
the latter can be expressed as a combination of the classes
\begin{align*}
  &  \RR\pi_*\RR\hom(\oO({-\CZ_{ij}})\otimes L_1^cL_2^b, \oO(-{\CZ_{lk}})),\\
   &   \RR\pi_*\RR\hom( L_1^cL_2^b, \oO(-{\CZ_{lk}})),\\
    &    \RR\pi_*\RR\hom(\oO({-\CZ_{ij}})\otimes L_1^cL_2^b, \oO).
\end{align*}
Since each universal subscheme $\CZ_{ij}$ is the restriction of a linear combination of universal subschemes on $ A_{C,\Bm} $, this proves the claim. 

Combining \eqref{eqn_ reg emb}, \eqref{eqn_ virt restr} we have
\begin{align*}
    \int_{C^{[\mathbf{n}]}}e(-N_{C,L_1,L_2, \bn}^{\vir})=\int_{A_{C, \mathbf{n}}}e(\CE-\Tilde{N}_{C,L_1,L_2}^{\vir}).
\end{align*}
Denote by
\[ 0\to \CI_a\to \oO\to \oO_{\CZ_a}\to 0\]
the universal sequences on $ A_{C, \mathbf{n}}$. It follows that the $K$-theory class of $\CE-\Tilde{N}_{C,L_1,L_2}^{\vir}$ is a linear combination of classes of the form 
 \begin{align*}
     \RR\pi_*\RR\hom\left(\bigotimes_{i\in I}\CI_i,\bigotimes_{j\in J}\CI_j\otimes L_k\right)\otimes t^{\mu},
 \end{align*}
  where $L_k$ are line bundles on $C$ and $I,J$ are families of indices (possibly with repetitions). We conclude the proof by \cite[Prop.~5.3]{Mon_double_nested} and noticing that all line bundles $L_k$ possibly occuring are a linear combination of $L_1, L_2, \omega_C$.

  The case of the $K$-theoretic invariants follows by applying virtual Grothendieck-Riemann-Roch \cite{FG_riemann_roch} and performing an analogous reasoning, see e.g.~\cite[Prop.~9.2]{Mon_double_nested}.
\end{proof}
\subsubsection{Constant term} We show that the constant terms of the generating series \eqref{eqn: gen integral on double nested} are invertible and are determined by three universal series. 
\smallbreak
Set the following virtual $\TT$-representations
\begin{align}\label{eqn: Z lambda}
\begin{split}
     \mathsf{Z}_\lambda&= \sum_{(i,j)\in \lambda} t_1^{-i}t_2^{-j},\\
    T_\lambda&=\mathsf{Z}_\lambda +\overline{\mathsf{Z}}_\lambda t_1t_2-(1-t_1)(1-t_2)\mathsf{Z}_\lambda\overline{\mathsf{Z}}_\lambda,
\end{split}
\end{align}
where recall that we defined $\overline{\mathsf{Z}}_\lambda$ as the dual $\TT$-representation of $\mathsf{Z}_\lambda$.
%Define the operators $\mathsf{W}_1,\mathsf{W}_2$ on $K_{\TT}^0(\pt)$ by 
%\begin{align*}
%    \mathsf{W}_1(t_1^at_2^b)&=at_1^at_2^b\in K_\TT^0(\pt),\\
%     \mathsf{W}_2(t_1^at_2^b)&=bt_1^at_2^b\in K_\TT^0(\pt),
%\end{align*}
%and extending by linearity to $ K_\TT^0(\pt) $.

Let $\mathbf{0}$ be the trivial skew plane partition of shape $\BZ^2_{\geq 0}\setminus \lambda$ and size 0. We have that $C^{[\mathbf{0}]}\cong \pt$ consists in only one reduced point, and therefore $N^{\vir}_{C, L_1, L_2,\mathbf{0} }$ can be identified with an element of $\BZ[t_1^{\pm 1}, t_2^{\pm 1}]$.
\begin{prop}\label{prop: virtual normal const as univer}
    Let $C$ be a genus $g$  irreducible smooth projective curve and $L_1, L_2$ line bundles on $C$. Then 
    \begin{align*}
        N^{\vir}_{C, L_1, L_2,\mathbf{0} }=(1-g)\cdot T_\lambda+\deg L_1\cdot t_1\frac{\partial}{\partial t_1}T_\lambda+\deg L_2\cdot t_2\frac{\partial}{\partial t_2}T_\lambda.
    \end{align*}
%    In particular, $ N^{\vir}_{C, L_1, L_2,\mathbf{0} }$ is $\TT$-movable.\todo{remove}
    \begin{proof}
    Set $k_i=\deg L_i$. 
        By \Cref{prop: K theort class} we have that 
          \begin{multline}\label{eqn: DT pref normal}
       \BE|^\vee_{C^{[\mathbf{0}]}}= 
       - \sum_{(i,j), (l,k)\in \lambda} \left(\oO^{1-g+(i-l)k_1+(j-k)k_2}-\oO^{1-g+(i-l+1)k_1+(j-k)k_2}t_1\right. \\
       \left.-\oO^{1-g+(i-l)k_1+(j-k+1)k_2}t_2+\oO^{1-g+(i-l+1)k_1+(j-k+1)k_2}t_1t_2\right)\cdot t_1^{i-l}t_2^{j-k} \\
        +\sum_{(i,j)\in \lambda}\left(\oO_{C^{[\bn]}}^{1-g -i\cdot k_1-j\cdot k_2}\cdot t_1^{-i}t_2^{-j}- \oO_{C^{[\bn]}}^{g-1-(i+1)k_1-(j+1) k_2}\cdot t_1^{i+1}t_2^{j+1}\right).
\end{multline}
Notice that the differential operator $t_1\frac{\partial}{\partial t_1}$ acts on a $\TT$-weight $t_1^at_2^b$ by
\[
t_1\frac{\partial}{\partial t_1} t_1^at_2^b=a\cdot t_1^at_2^b,
\]
and similarly $t_2\frac{\partial}{\partial t_2} $. The claimed identity follows simply by rearranging the terms in \eqref{eqn: DT pref normal}. To conclude the proof, notice that the virtual representation $T_\lambda$ computes the weight decomposition of the $\TT$-equivariant tangent space of $\Hilb^{|\lambda|}(\BA^2)$ at the isolated and reduced $\TT$-fixed point corresponding to the Young diagram $\lambda$, which implies that $T_\lambda$ is $\TT$-movable, see e.g.~\cite[Prop.~3.4.17]{Okounk_Lectures_K_theory}.
    \end{proof}
\end{prop}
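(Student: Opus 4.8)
The plan is to evaluate the general $K$-theory class of \Cref{prop: K theort class} at the trivial skew plane partition $\bn=\mathbf{0}$ and then recognise the outcome as the image of the representation $T_\lambda$ of \eqref{eqn: Z lambda} under a first-order differential operator in the equivariant variables. Since $\mathbf{0}$ has size $0$, every universal subscheme $\CZ_{ij}$ is empty and $\oO_{\CZ_{ij}}=0$ in $K$-theory; hence all the summands of $\BE|^\vee_{C^{[\bn]}}$ that contain an $\oO_{\CZ_{ij}}$ factor — the whole first block together with the $\RR\hom$ double sum of \Cref{prop: K theort class} — vanish. As $C^{[\mathbf{0}]}\cong\pt$ is a single reduced point, each class $\oO^{a}$ reduces to the integer $a$ times the trivial character, and what remains is precisely the combinatorial expression \eqref{eqn: DT pref normal}.

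The heart of the argument is then the operator identity
\[
\BE|^\vee_{C^{[\mathbf{0}]}}=\Big((1-g)+k_1\, t_1\tfrac{\partial}{\partial t_1}+k_2\, t_2\tfrac{\partial}{\partial t_2}\Big)T_\lambda ,\qquad k_i=\deg L_i ,
\]
from which the claimed formula follows by expanding the operator linearly. To establish it I would expand $T_\lambda$ into monomials through \eqref{eqn: Z lambda}: the term $\mathsf{Z}_\lambda$ produces the weights $t_1^{-i}t_2^{-j}$, the term $t_1t_2\,\overline{\mathsf{Z}}_\lambda$ the weights $t_1^{i+1}t_2^{j+1}$ (both over $(i,j)\in\lambda$), and $-(1-t_1)(1-t_2)\mathsf{Z}_\lambda\overline{\mathsf{Z}}_\lambda$ the four sign-patterned families with weights $t_1^{i-l}t_2^{j-k}$, $t_1^{i-l+1}t_2^{j-k}$, $t_1^{i-l}t_2^{j-k+1}$, $t_1^{i-l+1}t_2^{j-k+1}$ over $(i,j),(l,k)\in\lambda$. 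Because $t_1\tfrac{\partial}{\partial t_1}$ and $t_2\tfrac{\partial}{\partial t_2}$ scale a monomial $t_1^{a}t_2^{b}$ by $a$ and $b$, applying the operator multiplies each such monomial by $(1-g)+a k_1+b k_2$; comparing with \eqref{eqn: DT pref normal} monomial by monomial — the single sum against $\mathsf{Z}_\lambda+t_1t_2\,\overline{\mathsf{Z}}_\lambda$ and the double sum against $-(1-t_1)(1-t_2)\mathsf{Z}_\lambda\overline{\mathsf{Z}}_\lambda$ after the relabelling $(i,j)\leftrightarrow(l,k)$ — yields the identity.

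To finish, I must justify passing from $\BE|^\vee_{C^{[\mathbf{0}]}}$ to the virtual normal bundle $N^{\vir}=(\BE|^\vee)^{\mov}$. Here I would invoke the standard fact that $T_\lambda$ is the $\TT$-character of the tangent space of $\Hilb^{|\lambda|}(\BA^2)$ at the isolated reduced fixed point indexed by $\lambda$, hence carries no trivial summand (see e.g.\ \cite{Okounk_Lectures_K_theory}); the operators $t_1\tfrac{\partial}{\partial t_1}$ and $t_2\tfrac{\partial}{\partial t_2}$ preserve the vanishing of the constant term, so the right-hand side is entirely $\TT$-movable and therefore equals $N^{\vir}$. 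Equivalently, \Cref{thm: equality virtual classes} shows the $\TT$-fixed part has rank $\omega(\mathbf{0})=0$, which over a point forces it to vanish.

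The step I expect to be the main obstacle is the bookkeeping in the monomial-by-monomial comparison: the four shifted families coming from $(1-t_1)(1-t_2)\mathsf{Z}_\lambda\overline{\mathsf{Z}}_\lambda$ enter with alternating signs and index shifts $(i,j)\mapsto(i+1,j)$, $(i,j+1)$, $(i+1,j+1)$, and one must confirm that, after the swap $(i,j)\leftrightarrow(l,k)$, the integer coefficient of every weight $t_1^{a}t_2^{b}$ in \eqref{eqn: DT pref normal} is exactly $(1-g)+ak_1+bk_2$ times its sign in $T_\lambda$, with no off-by-one slip in the exponents of $t_1,t_2$.
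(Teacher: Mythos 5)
Your proposal is correct and follows essentially the same route as the paper: specialise the class of \Cref{prop: K theort class} to $\bn=\mathbf{0}$ (where all $\oO_{\CZ_{ij}}$ terms vanish), rearrange the resulting weights as $\bigl((1-g)+k_1 t_1\tfrac{\partial}{\partial t_1}+k_2 t_2\tfrac{\partial}{\partial t_2}\bigr)T_\lambda$, and conclude by the $\TT$-movability of $T_\lambda$ as the tangent character of $\Hilb^{|\lambda|}(\BA^2)$ at the fixed point $\lambda$. The only difference is that you spell out the monomial-by-monomial bookkeeping that the paper dismisses as ``rearranging the terms''.
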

Over a point, the operators  $e, \widehat{\mathfrak{e}}$ act in an explicit way on virtual $\TT$-representations, which we now recall. Let $V=\sum_{(\mu_1, \mu_2)}t_1^{\mu_1}t_2^{\mu_2}-\sum_{(\nu_1, \nu_2)}t_1^{\nu_1}t_2^{\nu_2}$ be a virtual $\TT$-representation, where we assume that $(\nu_1, \nu_2)\neq (0,0)$. For a formal variable $x$, denote the \emph{symmetrised} operator
\begin{align}\label{eqn: brack}
[x]=x^{1/2}-x^{-1/2}.
\end{align}
 We have
\begin{align*}
    e(V)&=\frac{\prod_{(\mu_1, \mu_2)}(\mu_1s_1+\mu_2s_2)}{\prod_{(\nu_1, \nu_2)}(\nu_1s_1+\nu_2s_2)},\\
    \widehat{\mathfrak{e}}(V)&=\frac{\prod_{(\mu_1, \mu_2)}[t_1^{\mu_1}t_2^{\mu_2}]}{\prod_{(\nu_1, \nu_2)}[t_1^{\nu_1}t_2^{\nu_2}]},
\end{align*}
see for instance the discussions in  \cite[Sec.~6.1, 7.1]{FMR_higher_rank}.
\smallbreak
Applying the operators $e, \widehat{\mathfrak{e}}$ to \Cref{prop: virtual normal const as univer} we immediately obtain the following corollary.
\begin{corollary}\label{cor: virt norm operators}
     Let $C$ be a genus $g$  irreducible smooth projective curve and $L_1, L_2$ line bundles on $C$. Then
     \begin{align*}
         e(-   N^{\vir}_{C, L_1, L_2,\mathbf{0} })&=e\left(-T_\lambda\right)^{1-g}\cdot e\left(-t_1\frac{\partial}{\partial t_1}T_\lambda\right)^{\deg L_1}\cdot e\left(-t_2\frac{\partial}{\partial t_2}T_\lambda\right)^{\deg L_2}\in \BQ(s_1,s_2),\\
              \widehat{\mathfrak{e}}(-   N^{\vir}_{C, L_1, L_2,\mathbf{0} })&= \widehat{\mathfrak{e}}\left(-T_\lambda\right)^{1-g}\cdot  \widehat{\mathfrak{e}}\left(-t_1\frac{\partial}{\partial t_1}T_\lambda\right)^{\deg L_1}\cdot  \widehat{\mathfrak{e}}\left(-t_2\frac{\partial}{\partial t_2}T_\lambda\right)^{\deg L_2}\in \BQ(t_1^{1/2}, t_2^{1/2}).
\end{align*}
In particular, $ e(-   N^{\vir}_{C, L_1, L_2,\mathbf{0} }),  \widehat{\mathfrak{e}}(-   N^{\vir}_{C, L_1, L_2,\mathbf{0} })$ are invertible.
%where $K_{1, \lambda}, K_{2, \lambda}, K_{3, \lambda}\in \BQ(s_1,s_2)$ and $\widehat{K}_{1, \lambda}, \widehat{K}_{2, \lambda}, \widehat{ K}_{3, \lambda}\in \BQ(t_1^{1/2}, t_2^{1/2})$ are  fixed invertible universal series  which only depend on $ \lambda$. 
\end{corollary}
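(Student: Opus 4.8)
The plan is to reduce the corollary to the elementary observation that both $e$ and $\widehat{\mathfrak{e}}$ are group homomorphisms from the additive group of $\TT$-movable virtual representations to the multiplicative group of the relevant fraction field. Indeed, from the explicit product formulas recalled immediately before the corollary, for two movable virtual representations $V, W$ (that is, containing no trivial summand $t_1^0 t_2^0$) one reads off at once that $e(V+W)=e(V)\cdot e(W)$ and $e(-V)=e(V)^{-1}$, since the numerator and denominator products split over weights; the same holds verbatim for $\widehat{\mathfrak{e}}$. Consequently $e(nV)=e(V)^n$ and $\widehat{\mathfrak{e}}(nV)=\widehat{\mathfrak{e}}(V)^n$ for every $n\in\BZ$. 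So the first step is simply to record these multiplicativity properties.

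Next I would verify that the three virtual representations $T_\lambda$, $t_1\frac{\partial}{\partial t_1}T_\lambda$ and $t_2\frac{\partial}{\partial t_2}T_\lambda$ occurring in \Cref{prop: virtual normal const as univer} are each $\TT$-movable, so that $e$ and $\widehat{\mathfrak{e}}$ are genuinely defined on them. For $T_\lambda$ this is exactly the movability statement established in the proof of that proposition, where $T_\lambda$ is identified with the tangent space of $\Hilb^{|\lambda|}(\BA^2)$ at an isolated reduced fixed point. For the two derivatives, I would observe that the operator $t_1\frac{\partial}{\partial t_1}$ multiplies a weight $t_1^a t_2^b$ by the integer $a$, and hence annihilates every weight with vanishing $t_1$-exponent; in particular $t_1\frac{\partial}{\partial t_1}T_\lambda$ contains no $t_1^0 t_2^b$ term, and symmetrically $t_2\frac{\partial}{\partial t_2}T_\lambda$ contains no $t_1^a t_2^0$ term. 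Thus all three classes are movable, and since a $\BZ$-linear combination of movable classes is again movable, so is $N^{\vir}_{C, L_1, L_2,\mathbf{0}}$.

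With these two points in place, the corollary follows by applying $e(-\,\cdot\,)$, respectively $\widehat{\mathfrak{e}}(-\,\cdot\,)$, to the identity of \Cref{prop: virtual normal const as univer} and distributing over the sum, the coefficients $1-g$, $\deg L_1$, $\deg L_2$ passing to exponents. For the invertibility claim I would note that each of the three factors is a ratio of finitely many nonzero field elements, because every weight $(\mu_1,\mu_2)$ occurring is nontrivial, so each $\mu_1 s_1+\mu_2 s_2$ (respectively $[t_1^{\mu_1}t_2^{\mu_2}]$) is a nonzero element of the field; hence each factor, and therefore the whole product of integer powers, is a nonzero and thus invertible element of $\BQ(s_1,s_2)$ (respectively $\BQ(t_1^{1/2}, t_2^{1/2})$).

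I do not expect a serious obstacle: the only point demanding genuine care is the movability verification of the second paragraph, since the product formulas for $e$ and $\widehat{\mathfrak{e}}$ break down in the presence of a trivial weight (one would divide by zero, as $[t_1^0 t_2^0]=0$). One must therefore be certain that no trivial summand is concealed in the derivatives or manufactured by cancellation in the linear combination. Once movability is secured, the factorization is purely formal.
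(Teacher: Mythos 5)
Your proposal is correct and follows essentially the same route as the paper: the corollary is obtained by applying $e$ and $\widehat{\mathfrak{e}}$ to the identity of \Cref{prop: virtual normal const as univer} and using their multiplicativity, with the movability of $T_\lambda$ (hence of its derivatives, whose surviving weights all have nonzero $t_1$- or $t_2$-exponent) already secured in the proof of that proposition via the identification with the tangent space of $\Hilb^{|\lambda|}(\BA^2)$ at an isolated reduced fixed point. Your extra care about trivial weights in the derivative terms is a sound elaboration of a point the paper leaves implicit.
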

\subsubsection{Universal expression}
We are ready to  prove that the generating series \eqref{eqn: gen integral on double nested} are controlled by three universal series.
\begin{theorem}\label{thm: universal series}
Let $C$ be a genus $g$ smooth irreducible projective curve and $L_1, L_2$  line bundles over $C$. There is an identity
\begin{align*}
    \sum_{\mathbf{n}}q^{|\mathbf{n}|} \int_{C^{[\mathbf{n}]}}e(-N_{C,L_1,L_2,\bn}^{\vir})= A_{\lambda}(q)^{1-g}\cdot B_{\lambda}(q)^{\deg L_1}\cdot C_{\lambda}(q)^{\deg L_2}\in \BQ(s_1,s_2)\llbracket q \rrbracket,
\end{align*}
where the sum is over all skew plane partitions $\bn$ of shape $\BZ^2_{\geq 0}\setminus \lambda$, and  $A_{\lambda}(q),B_{\lambda}(q),C_{\lambda}(q)\in \BQ(s_1,s_2)\llbracket q \rrbracket$ are fixed universal series  which only depend on $ \lambda$. 
Similarly, there is an identity
\begin{align*}
    \sum_{\mathbf{n}}q^{|\mathbf{n}|} \widehat{\chi}\left(C^{[\mathbf{n}]},\widehat{\mathfrak{e}}(-N_{C,L_1,L_2, \bn}^{\vir})  \right)= \widehat{A}_{\lambda}(q)^{1-g}\cdot \widehat{B}_{\lambda}(q)^{\deg L_1}\cdot \widehat{C}_{\lambda}(q)^{\deg L_2}\in  \BQ(t_1^{1/2}, t_2^{1/2})\llbracket q \rrbracket,
\end{align*}
where the sum is over all skew plane partitions $\bn$ of shape $\BZ^2_{\geq 0}\setminus \lambda$, and  $\widehat{A}_{\lambda}(q),\widehat{B}_{\lambda}(q),\widehat{C}_{\lambda}(q)\in  \BQ(t_1^{1/2}, t_2^{1/2})\llbracket q \rrbracket$ are fixed universal series  which only depend on $ \lambda$. 
\end{theorem}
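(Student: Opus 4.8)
The plan is to deduce the product form purely formally from the three structural results already in place, following the cobordism strategy of \cite{EGL_cobordism}. I treat the cohomological series in detail; the $K$-theoretic one is identical after replacing $e$ by $\widehat{\mathfrak{e}}$ and invoking virtual Grothendieck--Riemann--Roch \cite{FG_riemann_roch}. Write $R=\BQ(s_1,s_2)$ and
\[
Z_\lambda(C,L_1,L_2;q)=\sum_{\bn}q^{|\bn|}\int_{C^{[\bn]}}e(-N^{\vir}_{C,L_1,L_2,\bn})\in R\llbracket q\rrbracket,
\]
the sum over skew plane partitions of shape $\BZ^2_{\geq 0}\setminus\lambda$. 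Its $q^0$-coefficient is the $\bn=\mathbf{0}$ term, where $C^{[\mathbf{0}]}\cong\pt$, hence equals $e(-N^{\vir}_{C,L_1,L_2,\mathbf{0}})$; by \Cref{cor: virt norm operators} this is invertible and already factors as $A_0^{1-g}B_0^{\deg L_1}C_0^{\deg L_2}$. Thus $Z_\lambda$ is a unit in $R\llbracket q\rrbracket$; writing $Z_\lambda=(\text{constant term})\cdot\tilde Z_\lambda$ with $\tilde Z_\lambda\in 1+qR\llbracket q\rrbracket$, and noting that the constant-term factor already has the desired shape, it suffices to establish the product form for $\tilde Z_\lambda$, where $\log\tilde Z_\lambda\in qR\llbracket q\rrbracket$ is well defined.

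Next I combine the two additivity inputs. By \Cref{lemma: multiplicativity} the series $Z_\lambda$ is multiplicative under disjoint unions, and the constant-term factor is itself multiplicative since the Chern data $(1-g,\deg L_1,\deg L_2)$ are additive; hence $\tilde Z_\lambda$ is multiplicative and $\log\tilde Z_\lambda$ is \emph{additive}. By \Cref{prop: chern dependence integrals}, for a connected curve $\tilde Z_\lambda$ depends only on $(1-g,\deg L_1,\deg L_2)\in\BZ^3$. Therefore $\log\tilde Z_\lambda$ descends to an additive function $\Psi$ on the sub-semigroup $\Lambda\subset\BZ^3$ of realizable Chern data, and the theorem is equivalent to the assertion that $\Psi$ is the restriction of a $\BZ$-linear map $\BZ^3\to qR\llbracket q\rrbracket$.

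The heart of the argument is this linearity, and it is where I would spend the most care. I would realize the three coordinate directions by explicit connected geometries: $(1,0,0)$ by $(\BP^1,\oO,\oO)$, and $(0,1,0)$, $(0,0,1)$ by an elliptic curve carrying a degree-$1$ line bundle in the first, respectively second, slot and the trivial bundle in the other. These three vectors generate $\BZ^3$ as a group and lie in $\Lambda$, so $\Lambda-\Lambda=\BZ^3$, and an additive function on $\Lambda$ extends uniquely to a homomorphism on $\BZ^3$ via $v=u-u'\mapsto\Psi(u)-\Psi(u')$ (well defined because $u+s'=s+u'$ in $\Lambda$ forces $\Psi(u)-\Psi(u')=\Psi(s)-\Psi(s')$). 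The subtle point is that this presupposes $\Psi$ is genuinely a function of the \emph{total} Chern data even for the disconnected curves entering the additivity relations; this is exactly what the cobordism mechanism of \cite{EGL_cobordism} underlying \Cref{prop: chern dependence integrals} guarantees, since that proof expresses every coefficient of $Z_\lambda$ as an integral over the \emph{one-dimensional} base of a universal polynomial in $c_1(L_1),c_1(L_2),c_1(\omega_C)$ and Chern classes of universal subschemes. Integration over a curve extracts the degree-one part, which is automatically linear in $\deg L_1$, $\deg L_2$ and $\deg\omega_C=2g-2$, and is additive over disjoint unions. I regard reconciling the abstract semigroup extension with this concrete degree-reasons linearity as the main obstacle, the degree argument being what legitimizes the extension.

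Granting linearity, set $A_\lambda=\exp\Psi(1,0,0)$, $B_\lambda=\exp\Psi(0,1,0)$, $C_\lambda=\exp\Psi(0,0,1)$ (reabsorbing the constant-term factors $A_0,B_0,C_0$), so that $\log\tilde Z_\lambda=(1-g)\log A_\lambda+\deg L_1\log B_\lambda+\deg L_2\log C_\lambda$. Exponentiating yields $Z_\lambda=A_\lambda^{1-g}B_\lambda^{\deg L_1}C_\lambda^{\deg L_2}$ with $A_\lambda,B_\lambda,C_\lambda\in R\llbracket q\rrbracket$ depending only on $\lambda$, which is the first identity. The $K$-theoretic identity follows verbatim: \Cref{lemma: multiplicativity} and \Cref{prop: chern dependence integrals} hold for the $\widehat{\mathfrak{e}}$-series, \Cref{cor: virt norm operators} supplies the invertible factored constant term in $\BQ(t_1^{1/2},t_2^{1/2})$, and the same linearity produces $\widehat A_\lambda,\widehat B_\lambda,\widehat C_\lambda$. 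Only existence is needed here; their explicit values are pinned down afterwards by the three genus-$0$ evaluations.
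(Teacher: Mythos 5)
Your argument is correct and is essentially the paper's own proof: normalise by the invertible, already-factored constant term supplied by \Cref{cor: virt norm operators}, then combine the multiplicativity of \Cref{lemma: multiplicativity} with the Chern-number dependence of \Cref{prop: chern dependence integrals} to factor the normalised series through a homomorphism $(\BZ^3,+)\to (1+q\,\BQ(s_1,s_2)\llbracket q\rrbracket,\cdot)$ evaluated on generators, with the $K$-theoretic case handled verbatim. The only immaterial differences are that you work additively with $\log$, choose the generators $(1,0,0),(0,1,0),(0,0,1)$ realised by $\BP^1$ and elliptic curves rather than the paper's three genus-zero triples $e_1,e_2,e_3$, and spell out the well-definedness of the extension from realizable Chern data to all of $\BZ^3$ via the degree-reasons linearity of the underlying universal-polynomial structure --- a point the paper leaves implicit.
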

\begin{proof}
The proof is analogous to \cite[Thm.~5.1]{Mon_double_nested}. We prove here the claim only for the first identity, as the second one follows by an analogous discussion. 

%Let $\mathbf{0}$ be the trivial skew plane partition of shape $\BZ^2_{\geq 0}\setminus \lambda$ and size 0. 
%Define the constant term
%\[
%K_{\lambda, C, L_1, L_2}=\int_{C^{[\mathbf{0}]}}e(-N_{C,L_1,L_2}^{\vir}).
%\]
%We prove in \Cref{prop: costant term} that the constant term $K_{\lambda, C, L_1, L_2}$ is of the form
%\begin{align*}
%    K_{\lambda, C, L_1, L_2}=K_1^{1-g}\cdot K_2^{\deg L_1}\cdot K_3^{\deg L_2}, 
%\end{align*}
%where $K_1, K_2, K_3\in \BQ(s_1, s_2)$ are invertible rational functions on $s_1, s_2$ depending only on $g, \deg L_1, \deg L_2$.
    Consider the map
    \[
Z:\CK:=\set{(C,L_1,L_2)| C \mbox{ smooth projective  curve}, L_1, L_2 \mbox{ line bundles}}\to 1+\BQ(s_1,s_2)\llbracket q \rrbracket
\]
given by 
\begin{align*}
    Z(C,L_1, L_2)=   e(-   N^{\vir}_{C, L_1, L_2,\mathbf{0} })^{-1}\cdot\sum_{\mathbf{n}}q^{|\mathbf{n}|} \int_{C^{[\mathbf{n}]}}e(-N_{C,L_1,L_2, \bn}^{\vir}),
\end{align*}
where $   e(-   N^{\vir}_{C, L_1, L_2,\mathbf{0} })$ is invertible by \Cref{cor: virt norm operators}.
By  \Cref{lemma: multiplicativity} the map $Z(\cdot)$ is \emph{multiplicative} and by  \Cref{prop: chern dependence integrals}  $Z(C, L_1,  L_2)$ depend only  on the Chern numbers of  $(C, L_1,  L_2)$. This implies that $Z$ factors through
\[\CK\xrightarrow{\gamma} \BZ^3\xrightarrow{Z'}  1+\BQ(s_1,s_2)\llbracket q \rrbracket,\]
where $\gamma(C,L_1, L_2)=(1-g, \deg L_1, \deg L_2)$, and that $Z'$ is a morphism of monoids from $(\BZ^3,+) $ to $( 1+\BQ(s_1,s_2)\llbracket q \rrbracket, \cdot)$. A set of generators of $ \BZ^3$ is given by the elements
 \begin{align*}
     e_1=\gamma(\BP^1, \oO,\oO),\quad e_2=\gamma(\BP^1, \oO(1),\oO),\quad  e_3=\gamma(\BP^1, \oO,\oO(1)),
 \end{align*}
  and the image of a  generic triple $(C,L_1,L_2) $ can be written as
 \begin{align*}
     \gamma(C,L_1,L_2)=(1-g-\deg L_1-\deg L_2)\cdot e_1+\deg L_1\cdot  e_2+\deg L_2 \cdot e_3.
 \end{align*}
 We conclude that
 \begin{align*}
     Z'(C,L_1,L_2)=Z'(e_1)^{1-g}\cdot (Z'(e_1)^{-1}Z'(e_2))^{\deg L_1}\cdot(Z'(e_1)^{-1}Z'(e_3))^{\deg L_2}.
 \end{align*}
 Combining this decomposition with  \Cref{cor: virt norm operators} completes the proof, and yields the desired universal series.
 \end{proof}

\section{Computations}\label{sec: computations}

Throughout this section, we fix a Young diagram $\lambda$, and compute the universal series of \Cref{thm: universal series}.
\subsection{Constant terms}
We evaluate  the universal series appearing in \Cref{cor: virt norm operators}, which contribute to the constant term of the Donaldson-Thomas partition functions. 
\smallbreak
Recall that we defined in \eqref{eqn: brack} the operator $[x]=x^{1/2}-x^{-1/2}$.
\begin{prop}\label{eqn: costant DT expl}
    Let $\lambda$ be a Young diagram. We have
    \begin{align*}
        &e\left(-T_\lambda\right)=\prod_{\Box\in \lambda}\frac{1}{(-\ell(\Box)s_1+(a(\Box)+1)s_2)((\ell(\Box)+1)s_1-a(\Box)s_2)},\\
          &e\left(  -t_1\frac{\partial}{\partial t_1}T_\lambda\right)=\prod_{\Box\in \lambda}\frac{( -\ell(\Box) s_1+(a(\Box)+1)s_2)^{\ell(\Box)}}{( (\ell(\Box)+1)s_1-a(\Box)s_2)^{\ell(\Box)+1}},\\
          &  e\left(  -t_2\frac{\partial}{\partial t_2}T_\lambda\right)=\prod_{\Box\in \lambda}\frac{(\ell(\Box)+1)s_1-a(\Box)s_2)^{a(\Box)}}{(-\ell(\Box)s_1+(a(\Box)+1)s_2)^{a(\Box)+1}},\\
          &\widehat{\mathfrak{e}}\left(-T_\lambda\right)=\prod_{\Box\in \lambda}\frac{1}{[t_1^{-\ell(\Box)}t_2^{a(\Box)+1}][t_1^{\ell(\Box)+1}t_2^{-a(\Box)}]},\\
          &\widehat{\mathfrak{e}}\left(  -t_1\frac{\partial}{\partial t_1}T_\lambda\right)=\prod_{\Box\in \lambda}\frac{[t_1^{-\ell(\Box)}t_2^{a(\Box)+1}]^{\ell(\Box)}}{[t_1^{\ell(\Box)+1}t_2^{-a(\Box)}]^{\ell(\Box)+1}},\\
          &   \widehat{\mathfrak{e}}\left(  -t_2\frac{\partial}{\partial t_2}T_\lambda\right)=\prod_{\Box\in \lambda}\frac{[t_1^{\ell(\Box)+1}t_2^{-a(\Box)}]^{a(\Box)}}{[t_1^{-\ell(\Box)}t_2^{a(\Box)+1}]^{a(\Box)+1}}.
    \end{align*}
\end{prop}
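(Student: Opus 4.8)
The plan is to reduce all six identities to one combinatorial input: the \emph{arm--leg decomposition} of the virtual character $T_\lambda$. First I would establish that, despite its virtual definition \eqref{eqn: Z lambda}, $T_\lambda$ is an honest $\TT$-representation with exactly $2|\lambda|$ weights, namely
\begin{align*}
  T_\lambda=\sum_{\Box\in\lambda}\left(t_1^{-\ell(\Box)}t_2^{a(\Box)+1}+t_1^{\ell(\Box)+1}t_2^{-a(\Box)}\right).
\end{align*}
This is the classical character of the tangent space to $\Hilb^{|\lambda|}(\BA^2)$ at the monomial ideal $I_\lambda$, and indeed $T_\lambda$ was introduced in \eqref{eqn: Z lambda} precisely to encode it, as already noted at the end of the proof of \Cref{prop: virtual normal const as univer} and in \cite[Prop.~3.4.17]{Okounk_Lectures_K_theory}. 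I would prove it by expanding $\mathsf{Z}_\lambda+\overline{\mathsf{Z}}_\lambda t_1t_2-(1-t_1)(1-t_2)\mathsf{Z}_\lambda\overline{\mathsf{Z}}_\lambda$ and carrying out the standard box-by-box cancellation, which matches each surviving monomial to a box $\Box\in\lambda$ through its arm and leg. This is the one genuinely combinatorial step, and the only place where the arm/leg bookkeeping really enters.

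Granting this decomposition, the formulas for $e(-T_\lambda)$ and $\widehat{\mathfrak{e}}(-T_\lambda)$ are immediate from the weight-by-weight action of $e$ and $\widehat{\mathfrak{e}}$ recalled just before the statement: each of the $2|\lambda|$ weights of $T_\lambda$ occurs with multiplicity $+1$, so in $-T_\lambda$ all of them sit in the denominator, and substituting $t_1^{\mu_1}t_2^{\mu_2}\mapsto\mu_1 s_1+\mu_2 s_2$, respectively $t_1^{\mu_1}t_2^{\mu_2}\mapsto[t_1^{\mu_1}t_2^{\mu_2}]$, box by box produces exactly the first and fourth displayed products.

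For the four derivative formulas I would use that $t_1\frac{\partial}{\partial t_1}$ and $t_2\frac{\partial}{\partial t_2}$ act on a monomial $t_1^{\mu_1}t_2^{\mu_2}$ by multiplication by $\mu_1$, respectively $\mu_2$. Applying $t_1\frac{\partial}{\partial t_1}$ to the arm--leg sum scales the coefficient of $t_1^{-\ell(\Box)}t_2^{a(\Box)+1}$ by $-\ell(\Box)$ and that of $t_1^{\ell(\Box)+1}t_2^{-a(\Box)}$ by $\ell(\Box)+1$; hence in $-t_1\frac{\partial}{\partial t_1}T_\lambda$ the first weight appears with multiplicity $+\ell(\Box)$ (numerator) and the second with multiplicity $-(\ell(\Box)+1)$ (denominator), which upon applying $e$ and $\widehat{\mathfrak{e}}$ gives the second and fifth formulas. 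Symmetrically, $t_2\frac{\partial}{\partial t_2}$ rescales the two coefficients by $a(\Box)+1$ and $-a(\Box)$, so that $-t_2\frac{\partial}{\partial t_2}T_\lambda$ carries multiplicity $-(a(\Box)+1)$ on the first weight and $+a(\Box)$ on the second, yielding the third and sixth formulas.

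The only real obstacle is the first step, the arm--leg decomposition of $T_\lambda$; everything afterwards is a mechanical application of the rules for $e$ and $\widehat{\mathfrak{e}}$ together with the elementary fact that $t_i\frac{\partial}{\partial t_i}$ acts as weight multiplication. Since this decomposition is classical and was already invoked in \Cref{prop: virtual normal const as univer}, I would cite it and devote the written proof mainly to the sign bookkeeping---tracking, for each of the six cases, which of the two weights per box lands in the numerator and which in the denominator, and with what exponent.
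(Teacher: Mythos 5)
Your proposal is correct and follows essentially the same route as the paper: the paper's proof simply quotes the arm--leg decomposition $T_\lambda=\sum_{\Box\in\lambda}(t_1^{-\ell(\Box)}t_2^{a(\Box)+1}+t_1^{\ell(\Box)+1}t_2^{-a(\Box)})$ from \cite[Ex.~3.4.22]{Okounk_Lectures_K_theory} and then applies $e$ and $\widehat{\mathfrak{e}}$ weight by weight, exactly as you do. Your additional bookkeeping for the action of $t_i\frac{\partial}{\partial t_i}$ and the resulting numerator/denominator multiplicities is the (correct) detail the paper leaves implicit.
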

\begin{proof}
    By \cite[Ex.~3.4.22]{Okounk_Lectures_K_theory} there is an identity of $\TT$-representations
    \begin{align*}
        T_\lambda=\sum_{\Box \in \lambda} (t_1^{-\ell(\Box)}t_2^{a(\Box)+1}+ t_1^{\ell(\Box)+1}t_2^{-a(\Box)}).
    \end{align*}
    The claimed identities follow by applying the operators $e, \widehat{\mathfrak{e}}$.
\end{proof}
We remark that the series $ e\left(-T_\lambda\right)$ reproduces (up to a sign) the so-called \emph{Jack-Plancherel measure} on Young diagrams, see e.g.~\cite{Young_Jack}.
\subsubsection{Anti-diagonal restriction}\label{sec: andtifiagonal edge}
We show now that, under the anti-diagonal restriction of the equivariant parameters, the universal series in \Cref{eqn: costant DT expl} admit  more compact formulas in terms of the hooklengths of $\lambda$.
\begin{corollary}\label{cor: edge antidiagonal}
    Let $\lambda$ be a Young diagram. We have
    \begin{align*}
        &e\left(-T_\lambda\right)|_{s_1+s_2=0}=(-s_1^{-2})^{|\lambda|}\prod_{\Box\in \lambda}\frac{1}{h(\Box)^2},\\
        &\left.e\left(  -t_1\frac{\partial}{\partial t_1}T_\lambda\right)\right|_{s_1+s_2=0}=(-1)^{n(\lambda)}s_1^{-|\lambda|}\prod_{\Box\in \lambda}\frac{1}{h(\Box)},\\
         &\left.e\left(  -t_2\frac{\partial}{\partial t_2}T_\lambda\right)\right|_{s_1+s_2=0}=(-1)^{n(\overline{\lambda})}(-s_1)^{-|\lambda|}\prod_{\Box\in \lambda}\frac{1}{h(\Box)},\\
          &\widehat{\mathfrak{e}}\left(-T_\lambda\right)|_{t_1t_2=1}=(-1)^{|\lambda|}\prod_{\Box\in \lambda}\frac{1}{[t_1^{h(\Box)}]^2},\\
          &\left.\widehat{\mathfrak{e}}\left(  -t_1\frac{\partial}{\partial t_1}T_\lambda\right)\right|_{t_1t_2=1}=(-1)^{n(\lambda)}\prod_{\Box\in \lambda}\frac{1}{[t_1^{h(\Box)}]},\\
          &   \left.\widehat{\mathfrak{e}}\left(  -t_2\frac{\partial}{\partial t_2}T_\lambda\right)\right|_{t_1t_2=1}=(-1)^{n(\overline{\lambda})+|\lambda|}\prod_{\Box\in \lambda}\frac{1}{[t_1^{h(\Box)}]}.
    \end{align*}
\end{corollary}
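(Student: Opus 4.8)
The plan is to substitute the two anti-diagonal relations directly into the six closed product formulas of \Cref{eqn: costant DT expl} and then track signs box by box. The first step I would carry out is to record the elementary collapse of the individual factors attached to a box $\Box$. In the cohomological case, setting $s_2=-s_1$ and using $h(\Box)=a(\Box)+\ell(\Box)+1$ gives
\begin{align*}
-\ell(\Box)s_1+(a(\Box)+1)s_2 &= -(\ell(\Box)+a(\Box)+1)s_1 = -h(\Box)s_1,\\
(\ell(\Box)+1)s_1-a(\Box)s_2 &= (\ell(\Box)+a(\Box)+1)s_1 = h(\Box)s_1.
\end{align*}
In the $K$-theoretic case, setting $t_2=t_1^{-1}$ collapses the two brackets to $[t_1^{-\ell(\Box)}t_2^{a(\Box)+1}]=[t_1^{-h(\Box)}]$ and $[t_1^{\ell(\Box)+1}t_2^{-a(\Box)}]=[t_1^{h(\Box)}]$, and the elementary antisymmetry $[x^{-1}]=-[x]$ of the operator \eqref{eqn: brack} yields $[t_1^{-h(\Box)}]=-[t_1^{h(\Box)}]$. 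Thus in both settings each box contributes a factor depending only on its hooklength $h(\Box)$, up to a controlled sign carried by the ``$a+1$''-type factor.

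Next I would feed these collapses into the product formulas and collect the signs. For $e(-T_\lambda)$ the product over $\Box$ of $1/\big((-h(\Box)s_1)(h(\Box)s_1)\big)=-1/(h(\Box)^2s_1^2)$ produces the global sign $(-1)^{|\lambda|}=(-s_1^{-2})^{|\lambda|}s_1^{2|\lambda|}$ recombined into the stated $(-s_1^{-2})^{|\lambda|}\prod_\Box h(\Box)^{-2}$, and identically for $\widehat{\mathfrak{e}}(-T_\lambda)$ via $-[t_1^{h(\Box)}]^2$. For the two \emph{edge} series the exponent on the sign-carrying factor is what matters: in the $t_1$-derivative series that factor sits in the numerator with exponent $\ell(\Box)$, giving after cancellation of common hooklength powers the residual sign $(-1)^{\ell(\Box)}$ per box; in the $t_2$-derivative series it sits in the \emph{denominator} with exponent $a(\Box)+1$, giving the residual sign $(-1)^{a(\Box)+1}$ per box.

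Finally I would assemble the global signs using the standard identities \eqref{eqn: comb ident}, namely $\sum_{\Box}\ell(\Box)=n(\lambda)$ and $\sum_{\Box}a(\Box)=n(\overline{\lambda})$, so that $\prod_\Box(-1)^{\ell(\Box)}=(-1)^{n(\lambda)}$ and $\prod_\Box(-1)^{a(\Box)+1}=(-1)^{n(\overline{\lambda})+|\lambda|}$; rewriting $(-1)^{|\lambda|}s_1^{-|\lambda|}=(-s_1)^{-|\lambda|}$ then matches the normalisation printed for the $t_2\frac{\partial}{\partial t_2}$ lines, and the $K$-theoretic lines follow verbatim with $h(\Box)s_1$ replaced by $[t_1^{h(\Box)}]$. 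The computation is entirely mechanical, so I do not anticipate a genuine obstacle; the only point requiring care is the bookkeeping of signs, in particular the asymmetric extra $(-1)^{|\lambda|}$ appearing in the $t_2\frac{\partial}{\partial t_2}$ formulas, which is precisely an artefact of the sign-carrying factor occurring with the odd exponent $a(\Box)+1$ rather than $\ell(\Box)$.
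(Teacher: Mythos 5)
Your proposal is correct and is precisely the paper's own argument: the proof of \Cref{cor: edge antidiagonal} consists of substituting the anti-diagonal restriction into the closed formulas of \Cref{eqn: costant DT expl} and collecting signs via the identities \eqref{eqn: comb ident}, exactly as you do. Your sign bookkeeping (in particular the extra $(-1)^{|\lambda|}$ in the $t_2\frac{\partial}{\partial t_2}$ lines coming from the exponent $a(\Box)+1$) checks out against all six stated formulas.
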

\begin{proof}
   The proof follows by \Cref{eqn: costant DT expl} and the identities \eqref{eqn: comb ident}.
\end{proof}
\subsection{Normalised series}
By the universal expression of \Cref{thm: universal series}, the partition functions $\DT_d(X, q), \widehat{\DT}_d(X, q)$ are reduced to the computations of the \emph{normalised} series
\begin{align}\label{eqn: red Z gen}
\begin{split}
        Z(C,L_1, L_2)&=  e(-   N^{\vir}_{C, L_1, L_2,\mathbf{0} })^{-1}\cdot\sum_{\mathbf{n}}q^{|\mathbf{n}|} \int_{C^{[\mathbf{n}]}}e(-N_{C,L_1,L_2, \bn}^{\vir}),\\
      Z^K(C,L_1, L_2)&= \widehat{  \mathfrak{e}}(-   N^{\vir}_{C, L_1, L_2,\mathbf{0} })^{-1}\cdot\sum_{\mathbf{n}}q^{|\mathbf{n}|} \widehat{\chi}\left(C^{[\mathbf{n}]},\widehat{\mathfrak{e}}(-N_{C,L_1,L_2, \bn}^{\vir})  \right),
      \end{split}
\end{align}
where $\bn$ is a skew plane partition of shape $\BZ^{2}_{\geq 0}\setminus \lambda$ and  $(C, L_1, L_2)$ is  among
\begin{align*}
    (\BP^1, \oO, \oO), \quad 
     (\BP^1, \oO, \oO(-2)), \quad 
      (\BP^1, \oO(-2), \oO).
\end{align*}
Define the classes in $K$-theory 
\begin{multline}\label{eqn: DT normal red}
    N^{\mathrm{red}}_{C, L_1, L_2, \bn}=  \left(\sum_{(i,j)\in \BZ^2_{\geq 0}\setminus\lambda}\left(\pi_*\left(\oO_{\CZ_{ij}}\otimes L_1^{-i}L_2^{-j}\right)\cdot t_1^{-i}t_2^{-j} - \left(\pi_*\left(\oO_{\CZ_{ij}}\otimes L_1^{-i-1}L_2^{-j-1}\otimes \omega_C\right)\right)^\vee\cdot t_1^{i+1}t_2^{j+1}\right.\right.\\
 \left.-\pi_*\left(\oO_{\CZ_{ij}}\otimes L_1^{-i}L_2^{-j}\otimes \CN\otimes \CA^* \right)\cdot t_1^{-i}t_2^{-j}+\left(\pi_*\left(\oO_{\CZ_{ij}}\otimes L_1^{-i}L_2^{-j}\otimes \CN^*\otimes \CA^* \otimes \omega_C\right)\right)^\vee\cdot t_1^{i}t_2^{j}\right)\\
  \left.    -  \sum_{(i,j), (l,k)\in\BZ^2_{\geq 0}\setminus\lambda } \RR\pi_*\RR\hom( \oO_{\CZ_{ij}}\otimes \CN^*,\oO_{\CZ_{lk}}\otimes L_1^{i-l}L_2^{j-k})\cdot t_1^{i-l}t_2^{j-k},\right)^{\mov},
\end{multline}
and 
\begin{multline}\label{eqn: DT prefact normal}
    P_{C, L_1, L_2}= - \sum_{(i,j), (l,k)\in \lambda} \left(\oO^{1-g+(i-l)k_1+(j-k)k_2}-\oO^{1-g+(i-l+1)k_1+(j-k)k_2}t_1\right. \\
       \left.-\oO^{1-g+(i-l)k_1+(j-k+1)k_2}t_2+\oO^{1-g+(i-l+1)k_1+(j-k+1)k_2}t_1t_2\right)\cdot t_1^{i-l}t_2^{j-k} \\
        +\sum_{(i,j)\in \lambda}\left(\oO^{1-g -i\cdot k_1-j\cdot k_2}\cdot t_1^{-i}t_2^{-j}- \oO^{g-1-(i+1)k_1-(j+1) k_2}\cdot t_1^{i+1}t_2^{j+1}\right),
\end{multline}
and notice that 
\begin{align*}
       N_{C,L_1,L_2, \bn}^{\vir}=  P_{C, L_1, L_2}+ N_{C,L_1,L_2, \bn}^{\mathrm{red}}.
\end{align*}
\begin{lemma}\label{eqn: lemma reduced}
Let $C$ be a smooth irreducible projective curve and $L_1, L_2$  line bundles over $C$. There are identities   
\begin{align*}
 Z(C,L_1, L_2)&= \sum_{\mathbf{n}}q^{|\mathbf{n}|} \int_{C^{[\mathbf{n}]}}e(-N_{C,L_1,L_2, \bn}^{\mathrm{red}}),\\
     Z^K(C,L_1, L_2)&=\sum_{\mathbf{n}}q^{|\mathbf{n}|} \widehat{\chi}\left(C^{[\mathbf{n}]},\widehat{\mathfrak{e}}(-N_{C,L_1,L_2, \bn}^{\mathrm{red}})  \right).
\end{align*}
\end{lemma}
\begin{proof}
  We prove the first claim, as the second follows analogously.
      By  \Cref{prop: K theort class} and the multiplicativity of $e$, we have that
    \begin{align*}
        e(-N_{C,L_1,L_2, \bn}^{\vir})&=e(-  P_{C, L_1, L_2})\cdot e(-N_{C,L_1,L_2, \bn}^{\mathrm{red}})\\
        &=  e(-N_{C,L_1,L_2, \mathbf{0}}^{\vir})\cdot e(-N_{C,L_1,L_2, \bn}^{\mathrm{red}}).
    \end{align*}
    Integrating over $C^{[\bn]}$ concludes the proof.
\end{proof}
\subsection{Genus zero}\label{sec: genus 0 DT}
As recalled above, by the universality step we are reduced to perform computations only in the genus 0 case. Therefore, we assume for the rest of the section that the base curve $C=\BP^1$ is a projective line. \\

Let $\BC^*$ act on the points of $\BP^1$ by $t_3 \cdot x_0= t_3x_0 $ around the affine chart at 0, and by $t_3\cdot x_\infty=t_3^{-1}x_\infty$ around the affine chart at $\infty$. Let $L$ be a line bundle on $\BP^1$. By the identification $L\cong \oO_{\BP^1}(\deg L\cdot [\infty])$, the line bundle $L$ is endowed with a $\BC^*$-equivariant structure, whose fibers over the $\BC^*$-fixed points of $\BP^1$ are the $\BC^*$-representations
\begin{align*}
        L|_p=\begin{cases}
    \BC\otimes 1, & p=0,\\
    \BC\otimes t_3^{-\deg L}, & p=\infty.
    \end{cases}
\end{align*}
Set $\overline{\TT}=\TT\times \BC^*$. The $\BC^*$-action on $\BP^1$ naturally lifts to a $\overline{\TT}$-action on  the skew nested Hilbert schemes $(\BP^1)^{[\bn]}$, where $\TT$ acts trivially. In particular, the $\BC^*$-equivariant structure on line bundles $L$ on $\BP^1$ naturally lifts the $\TT$-equivariant  $K$-theory class $ N_{C,L_1,L_2, \bn}^{\mathrm{red}}$  to a $\overline{\TT}$-equivariant $K$-theory class. Therefore, the series $ Z(\BP^1,L_1, L_2),  Z^K(\BP^1,L_1, L_2)$ are amenable for $\overline{\TT}$-localisation.
\smallbreak
By a small abuse of notation, consider $\CA, \CN$ as  $K$-theory classes on $\BP^1$. With respect to the equivariant structure on line bundles described above, we have
\begin{align*}
    \CA|_{0}&=\mathsf{Z}_\lambda,\\
    \CA|_{\infty}&=\mathsf{Z}_\lambda|_{t_1=t_1t_3^{-\deg L_1}, t_2=t_2t_3^{-\deg L_2}},        \\
    \CN|_{0}&=(1-t_1)(1-t_2),\\
     \CN|_{\infty}&=(1-t_1t_3^{-\deg L_1})(1-t_2t_3^{-\deg L_2}),
\end{align*}
where $\mathsf{Z}_\lambda$ was introduced in \eqref{eqn: Z lambda}.
\smallbreak
Let $\bn$ be a skew plane partition of shape $\BZ^{2}_{\geq 0}\setminus \lambda$. Define the $\overline{\TT}$-representations
\begin{equation}\label{eqn: vertex DT}
\begin{split}
    \mathsf{Z}_{\bn}&=\sum_{(i,j)\in \BZ^{2}_{\geq 0}\setminus \lambda}\sum_{\alpha=0}^{n_{ij}-1}t_1^{-i}t_2^{-j}t_3^{-\alpha},\\
    \mathsf{v}_{\bn}&=  \mathsf{Z}_{\bn}-  \overline{\mathsf{Z}}_{\bn}\cdot t_1t_2t_3-(1-t_1)(1-t_2)\left(-t_3\cdot     \mathsf{Z}_{\lambda}\overline{\mathsf{Z}}_{\bn} +\overline{\mathsf{Z}}_{\lambda}\mathsf{Z}_{\bn}+(1-t_3)\mathsf{Z}_{\bn}\overline{\mathsf{Z}}_{\bn}\right).
    \end{split}
\end{equation}
Denote by $s_3$ the generator of the equivariant cohomology $H^*_{\BC^*}(\pt)$. Recall that, for 
\[V=\sum_{(\mu_1, \mu_2, \mu_3)}t_1^{\mu_1}t_2^{\mu_2}t_3^{\mu_3}-\sum_{(\nu_1, \nu_2, \nu_3)}t_1^{\nu_1}t_2^{\nu_2}t_3^{\nu_3}\]  a virtual $\overline{\TT}$-representation such that $(\nu_1, \nu_2, \nu_3)\neq (0,0,0)$, we have
\begin{align*}
    e(V)&=\frac{\prod_{(\mu_1, \mu_2, \mu_3)}(\mu_1s_1+\mu_2s_2+\mu_3s_3)}{\prod_{(\nu_1, \nu_2, \nu_3)}(\nu_1s_1+\nu_2s_2+\nu_3s_3)},\\
    \widehat{\mathfrak{e}}(V)&=\frac{\prod_{(\mu_1, \mu_2, \mu_3)}[t_1^{\mu_1}t_2^{\mu_2}t_3^{\mu_3}]}{\prod_{(\nu_1, \nu_2, \nu_3)}[t_1^{\nu_1}t_2^{\nu_2}t_3^{\nu_3}]}.
\end{align*}

\begin{theorem}\label{thm: toric loc}
    Let $\bn$ be a skew plane partition of shape $\BZ^2_{\geq 0}\setminus \lambda$. We have identities of $\overline{\TT}$-representations
    \begin{align*}
        \int_{(\BP^1)^{[\mathbf{n}]}}e(-N_{\BP^1,L_1,L_2, \bn}^{\mathrm{red}})&=\left.\left(\sum_{\bn=\bn_1+\bn_2}e(- \mathsf{v}_{\bn_1})\cdot e(-  \mathsf{v}_{\bn_2})|_{s_1=s_1-\deg L_1 s_3, s_2=s_2-\deg L_2 s_3, s_3=-s_3}\right)\right|_{s_3=0},\\
         \widehat{\chi}\left((\BP^1)^{[\mathbf{n}]},\widehat{\mathfrak{e}}(-N_{\BP^1,L_1,L_2, \bn}^{\mathrm{red}})  \right)&=\left.\left(\sum_{\bn=\bn_1+\bn_2}\widehat{\mathfrak{e}}(- \mathsf{v}_{\bn_1})\cdot \widehat{\mathfrak{e}}(-  \mathsf{v}_{\bn_2})|_{t_1=t_1t_3^{-\deg L_1}, t_2=t_2t_3^{-\deg L_2}, t_3=t_3^{-1}}\right)\right|_{t_3=1}.
    \end{align*}
\end{theorem}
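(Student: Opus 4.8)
The plan is to compute both integrals by an additional $\BC^*$-localisation on the proper scheme $(\BP^1)^{[\bn]}$, using the $t_3$-action induced by the scaling on $\BP^1$. First I would identify the $\BC^*$-fixed locus: a $\BC^*$-fixed flag of nested zero-dimensional subschemes $(Z_{ij})$ of $\BP^1$ must be supported on the two fixed points $0,\infty\in\BP^1$, and since the \emph{punctual} skew nested Hilbert scheme is a reduced point (as recorded in the proof of \Cref{motives of spp}), the fixed locus is a finite set of reduced points. These are indexed by pairs $(\bn_1,\bn_2)$ of skew plane partitions of shape $\BZ^2_{\geq 0}\setminus\lambda$ with $\bn_1+\bn_2=\bn$ componentwise, where $\bn_1$ (resp.\ $\bn_2$) records the length of the part of each $Z_{ij}$ at $0$ (resp.\ at $\infty$). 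The nesting conditions are inherited by both $\bn_1$ and $\bn_2$, so this indexing is exactly the one appearing in the statement.

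Next I would invoke virtual $\BC^*$-localisation. By \Cref{thm: equality virtual classes} the obstruction theory $\BE|^{\fix}_{(\BP^1)^{[\bn]}}$ is trivial, so virtual localisation reduces to localisation against the $\BC^*$-moving part of the genuine tangent complex of $(\BP^1)^{[\bn]}$. Moreover, the $\TT$-equivariant integral of $e(-N^{\mathrm{red}})$ is recovered from its $\overline{\TT}$-equivariant lift by setting $s_3=0$. Applying the localisation formula of \cite{GP_virtual_localization} (resp.\ its $K$-theoretic counterpart \cite{FG_riemann_roch, Qu_virtual_pullback}) with respect to $\BC^*$ then expresses the left-hand side as a sum over the fixed points $(\bn_1,\bn_2)$ of $e(-N^{\mathrm{red}}|_{(\bn_1,\bn_2)})$ (resp.\ $\widehat{\mathfrak{e}}(-N^{\mathrm{red}}|_{(\bn_1,\bn_2)})$) divided by the Euler class of the $\BC^*$-moving tangent space $T_{(\bn_1,\bn_2)}(\BP^1)^{[\bn]}$, all as rational functions in $s_1,s_2,s_3$.

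The heart of the proof is then a $K$-theoretic identity of $\overline{\TT}$-representations at each fixed point, namely $N^{\mathrm{red}}|_{(\bn_1,\bn_2)}+\big(T_{(\bn_1,\bn_2)}(\BP^1)^{[\bn]}\big)^{\mov}=\mathsf{v}_{\bn_1}+\mathsf{v}_{\bn_2}|_{\mathrm{subst}}$. To prove it I would restrict the class of \Cref{prop: K theort class} to $(\bn_1,\bn_2)$, where every universal sheaf $\oO_{\CZ_{ij}}$ splits into its part supported at $0$, contributing $\sum_{\alpha=0}^{(n_1)_{ij}-1}t_3^{-\alpha}$, and its part supported at $\infty$, contributing $\sum_{\alpha=0}^{(n_2)_{ij}-1}t_3^{\alpha}$, while the line bundles acquire the weights $L_i|_0=1$ and $L_i|_\infty=t_3^{-\deg L_i}$ recorded just before the statement. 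Combining this restriction with the moving tangent directions absorbed by the localisation denominator and using $\CA|_0,\CA|_\infty,\CN|_0,\CN|_\infty$, I expect the total to split into the two MNOP vertex characters: the contribution at $0$ equals $\mathsf{v}_{\bn_1}$ as in \eqref{eqn: vertex DT}, while the contribution at $\infty$ equals $\mathsf{v}_{\bn_2}$ after the substitution $t_1\mapsto t_1t_3^{-\deg L_1}$, $t_2\mapsto t_2t_3^{-\deg L_2}$, $t_3\mapsto t_3^{-1}$ (resp.\ $s_1\mapsto s_1-\deg L_1\,s_3$, $s_2\mapsto s_2-\deg L_2\,s_3$, $s_3\mapsto -s_3$) dictated by the opposite $\BC^*$-weight and the line-bundle twist at $\infty$. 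Applying $e$ (resp.\ $\widehat{\mathfrak{e}}$) and setting $s_3=0$ (resp.\ $t_3=1$) then yields both formulas at once.

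The main obstacle I anticipate is the bookkeeping in this last step: matching, box by box, the $\RR\pi_*\RR\hom$ terms of \Cref{prop: K theort class} — together with the moving tangent contribution — with the compact combinatorial expression $\mathsf{v}_{\bn}$, in particular verifying that the cross terms involving the infinite leg $\mathsf{Z}_\lambda$ reassemble correctly and that the edge prefactor $P_{C,L_1,L_2}$ plays no role after passing to $N^{\mathrm{red}}$. This is precisely the $K$-theoretic refinement of the classical vertex/edge computation of \cite{MNOP_1}, carried out here $\overline{\TT}$-equivariantly on the normalised class.
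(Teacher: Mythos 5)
Your proposal is correct and follows essentially the same route as the paper: an auxiliary $\BC^*$-localisation on $(\BP^1)^{[\bn]}$ whose reduced, isolated fixed points are indexed by decompositions $\bn=\bn_1+\bn_2$, followed by the key fixed-point identity $(N^{\mathrm{red}}+T^{\vir})|_{(\bn_1,\bn_2)}=\mathsf{v}_{\bn_1}+\mathsf{v}_{\bn_2}|_{\mathrm{subst}}$ obtained by restricting the class of \Cref{prop: K theort class} to $0$ and $\infty$ with the stated equivariant structures on $L_1,L_2,\omega_{\BP^1}$. The bookkeeping step you flag as the main obstacle is exactly what the paper's proof carries out, so no gap remains.
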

\begin{proof}
    We prove the first claim, as the second one follows by an analogous reasoning. Denote by $T^{\vir}$ the virtual tangent space of $(\BP^1)^{[\mathbf{n}]}$. By $\overline{\TT}$-equivariant virtual localisation, we have
    \begin{align*}
            \int_{(\BP^1)^{[\mathbf{n}]}}e(-N_{\BP^1,L_1,L_2, \bn}^{\mathrm{red}})=\left.\left(\int_{((\BP^1)^{[\mathbf{n}]})^{\overline{\TT}}}e(-N_{\BP^1,L_1,L_2, \bn}^{\mathrm{red}}-T^{\vir})|_{((\BP^1)^{[\mathbf{n}]})^{\overline{\TT}}}\right)\right|_{s_3=0}.
    \end{align*}
    Notice that a $\overline{\TT}$-fixed flag $(Z_\Box)_\Box\in ((\BP^1)^{[\mathbf{n}]})^{\overline{\TT}}$ decomposes as 
    \begin{align*}
         (Z_\Box)_\Box&=(Z^0_\Box)_\Box + (Z^\infty_\Box)_\Box\\
        &= \bn_1\cdot [0]+\bn_2\cdot [\infty],
    \end{align*}
    for some skew plane partitions $\bn_1, \bn_2$. 
    This implies that 
    the $\overline{\TT}$-fixed locus $((\BP^1)^{[\mathbf{n}]})^{\overline{\TT}}$ is reduced, zero-dimensional and  in bijection with 
    \begin{align*}
        \set{(\bn_1, \bn_2)| \bn_1, \bn_2 \mbox{ skew plane partitions of shape } \BZ^2_{\geq 0}\setminus \lambda \mbox{ such that } \bn= \mathbf{n}_1+\mathbf{n}_2}.
    \end{align*}
    In particular, this implies that the virtual $\overline{\TT}$-representation $T^{\vir}_{(\bn_1, \bn_2)} $ is  $\overline{\TT}$-movable at all fixed points corresponding to $(\bn_1, \bn_2)$.

    Fix now a $\overline{\TT}$-fixed point $(\bn_1, \bn_2)$. By construction, there is  an identity of virtual $\overline{\TT}$-representations
    \begin{align*}
      N_{\BP^1,L_1,L_2, \bn}^{\mathrm{red}}+T^{\vir}=\BE|_{(\BP^1)^{[\mathbf{n}]}}^\vee- P_{C, L_1, L_2}.
    \end{align*}
    Taking the fiber over the fixed point corresponding to $(\bn_1, \bn_2)$ yields
    \begin{multline}\label{eqn: sum 0 inft}
         ( N_{\BP^1,L_1,L_2, \bn}^{\mathrm{red}}+T^{\vir})|_{(\bn_1, \bn_2)}=\\
        \sum_{p\in \{0, \infty\}}\left(\sum_{(i,j)\in \BZ^2_{\geq 0}\setminus\lambda}\left(H^0(\oO_{Z^p_{ij}}\otimes  L_1^{-i}L_2^{-j})\cdot t_1^{-i}t_2^{-j} - H^0( \oO_{Z^p_{ij}}\otimes L_1^{-i-1}L_2^{-j-1}\otimes \omega_{\BP^1})^*\cdot t_1^{i+1}t_2^{j+1}\right.\right.\\
 \left.-H^0(\oO_{Z^p_{ij}}\otimes L_1^{-i}L_2^{-j}\otimes \CN\otimes \CA^*)\cdot t_1^{-i}t_2^{-j}+H^0(\oO_{Z^p_{ij}}\otimes L_1^{-i}L_2^{-j}\otimes \CN^*\otimes \CA^*\otimes \omega_{\BP^1})^*\cdot t_1^{i}t_2^{j}\right)\\
    \left.  -  \sum_{(i,j), (l,k)\in\BZ^2_{\geq 0}\setminus\lambda } \RR\Hom( \oO_{Z^p_{ij}}\otimes \CN^*,\oO_{Z^p_{lk}}\otimes L_1^{i-l}L_2^{j-k})\cdot t_1^{i-l}t_2^{j-k}\right).
    \end{multline}
    Let $U_0, U_\infty$ be the open affine charts in $\BP^1$ around respectively $0, \infty$. We have that 
    \begin{align*}
        \omega_{\BP^1}|_{U_p}=\begin{cases}
            \oO_{U_0}\otimes t_3^{-1} & p=0,\\
            \oO_{U_\infty}\otimes t_3  & p=\infty,
        \end{cases}
    \end{align*}
    where we remark that the equivariant structure on $\omega_{\BP^1}$ is canonically induced by the $\BC^*$-action on $\BP^1$. Analogously, for any line bundle $L$ on $\BP^1$ we have that 
     \begin{align}\label{eqn: struct at inf}
       L|_{U_p}=\begin{cases}
            \oO_{U_0} & p=0,\\
            \oO_{U_\infty}\otimes t_3^{-\deg L}  & p=\infty.
        \end{cases}
    \end{align}
Notice that, for all $(i,j), (l,k)$,  by \cite[Lemma 11.3.3]{Ric_Book} we have an identity of $\overline{\TT}$-representations
    \begin{align*}
        \RR\Hom( \oO_{Z^0_{ij}}\otimes \CN^*,\oO_{Z^0_{lk}}\otimes L_1^{i-l}L_2^{j-k})=(1-t_3)\cdot H^0(\oO_{Z^0_{ij}})^*\cdot H^0(\oO_{Z^0_{lk}}) \cdot (1-t_1)(1-t_2),
    \end{align*}
    and that
    \begin{align*}
      \mathsf{Z}_{\bn_1}&= \sum_{(i,j)\in \BZ^2_{\geq 0}\setminus\lambda}H^0(\oO_{Z^0_{ij}}\otimes  L_1^{-i}L_2^{-j})\cdot t_1^{-i}t_2^{-j},\\
     \overline{\mathsf{Z}}_{\bn_1}\cdot t_1t_2t_3 &=\sum_{(i,j)\in \BZ^2_{\geq 0}\setminus\lambda} H^0( \oO_{Z^0_{ij}}\otimes L_1^{-i-1}L_2^{-j-1}\otimes \omega_{\BP^1})^*\cdot t_1^{i+1}t_2^{j+1}.
    \end{align*}
    By the latter identities (and their natural analogs for the terms involving $\CN, \CA$) we have that the contribution of $p=0$ to the right-hand-side of \eqref{eqn: sum 0 inft} is
    \begin{align*}
        \mathsf{v}_{\bn_1}=\mathsf{Z}_{\bn_1}- \overline{\mathsf{Z}}_{\bn_1}\cdot t_1t_2t_3-(1-t_1)(1-t_2)(-t_3\mathsf{Z}_{\lambda}\overline{\mathsf{Z}}_{\bn_1} +\overline{\mathsf{Z}}_{\lambda}\mathsf{Z}_{\bn_1}+(1-t_3)\mathsf{Z}_{\bn_1}\overline{\mathsf{Z}}_{\bn_1}).
    \end{align*}
    Similarly, using the equivariant structure \eqref{eqn: struct at inf} around $p=\infty$, we have that the contribution of $p=\infty$ to the right-hand-side of \eqref{eqn: sum 0 inft} is
    \begin{align}\label{eqn: V2}
          \mathsf{v}_{\bn_2}|_{t_1=t_1t_3^{-\deg L_1}, t_2=t_2t_3^{-\deg L_2}, t_3=t_3^{-1}}.
    \end{align}
    Summing all up, we have proved that 
\[
 ( N_{\BP^1,L_1,L_2, \bn}^{\mathrm{red}}+T^{\vir})|_{(\bn_1, \bn_2)}=  \mathsf{v}_{\bn_1}+    \mathsf{v}_{\bn_2}|_{t_1=t_1t_3^{-\deg L_1}, t_2=t_2t_3^{-\deg L_2}, t_3=t_3^{-1}}.
\]
By \cite{MNOP_1} the term $ \mathsf{v}_{\bn_1}$ is $\overline{\TT}$-movable, which implies that the measure $e(- \mathsf{v}_{\bn_1}) $ is well-defined. Similarly, the term \eqref{eqn: V2} is $\overline{\TT}$-movable,  
    and therefore 
    \begin{align*}
          \int_{(\BP^1)^{[\mathbf{n}]}}e(-N_{\BP^1,L_1,L_2, \bn}^{\mathrm{red}})&=\left.\left(\sum_{\bn=\bn_1+\bn_2}e(- \mathsf{v}_{\bn_1})\cdot e(-  \mathsf{v}_{\bn_2}|_{t_1=t_1t_3^{-\deg L_1}, t_2=t_2t_3^{-\deg L_2}, t_3=t_3^{-1}})\right)\right|_{s_3=0}\\
          &=\left.\left(\sum_{\bn=\bn_1+\bn_2}e(- \mathsf{v}_{\bn_1})\cdot e(-  \mathsf{v}_{\bn_2})|_{s_1=s_1-\deg L_1 s_3, s_2=s_2-\deg L_2 s_3, s_3=-s_3}\right)\right|_{s_3=0}.
    \end{align*}
\end{proof}
Define the generating series
\begin{align*}
    \mathsf{V}_\lambda(q)&=\sum_{\bn}e(-\mathsf{v}_{\bn})\cdot q^{|\mathbf{n}|}\in \BQ(s_1, s_2, s_3)\llbracket q \rrbracket,\\
\widehat{\mathsf{V}}_\lambda(q)&=\sum_{\bn}\widehat{\mathfrak{e}}(-\mathsf{v}_{\bn})\cdot q^{|\mathbf{n}|}\in \BQ(t_1^{1/2}, t_2^{1/2}, t_3^{1/2})\llbracket q \rrbracket,
\end{align*}
where the sum is over all skew plane partitions of shape $\BZ^2_{\geq 0}\setminus \lambda$. Summing the identities of \Cref{thm: toric loc} over all skew plane partitions, we obtain the following corollary.

\begin{corollary}\label{cor: vertex top}
   Let $L_1, L_2$ be line bundles on $\BP^1$. We have identities of generating series
    \begin{align*}
  \sum_{\mathbf{n}}q^{|\mathbf{n}|} \int_{(\BP^1)^{[\mathbf{n}]}}e(-N_{\BP^1,L_1,L_2, \bn}^{\mathrm{red}})&=  \left(\mathsf{V}_\lambda(q)\cdot \mathsf{V}_\lambda(q)|_{s_1=s_1-\deg L_1 s_3, s_2=s_2-\deg L_2 s_3, s_3=-s_3}\right)|_{s_3=0},\\
    \sum_{\mathbf{n}}q^{|\mathbf{n}|} \widehat{\chi}\left((\BP^1)^{[\mathbf{n}]},\widehat{\mathfrak{e}}(-N_{\BP^1,L_1,L_2, \bn}^{\mathrm{red}})  \right)&=\left.\left(\widehat{\mathsf{V}}_\lambda(q)\cdot \widehat{\mathsf{V}}_\lambda(q)|_{t_1=t_1t_3^{-\deg L_1}, t_2=t_2t_3^{-\deg L_2}, t_3=t_3^{-1}}\right)\right|_{t_3=1}.
\end{align*}
\end{corollary}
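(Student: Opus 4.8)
The plan is to deduce the corollary directly from \Cref{thm: toric loc}, by multiplying each of its two identities by $q^{|\bn|}$ and summing over all skew plane partitions $\bn$ of shape $\BZ^2_{\geq 0}\setminus\lambda$. I would carry out the cohomological case explicitly; the $K$-theoretic one is identical, replacing the operator $e$ by $\widehat{\mathfrak{e}}$, the additive substitution $s_i\mapsto s_i-\deg L_i\,s_3,\ s_3\mapsto -s_3$ by the multiplicative substitution $t_i\mapsto t_it_3^{-\deg L_i},\ t_3\mapsto t_3^{-1}$, and the specialisation $s_3=0$ by $t_3=1$.

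The heart of the argument is that the convolution over decompositions in \Cref{thm: toric loc} assembles into a product of generating series. Concretely, multiplying the first identity of \Cref{thm: toric loc} by $q^{|\bn|}$ and summing over $\bn$ gives
\begin{align*}
\sum_{\bn}q^{|\bn|}\int_{(\BP^1)^{[\bn]}}e(-N_{\BP^1,L_1,L_2,\bn}^{\mathrm{red}})=\left(\sum_{\bn}\ \sum_{\bn=\bn_1+\bn_2}q^{|\bn|}\,e(-\mathsf{v}_{\bn_1})\cdot e(-\mathsf{v}_{\bn_2})|_{\cdots}\right)\Big|_{s_3=0},
\end{align*}
where $|_{\cdots}$ abbreviates the substitution above. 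Since $|\bn|=|\bn_1|+|\bn_2|$ whenever $\bn=\bn_1+\bn_2$, the weight splits as $q^{|\bn|}=q^{|\bn_1|}q^{|\bn_2|}$, and the double sum over $\bn$ and its decompositions becomes an unrestricted sum over ordered pairs $(\bn_1,\bn_2)$ of skew plane partitions of shape $\BZ^2_{\geq 0}\setminus\lambda$. This is precisely the Cauchy product of $\mathsf{V}_\lambda(q)=\sum_{\bn_1}e(-\mathsf{v}_{\bn_1})q^{|\bn_1|}$ with the series obtained from $\mathsf{V}_\lambda(q)$ by applying the substitution coefficient-by-coefficient, which yields the claimed expression $\left(\mathsf{V}_\lambda(q)\cdot\mathsf{V}_\lambda(q)|_{\cdots}\right)|_{s_3=0}$.

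The only point needing care — and I expect it to be a formality rather than a genuine obstacle — is the commutation of the infinite sum over $\bn$ with the specialisation $s_3=0$ (resp.~$t_3=1$). This is legitimate because everything happens coefficient-wise in $q$: for each $n$ there are finitely many $\bn$ with $|\bn|=n$ and finitely many decompositions of each, so the coefficient of $q^n$ is a finite rational expression in $s_1,s_2,s_3$, and the substitution acts on the equivariant variables without touching $q$. Although the individual terms $e(-\mathsf{v}_{\bn_1})\cdot e(-\mathsf{v}_{\bn_2})|_{\cdots}$ need not be regular at $s_3=0$, \Cref{thm: toric loc} guarantees that each $q^n$-coefficient equals the $\TT$-equivariant integral on the left, hence lies in $\BQ(s_1,s_2)$ and is regular there; the specialisation of the $q^n$-coefficient is therefore well defined, and the equality of the two generating series follows termwise in $q$.
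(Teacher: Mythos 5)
Your proposal is correct and follows exactly the paper's route: the paper obtains \Cref{cor: vertex top} by summing the identities of \Cref{thm: toric loc} over all skew plane partitions weighted by $q^{|\bn|}$ and recognising the resulting convolution as the Cauchy product of $\mathsf{V}_\lambda(q)$ (resp.~$\widehat{\mathsf{V}}_\lambda(q)$) with its substituted copy. Your additional remark that the specialisation $s_3=0$ (resp.~$t_3=1$) is applied coefficient-wise in $q$ to a finite, regular expression is a correct and worthwhile clarification of a point the paper leaves implicit.
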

By the above result, we can finally compute the universal series of \Cref{thm: universal series}.
\begin{theorem}\label{thm: DT_ explicit univ series}
   The universal series are given by
\begin{align*}
    A_\lambda(q)&=\prod_{\Box\in \lambda}\frac{1}{(-\ell(\Box)s_1+(a(\Box)+1)s_2)((\ell(\Box)+1)s_1-a(\Box)s_2)}\cdot \left(\mathsf{V}_\lambda(q)\cdot \mathsf{V}_\lambda(q)|_{ s_3=-s_3}\right)|_{s_3=0},\\
    B_\lambda(q)&=\prod_{\Box\in \lambda}\frac{( -\ell(\Box) s_1+(a(\Box)+1)s_2)^{\ell(\Box)}}{( (\ell(\Box)+1)s_1-a(\Box)s_2)^{\ell(\Box)+1}}\cdot \left.\left(\mathsf{V}_\lambda(q)|_{ s_3=-s_3}\cdot \mathsf{V}_\lambda(q)^{-1}|_{s_1=s_1+2s_3, s_3=-s_3}\right)\right|^{\frac{1}{2}}_{s_3=0}, \\
    C_\lambda(q)&=\prod_{\Box\in \lambda}\frac{(\ell(\Box)+1)s_1-a(\Box)s_2)^{a(\Box)}}{(-\ell(\Box)s_1+(a(\Box)+1)s_2)^{a(\Box)+1}}\cdot \left.\left(\mathsf{V}_\lambda(q)|_{ s_3=-s_3}\cdot \mathsf{V}_\lambda(q)^{-1}|_{s_2=s_2+2s_3, s_3=-s_3}\right)\right|^{\frac{1}{2}}_{s_3=0},\\
      \widehat{A}_\lambda(q)&=\prod_{\Box\in \lambda}\frac{1}{[t_1^{-\ell(\Box)}t_2^{a(\Box)+1}][t_1^{\ell(\Box)+1}t_2^{-a(\Box)}]}\cdot \left.\left(\widehat{\mathsf{V}}_\lambda(q)\cdot \widehat{\mathsf{V}}_\lambda(q)|_{ t_3=t^{-1}_3}\right)\right|_{t_3=1}\\
    \widehat{B}_\lambda(q)&=\prod_{\Box\in \lambda}\frac{[t_1^{-\ell(\Box)}t_2^{a(\Box)+1}]^{\ell(\Box)}}{[t_1^{\ell(\Box)+1}t_2^{-a(\Box)}]^{\ell(\Box)+1}}\cdot \left.\left(\widehat{\mathsf{V}}_\lambda(q)|_{ t_3=t_3^{-1}}\cdot \widehat{\mathsf{V}}_\lambda(q)^{-1}|_{t_1=t_1t^2_3, t_3=t^{-1}_3}\right)\right|^{\frac{1}{2}}_{t_3=1},  \\
    \widehat{C}_\lambda(q)&=\prod_{\Box\in \lambda}\frac{[t_1^{\ell(\Box)+1}t_2^{-a(\Box)}]^{a(\Box)}}{[t_1^{-\ell(\Box)}t_2^{a(\Box)+1}]^{a(\Box)+1}}\cdot \left.\left(\widehat{\mathsf{V}}_\lambda(q)|_{ t_3=t_3^{-1}}\cdot \widehat{\mathsf{V}}_\lambda(q)^{-1}|_{t_2=t_2t^2_3, t_3=t^{-1}_3}\right)\right|^{\frac{1}{2}}_{t_3=1}.
\end{align*}
\end{theorem}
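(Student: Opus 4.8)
The plan is to assemble the five preceding results into a single evaluation of the universal decomposition of \Cref{thm: universal series} at three explicit genus-zero triples. Recall that \Cref{thm: universal series} presents
\[
\sum_{\bn}q^{|\bn|}\int_{C^{[\bn]}}e(-N^{\vir}_{C,L_1,L_2,\bn}) = A_\lambda(q)^{1-g}B_\lambda(q)^{\deg L_1}C_\lambda(q)^{\deg L_2}.
\]
Since $A_\lambda,B_\lambda,C_\lambda$ are, by construction, the unique multiplicative factors, I may compute them by specialising $(C,L_1,L_2)$ to $(\BP^1,\oO,\oO)$, $(\BP^1,\oO(-2),\oO)$ and $(\BP^1,\oO,\oO(-2))$, whose exponent triples $(1-g,\deg L_1,\deg L_2)$ are $(1,0,0)$, $(1,-2,0)$ and $(1,0,-2)$. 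The first evaluation gives $A_\lambda$ as the full series itself, while the remaining two present $A_\lambda B_\lambda^{-2}$ and $A_\lambda C_\lambda^{-2}$, isolating $B_\lambda^2$ and $C_\lambda^2$ as ratios. The shift $\deg L_1=-2$ is chosen precisely so that the substitution $s_1\mapsto s_1+2s_3$ (multiplicatively $t_1\mapsto t_1t_3^2$) appearing in the target formulas is produced directly.

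Next, for each triple I would unfold the series as $e(-N^{\vir}_{C,L_1,L_2,\mathbf{0}})\cdot Z(C,L_1,L_2)$, with $Z$ the normalised series of \eqref{eqn: red Z gen}. The constant-term prefactor is read off from \Cref{cor: virt norm operators} together with the explicit expressions of \Cref{eqn: costant DT expl}; at genus $0$ it equals $e(-T_\lambda)$ for the first triple, and $e(-T_\lambda)\cdot e(-t_1\partial_{t_1}T_\lambda)^{-2}$, $e(-T_\lambda)\cdot e(-t_2\partial_{t_2}T_\lambda)^{-2}$ for the other two. By \Cref{eqn: lemma reduced} the normalised series equals $\sum_{\bn}q^{|\bn|}\int_{C^{[\bn]}}e(-N^{\mathrm{red}}_{C,L_1,L_2,\bn})$, and by \Cref{cor: vertex top} this is the vertex product; for instance $Z(\BP^1,\oO(-2),\oO) = (\mathsf{V}_\lambda(q)\cdot \mathsf{V}_\lambda(q)|_{s_1=s_1+2s_3,\,s_3=-s_3})|_{s_3=0}$, and analogously for the remaining two triples.

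Finally I would form the ratios. The crucial simplification is that the factor $\mathsf{V}_\lambda(q)$ common to numerator and denominator cancels as a rational function in $s_3$ \emph{before} the specialisation $s_3=0$ is imposed; since each vertex product is finite and equals $1$ at the $q^0$ level, division commutes with setting $s_3=0$, and the ratio collapses to $(\mathsf{V}_\lambda(q)|_{s_3=-s_3}\cdot \mathsf{V}_\lambda(q)^{-1}|_{s_1=s_1+2s_3,\,s_3=-s_3})|_{s_3=0}$. Multiplying by the prefactor $e(-t_1\partial_{t_1}T_\lambda)^2$ reproduces $B_\lambda^2$ exactly, and the canonical square root in $1+q\,\BQ(s_1,s_2)\llbracket q\rrbracket$ — with constant term the positive root $e(-t_1\partial_{t_1}T_\lambda)$ of \Cref{eqn: costant DT expl} — recovers $B_\lambda$; the series $C_\lambda$ and the $K$-theoretic analogues $\widehat{A}_\lambda,\widehat{B}_\lambda,\widehat{C}_\lambda$ (replacing $e$ by $\widehat{\mathfrak{e}}$ and the additive shifts by the multiplicative ones $t_1\mapsto t_1t_3^2$, $t_3\mapsto t_3^{-1}$) follow verbatim. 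The main obstacle is precisely the bookkeeping around these square roots: the three chosen triples generate only an index-$4$ sublattice of $\BZ^3$, so one must verify that they determine $B_\lambda$ and $C_\lambda$ themselves rather than merely their squares. This rests on the finiteness and non-vanishing of the relevant vertex products at $q^0$, which legitimises the cancellation and the limit, and on fixing the overall sign by matching the $q^0$-coefficient against the explicit constant terms of \Cref{eqn: costant DT expl}.
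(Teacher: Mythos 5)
Your proposal is correct and follows essentially the same route as the paper: normalise by the constant terms of \Cref{cor: virt norm operators}, evaluate the reduced series of \Cref{eqn: lemma reduced} at the three triples $(\BP^1,\oO,\oO)$, $(\BP^1,\oO(-2),\oO)$, $(\BP^1,\oO,\oO(-2))$ via \Cref{cor: vertex top}, and solve the resulting multiplicative system. The index-$4$ sublattice issue you flag is real but resolved exactly as you say — the normalised series lie in $1+q\,\BQ(s_1,s_2)\llbracket q\rrbracket$, where squaring is injective — which is precisely what the paper's ``extracting the logarithm and solving the resulting system'' accomplishes.
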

\begin{proof}
We spell out the argument for the first three equalities, as the other ones follow from an analogous reasoning.   

Set the normalised universal series
\begin{align*}
       A'_\lambda(q)&=e\left( -T_\lambda\right)^{-1}\cdot A_\lambda(q),\\
       B'_\lambda(q)&=e\left(  -t_1\frac{\partial}{\partial t_1}T_\lambda\right)^{-1}\cdot B_\lambda(q), \\
       C'_\lambda(q)&=e\left(  -t_2\frac{\partial}{\partial t_2}T_\lambda\right)^{-1}\cdot C_\lambda(q).
\end{align*}
Notice that, by construction, the series $A'_\lambda(q), B'_\lambda(q), C'_\lambda(q) $ are in $1+\BQ(s_1, s_2)\llbracket q\rrbracket$. 
By the proof of \Cref{thm: universal series} and \Cref{eqn: lemma reduced}, we have that
\begin{align*}
     \sum_{\mathbf{n}}q^{|\mathbf{n}|} \int_{C^{[\mathbf{n}]}}e(-N_{C,L_1,L_2, \bn}^{\mathrm{red}})= A'_{\lambda}(q)^{1-g}\cdot B'_{\lambda}(q)^{\deg L_1}\cdot C'_{\lambda}(q)^{\deg L_2}.
\end{align*}
By applying \Cref{cor: vertex top} to the tuples 
\begin{align*}
    (\BP^1, \oO, \oO), \quad 
     (\BP^1, \oO, \oO(-2)), \quad 
      (\BP^1, \oO(-2), \oO),
\end{align*}
we end up with a system of three equations. Extracting the logarithm and solving the resulting system uniquely determines the three required universal series, thus proving the claim.
\end{proof}
\begin{remark}
Let $(L_1, L_2)\neq (L_1', L_2')$ be any two pairs of line bundles on $\BP^1$ satisfying $L_1\otimes L_2 \cong L_1'\otimes L_2'\cong \omega_{\BP^1}$. Then in the  proof of \Cref{thm: DT_ explicit univ series} we could have alternatively  taken 
 \begin{align*}
    (\BP^1, \oO, \oO), \quad 
     (\BP^1, L_1, L_2), \quad 
      (\BP^1, L_1', L_2'),
\end{align*}
to end up with a system of three equations, whose solution determines the universal series. By the uniqueness of the universal series, this yields (a priori non-trivial) relations involving the generating series $\mathsf{V}_{\lambda}(q), \widehat{\mathsf{V}}_{\lambda}(q)$.
\end{remark}
\subsection{The 1-leg vertex}\label{Sec: DT 1 leg vert}
The key point of \Cref{cor: vertex top} is that, for all Young diagrams $\lambda$,  the generating series \eqref{eqn: red Z gen} are reduced to   the knowledge of the 
generating series $\mathsf{V}_\lambda(q), \widehat{\mathsf{V}}_\lambda(q)$. 

Remarkably, the latter reproduce the \emph{vertex partition functions} obtained by starting from the vertex/edge formalism introduced in \cite{MNOP_1} in the case of  one infinite leg. We briefly recall the combinatorial data used to express the original  vertex formalism.

A \emph{plane partition} is a (possibly infinite) collection of boxes $\pi\in \BZ^3_{\geq 0}$, such that if one among $(i+1,j,k), (i,j+1,k), (i,j,k+1)$ is in $\pi$, then $(i,j,k)$ belongs to $ \pi$ as well. We say that a plane partition $\pi$ is a \emph{1-leg plane partition with asymptotic profile} $\lambda$ if, for $N\gg 0$, we have that 
\begin{align*}
    \pi\setminus \left([0,N]\times [0, N]\times [0, N]\right)=\set{(i,j,k)| (i,j)\in \lambda \mbox{ and } k\geq N+1}.
\end{align*}
Following \cite[Sec.~4.4]{MNOP_1}, we define the \emph{normalised size} of $\pi$ to be
\begin{align*}
    |\pi|=|\set{(i,j,k)\in \pi| (i,j)\notin \lambda}|.
\end{align*}
To each skew plane partition $\bn$ of shape $\mathbb{\BZ}^2_{\geq 0}\setminus \lambda$, we associate a 1-leg plane partition $\pi$   with asymptotic profile $\lambda$ by setting
\[
\pi=\set{(i,j,k)\in \BZ^3_{\geq 0}| (i,j)\in \lambda}\cup \set{(i,j,k)\in \BZ^3_{\geq 0}| (i,j)\notin \lambda \mbox{ and } k\leq n_{ij}-1}.
\]
This association is easily seen to provide a bijection  between the sets
\begin{align*}
    \set{\mbox{skew plane partitions of shape } \mathbb{\BZ}^2_{\geq 0}\setminus \lambda} \longleftrightarrow \set{\mbox{1-leg plane partitions with asymptotic profile } \lambda },
\end{align*}
which furthermore preserves the size.

\begin{figure}[H]
    \centering

\tdplotsetmaincoords{70}{120}
\begin{tikzpicture}[tdplot_main_coords, scale=0.8]

% Define a cube drawing macro at coordinate (x,y,z)
\newcommand{\drawcube}[3]{
  \pgfmathsetmacro{\x}{#1}
  \pgfmathsetmacro{\y}{#2}
  \pgfmathsetmacro{\z}{#3}
  
  % Top face
  \fill[gray!40] (\x,\y,\z+1) -- ++(1,0,0) -- ++(0,1,0) -- ++(-1,0,0) -- cycle;
  % Right face
  \fill[gray!30] (\x+1,\y,\z) -- ++(0,0,1) -- ++(0,1,0) -- ++(0,0,-1) -- cycle;
  % Front face
  \fill[gray!20] (\x,\y+1,\z) -- ++(1,0,0) -- ++(0,0,1) -- ++(-1,0,0) -- cycle;
  
  % Draw edges
  \draw[thick] (\x,\y,\z) -- ++(1,0,0) -- ++(0,1,0) -- ++(-1,0,0) -- cycle;
  \draw[thick] (\x,\y,\z) -- ++(0,0,1);
  \draw[thick] (\x+1,\y,\z) -- ++(0,0,1);
  \draw[thick] (\x+1,\y+1,\z) -- ++(0,0,1);
  \draw[thick] (\x,\y+1,\z) -- ++(0,0,1);
  \draw[thick] (\x,\y,\z+1) -- ++(1,0,0) -- ++(0,1,0) -- ++(-1,0,0) -- cycle;
}

% Finite part of the plane partition (a typical shape)
\foreach \x/\y/\z in {
  0/0/0, 1/0/0, 2/0/0,
  0/1/0, 1/1/0,
  0/2/0,
  0/0/1, 1/0/1
}{
  \drawcube{\x}{\y}{\z}
}

% Infinite leg in the x-direction
\foreach \x in {3,...,8} {
  \drawcube{\x}{0}{0}
}

% Dotted continuation
\draw[dotted, thick] (9,0,0) -- ++(2,0,0);

\end{tikzpicture}
  \caption{A 1-leg plane partition of normalised size 5, with asymptotic profile a Young diagram of size 1.}
    \label{fig: DT 1 vertex}
\end{figure}
\smallbreak
Under the above correspondence it is immediate to verify that, given a skew plane partition $\bn$, the vertex term $\mathsf{v}_{\bn}$ reproduces precisely\footnote{Due to different conventions on the torus $\overline{\TT}$-action in \cite{MNOP_1}, the vertex terms $\mathsf{v}_{\bn}$ are matched only after the change of variables $t_i\mapsto t_i^{-1}$, for $i=1, 2,3$. We remark that the  convention on the torus action adapted in this paper is consistent with the one used in \cite{Okounk_Lectures_K_theory, Arb_K-theo_surface, Mon_double_nested}, while the one of \cite{MNOP_1} is consistent, for instance, with \cite{FMR_higher_rank, Mon_PhD}. } the \emph{normalised vertex} term of \cite[Sec.~4.9]{MNOP_1} of the corresponding 1-leg plane partition $\pi$.
%\subsubsection{Kononov-Okounkov-Osinenko}
The expressions in  \Cref{cor: full DT} can be computed using \emph{relative} invariants, combining the formula for the \emph{capped} 1-leg vertex\footnote{To be precise, \cite{KOO_2_legDT} deals with the 1-leg vertex in Pandharipande-Thomas theory. Nevertheless,  proving a formula for the  1-leg vertex in any  of the two theories yields equivalent results, see \Cref{sec: DT/PT} and \cite{KLT_DTPT}.}  $\widehat{\mathsf{V}}_\lambda(q)$  \cite[Eqn.~(29)]{KOO_2_legDT},  a plethystic epression as a function  of the torus equivariant parameters and a basis of the representation ring of the symmetric group, and  the correct  \emph{glueing operator} (see e.g.~\cite{KOO_2_legDT}).

%A formula for the \emph{capped} 1-leg vertex\footnote{To be precise, \cite{KOO_2_legDT} deals with the 1-leg vertex in Pandharipande-Thomas theory. Nevertheless,  proving a formula for the  1-leg vertex in any  of the two theories yields equivalent results, see \Cref{sec: DT/PT} and \cite{KLT_DTPT}.}  $\widehat{\mathsf{V}}_\lambda(q)$ can be derived from \cite[Eqn.~(29)]{KOO_2_legDT},  a plethystic epression as a function  of the torus equivariant parameters and a basis of the representation ring of the symmetric group, which combined with the correct  \emph{glueing operator} (see e.g.~\cite{KOO_2_legDT}) determines the expressions in \Cref{cor: full DT}.
\subsection{Degree zero}
We prove  an explicit closed formula for the universal series in the degree 0 case using the language of plethystic exponentials. Since there is only one Young diagram of size 0, by \eqref{eqn: DT as localised} the resulting invariants compute the Donaldson-Thomas partition functions in degree 0. 
\smallbreak
 Given a  formal power series 
 \[f(t_1, t_2, t_3; q)\in   \BQ(t_1, t_2, t_3)[\![q]\!],\]
 such that $f(t_1, t_2, t_3; 0)=0$, its \emph{plethystic exponential} is defined as 
\begin{align}\label{eqn: on ple} 
\Exp(f(t_1, t_2, t_3;q)) &:= \exp\Big( \sum_{n=1}^{\infty} \frac{1}{n} f(t_1^n, t_2^n, t_3^n;q^n) \Big),
\end{align}
viewed as an element of $1+\BQ(t_1, t_2, t_3)[\![q]\!]$. Recall that the plethystic exponential enjoys the useful identities
\begin{align*}
 \Exp(f+g)&=\Exp(f)\cdot \Exp(g),\\
    \Exp(q)&=\frac{1}{1-q}.
\end{align*}
Recall that the \emph{MacMahon} function is defined as
\begin{align}\label{eqn: MM}
    \mathsf{M}(q)=\prod_{d\geq 1}\frac{1}{(1-q^d)^d}.
\end{align}
\begin{theorem}\label{thm: explicit DT 0 universal}
Let $\lambda=\varnothing$. The universal series satisfy
    \begin{align*}
    A_\varnothing(q)&=\mathsf{M}(-q)^{-2\frac{(s_1+s_2)^2}{s_1s_2}},\\
    B_\varnothing(q)&=\mathsf{M}(-q)^{-1}, \\
    C_\varnothing(q)&=\mathsf{M}(-q)^{-1},\\
      \widehat{A}_\varnothing(-q)&=\Exp\left(2\frac{[t_1t_2]^2}{[t_1][t_2]}\frac{1}{[(t_1t_2)^{1/2}q][(t_1t_2)^{1/2}q^{-1}]} \right),\\
    \widehat{B}_\varnothing(-q)&=\Exp\left(\frac{1}{2}\frac{(t_1t_2)^{\frac{1}{2}}+(t_1t_2)^{-\frac{1}{2}}}{[(t_1t_2)^{1/2}q][(t_1t_2)^{1/2}q^{-1}]} \right),  \\
    \widehat{C}_\varnothing(-q)&=\Exp\left(\frac{1}{2}\frac{(t_1t_2)^{\frac{1}{2}}+(t_1t_2)^{-\frac{1}{2}}}{[(t_1t_2)^{1/2}q][(t_1t_2)^{1/2}q^{-1}]} \right).
\end{align*}
\end{theorem}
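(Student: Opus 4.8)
The plan is to use that $\varnothing$ has no boxes, so every product $\prod_{\Box\in\lambda}(\cdots)$ appearing in \Cref{thm: DT_ explicit univ series} is an empty product equal to $1$; consequently the six series $A_\varnothing,B_\varnothing,C_\varnothing,\widehat{A}_\varnothing,\widehat{B}_\varnothing,\widehat{C}_\varnothing$ are expressed entirely through the degree-$0$ vertices $\mathsf{V}_\varnothing(q)$ and $\widehat{\mathsf{V}}_\varnothing(q)$. Under the correspondence of \Cref{Sec: DT 1 leg vert} these generating series run over ordinary (finite) plane partitions, i.e. they are exactly the cohomological and $K$-theoretic degree-$0$ Donaldson--Thomas vertices of $\BA^3$. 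I would therefore begin by importing their known closed forms: the cohomological one of Maulik--Nekrasov--Okounkov--Pandharipande \cite{MNOP_1},
\[
\mathsf{V}_\varnothing(q)=\mathsf{M}(-q)^{-\frac{(s_1+s_2)(s_1+s_3)(s_2+s_3)}{s_1s_2s_3}},
\]
and the $K$-theoretic one of Okounkov \cite{Okounk_Lectures_K_theory}, written as a plethystic exponential $\widehat{\mathsf{V}}_\varnothing(q)=\Exp(G)$, where $G$ is the explicit symmetric rational expression in $t_1,t_2,t_3$ carrying the refined-MacMahon denominator $[(t_1t_2t_3)^{1/2}q]\,[(t_1t_2t_3)^{1/2}q^{-1}]$.

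For the cohomological triple of series I would substitute the MNOP formula into the expressions of \Cref{thm: DT_ explicit univ series}. Since $\mathsf{V}_\varnothing(q)$ is a single power of $\mathsf{M}(-q)$, each product or quotient of shifted copies collapses to $\mathsf{M}(-q)^{\Phi}$ with $\Phi\in\BQ(s_1,s_2,s_3)$ a sum of the shifted exponents. The entire content is then that $\Phi$ is regular at $s_3=0$ and that its specialisation is the stated one: for $A_\varnothing$ the substitution $s_3\mapsto -s_3$ already makes the combined exponent equal to $-2(s_1+s_2)^2/(s_1s_2)$, independently of $s_3$; for $B_\varnothing$ and $C_\varnothing$ the additional shift $s_1\mapsto s_1+2s_3$ (resp. $s_2\mapsto s_2+2s_3$) cancels the simple $s_3$-poles of the two summands, and one half of the regular value equals $-1$, giving $\mathsf{M}(-q)^{-1}$. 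These are short Laurent expansions in $s_3$.

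The $K$-theoretic series are treated by the same device, now multiplicatively. Writing $\widehat{\mathsf{V}}_\varnothing=\Exp(G)$ and using $\Exp(f)\Exp(g)=\Exp(f+g)$ together with $\Exp(f)^{-1}=\Exp(-f)$, the defining products of \Cref{thm: DT_ explicit univ series} turn into $\Exp(\widehat{\Phi})$ for a combination $\widehat{\Phi}(t_1,t_2,t_3;q)$ of shifted copies of $G$ (applying $t_3\mapsto t_3^{-1}$ and, for $\widehat{B}_\varnothing,\widehat{C}_\varnothing$, the shifts $t_1\mapsto t_1t_3^{2}$, resp. $t_2\mapsto t_2t_3^{2}$). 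One then evaluates $\widehat{\Phi}$ at $t_3=1$, which, using $[x^{-1}]=-[x]$ and elementary identities for $[\cdot]$, should simplify to $2\frac{[t_1t_2]^2}{[t_1][t_2]}\cdot\frac{1}{[(t_1t_2)^{1/2}q][(t_1t_2)^{1/2}q^{-1}]}$ for $\widehat{A}_\varnothing$, and to one half of $\frac{(t_1t_2)^{1/2}+(t_1t_2)^{-1/2}}{[(t_1t_2)^{1/2}q][(t_1t_2)^{1/2}q^{-1}]}$ for $\widehat{B}_\varnothing$ and $\widehat{C}_\varnothing$; the factor $\tfrac{1}{2}$ is exactly the square root prescribed in \Cref{thm: DT_ explicit univ series}, and one checks that the limiting argument is genuinely twice a regular series, so the square root is unambiguous. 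The replacement $q\mapsto -q$ matches the normalisation used in the statement.

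The hard part will be precisely this $t_3\to 1$ specialisation. Each shifted vertex is singular there — the denominators $[t_3],[t_3^{-1}]$ built into $G$ vanish — so the computation rests on the cancellation of these singular contributions inside $\widehat{\Phi}$, leaving a finite explicit symmetric expression. I would control it by expanding $G$ and its shifts to the first non-trivial order in $t_3$ about $1$ (equivalently in $\log t_3$), isolating and cancelling the leading poles before reading off the constant term; the cohomological computation above serves as a consistency check through the cohomological limit of \Cref{prop: limit equiv INTRO}. A secondary bookkeeping point is that the refined-MacMahon denominator is inert under the $t_1,t_2$-shifts and feels $t_3\to 1$ only through $t_1t_2t_3\to t_1t_2$, which is what produces the clean $q$-dependence of the final formulas and, in degree $0$, ultimately \Cref{cor: intro DT 0}.
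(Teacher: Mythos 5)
Your proposal follows the paper's proof essentially verbatim: the paper likewise observes that for $\lambda=\varnothing$ the box--products are empty, imports the closed form $\mathsf{V}_\varnothing(q)=\mathsf{M}(-q)^{-\frac{(s_1+s_2)(s_1+s_3)(s_2+s_3)}{s_1s_2s_3}}$ from \cite{MNOP_2} and $\widehat{\mathsf{V}}_\varnothing(-q)=\Exp\bigl(\tfrac{[t_1t_2][t_1t_3][t_2t_3]}{[t_1][t_2][t_3]}\tfrac{1}{[(t_1t_2t_3)^{1/2}q][(t_1t_2t_3)^{1/2}q^{-1}]}\bigr)$ from \cite{Okounk_Lectures_K_theory}, and then specialises at $s_3=0$, resp.\ $t_3=1$; your cohomological exponent computations ($-2(s_1+s_2)^2/(s_1s_2)$ for $A_\varnothing$, regular value $-2$ halved for $B_\varnothing$, $C_\varnothing$) are exactly what the paper's ``applying the correct change of variables'' amounts to.

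One caution on the $K$-theoretic half, which is the only place your description could lead you astray. Your ``secondary bookkeeping point'' asserts that the refined-MacMahon denominator feels $t_3\to 1$ only through $t_1t_2t_3\to t_1t_2$. That is not how the limit works: writing $t_3=e^{\epsilon}$, the combination $G+G|_{t_3\to t_3^{-1}}$ is a $0/0$ expression against the simple zero of $[t_3]$, so its value at $\epsilon=0$ is (twice) the $\epsilon$-derivative of the \emph{entire} numerator $[t_1e^{\epsilon}][t_2e^{\epsilon}]\cdot[(t_1t_2e^{\epsilon})^{1/2}q]^{-1}[(t_1t_2e^{\epsilon})^{1/2}q^{-1}]^{-1}$, and the derivative of the $q$-dependent factor is not zero (it equals $-\tfrac{1}{2}[t_1t_2]$ times the square of that factor). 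The derivative of $[t_1e^{\epsilon}][t_2e^{\epsilon}]$ alone produces the advertised $2\tfrac{[t_1t_2]^2}{[t_1][t_2]}\tfrac{1}{[(t_1t_2)^{1/2}q][(t_1t_2)^{1/2}q^{-1}]}$; the derivative of the denominator produces an additional contribution proportional to $[t_1t_2]^2\bigl([(t_1t_2)^{1/2}q][(t_1t_2)^{1/2}q^{-1}]\bigr)^{-2}$, which first affects the $q^2$ coefficient and does not vanish identically. So if you carry out your own recipe (Taylor expansion in $\log t_3$, cancel the pole, read off the constant term) honestly, you must track this second contribution and show how it is accounted for before the stated formulas for $\widehat{A}_\varnothing$, $\widehat{B}_\varnothing$, $\widehat{C}_\varnothing$ come out; the paper's proof is silent on this point as well, so it is precisely the step you should write out in full rather than declare inert.
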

\begin{proof}
    By \cite[Thm.~3.3.6]{Okounk_Lectures_K_theory}\footnote{See also \cite[Thm.~5.1]{thimm_orbi} for a more general and detailed proof.} there is an identity
    \begin{align*}
        \widehat{\mathsf{V}}_\varnothing(-q)=\Exp\left(\frac{[t_1t_2][t_1t_3][t_2t_3]}{[t_1][t_2][t_3]}\frac{1}{[(t_1t_2t_3)^{1/2}q][(t_1t_2t_3)^{1/2}q^{-1}]}\right).
    \end{align*}
    Applying the correct change of variables and restricting to $t_3=1$ yields
    \begin{align*}
       & \left.\left(\widehat{\mathsf{V}}_\lambda(-q)\cdot \widehat{\mathsf{V}}_\lambda(-q)|_{ t_3=t^{-1}_3}\right)\right|_{t_3=1}=\Exp\left(2\frac{[t_1t_2]^2}{[t_1][t_2]}\frac{1}{[(t_1t_2)^{1/2}q][(t_1t_2)^{1/2}q^{-1}]} \right),\\
       & \left.\left(\widehat{\mathsf{V}}_\varnothing(-q)|_{ t_3=t_3^{-1}}\cdot \widehat{\mathsf{V}}_\varnothing(-q)^{-1}|_{t_1=t_1t^2_3, t_3=t^{-1}_3}\right)\right|^{\frac{1}{2}}_{t_3=1}=\Exp\left(\frac{1}{2}\frac{(t_1t_2)^{\frac{1}{2}}+(t_1t_2)^{-\frac{1}{2}}}{[(t_1t_2)^{1/2}q][(t_1t_2)^{1/2}q^{-1}]} \right).
    \end{align*}
   By symmetry, an analogous calculation computes $ \widehat{C}_\varnothing(-q)$ as well.

   The first three universal series are computed analogously. By \cite[Thm.~1]{MNOP_2} there is an identity
   \begin{align*}
      \mathsf{V}_\varnothing(q)=\mathsf{M}(-q)^{-\frac{(s_1+s_2)(s_2+s_3)(s_1+s_3)}{s_1s_2s_3}},
   \end{align*}
   where $\mathsf{M}(q)$ denotes MacMahon's function. Applying the correct change of variables and restricting to $s_3=0$ yields the desired results.
\end{proof}
\subsection{Anti-diagonal restriction}
Similarly to \Cref{sec: andtifiagonal edge}, we show now that under the anti-diagonal restriction of the equivariant parameters, the universal series in \Cref{thm: DT_ explicit univ series} admit  more compact formulas. 
%in terms of the topological Euler characteristic of skew nested Hilbert schemes.
\begin{corollary}\label{cor: antidiagonal DT}
     In the anti-diagonal restriction, the universal series satisfy
\begin{align*}
    \left.A_\lambda(q)\right|_{s_1+s_2=0}&=(-s_1^{-2})^{|\lambda|}\prod_{\Box\in \lambda}\frac{1}{h(\Box)^2},\\
    \left.B_\lambda(-q)\right|_{s_1+s_2=0}&=(-1)^{n(\lambda)}s_1^{-|\lambda|}\prod_{\Box\in \lambda}\frac{1}{h(\Box)} \cdot \mathsf{M}(q)^{-1} \cdot \prod_{\Box\in \lambda}(1-q^{h(\Box)}), \\
    \left.C_\lambda(-q)\right|_{s_1+s_2=0}&=(-1)^{n(\overline{\lambda})}(-s_1)^{-|\lambda|}\prod_{\Box\in \lambda}\frac{1}{h(\Box)}\cdot \mathsf{M}(q)^{-1} \cdot \prod_{\Box\in \lambda}(1-q^{h(\Box)}),\\
    \left.  \widehat{A}_\lambda(q)\right|_{t_1t_2=1}&=(-1)^{|\lambda|}\prod_{\Box\in \lambda}\frac{1}{[t_1^{h(\Box)}]^2},\\
   \left. \widehat{B}_\lambda(-q)\right|_{t_1t_2=1}&=(-1)^{n(\lambda)}\prod_{\Box\in \lambda}\frac{1}{[t_1^{h(\Box)}]}\cdot \mathsf{M}(q)^{-1} \cdot \prod_{\Box\in \lambda}(1-q^{h(\Box)}),  \\
  \left.  \widehat{C}_\lambda(-q)\right|_{t_1t_2=1}&=(-1)^{n(\overline{\lambda})+|\lambda|}\prod_{\Box\in \lambda}\frac{1}{[t_1^{h(\Box)}]}\cdot \mathsf{M}(q)^{-1} \cdot \prod_{\Box\in \lambda}(1-q^{h(\Box)}).
\end{align*}
\end{corollary}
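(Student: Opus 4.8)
The plan is to exploit the factorisation of each universal series provided by \Cref{thm: DT_ explicit univ series} into a ``constant-term'' piece and a ``vertex'' piece, and to restrict each piece separately. First I would record that the constant-term pieces are exactly the classes $\widehat{\mathfrak{e}}(-T_\lambda)$, $\widehat{\mathfrak{e}}(-t_1\tfrac{\partial}{\partial t_1}T_\lambda)$, $\widehat{\mathfrak{e}}(-t_2\tfrac{\partial}{\partial t_2}T_\lambda)$ (and their cohomological analogues), whose anti-diagonal restrictions have already been computed in \Cref{cor: edge antidiagonal} in terms of hooklengths, with the signs $(-1)^{|\lambda|}$, $(-1)^{n(\lambda)}$, $(-1)^{n(\overline{\lambda})+|\lambda|}$. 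Hence the whole statement reduces to evaluating, at $t_1t_2=1$ (resp.\ $s_1+s_2=0$), the vertex factors
\[
\left.\left(\widehat{\mathsf{V}}_\lambda(q)\cdot\widehat{\mathsf{V}}_\lambda(q)|_{t_3=t_3^{-1}}\right)\right|_{t_3=1},\qquad \left.\left(\widehat{\mathsf{V}}_\lambda(q)|_{t_3=t_3^{-1}}\cdot\widehat{\mathsf{V}}_\lambda(q)^{-1}|_{t_1=t_1t_3^2,\,t_3=t_3^{-1}}\right)^{1/2}\right|_{t_3=1},
\]
together with the analogous factor obtained by exchanging $t_1\leftrightarrow t_2$. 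The claim is that the first collapses to $1$ and the latter two to $\mathsf{M}(q)^{-1}\prod_{\Box\in\lambda}(1-q^{h(\Box)})$ (after $q\mapsto -q$), which by \Cref{motives of spp} is exactly $(\mathsf{Z}^{\mathrm{spp}}_\lambda(q))^{-1}$.

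Next I would observe that setting $t_1t_2=1$ and then $t_3=1$ places the $1$-leg vertex $\widehat{\mathsf{V}}_\lambda$ precisely at its Calabi--Yau specialisation $t_1t_2t_3=1$, where the individual vertices are singular but the two pole-cancelling combinations above are regular by construction. The base case $\lambda=\varnothing$ is already explicit: from \Cref{thm: explicit DT 0 universal} the $A$-factor has $\Exp$-argument proportional to $[t_1t_2]^2$, which vanishes at $t_1t_2=1$, while the $B$- and $C$-factors reduce to $\Exp(-q/(1-q)^2)=\mathsf{M}(q)^{-1}$, using the identity $\mathsf{M}(q)=\Exp(q/(1-q)^2)$. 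The cohomological case follows identically from the MNOP evaluation $\mathsf{V}_\varnothing(q)=\mathsf{M}(-q)^{-\frac{(s_1+s_2)(s_2+s_3)(s_1+s_3)}{s_1s_2s_3}}$, whose exponent is divisible by $(s_1+s_2)$ and hence kills the $A$-factor while leaving $\mathsf{M}(\mp q)^{-1}$ for the $B$- and $C$-factors. This computation is the template for the general diagram.

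For general $\lambda$ I would feed in the explicit plethystic form of the capped $1$-leg vertex recalled in \Cref{Sec: DT 1 leg vert}. The structural input I expect to use is a reciprocity at the Calabi--Yau point: at $t_1t_2=1$ the vertex character becomes anti-self-dual up to the residual weight $t_3$, so that $\widehat{\mathsf{V}}_\lambda(q)|_{t_3}$ and $\widehat{\mathsf{V}}_\lambda(q)|_{t_3^{-1}}$ are mutually inverse as $t_3\to 1$, forcing the $A$-factor to equal $1$. For the $B$- and $C$-factors the ratio $\widehat{\mathsf{V}}_\lambda/\widehat{\mathsf{V}}_\lambda$ with the shifted first (resp.\ second) argument cancels the bulk MacMahon contribution, so that only the boundary/leg contribution survives; its Calabi--Yau value is the hooklength product $\prod_{\Box}(1-q^{h(\Box)})$, with the asymptotic normalisation contributing the factor $\mathsf{M}(q)^{-1}$. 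Multiplying back the constant terms of \Cref{cor: edge antidiagonal} and tracking the signs through the identities \eqref{eqn: comb ident} then produces all six formulas.

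The main obstacle is the general-$\lambda$ Calabi--Yau specialisation of the preceding paragraph: unlike $\lambda=\varnothing$ there is no elementary closed form, so one must push the plethystic expression for $\widehat{\mathsf{V}}_\lambda$ through the double limit $t_1t_2\to1$, $t_3\to1$ with careful cancellation of the poles along $t_1t_2t_3=1$ and exact bookkeeping of the half-integer powers and signs in the square-root factors. As a consistency check I would verify that raising the $B$- and $C$-factors to the combined power $\deg L_1+\deg L_2=2g-2$ in the Calabi--Yau case reproduces $(\mathsf{Z}^{\mathrm{spp}}_\lambda(q))^{2-2g}$, matching the purely topological partition function of \Cref{cor: top DT}.
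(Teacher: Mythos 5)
Your skeleton is the right one --- peel off the constant terms via \Cref{cor: edge antidiagonal} and reduce everything to the two vertex combinations --- but the two claims that actually carry the proof are left as ``structural inputs I expect to use'', and the mechanism you sketch for the $B$/$C$-factors is not the one that works. For the $\widehat{A}$-factor, asserting that $\widehat{\mathsf{V}}_\lambda(q)$ and $\widehat{\mathsf{V}}_\lambda(q)|_{t_3\to t_3^{-1}}$ become mutually inverse as $t_3\to1$ on $t_1t_2=1$ is just a restatement of the conclusion $\widehat{A}'_\lambda|_{t_1t_2=1}=1$; the paper derives it from the divisibility of the vertex by $(1-t_1t_2)$ (resp.\ of $\mathsf{V}_\lambda$ by $s_1+s_2$), quoting \cite[Lemma~6]{OP_local_theory_curves}. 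Note that no Calabi--Yau specialisation is available for this factor: the two terms in $\widehat{A}'_\lambda=(\widehat{\mathsf{V}}_\lambda\cdot\widehat{\mathsf{V}}_\lambda|_{t_3\to t_3^{-1}})|_{t_3=1}$ carry the weights $t_1t_2t_3$ and $t_1t_2t_3^{-1}$, which cannot both be set to $1$ away from $t_1t_2=1$, so an independent input really is needed here.

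For $\widehat{B}$ and $\widehat{C}$ your proposed mechanism --- the ratio of the two shifted vertices cancels the ``bulk MacMahon'', leaving a boundary hook product, with $\mathsf{M}(q)^{-1}$ supplied by an unspecified ``asymptotic normalisation'' --- does not compute the stated answer and, taken literally, would return $1$ rather than $\mathsf{M}(q)^{-2}\prod_{\Box}(1-q^{h(\Box)})^{2}$ for $\widehat{B}'_\lambda(q)^{-2}$. The step you are missing is the choice of the Calabi--Yau pairs $(\BP^1,\oO(-2),\oO)$ and $(\BP^1,\oO,\oO(-2))$: by \Cref{cor: vertex top} these produce the combination $\bigl(\widehat{\mathsf{V}}_\lambda\cdot\widehat{\mathsf{V}}_\lambda|_{t_1=t_1t_3^{2},\,t_3=t_3^{-1}}\bigr)|_{t_3=1}=\widehat{A}'_\lambda(q)\,\widehat{B}'_\lambda(q)^{-2}$, in which \emph{both} factors share the same Calabi--Yau weight, since $(t_1t_3^{2})\cdot t_2\cdot t_3^{-1}=t_1t_2t_3$; on $t_1t_2=1$ one may therefore trade the restriction $t_3=1$ for $t_3=(t_1t_2)^{-1}$ and land on the locus $t_1t_2t_3=1$ for generic $t_1$. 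There the anti-self-duality $\mathsf{v}_{\bn}=\tilde{\mathsf{v}}^{+}_{\bn}-\overline{\tilde{\mathsf{v}}^{+}_{\bn}}\cdot t_1t_2t_3$ with $\rk\tilde{\mathsf{v}}^{+}_{\bn}=|\bn|$ gives $\widehat{\mathfrak{e}}(-\mathsf{v}_{\bn})=(-1)^{|\bn|}$, so each vertex collapses to $\sum_{\bn}(-q)^{|\bn|}$, i.e.\ to Sagan's generating function $\mathsf{M}(-q)\prod_{\Box}(1-(-q)^{h(\Box)})^{-1}$ of \Cref{motives of spp}: the MacMahon factor is not cancelled but survives into the final formula through the inverse square. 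Your $\lambda=\varnothing$ computation and the consistency check against \Cref{cor: top DT} are fine, but without this evaluation the general case is not proved.
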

\begin{proof}
    We prove the claim for the last three identities, as the first three will follow by an analogous reasoning.  Notice that  the anti-diagonal restriction of the constant terms was computed in \Cref{cor: edge antidiagonal}.

   By \cite[Lemma 6]{OP_local_theory_curves}, the vertex partition function $ \widehat{\mathsf{V}}_\lambda(q)$ is divisible\footnote{To be precise, \cite[Lemma 6]{OP_local_theory_curves} proves that $ \mathsf{V}_\lambda(q)$ is divisible by $s_1+s_2 $, but the same reasoning applies in this case.} by $(1-t_1t_2)$, by which follows the vanishing
    \begin{align*}
        \left.\left(\widehat{\mathsf{V}}_\lambda(q)\cdot \widehat{\mathsf{V}}_\lambda(q)|_{ t_3=t^{-1}_3}\right)\right|_{t_3=1, t_1t_2=1}=1.
    \end{align*}

Let now $L_1, L_2$ be line bundles on $\BP^1$ so that $L_1\otimes L_2\cong \omega_{\BP^1}$. By \Cref{cor: vertex top}, we have that
\begin{align}\label{eqn: loc and restr}
      \left.\sum_{\mathbf{n}}q^{|\mathbf{n}|} \widehat{\chi}\left((\BP^1)^{[\mathbf{n}]},\widehat{\mathfrak{e}}(-N_{\BP^1,L_1,L_2, \bn}^{\mathrm{red}})  \right)\right|_{t_1t_2=1}&=\left.\left(\widehat{\mathsf{V}}_\lambda(q)\cdot \widehat{\mathsf{V}}_\lambda(q)|_{t_1=t_1t_3^{-\deg L_1}, t_2=t_2t_3^{-\deg L_2}, t_3=t_3^{-1}}\right)\right|_{t_3=1, t_1t_2=1}.
\end{align}
Notice that, in particolar, if we define
\begin{align*}
    t'_1&=t_1t_3^{-\deg L_1},\\
    t'_2&=t_2t_3^{-\deg L_2},\\
    t'_3&=t_3^{-1},
\end{align*}
we have $t'_1t'_2t'_3=t_1t_2t_3$.

For every skew plane partition $\bn$ of shape $\BZ^2_{\geq 0}\setminus \lambda$, there is a decomposition 
\begin{align}\label{eqn: splitting V}
\begin{split}
    \mathsf{v}_{\bn}&= \mathsf{v}_{\bn}^+-\overline{ \mathsf{v}_{\bn}^+}\cdot t_1t_2t_3,\\
    \mathsf{v}_{\bn}^+&=   \mathsf{Z}_{\bn} -(1-t_1)(1-t_2)\left( \overline{\mathsf{Z}}_{\lambda}\mathsf{Z}_{\bn}+\mathsf{Z}_{\bn}\overline{\mathsf{Z}}_{\bn}\right).
    \end{split}
\end{align}
By the discussion in \cite[pag.~1279]{MNOP_1} we have that $\mathsf{v}_{\bn}$ contains no (virtual) weight of the form $(t_1t_2t_3)^a$, for any $a\in \BZ$. Let now $a>0$. By the splitting in \eqref{eqn: splitting V}, we have that for all weights of the form $ (t_1t_2t_3)^a$ in $\mathsf{v}^+_{\bn}$, there is a weight $(t_1t_2t_3)^{1-a} $ is $\mathsf{v}^+_{\bn}$. Set therefore $\tilde{\mathsf{v}}^+_{\bn}$ to be the virtual $\overline{\TT}$-representation obtained from $ \mathsf{v}^+_{\bn}$ by removing all the weights of the form $(t_1t_2t_3)^a$, for some $a\in \BZ$. It follows that 
\begin{align*}
     & \mathsf{v}_{\bn}= \tilde{\mathsf{v}}_{\bn}^+-\overline{ \tilde{\mathsf{v}}_{\bn}^+}\cdot t_1t_2t_3,\\
     & \rk \tilde{\mathsf{v}}_{\bn}^+=\rk \mathsf{v}_{\bn}^+,\\
     &\rk \mathsf{v}_{\bn}^+=|\mathbf{n}|,\\
   &   \left(\tilde{\mathsf{v}}_{\bn}^+\right)^{\fix}=0.
\end{align*}
Since there are no weights of the form $(t_1t_2t_3)^a$ in $ \mathsf{v}_{\bn}$, we can compute
\begin{align*}
   \left. \widehat{\mathsf{V}}_\lambda(q)\right|_{t_3=(t_1t_2)^{-1}}&=\sum_{\bn}\widehat{\mathfrak{e}}(-\mathsf{v}_{\bn})|_{t_3=(t_1t_2)^{-1}}\cdot q^{|\mathbf{n}|}\\
     &=\sum_{\bn}\left.\frac{\widehat{\mathfrak{e}}(\overline{ \tilde{\mathsf{v}}_{\bn}^+}\cdot t_1t_2t_3)}{\widehat{\mathfrak{e}}(\tilde{\mathsf{v}}^+_{\bn})}\right|_{t_3=(t_1t_2)^{-1}}\cdot q^{|\mathbf{n}|}\\
     &=\sum_{\bn}\frac{\widehat{\mathfrak{e}}(\overline{ \tilde{\mathsf{v}}_{\bn}^+}|_{t_3=(t_1t_2)^{-1}})}{\widehat{\mathfrak{e}}(\tilde{\mathsf{v}}^+_{\bn}|_{t_3=(t_1t_2)^{-1}})}\cdot q^{|\mathbf{n}|}\\
     &=\sum_{\bn}(-q)^{ |\mathbf{n}|}\\
     &=\prod_{d\geq 1}\frac{1}{(1-(-q)^d)^d} \prod_{\Box\in \lambda} \frac{1}{1-(-q)^{h(\Box)}}.
\end{align*}
Notice that, in particular, the depends on the equivariant parameters $t_1, t_2$ drops. Therefore we compute
\begin{align}\label{eqn: final restrc DT}
\begin{split}
        \left.\left(\widehat{\mathsf{V}}_\lambda(q)\cdot \widehat{\mathsf{V}}_\lambda(q)|_{t_1=t'_1, t_2=t'_2, t_3=t'_3}\right)\right|_{t_3=1, t_1t_2=1}&=
  \left.\left(\widehat{\mathsf{V}}_\lambda(q)\cdot \widehat{\mathsf{V}}_\lambda(q)|_{t_1=t'_1, t_2=t'_2, t_3=t'_3}\right)\right|_{t_3=(t_1t_2)^{-1}, t_1t_2=1}\\
    &=\left.\left.\left( \widehat{\mathsf{V}}_\lambda(q)\right|_{t_3=(t_1t_2)^{-1}}\cdot  \left.\widehat{\mathsf{V}}_\lambda(q)|_{t_1=t'_1, t_2=t'_2, t_3=t'_3}\right|_{t'_3=(t'_1t'_2)^{-1}} \right)\right|_{t_1t_2=1}\\
    &=\left(\prod_{d\geq 1}\frac{1}{(1-(-q)^d)^d} \prod_{\Box\in \lambda} \frac{1}{1-(-q)^{h(\Box)}}\right)^{2}.
    \end{split}
\end{align}
Set the normalised series $ \widehat{A}'_\lambda(q),\widehat{B}'_\lambda(q),\widehat{C}'_\lambda(q)$ to be the series $ \widehat{A}_\lambda(q),\widehat{B}_\lambda(q),\widehat{C}_\lambda(q)$ divided by the constant terms as in the proof of \Cref{thm: DT_ explicit univ series}. By choosing $(L_1, L_2)=(\oO(-2), \oO), (\oO, \oO(-2))$, by \eqref{eqn: loc and restr},\eqref{eqn: final restrc DT} we compute
\begin{align*}
\left.\widehat{B}'_\lambda(q)\right|_{t_1t_2=1}^{-2}&=\left(\prod_{d\geq 1}\frac{1}{(1-(-q)^d)^d} \prod_{\Box\in \lambda} \frac{1}{1-(-q)^{h(\Box)}}\right)^{2},\\  \left.\widehat{C}'_\lambda(q)\right|_{t_1t_2=1}^{-2}&=\left(\prod_{d\geq 1}\frac{1}{(1-(-q)^d)^d} \prod_{\Box\in \lambda} \frac{1}{1-(-q)^{h(\Box)}}\right)^{2},
\end{align*}
by which we conclude the proof.
\end{proof}
We remark that if $X$ is Calabi-Yau -- in other words, when $L_1\otimes L_2\cong \omega_C$ -- combining \Cref{motives of spp} and \Cref{cor: antidiagonal DT} yields the identity
\begin{align*}
   \left.   \widehat{\chi}\left(C^{[\mathbf{n}]},\widehat{\mathfrak{e}}(-N_{C,L_1,L_2, \bn})  \right)\right|_{t_1t_2=1}=(-1)^{|\lambda|(1-g)+n(\lambda)\deg L_1+n(\overline{\lambda})\deg L_2}\cdot e(C^{[\mathbf{n}]}),
\end{align*}
which shows that the localised contribution to the Donaldson-Thomas partition function is topological, generalising the original computations  of \cite{MNOP_1} for toric Calabi-Yau threefold and in equivariant cohomology.
\subsection{Donaldson-Thomas partition functions}
By summing up the contribution of each Young diagram computed in \Cref{sec: computations}, we give a closed expression for the Donaldson-Thomas partition functions of local curves. 
\subsubsection{Degree 0}
In degree 0, the Donaldson-Thomas partition functions are essentially computed by \Cref{thm: explicit DT 0 universal}. We prove here a more compact formula for the partition functions.
\begin{corollary}\label{cor: as in Oko}
Let     $X=\Tot_C(L_1 \oplus L_2)$, where $L_1, L_2$ are line bundles over a smooth projective curve $C$. We have
\begin{align*}
    \DT_0(X, -q)&=\mathsf{M}(q)^{\int_X c_3(T_X\otimes \omega_X)},\\
      \widehat{\DT}_0(X, -q)&=\Exp\left(\chi(X, T_X+\omega_X-T_X^*-\omega_X^{-1}) \cdot \frac{1}{[(t_1t_2)^{1/2}q][(t_1t_2)^{1/2}q^{-1}]}\right).
\end{align*}
\end{corollary}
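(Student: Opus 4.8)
The plan is to evaluate the universal structure of \Cref{thm: universal series} on the single Young diagram of size zero and then to recognise the two resulting exponents geometrically. Since $\lambda=\varnothing$ is the only partition of $0$ and $\mathbf{f}_{\varnothing}(g,\deg L_1,\deg L_2)=0$, formula \eqref{eqn: DT as localised} degenerates to
\begin{align*}
\DT_0(X,q)&=A_{\varnothing}(q)^{1-g}\cdot B_{\varnothing}(q)^{\deg L_1}\cdot C_{\varnothing}(q)^{\deg L_2},\\
\widehat{\DT}_0(X,q)&=\widehat{A}_{\varnothing}(q)^{1-g}\cdot \widehat{B}_{\varnothing}(q)^{\deg L_1}\cdot \widehat{C}_{\varnothing}(q)^{\deg L_2}.
\end{align*}
Substituting the closed forms of \Cref{thm: explicit DT 0 universal}, using multiplicativity of $\mathsf{M}(-q)^{(-)}$ in the cohomological case and the identity $\Exp(f)\Exp(g)=\Exp(f+g)$ in the $K$-theoretic case, both right-hand sides collapse, after the substitution $q\mapsto -q$, to a single MacMahon power, respectively a single plethystic exponential. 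Writing $k_i=\deg L_i$ this produces
\begin{align*}
\DT_0(X,-q)&=\mathsf{M}(q)^{-2(1-g)\frac{(s_1+s_2)^2}{s_1s_2}-(k_1+k_2)},\\
\widehat{\DT}_0(X,-q)&=\Exp\!\left(W\cdot\frac{1}{[(t_1t_2)^{1/2}q][(t_1t_2)^{1/2}q^{-1}]}\right),
\end{align*}
where $W=2(1-g)\frac{[t_1t_2]^2}{[t_1][t_2]}+\tfrac{k_1+k_2}{2}\big((t_1t_2)^{1/2}+(t_1t_2)^{-1/2}\big)$.

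It then remains to identify these two exponents with the advertised geometric quantities, and this is the heart of the argument. The torus $\TT$ acts on $X=\Tot_C(L_1\oplus L_2)$ with fixed locus the zero section $C$, whose $\TT$-equivariant normal bundle is $N_{C/X}=L_1\otimes t_1\oplus L_2\otimes t_2$. I would compute $\int_X c_3(T_X\otimes\omega_X)$ and $\chi(X,T_X+\omega_X-T_X^*-\omega_X^{-1})$ by equivariant, respectively $K$-theoretic, localisation to $C$, using
\[
\int_X\alpha=\int_C\frac{\alpha|_C}{e(N_{C/X})},\qquad \chi(X,\mathcal F)=\chi\!\left(C,\frac{\mathcal F|_C}{(1-L_1^{-1}t_1^{-1})(1-L_2^{-1}t_2^{-1})}\right),
\]
the latter denominator being exactly the expansion of the class $p_*\oO_X$ recorded in the proof of \Cref{thm: fixed}. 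Restricting along the zero section gives $T_X|_C=T_C\oplus L_1 t_1\oplus L_2 t_2$ and $\omega_X|_C=\omega_C\otimes L_1^{-1}L_2^{-1}t_1^{-1}t_2^{-1}$, after which Riemann--Roch on the curve (with $\int_C c_1(L_i)=k_i$ and $\int_C c_1(\omega_C)=2g-2$) evaluates everything. A direct expansion retaining only the degree-one part of $c_3(T_X\otimes\omega_X)|_C/e(N_{C/X})$ produces $-2(1-g)\frac{(s_1+s_2)^2}{s_1s_2}-(k_1+k_2)$, matching the MacMahon exponent; the parallel $K$-theoretic computation produces $W$, where the antisymmetry $(T_X+\omega_X-T_X^*-\omega_X^{-1})^\vee=-(T_X+\omega_X-T_X^*-\omega_X^{-1})$ is what forces the answer to organise into the symmetrised brackets $[x]=x^{1/2}-x^{-1/2}$.

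I expect the main obstacle to be the $K$-theoretic localisation bookkeeping: summing the geometric series $\sum_{i,j\geq 0}t_1^{-i}t_2^{-j}$ against the linear-in-degree Riemann--Roch contributions and collapsing the result into the compact form of $W$ demands careful handling of the half-integer weights introduced by the square root $K_{\vir}^{1/2}$. In practice I would first settle the cohomological identity, which fixes the combinatorial pattern, and then lift it to $K$-theory term by term, cross-checking via the cohomological limit of \Cref{prop: limit equiv INTRO}.
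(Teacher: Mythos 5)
Your reduction of $\widehat{\DT}_0(X,-q)$ to $\Exp\left(W\cdot\frac{1}{[(t_1t_2)^{1/2}q][(t_1t_2)^{1/2}q^{-1}]}\right)$ with $W=2(1-g)\frac{[t_1t_2]^2}{[t_1][t_2]}+\frac{k_1+k_2}{2}\left((t_1t_2)^{1/2}+(t_1t_2)^{-1/2}\right)$ is correct, and your cohomological identification does work: pushing $c_3(T_X\otimes\omega_X)$ to $C$ and retaining the degree-one part gives exactly $-2(1-g)\frac{(s_1+s_2)^2}{s_1s_2}-k_1-k_2$. Note, though, that your route differs from the paper's: the paper never evaluates $\chi(X,\cdot)$ globally, but observes that the right-hand side of the corollary is itself multiplicative in $(C,L_1,L_2)$, depends only on Chern numbers and has constant term $1$, hence is governed by three universal series, and then compares the two sides only for the three toric generating triples, where everything reduces to Okounkov's $\BA^3$ formula by a further $\BC^*$-localisation on $\BP^1$.

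The genuine gap is in your $K$-theoretic identification. The plain pushforward $\chi(X,\mathcal F)=\chi\bigl(C,\mathcal F|_C\otimes\textstyle\sum_{i,j\geq 0}L_1^{-i}L_2^{-j}t_1^{-i}t_2^{-j}\bigr)$ applied to $V=T_X+\omega_X-T_X^*-\omega_X^{-1}$ does \emph{not} produce $W$. Indeed $\chi(X,V)$ is a rational function in integer powers of $t_1,t_2$, and equivariant Serre duality sends it under $t_i\mapsto t_i^{-1}$ to $\chi(X,V\otimes K_X)$, not to itself; whereas $W$ contains a genuine half-integer power of $t_1t_2$ (note $\frac{[t_1t_2]^2}{[t_1][t_2]}=\frac{(t_1t_2-1)^2}{(t_1t_2)^{1/2}(t_1-1)(t_2-1)}$) and is invariant under $t_i\mapsto t_i^{-1}$. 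The discrepancy is precisely a twist by $K_X^{1/2}$: already for the vertex one finds $\chi(\BA^3,V)=-(t_1t_2t_3)^{1/2}\cdot\frac{[t_1t_2][t_1t_3][t_2t_3]}{[t_1][t_2][t_3]}$, and for $(\BP^1,\oO,\oO)$ a direct check gives $\chi(X,V\otimes K_X^{1/2})=-2\frac{[t_1t_2]^2}{[t_1][t_2]}=-W$. So the quantity equal to $\pm W$ is the symmetrised Euler characteristic $\chi(X,V\otimes K_X^{1/2})$, and it is Serre duality for this symmetrised $\chi$ --- not the antisymmetry of $V$ alone --- that organises the answer into brackets $[x]$. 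As written, your ``direct expansion'' at the last step yields a different rational function and the proof does not close; you must build the $K_X^{1/2}$ twist (equivalently the weight $(t_1t_2)^{-1/2}(\omega_CL_1^{-1}L_2^{-1})^{1/2}$ along the zero section) into the class you push forward, and then pin down the resulting sign, which the paper itself records inconsistently between \Cref{cor: intro DT 0} and \Cref{cor: as in Oko}.
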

\begin{proof}
    We provide a proof for the second equality, as the first one follows analogously. 

The expression 
\[
\Exp\left(\chi(X, T_X+\omega_X-T_X^*-\omega_X^{-1}) \cdot \frac{1}{[(t_1t_2)^{1/2}q][(t_1t_2)^{1/2}q^{-1}]}\right)
\]
 is clearly multiplicative, depends only on the Chern numbers of $(C, L_1, L_2)$ and its constant term is 1. Therefore, by the same argument  as in the proof of \Cref{thm: universal series}, it is controlled by three universal series, which can be computed in the toric case $C=\BP^1$. The toric case follows immediately by a direct comparison with  \Cref{thm: explicit DT 0 universal} by further localisation with respect to  the $\BC^*$-action on $\BP^1$.
\end{proof}

\subsubsection{The full partition function}
Summing up the contributions of each Young diagram computed in \Cref{thm: DT_ explicit univ series}, we finally prove a closed formula for the Donaldson-Thomas partition functions.
\begin{corollary}\label{cor: full DT}
    Let  $d\geq 0$ and    $X=\Tot_C(L_1 \oplus L_2)$, where $L_1, L_2$ are line bundles over a smooth projective curve $C$ of genus $g$. We have
    \begin{align*}
     \DT_d(X, q)&= \sum_{|\lambda|=d} \left(q^{|\lambda|}{A}_{\lambda}(q)\right)^{1-g}\cdot \left(q^{-n(\lambda)}{B}_{\lambda}(q)\right)^{\deg L_1}\cdot \left(q^{-n(\overline{\lambda})}{C}_{\lambda}(q)\right)^{\deg L_2},\\
     \widehat{\DT}_d(X, q)&= \sum_{|\lambda|=d} \left(q^{|\lambda|}\widehat{A}_{\lambda}(q)\right)^{1-g}\cdot \left(q^{-n(\lambda)}\widehat{B}_{\lambda}(q)\right)^{\deg L_1}\cdot \left(q^{-n(\overline{\lambda})}\widehat{C}_{\lambda}(q)\right)^{\deg L_2},
\end{align*}
where the universal series are computed in \Cref{thm: DT_ explicit univ series}. Set $\deg L_i=k_i$.  In the anti-diagonal restriction, we have
\begin{multline*}
     \left.\DT_d(X,  -q)\right|_{s_1+s_2=0}=\\(-1)^{d \cdot k_2}\sum_{|\lambda|=d}q^{|\lambda|(1-g)-n(\lambda)k_1-n(\overline{\lambda})k_2}\cdot 
     \prod_{\Box \in \lambda}(s_1 h(\Box))^{2g-2-k_1-k_2}\cdot \left( \mathsf{M}(q)^{-1} \cdot \prod_{\Box\in \lambda}(1-q^{h(\Box)})\right)^{k_1+k_2},
\end{multline*}
\begin{multline*}
     \left.\widehat{\DT}_d(X,  -q)\right|_{t_1t_2=1}=\\
     (-1)^{d \cdot k_2}\sum_{|\lambda|=d}q^{|\lambda|(1-g)-n(\lambda)k_1-n(\overline{\lambda})k_2}\cdot
     \prod_{\Box \in \lambda}[t_1^{h(\Box)}]^{2g-2-k_1-k_2}\cdot \left( \mathsf{M}(q)^{-1} \cdot \prod_{\Box\in \lambda}(1-q^{h(\Box)})\right)^{k_1+k_2}.
\end{multline*}
\end{corollary}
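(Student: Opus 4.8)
The plan is to assemble the statement directly from the two structural results already in hand: the localised expansion \eqref{eqn: DT as localised} and the universality of \Cref{thm: universal series}. First I would start from \eqref{eqn: DT as localised}, which expresses each partition function as a sum over Young diagrams $\lambda$ with $|\lambda|=d$ of the monomial prefactor $q^{\mathbf{f}_\lambda(g,\deg L_1,\deg L_2)}$ times the inner generating series $\sum_{\bn}q^{|\bn|}\int_{C^{[\bn]}}e(-N^{\vir})$ (and its $K$-theoretic analogue). By \Cref{thm: universal series} the inner series factorises as $A_\lambda(q)^{1-g}B_\lambda(q)^{\deg L_1}C_\lambda(q)^{\deg L_2}$ (resp.\ the hatted version), so all that remains is to distribute the prefactor. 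Using the closed form $\mathbf{f}_\lambda=|\lambda|(1-g)-n(\lambda)\deg L_1-n(\overline{\lambda})\deg L_2$ from \eqref{eqn: f lambda g}, I split $q^{\mathbf{f}_\lambda}=(q^{|\lambda|})^{1-g}(q^{-n(\lambda)})^{\deg L_1}(q^{-n(\overline{\lambda})})^{\deg L_2}$ and absorb each factor into the corresponding universal series, which produces verbatim the claimed expressions for $\DT_d(X,q)$ and $\widehat{\DT}_d(X,q)$.

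For the anti-diagonal restriction I would substitute $q\mapsto -q$ in the formula just obtained and then set $s_1+s_2=0$ (resp.\ $t_1t_2=1$), inserting the explicit evaluations of \Cref{cor: antidiagonal DT}. The key observation is that after restriction $\widehat{A}_\lambda$ is independent of $q$, so $\widehat{A}_\lambda(-q)|_{t_1t_2=1}=(-1)^{|\lambda|}\prod_\Box[t_1^{h(\Box)}]^{-2}$; pairing the sign $(-1)^{|\lambda|}$ with $(-q)^{|\lambda|}$ turns the prefactor into $q^{|\lambda|}$ and, after raising to the power $1-g$, yields $\prod_\Box[t_1^{h(\Box)}]^{2g-2}$. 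The $B$- and $C$-evaluations are stated in the variable $-q$ precisely so that the signs $(-1)^{n(\lambda)}$ and $(-1)^{n(\overline{\lambda})}$ cancel against $(-q)^{-n(\lambda)}$ and $(-q)^{-n(\overline{\lambda})}$, leaving the clean powers $q^{-n(\lambda)k_1}$ and $q^{-n(\overline{\lambda})k_2}$. The only surviving sign comes from the factor $(-1)^{|\lambda|}$ hidden in $\widehat{C}_\lambda$ (resp.\ the $(-s_1)^{-|\lambda|}$ in $C_\lambda$), which after raising to the power $k_2=\deg L_2$ contributes the global $(-1)^{d k_2}$ since $|\lambda|=d$. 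Collecting the remaining powers of $[t_1^{h(\Box)}]$ (resp.\ $s_1 h(\Box)$) gives the exponent $2g-2-k_1-k_2$, and gathering the MacMahon factors from $\widehat{B}_\lambda$ and $\widehat{C}_\lambda$ gives the $(k_1+k_2)$-th power of $\mathsf{M}(q)^{-1}\prod_\Box(1-q^{h(\Box)})$, exactly as claimed.

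Since every analytic input has already been proven, this corollary is pure bookkeeping and carries no genuine obstruction; the one place that demands care is the sign and variable tracking in the anti-diagonal computation. Concretely, I would keep scrupulous account of the three independent sources of signs — the substitution $q\mapsto -q$, the $(-1)^{n(\lambda)}$-type factors of \Cref{cor: antidiagonal DT}, and the $(-1)^{|\lambda|}$ concealed in the $C$-series — and verify that they conspire to leave a single overall $(-1)^{d\deg L_2}$ together with manifestly positive $q$-exponents and a symmetric $k_1+k_2$ dependence. As a consistency check I would specialise to $\lambda$ of size one, where every quantity is explicit, and compare the constant-term signs against \Cref{cor: edge antidiagonal} to be sure the normalisation of the universal series has been carried through correctly.
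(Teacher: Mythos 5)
Your proposal is correct and follows exactly the paper's route: the paper's proof is a one-line assembly of \Cref{thm: fixed}, \Cref{thm: equality virtual classes}, \Cref{thm: universal series}, \Cref{thm: DT_ explicit univ series} and \Cref{cor: antidiagonal DT}, which is precisely the bookkeeping you carry out, and your sign tracking (in particular the cancellation of $(-1)^{n(\lambda)}$, $(-1)^{n(\overline{\lambda})}$ against the $(-q)$-powers and the surviving $(-1)^{|\lambda| k_2}=(-1)^{dk_2}$ from the $C$-series) checks out.
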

\begin{proof}
    The claimed identities follow by combining Theorem \ref{thm: fixed}, \ref{thm: equality virtual classes}, \ref{thm: universal series}, \ref{thm: DT_ explicit univ series} and Corollary \ref{cor: antidiagonal DT}.
\end{proof}
We remark that, in the case $X$ is Calabi-Yau, the partition function in the anti-diagonal restriction of the equivariant parameters can be expressed (up to a sign) as a \emph{signed}  \emph{topological Euler characteristic} partition function.
\begin{corollary}\label{cor: DT is top}
    Let $X=\Tot_C(L_1\oplus L_2)$ be a local curve, where $L_1, L_2$ are line bundles over a smooth projective curve $C$ such that $L_1\otimes L_2\cong \omega_C$. Then we have
    \begin{align*}
          \left.\widehat{\DT}_d(X,  q)\right|_{t_1t_2=1}=(-1)^{d \cdot k_2} \DT^{\mathrm{top}}_d(X,- q).
    \end{align*}
\end{corollary}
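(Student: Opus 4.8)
The plan is to prove the statement by directly comparing the two explicit closed formulas already at our disposal: the anti-diagonal restriction of $\widehat{\DT}_d(X,-q)$ established in \Cref{cor: full DT}, and the generating series of topological Euler characteristics computed in \Cref{cor: top DT}. The entire content of the corollary is that, under the Calabi--Yau hypothesis, the equivariant contribution to the former collapses, leaving a purely $q$-dependent expression that coincides with the latter.

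First, I would extract the arithmetic consequence of the Calabi--Yau condition. Since $L_1\otimes L_2\cong \omega_C$, taking degrees gives $k_1+k_2=\deg\omega_C=2g-2$, so that the exponent $2g-2-k_1-k_2$ appearing in \Cref{cor: full DT} vanishes identically. This is the crucial simplification: the factor $\prod_{\Box\in\lambda}[t_1^{h(\Box)}]^{2g-2-k_1-k_2}$ in the anti-diagonal formula becomes $1$, and with it all dependence on the equivariant parameter $t_1$ disappears. We are left with
\begin{equation*}
\left.\widehat{\DT}_d(X,-q)\right|_{t_1t_2=1}=(-1)^{d\cdot k_2}\sum_{|\lambda|=d}q^{|\lambda|(1-g)-n(\lambda)k_1-n(\overline{\lambda})k_2}\cdot\left(\mathsf{M}(q)^{-1}\prod_{\Box\in\lambda}(1-q^{h(\Box)})\right)^{2g-2}.
\end{equation*}

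Second, I would match this term by term against \Cref{cor: top DT} (equivalently, I substitute $q\mapsto -q$ throughout, so as to compare $\left.\widehat{\DT}_d(X,-q)\right|_{t_1t_2=1}$ with $(-1)^{d k_2}\DT^{\mathrm{top}}_d(X,q)$). Since the sum ranges over partitions with $|\lambda|=d$, the $q$-exponent $|\lambda|(1-g)-n(\lambda)k_1-n(\overline{\lambda})k_2$ agrees with $d(1-g)-k_1 n(\lambda)-k_2 n(\overline{\lambda})$. For the remaining factor I would use the elementary identity
\begin{equation*}
\left(\mathsf{M}(q)^{-1}\prod_{\Box\in\lambda}(1-q^{h(\Box)})\right)^{2g-2}=\left(\mathsf{M}(q)\prod_{\Box\in\lambda}\frac{1}{1-q^{h(\Box)}}\right)^{2-2g},
\end{equation*}
recalling $\mathsf{M}(q)=\prod_{d\geq 1}(1-q^d)^{-d}$, which is exactly the hooklength factor of the topological formula. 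This yields the claimed equality, and reversing the substitution $q\mapsto -q$ produces the statement as written.

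The main obstacle here is conceptual rather than computational: recognising that the Calabi--Yau specialisation $t_1t_2=1$ is precisely what forces the symmetrised equivariant class $[t_1^{h(\Box)}]$ to drop out, so that the refined $K$-theoretic invariant degenerates to an unrefined (signed) topological count. Once this mechanism is identified, the verification reduces to careful bookkeeping of the $q$-exponents and of the global sign $(-1)^{d k_2}$ under the substitution $q\mapsto -q$, where I expect no genuine difficulty.
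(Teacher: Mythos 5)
Your proposal is correct and follows exactly the paper's own route: the paper proves \Cref{cor: DT is top} by direct comparison of \Cref{cor: full DT} and \Cref{cor: top DT}, and the only substantive observation needed is the one you isolate, namely that $k_1+k_2=\deg\omega_C=2g-2$ kills the exponent of $[t_1^{h(\Box)}]$ and turns the remaining factor into the hooklength expression of the topological series. The bookkeeping of the $q$-exponents, the sign $(-1)^{d\cdot k_2}$, and the substitution $q\mapsto -q$ is handled correctly.
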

\begin{proof}
    The proof follows from \Cref{cor: full DT}, \ref{cor: top DT} from direct comparison.
\end{proof}
\subsection{Refined limit}
While \Cref{cor: full DT} does compute the \emph{fully} $\TT$-equivariant   Donaldson-Thomas partition $ \widehat{\DT}_d(X,  q)$, closed explicit formulas are in general difficult to extract, away from the degree zero case and the anti-diagonal restriction of the equivariant parameters.

We study in this section the \emph{refined limit} of the Donaldson-Thomas partition function of \emph{Calabi-Yau} local curves $X$, by suitably scaling the equivariant parameters $t_1, t_2$ to infinity while keeping the Calabi-Yau weight $\kappa=t_1t_2$ constant, following the original treatment of the \emph{refined topological vertex} \cite{IKV_topological_vertex}.
\smallbreak
Let $X=\Tot_C(L_1\oplus L_2)$ be a local curve, where $L_1, L_2$ are line bundles over a smooth projective curve $C$ such that $L_1\otimes L_2\cong \omega_C$.
We introduce the \emph{refined Donaldson-Thomas partition function} of  $X$ as
\begin{align*}
    \DT^{\mathsf{ref}}_d(X, q)=\lim_{L\to \infty}\widehat{\DT}_d(X, q)|_{t_1=L\kappa^{\frac{1}{2}}, t_2=L^{-1}\kappa^{\frac{1}{2}}}\in \BQ(\kappa^{1/2})(\!( q )\!),
\end{align*}
%where $\kappa$ is a formal variable that plays the role of (the inverse of) the Calabi-Yau weight of $X$.

We define the \emph{reduced} refined  partition function to be the normalisation by the degree zero partition function
\begin{align*}
 \overline{\DT}^{\mathsf{ref}}_d(X, q)&=\frac{ \DT^{\mathsf{ref}}_d(X, q)}{ \DT^{\mathsf{ref}}_0(X, q)}\in \BQ(\kappa^{1/2})(\!( q )\!).
\end{align*}
Recall that we defined two new variables $t_4, t_5$ in \eqref{eqn: new variables}.
\begin{theorem}\label{thm: refined DT full Aga}
    Let $X=\Tot_C(L_1\oplus L_2)$ be a local curve, where $L_1, L_2$ are line bundles over a smooth projective curve $C$ of genus $g$, such that $L_1\otimes L_2\cong \omega_C$. Set $\deg L_1=k_1$. We have
    \begin{align*}
       \DT^{\mathsf{ref}}_0(X, -q)&=\Exp\left((1-g)\frac{ (t_4t_5)^{\frac{1}{2}}+(t_4t_5)^{-\frac{1}{2}}
       }{[t_4][t_5]}
             \right),\\
         \overline{\DT}^{\mathsf{ref}}_d(X, -q)&=(-1)^{dk_1}\cdot 
         \sum_{|\lambda|=d}\left(t_4^{\left \lVert \lambda \right \rVert^2} \prod_{\Box\in \lambda} \frac{1}{(1- t_4^{a(\Box)+1}t_5^{-\ell(\Box)})(1-t_4^{a(\Box)} t_5^{-\ell(\Box)-1})} \right)^{1-g}\cdot t_4^{\frac{k_1\cdot \left \lVert \lambda \right \rVert^2}{2}}t_5^{ \frac{k_1\cdot \left \lVert \overline{\lambda} \right \rVert^2}{2}}.
    \end{align*}
    In particular, $   \overline{\DT}^{\mathsf{ref}}_d(X, -q)$ is  rational.
\end{theorem}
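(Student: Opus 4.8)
The plan is to take the refined limit directly in the closed formula of \Cref{cor: full DT}, feeding the explicit universal series of \Cref{thm: DT_ explicit univ series} and the vertex computation of \Cref{cor: vertex top} through the substitution $t_1 = L\kappa^{1/2}$, $t_2 = L^{-1}\kappa^{1/2}$ with $\kappa = t_1t_2$ held fixed. I would record at the outset the two elementary identities $[t_4][t_5] = [\kappa^{1/2}q][\kappa^{1/2}q^{-1}] = \kappa^{1/2}+\kappa^{-1/2}-q-q^{-1}$ and $(t_4t_5)^{\pm 1/2} = \kappa^{\mp 1/2}$, which are what convert the $\kappa,q$-expressions produced by the limit into the stated $t_4,t_5$-expressions. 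Since in the Calabi--Yau case $g$ and $\deg L_1 = k_1$ force $\deg L_2 = 2g-2-k_1$, dividing the two sums of \Cref{cor: full DT} gives
\[
\frac{\widehat{\DT}_d(X,q)}{\widehat{\DT}_0(X,q)} = \sum_{|\lambda|=d}\Big(q^{|\lambda|}\tfrac{\widehat A_\lambda(q)}{\widehat A_\varnothing(q)}\Big)^{1-g}\Big(q^{-n(\lambda)}\tfrac{\widehat B_\lambda(q)}{\widehat B_\varnothing(q)}\Big)^{k_1}\Big(q^{-n(\overline\lambda)}\tfrac{\widehat C_\lambda(q)}{\widehat C_\varnothing(q)}\Big)^{2g-2-k_1},
\]
so it suffices to compute the refined limit of each universal ratio $\widehat A_\lambda/\widehat A_\varnothing$, $\widehat B_\lambda/\widehat B_\varnothing$, $\widehat C_\lambda/\widehat C_\varnothing$ and then re-collect the exponents into powers of $1-g$ and of $k_1$.

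For $\DT^{\mathsf{ref}}_0$ I would feed the refined substitution into \Cref{thm: explicit DT 0 universal}. The key point is that $\widehat A_\varnothing$ carries the factor $[t_1t_2]^2/([t_1][t_2])$ while $\widehat B_\varnothing = \widehat C_\varnothing$ carries only the scale-invariant $(\kappa^{1/2}+\kappa^{-1/2})/[\kappa^{1/2}q][\kappa^{1/2}q^{-1}]$; since $[t_1][t_2]\sim -L$ as $L\to\infty$ the $\widehat A_\varnothing$ exponent tends to $0$, so $\widehat A_\varnothing^{1-g}\to\Exp(0)=1$, whereas the $\widehat B_\varnothing,\widehat C_\varnothing$ exponents survive. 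Passing the plethystic exponential through the limit (legitimate because the limit is termwise in the plethystic variables and $\kappa$ is fixed) collapses $\widehat A_\varnothing^{1-g}\widehat B_\varnothing^{k_1}\widehat C_\varnothing^{2g-2-k_1}$ to the displayed $\Exp$, after applying the two identities above. This suppression of every weight $t_1^at_2^b$ with $a\neq b$ as $L\to\infty$ is the engine of the entire argument.

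The heart of the proof is the refined limit of the $1$-leg vertex. Using \Cref{thm: DT_ explicit univ series} together with the vertex DT/PT factorisation $\widehat{\mathsf V}_\lambda = \widehat{\mathsf V}_\varnothing\cdot\widehat{\mathsf V}^{\PT}_\lambda$, the $\widehat{\mathsf V}_\varnothing$ factors cancel in every ratio, so each $\widehat A_\lambda/\widehat A_\varnothing$, etc., becomes a hooklength prefactor (a product of brackets $[t_1^{\pm}t_2^{\pm}]$) times a combination of the \emph{normalised} vertex $\overline{\mathsf V}_\lambda = \widehat{\mathsf V}^{\PT}_\lambda$ evaluated at $t_3,t_3^{-1}$ and specialised to $t_3=1$. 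First I would compute the bracket prefactors: from $[t_1^at_2^b] = L^{(a-b)/2}\kappa^{(a+b)/4} - L^{-(a-b)/2}\kappa^{-(a+b)/4}$ one reads off, box by box, a leading power of $L$ and a residual $\kappa$-monomial, to be organised through $\sum_\Box h(\Box)$, $\sum_\Box\ell(\Box)=n(\lambda)$, $\sum_\Box a(\Box)=n(\overline\lambda)$ of \Cref{eqn: comb ident}. Then I would take the refined limit of $\overline{\mathsf V}_\lambda$ itself: writing $\mathsf v_{\bn} = \mathsf v^+_{\bn} - \overline{\mathsf v^+_{\bn}}\,t_1t_2t_3$ exactly as in the anti-diagonal computation, in the limit $L\to\infty$ every weight $t_1^at_2^bt_3^c$ of $\mathsf v_{\bn}$ with $a\neq b$ contributes only a leading monomial to $\widehat{\mathfrak e}(-\mathsf v_{\bn})$, while the $\kappa$-diagonal weights $(a=b)$ survive as genuine bracket factors; summing over all skew plane partitions $\bn$ of shape $\BZ^2_{\geq0}\setminus\lambda$ then resums to a product over the boxes of $\lambda$. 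This limit is the $K$-theoretic incarnation of the Iqbal--Kozcaz--Vafa refined topological vertex in the one-leg case, and the product it produces is of the form $\prod_\Box(1-q^{h(\Box)}\kappa^{c(\Box)})^{-1}$; the explicit capped-vertex formula recalled in \Cref{Sec: DT 1 leg vert} supplies the exponents $c(\Box)$.

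Finally I would assemble. Multiplying the $L$-divergent bracket prefactors against the $L$-divergent leading monomials coming from the imbalanced weights of $\mathsf v_{\bn}$, the powers of $L$ cancel (this cancellation holds a priori, since the symmetric obstruction theory makes $\DT^{\mathsf{ref}}_d$ well defined), leaving a finite expression; the residual $\kappa$-monomials combine with the explicit prefactors $q^{|\lambda|},q^{-n(\lambda)},q^{-n(\overline\lambda)}$ of \Cref{cor: full DT} and, via $\left\lVert\lambda\right\rVert^2 = 2n(\overline\lambda)+|\lambda|$, reorganise into the framing factors $t_4^{\left\lVert\lambda\right\rVert^2}$ and $t_4^{k_1\left\lVert\lambda\right\rVert^2/2}t_5^{k_1\left\lVert\overline\lambda\right\rVert^2/2}$ of the two displayed formulas, the sign $(-1)^{dk_1}$ emerging from the per-box $(-1)^{|\lambda|}$ contributions in the $k_1$-power. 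Rationality is then immediate, since each $\lambda$-summand is a finite product of factors $(1-q^{h}\kappa^{c})^{-1}$ and monomials and the sum over $|\lambda|=d$ is finite. The hard part will be the vertex limit of the previous paragraph --- establishing box-by-box cancellation of the divergent $L$-powers and extracting the correct residual $\kappa$- and $q$-exponents (equivalently, pinning down the preferred-direction slicing that identifies our limit with the refined vertex); everything else is bookkeeping with the bracket $[\,\cdot\,]$ and the combinatorial identities of \Cref{eqn: comb ident}.
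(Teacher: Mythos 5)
Your skeleton matches the paper's: take the refined limit in the closed formula of \Cref{cor: full DT}, use the Calabi--Yau relation $\deg L_2 = 2g-2-k_1$ to regroup the three universal series into two (powers of $1-g$ and of $k_1$; the paper calls these $W_{\lambda,1}, W_{\lambda,2}$), kill the $[t_1t_2]^2/([t_1][t_2])$ term in degree $0$, extract the leading monomials to get the framing factors $t_4^{\lVert\lambda\rVert^2}$ etc., and reduce everything to the refined limit of the normalised vertex $\widehat{\mathsf{V}}_\lambda/\widehat{\mathsf{V}}_\varnothing$. The degree-$0$ computation and the bookkeeping with \eqref{eqn: comb ident} are fine. (One cosmetic difference: the paper pins down the monomial prefactors by evaluating the leading $q$-coefficients for $(\BP^1,\oO)$ and $(\BP^1,\oO(-1))$ rather than by a direct box-by-box bracket analysis; both work.)

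The genuine gap is exactly the step you flag as ``the hard part'' and then do not carry out: the refined limit of the normalised $1$-leg vertex. Your proposed mechanism --- ``every weight $t_1^at_2^bt_3^c$ with $a\neq b$ contributes only a leading monomial, the $\kappa$-diagonal weights survive, and summing over skew plane partitions resums to a product over boxes'' --- is not a proof, and the resummation claim is precisely the content of the refined topological vertex identity, which is nontrivial. The obstruction is that at the degenerate slope $t_1=L\kappa^{1/2}$, $t_2=L^{-1}\kappa^{1/2}$, $t_3=1$, the individual terms $\widehat{\mathfrak{e}}(-\mathsf{v}_{\bn})$ do not separately tend to clean monomials and there is no direct box-by-box cancellation to exhibit. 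The paper resolves this with two external inputs from Arbesfeld: first, independence of the limit from the choice of slope (\cite[Prop.~3.4]{Arb_K-theo_surface}), which legitimises replacing the degenerate scaling by the generic one $t_1=L^{N}\kappa^{1/2}$, $t_2=L^{-N-\epsilon}\kappa^{1/2}$, $t_3=L^{\epsilon}$ with $N\gg\epsilon>0$; and second, the explicit evaluation of $\lim_{L\to\infty}\widehat{\overline{\mathsf{V}}}_\lambda(-q)$ at such a generic slope (\cite[Prop.~4.6]{Arb_K-theo_surface}), which produces the hook-type products $\prod_{\Box}(1-(q\kappa^{1/2})^{\ell(\Box)}(q\kappa^{-1/2})^{a(\Box)+1})^{-1}$ appearing in the statement. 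Your alternative suggestion of reading the exponents off the capped-vertex formula of \cite{KOO_2_legDT} is plausible in principle but is also not carried out, and is not the route the paper takes. Until the vertex limit is actually established by one of these means, the proof is incomplete.
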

\begin{proof}
For the first identity,    by \Cref{thm: DT_ explicit univ series} we have
    \begin{align*}
             \widehat{\DT}_0(X, -q)=\Exp\left((1-g)\frac{1}{[(t_1t_2)^{1/2}q][(t_1t_2)^{1/2}q^{-1}]}\cdot \left(2\frac{[t_1t_2]^2}{[t_1][t_2]}-(t_1t_2)^{\frac{1}{2}}-(t_1t_2)^{-\frac{1}{2}} \right)
             \right).
    \end{align*}
The claimed identity follows by the vanishing of the limit
\begin{align*}
   \lim_{L\to \infty}\left. 2\frac{[t_1t_2]^2}{[t_1][t_2]}\right|_{t_1=L\kappa^{1/2}, t_2=L^{-1}\kappa^{1/2}}=0.
\end{align*}

For the second identity,    by the Calabi-Yau condition and \Cref{cor: full DT}, we can write
    \begin{align*}
         \widehat{\DT}_d(X, q)=\sum_{|\lambda|=d}W_{\lambda, 1}(q)^{1-g}\cdot W_{\lambda, 2}(q)^{\deg L_1},
    \end{align*}
    where $W_{\lambda, 1}(q), W_{\lambda,2}(q)$ are two universal series. We compute the two series in the refined limit by evaluating the Donaldson-Thomas partition function $    \widehat{\DT}_d(X, q)$ in the refined limit for $C\cong \BP^1$ and $L_1=\oO, \oO(-1)$.
 %   \begin{align*}
  %       \widehat{\DT}_d(X, q)&= \sum_{|\lambda|=d} \left(q^{|\lambda|+2n(\overline{\lambda})}\widehat{A}_{\lambda}(q) \widehat{C}_{\lambda}(q)^{-2}\right)^{1-g}\cdot \left(q^{n(\overline{\lambda})-n(\lambda)}\widehat{B}_{\lambda}(q)\widehat{C}_{\lambda}(q)^{-1}\right)^{\deg L_1}
  %  \end{align*}

   By construction, we have the relations
   \begin{align*}
       W_{\lambda, 1}(q)&=q^{|\lambda|+2n(\overline{\lambda})}\widehat{A}_{\lambda}(q)\cdot  \widehat{C}_{\lambda}(q)^{-2},\\
          W_{\lambda, 2}(q)&=q^{n(\overline{\lambda})-n(\lambda)}\widehat{B}_{\lambda}(q)\cdot\widehat{C}_{\lambda}(q)^{-1}.
   \end{align*}
   \underline{Step I.} We compute the first non-zero coefficients of the series $   W_{\lambda, 1}(q),   W_{\lambda, 2}(q)$ in the refined limit. 

   We start with the case $L_1=\oO(-1)$. Then by \Cref{thm: DT_ explicit univ series} we have that 
   \[
   W_{\lambda, 1}(q)\cdot W_{\lambda, 2}(q)^{-1}=q^{|\lambda|+n(\overline{\lambda})+n(\lambda)}\cdot \prod_{\Box\in \lambda}\left(\frac{[t_1^{-\ell(\Box)}t_2^{a(\Box)+1}]}{[t_1^{\ell(\Box)}t_2^{-a(\Box)-1}t_1t_2]}\right)^{a(\Box)-\ell(\Box)}+o(q^{|\lambda|+n(\overline{\lambda})+n(\lambda)+1}).
   \]
   Set $t_1=L\kappa^{1/2}$ and $t_2=L^{-1}\kappa ^{1/2}$, so that $t_1t_2=\kappa$. We have
   \begin{align*}
     \lim_{L\to \infty} \left. \prod_{\Box\in \lambda}\left(\frac{[t_1^{-\ell(\Box)}t_2^{a(\Box)+1}]}{[t_1^{\ell(\Box)}t_2^{-a(\Box)-1}t_1t_2]}\right)\right|_{t_1=L\kappa^{1/2},t_2=L^{-1}\kappa ^{1/2} }^{a(\Box)-\ell(\Box)}&=\lim_{L\to \infty}\prod_{\Box\in \lambda} \left(\frac{[(\kappa^{1/2})^{-\ell(\Box)+a(\Box)+1}L^{-h(\Box)}]}{[(\kappa^{1/2})^{\ell(\Box)-a(\Box)-1}L^{h(\Box)}\kappa]}\right)^{a(\Box)-\ell(\Box)}\\
     &=\prod_{\Box \in \lambda}\left(\frac{-(\kappa^{1/2})^{\frac{\ell(\Box)-a(\Box)-1}{2}}}{(\kappa^{1/2})^{\frac{\ell(\Box)-a(\Box)-1}{2}}\kappa^{1/2}}\right)^{a(\Box)-\ell(\Box)}\\
     &=\prod_{\Box\in \lambda}(-\kappa^{1/2})^{\ell(\Box)-a(\Box)}\\
     &= (-\kappa^{1/2})^{n(\lambda)}\cdot (-\kappa^{1/2})^{-n(\overline{\lambda})},
   \end{align*}
   which yields
   \begin{align*}
       W_{\lambda, 1}(q)\cdot W_{\lambda, 2}(q)^{-1}= (-1)^{|\lambda|}\cdot (-q\kappa^{1/2})^{n(\lambda)+\frac{|\lambda|}{2}}\cdot (-q\kappa^{-1/2})^{n(\overline{\lambda})+\frac{|\lambda|}{2}} + o(q^{|\lambda|+n(\overline{\lambda})+n(\lambda)+1}).
   \end{align*}
   Next, we address the case of $L_1=\oO$. By \Cref{thm: DT_ explicit univ series} we have that 
   \begin{align*}
        W_{\lambda, 1}(q)=q^{|\lambda|+2n(\overline{\lambda})}\cdot \prod_{\Box\in \lambda}\left(\frac{[t_1^{-\ell(\Box)}t_2^{a(\Box)+1}]}{[t_1^{\ell(\Box)}t_2^{-a(\Box)-1}t_1t_2]}\right)^{2a(\Box)+1}+o(q^{|\lambda|+2n(\overline{\lambda})+1}).
   \end{align*}
   The limit satisfies
   \begin{align*}
       \lim_{L\to \infty}\left.\prod_{\Box\in \lambda}\left(\frac{[t_1^{-\ell(\Box)}t_2^{a(\Box)+1}]}{[t_1^{\ell(\Box)}t_2^{-a(\Box)-1}t_1t_2]}\right)\right|^{2a(\Box)+1}_{t_1=L\kappa^{1/2},t_2=L^{-1}\kappa ^{1/2}}&=\lim_{L\to \infty}\prod_{\Box\in \lambda} \left(\frac{[(\kappa^{1/2})^{-\ell(\Box)+a(\Box)+1}L^{-h(\Box)}]}{[(\kappa^{1/2})^{\ell(\Box)-a(\Box)-1}L^{h(\Box)}\kappa]}\right)^{2a(\Box)+1}\\
     &=\prod_{\Box \in \lambda}\left(\frac{-(\kappa^{1/2})^{\frac{\ell(\Box)-a(\Box)-1}{2}}}{(\kappa^{1/2})^{\frac{\ell(\Box)-a(\Box)-1}{2}}\kappa^{1/2}}\right)^{2a(\Box)+1}\\
     &=\prod_{\Box\in \lambda}(-\kappa^{-1/2})^{2a(\Box)+1}\\
     &=  (-\kappa^{-1/2})^{2n(\overline{\lambda})+|\lambda|},
   \end{align*}
   which yields
   \begin{align*}
          W_{\lambda, 1}(q)=(-q\kappa^{-1/2})^{|\lambda|+2n(\overline{\lambda})}+o(q^{|\lambda|+2n(\overline{\lambda})+1}).
   \end{align*}
   Combining with the computation for $L_1=\oO(-1)$ with the combinatorial identities \eqref{eqn: comb ident}, we deduce that 
   \begin{align*}
   W_{\lambda, 1}(q)&=(-q\kappa^{-1/2})^{\left \lVert \lambda \right \rVert^2}\cdot(1+\dots),\\
       W_{\lambda, 2}(q)&=(-1)^{|\lambda|}\cdot (-q\kappa^{-1/2})^{\frac{\left \lVert \lambda \right \rVert^2}{2}}(-q\kappa^{1/2})^{-\frac{\left \lVert \overline{\lambda} \right \rVert^2}{2}}\cdot(1+\dots).
   \end{align*}
   \underline{Step II.} Recall that we defined the series  $ \widehat{A}'_\lambda(q),\widehat{B}'_\lambda(q),\widehat{C}'_\lambda(q)$ to be the series $ \widehat{A}_\lambda(q),\widehat{B}_\lambda(q),\widehat{C}_\lambda(q)$ divided by their constant terms in the proof of \Cref{cor: antidiagonal DT}. Set the normalised series
   \begin{align*}
   W_{\lambda, 1}(q)&=(-q\kappa^{-1/2})^{\left \lVert \lambda \right \rVert^2}\cdot  W'_{\lambda, 1}(q),\\
       W_{\lambda, 2}(q)&=(-1)^{|\lambda|}\cdot(-q\kappa^{-1/2})^{\frac{\left \lVert \lambda \right \rVert^2}{2}}(-q\kappa^{1/2})^{-\frac{\left \lVert \overline{\lambda} \right \rVert^2}{2}}W'_{\lambda, 2}(q).
   \end{align*}
   By \Cref{thm: DT_ explicit univ series}, we have that
   \begin{align*}
        W'_{\lambda, 1}(q)&=\widehat{A}'_{\lambda}(q)\cdot  \widehat{C}'_{\lambda}(q)^{-2}\\
        &=\left.\left(\widehat{\mathsf{V}}_\lambda(q)\cdot \widehat{\mathsf{V}}_\lambda(q)|_{t_2=t_2t^2_3, t_3=t^{-1}_3}\right)\right|_{t_3=1}.
   \end{align*}
and similarly
\begin{align*}
      W'_{\lambda, 2}(q)&= \left.\left(\widehat{\mathsf{V}}_\lambda(q)^{-1}|_{t_1=t_1t^2_3, t_3=t^{-1}_3 }\cdot \widehat{\mathsf{V}}_\lambda(q)|_{t_2=t_2t^2_3, t_3=t^{-1}_3}\right)\right|^{1/2}_{t_3=1}.
\end{align*}
Set the reduced vertex
\begin{align*}
\widehat{\overline{\mathsf{V}}}_\lambda(q)=\frac{\widehat{\mathsf{V}}_\lambda(q)}{\widehat{\mathsf{V}}_\varnothing(q)}.
\end{align*}
   To conclude the computation of $   \overline{\DT}^{\mathsf{ref}}_d(X, q)$, we are left to evaluate the reduced series
   \begin{align*}
         \frac{  W'_{\lambda, 1}(q)}{  W'_{\varnothing, 1}(q)}&=\left.\left(\widehat{\overline{\mathsf{V}}}_\lambda(q)\cdot \widehat{\overline{\mathsf{V}}}_\lambda(q)|_{t_2=t_2t^2_3, t_3=t^{-1}_3}\right)\right|_{t_3=1},\\
            \frac{  W'_{\lambda, 2}(q)}{  W'_{\varnothing, 2}(q)}&=\left.\left(\widehat{\overline{\mathsf{V}}}_\lambda(q)^{-1}|_{t_1=t_1t^2_3, t_3=t^{-1}_3 }\cdot \widehat{\overline{\mathsf{V}}}_\lambda(q)|_{t_2=t_2t^2_3, t_3=t^{-1}_3}\right)\right|^{1/2}_{t_3=1}
   \end{align*}
   in the refined limit. Fix integers $N\gg \epsilon >0$. We have
   \begin{align*}
       \lim_{L\to \infty}   \left.\frac{  W'_{\lambda, 1}(q)}{  W'_{\varnothing, 1}(q)}\right|_{t_1=L\kappa^{1/2},t_2=L^{-1}\kappa ^{1/2}}&=    \lim_{L\to \infty}  \left.\left(\widehat{\overline{\mathsf{V}}}_\lambda(q)\cdot \widehat{\overline{\mathsf{V}}}_\lambda(q)|_{t_2=t_2t^2_3, t_3=t^{-1}_3}\right)\right|_{t_1=L\kappa^{1/2},t_2=L^{-1}\kappa ^{1/2}, t_3=1}\\
       &=  \lim_{L\to \infty}  \left.\left(\widehat{\overline{\mathsf{V}}}_\lambda(q)\cdot \widehat{\overline{\mathsf{V}}}_\lambda(q)|_{t_2=t_2t^2_3, t_3=t^{-1}_3}\right)\right|_{t_1=L^N\kappa^{1/2},t_2=L^{-N}\kappa ^{1/2}, t_3=1}\\
       &=  \lim_{L\to \infty}  \left.\left(\widehat{\overline{\mathsf{V}}}_\lambda(q)\cdot \widehat{\overline{\mathsf{V}}}_\lambda(q)|_{t_2=t_2t^2_3, t_3=t^{-1}_3}\right)\right|_{t_1=L^N\kappa^{1/2},t_2=L^{-N-\epsilon}\kappa ^{1/2}, t_3=L^{\epsilon}},
   \end{align*}
   where the independence of the choice of slope in the last equality follows from \cite[Prop.~3.4]{Arb_K-theo_surface}.    Analogously, we have that
  \begin{multline*}
       \lim_{L\to \infty}   \left.\frac{  W'_{\lambda, 2}(q)}{  W'_{\varnothing, 2}(q)}\right|_{t_1=L\kappa^{1/2},t_2=L^{-1}\kappa ^{1/2}}=
  \\     \lim_{L\to \infty}  \left.\left(\widehat{\overline{\mathsf{V}}}_\lambda(q)^{-1}|_{t_1=t_1t^2_3, t_3=t^{-1}_3 }\cdot \widehat{\overline{\mathsf{V}}}_\lambda(q)|_{t_2=t_2t^2_3, t_3=t^{-1}_3}\right)\right|^{1/2}_{t_1=L^N\kappa^{1/2},t_2=L^{-N-\epsilon}\kappa ^{1/2}, t_3=L^{\epsilon}}.
   \end{multline*}
 By \cite[Prop.~4.6]{Arb_K-theo_surface} we have that
   \begin{align*}
        &\lim_{L\to \infty}  \left.\left(\widehat{\overline{\mathsf{V}}}_\lambda(-q)\right)\right|_{t_1=L^N\kappa^{1/2},t_2=L^{-N-\epsilon}\kappa ^{1/2}, t_3=L^{\epsilon}}=\frac{1}{\prod_{\Box\in \lambda}(1-(q\kappa^{1/2})^{\ell(\Box)} (q\kappa^{-1/2})^{a(\Box)+1})},\\
       &  \lim_{L\to \infty}  \left.\left(\widehat{\overline{\mathsf{V}}}_\lambda(-q)|_{t_2=t_2t^2_3, t_3=t^{-1}_3}\right)\right|_{t_1=L^N\kappa^{1/2},t_2=L^{-N-\epsilon}\kappa ^{1/2}, t_3=L^{\epsilon}}=\frac{1}{\prod_{\Box\in \overline{\lambda}}(1-(q\kappa^{-1/2})^{\ell(\Box)} (q\kappa^{1/2})^{a(\Box)+1})},\\
           &\lim_{L\to \infty}  \left.\left(\widehat{\overline{\mathsf{V}}}_\lambda(-q)|_{t_1=t_1t^2_3, t_3=t^{-1}_3}\right)\right|_{t_1=L^N\kappa^{1/2},t_2=L^{-N-\epsilon}\kappa ^{1/2}, t_3=L^{\epsilon}}=\frac{1}{\prod_{\Box\in \overline{\lambda}}(1-(q\kappa^{-1/2})^{\ell(\Box)} (q\kappa^{1/2})^{a(\Box)+1})}.
   \end{align*}
   Combining everything together yields
   \begin{align*}
           \lim_{L\to \infty}   \left.\frac{  W'_{\lambda, 1}(-q)}{  W'_{\varnothing, 1}(-q)}\right|_{t_1=L\kappa^{1/2},t_2=L^{-1}\kappa ^{1/2}}&=\prod_{\Box\in \lambda} \frac{1}{(1-(q\kappa^{1/2})^{\ell(\Box)} (q\kappa^{-1/2})^{a(\Box)+1})(1-(q\kappa^{-1/2})^{a(\Box)} (q\kappa^{1/2})^{\ell(\Box)+1})},\\
           \lim_{L\to \infty}   \left.\frac{  W'_{\lambda, 2}(-q)}{  W'_{\varnothing, 2}(-q)}\right|_{t_1=L\kappa^{1/2},t_2=L^{-1}\kappa ^{1/2}}&=1.
   \end{align*}
   Summing over all Young diagrams concludes the proof.
\end{proof}
Remarkably, the refined Donaldson-Thomas partition functions computed in  \Cref{thm: refined DT full Aga} reproduces a formula for the refined topological string of $\Tot_{C}(L_1\oplus L_2)$ proposed by Aganagic-Schaeffer \cite[Eqn.~(4.13)]{AS_black_holes} and studied with a TQFT approach in the context of the refined  Ooguri-Strominger-Vafa conjecture. In particular, in the genus zero case it reproduces the partition functions studied in \cite{IKV_topological_vertex} via the refined topological vertex. Notice, moreover, that the computation of the constant terms of the functions $W_{\lambda, i}(     q)$ in the proof of \Cref{thm: refined DT full Aga} reproduce the refined limit of the edge term in \cite[Prop.~4.2, 4.3]{Arb_K-theo_surface}.
\smallbreak
In some special cases, the formulas of \Cref{thm: refined DT full Aga} can be more compactly rewritten using the plethystic exponential. For instance, if $X=\Tot_{\BP^1}(\oO(-1)\oplus \oO(-1))$ is the resolved conifold, by \Cref{thm: refined DT full Aga} and the combinatorial identity in \cite[Eqn.~(5.6)]{IKV_topological_vertex} we recover the known expression
\begin{align}\label{eq_ res con}
\begin{split}
        \sum_{d\geq 0}Q^{d}\cdot   \overline{\DT}^{\mathsf{ref}}_d(X, -q)&=\Exp\left( \frac{Q}{[t_4][t_5]} \right)\\
    &= \Exp\left(\frac{-Qq}{(1-q\kappa^{1/2})(1-q\kappa^{-1/2})}\right).
    \end{split}
\end{align}
The same formula appears as the \emph{motivic} Pandharipande-Thomas partition function of the resolved conifold computed by Morrison-Mozgovoy-Nagao-Szendr\H{o}i \cite{MMNS_motivic_DT_conifold}. The formula in loc.~cit.~can be obtained by \eqref{eq_ res con} by the refined DT/PT correspondence (see \Cref{thm: DT/PT}) and the $K$-theoretic-to-motivic correspondence\footnote{To be precise, we exploit here that since the moduli space of stable pairs on $X$ is proper, the torus action is \emph{circle-compact}, and therefore the invariants defined by motivic and $K$-theoretic localisation coincide, see also the discussion in \cite[Prop.~8]{CKK_refined_BPS}. When the moduli space of stable pairs is not circle-compact, the motivic and $K$-theoretic invariants differ by the contribution of the complement of the attracting locus, cf.~\cite[Eq.~(1.3), (1.4)]{descombes_hyperbolic_loc}. In particular, we expect  for higher genus local curves a correction term that relates motivic and $K$-theoretic invariants, as the (conjectural) formula for motivic PT invariants proposed in \cite[Eq.~(1.5)]{CDDP_parabolic} suggests.} of Descombes \cite[pag.~8]{descombes_hyperbolic_loc}, exploiting the properness of the moduli space of stable pairs on the resolved conifold.
\subsection{$K$-theoretic-to-cohomological limit}
Throughout \Cref{sec: computations} we performed the computations of the Donaldson-Thomas partition functions in both equivariant cohomology and equivariant $K$-theory. We show now that the former is actually a limit of the latter. This confirms that it would have been enough to prove each of the results of \Cref{sec: computations} only for  the $K$-theoretic Donaldson-Thomas partition function.
\begin{prop}\label{prop: limit equiv}
  Let $X=\Tot_C(L_1\oplus L_2)$ be a local curve, where $L_1, L_2$ are line bundles over a smooth projective curve $C$ of genus $g$. We have
    \[\lim_{b\to 0}b^{d(2-2g+\deg L_1+\deg L_2)}\cdot \widehat{\DT}_d(X, q)|_{t_1=e^{bs_1}, t_2= e^{bs_2}}=\DT_d(X, q).\]
\end{prop}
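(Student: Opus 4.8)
The plan is to prove the statement componentwise on the $\TT$-fixed locus, reducing the whole assertion to a single local Hirzebruch--Riemann--Roch computation that compares the symmetrised $K$-theoretic Euler class $\widehat{\mathfrak{e}}$ with the cohomological Euler class $e$ in the limit $t_i=e^{bs_i}$, $b\to 0$.

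First I would invoke the localised expressions \eqref{eqn: DT as localised}, which write both $\DT_d(X,q)$ and $\widehat{\DT}_d(X,q)$ as sums over Young diagrams $\lambda$ and skew plane partitions $\bn$ of the contributions $\int_{C^{[\bn]}} e(-N^{\vir}_{C,L_1,L_2,\bn})$ and $\chi(C^{[\bn]}, \widehat{\mathfrak{e}}(-N^{\vir}_{C,L_1,L_2,\bn}))$, each weighted by a power of $q$ independent of $b$. By \Cref{thm: equality virtual classes} every $C^{[\bn]}$ is smooth — indeed a reduced local complete intersection of pure dimension $\omega(\bn)$ by \Cref{prop: dim of skew nested} — so the $K$-theoretic contribution is a genuine holomorphic Euler characteristic to which ordinary Hirzebruch--Riemann--Roch applies. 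Since for each power of $q$ only finitely many $\bn$ contribute, it suffices to establish, for every fixed $\lambda$ and $\bn$, the limit
\[
\lim_{b\to 0} b^{\,d(2-2g+\deg L_1+\deg L_2)}\,\chi\!\left(C^{[\bn]}, \widehat{\mathfrak{e}}(-N^{\vir}_{C,L_1,L_2,\bn})\right)\big|_{t_i=e^{bs_i}} = \int_{C^{[\bn]}} e(-N^{\vir}_{C,L_1,L_2,\bn}),
\]
and then to sum coefficientwise in $q$.

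The core is the following local lemma: if $M$ is a smooth projective variety with trivial $\TT$-action and $V\in K^0_\TT(M)$ has no $\TT$-fixed part, then $\lim_{b\to 0} b^{\dim M - \rk V}\,\chi(M,\widehat{\mathfrak{e}}(V))|_{t_i=e^{bs_i}} = \int_M e(V)$. To prove it I would expand $V$ into equivariant Chern roots $t^{\mu_a}e^{x_a}$ with signs $\epsilon_a$, so that $\ch(\widehat{\mathfrak{e}}(V)) = \prod_a [t^{\mu_a}e^{x_a}]^{\epsilon_a}$ and $\chi(M,\widehat{\mathfrak{e}}(V)) = \int_M \prod_a[t^{\mu_a}e^{x_a}]^{\epsilon_a}\,\td(M)$. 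Under $t_i=e^{bs_i}$ each factor becomes $2\sinh\!\big(\tfrac{b\langle\mu_a,s\rangle+x_a}{2}\big) = (b\langle\mu_a,s\rangle+x_a)\cdot\big(1+O\big((b\langle\mu_a,s\rangle+x_a)^2\big)\big)$. Collecting the linear factors reproduces $e(V)$ with $s$ rescaled to $bs$, whose cohomological degree-$k$ part carries the weight $b^{\rk V - k}$; extracting the top degree $k=\dim M$ and observing that the residual hyperbolic-sine corrections and the Todd class only raise the power of $b$, the leading term is exactly $b^{\rk V - \dim M}\int_M e(V)$, which is the lemma.

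Finally I would apply the lemma with $M=C^{[\bn]}$ and $V=-N^{\vir}_{C,L_1,L_2,\bn}$, which is $\TT$-movable by construction. The required normalising exponent is $\dim C^{[\bn]} - \rk(-N^{\vir}) = \omega(\bn)+\rk N^{\vir}_{\bn}$. By virtual localisation this equals the rank of the restricted global virtual tangent complex $\BE^\vee|_{C^{[\bn]}}$, i.e. the virtual dimension of $\Hilb^n(X,\beta)$, which is independent of $n$ and of the chosen component. Evaluating it on the single reduced point $C^{[\mathbf{0}]}$ via \Cref{prop: virtual normal const as univer} gives $\rk N^{\vir}_{\mathbf{0}} = (1-g)\,\rk T_\lambda + \deg L_1\cdot \rk\!\left(t_1\tfrac{\partial}{\partial t_1}T_\lambda\right) + \deg L_2\cdot\rk\!\left(t_2\tfrac{\partial}{\partial t_2}T_\lambda\right) = 2d(1-g)+d\deg L_1 + d\deg L_2 = d(2-2g+\deg L_1+\deg L_2)$, matching the exponent in the statement. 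Summing over all $\bn$ and $\lambda$ and reinstating the $b$-independent powers of $q$ then completes the argument. The main obstacle is the leading-order bookkeeping inside the lemma: one must verify carefully that $\td(C^{[\bn]})$ and the higher-order terms of the hyperbolic sines contribute only to strictly higher powers of $b$, so that the surviving coefficient is precisely $\int_M e(V)$ with no spurious Todd or normalisation factors; a secondary point requiring care is the uniformity of the exponent across all components, which rests on deformation-invariance of the virtual dimension together with the rank computation above.
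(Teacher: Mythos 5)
Your argument is correct in substance, but it takes a genuinely different route from the paper -- in fact it is essentially the ``alternative proof'' that the paper only alludes to in the remark following \Cref{prop: limit equiv}. The paper's actual proof is much shorter and purely combinatorial: by \Cref{thm: DT_ explicit univ series} every $q^n$-coefficient of $\widehat{\DT}_d(X,q)$ is already exhibited as a ratio $\left.\left(\prod_\mu[t^\mu]/\prod_\nu[t^\nu]\right)\right|_{t_3=1}$ of brackets of \emph{pure torus weights} (no Chern roots appear, since the universal series are assembled from vertex terms evaluated at isolated fixed points), so the limit follows from the single expansion $[t^\mu]|_{t_i=e^{bs_i}}=b\,e(t^\mu)+O(b^2)$ together with the count $\#\mu-\#\nu=-d(2-2g+\deg L_1+\deg L_2)$. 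Your proof instead works directly on the components $C^{[\bn]}$ of the fixed locus via a Riemann--Roch leading-order analysis; it does not rely on the explicit evaluation of the universal series, so it is more robust (it would apply verbatim to the PT side, or to situations where the vertex has not been computed), at the cost of the bookkeeping in your local lemma. Your identification of the normalising exponent with $\omega(\bn)+\rk N^{\vir}=\rk\,\BE^\vee|_{C^{[\bn]}}$, its independence of the component, and its evaluation $d(2-2g+\deg L_1+\deg L_2)$ via \Cref{prop: virtual normal const as univer} are all correct, as is the observation that only the degree-zero parts of the Todd class and of the $\sinh$-corrections survive at leading order in $b$.

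One inaccuracy to repair: $C^{[\bn]}$ is a reduced local complete intersection of pure dimension $\omega(\bn)$ by \Cref{prop: dim of skew nested}, but it need \emph{not} be smooth, so ``ordinary Hirzebruch--Riemann--Roch'' does not literally apply. You should instead invoke the virtual Riemann--Roch theorem of \cite{FG_riemann_roch}: since $\oO^{\vir}_{C^{[\bn]}}=\oO_{C^{[\bn]}}$ and $[C^{[\bn]}]^{\vir}=[C^{[\bn]}]$ by \Cref{thm: equality virtual classes}, one has $\chi(C^{[\bn]},W)=\int_{C^{[\bn]}}\ch(W)\,\td(T^{\vir})$ with $T^{\vir}=\BE|^{\vee,\fix}_{C^{[\bn]}}$ of rank $\omega(\bn)$. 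As the degree-zero part of $\td(T^{\vir})$ is $1$, your leading-order extraction goes through unchanged. With this substitution the proposal is a complete proof.
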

\begin{proof}
The operators $[\cdot], e(\cdot)$ applied to a torus character $t_1^{\mu_1}t_2^{\mu_2}t_3^{\mu_3}$ obey the relation
\begin{align*}
    [ t_1^{\mu_1}t_2^{\mu_2}t_3^{\mu_3}]|_{t_1=e^{bs_1}, t_2= e^{bs_2}, t_3=e^{bs_3}}&= e^{\frac{b(\mu_1s_1+\mu_2s_2+\mu_3 s_3)}{2}}- e^{-\frac{b(\mu_1s_1+\mu_2s_2+\mu_3s_3)}{2}}\\
&=b (\mu_1s_1+\mu_2s_2+\mu_3 s_3)+o(b^2)\\
&=b \cdot e(t_1^{\mu_1}t_2^{\mu_2}t_3^{\mu_3})+o(b^2).
\end{align*}
By \Cref{thm: DT_ explicit univ series},  each coeffient of $q^n$ of the Donaldson-Thomas partition function $ \widehat{\DT}_d(X, q)$ can be written as a rational function of the form
\begin{align*}
  \left.  \left(\frac{\prod_{\mu}[t^{\mu}]}{\prod_{\nu}[t^\nu]}\right)\right|_{t_3=1},
\end{align*}
where the number of weights $\mu, \nu$ satisfy 
\[
\#\mu-\#\nu=-d(2-2g+\deg L_1+\deg L_2).
\]
Substituting $t_i=e^{b s_i}$ and taking the limit for $b\to 0$ concludes the argument.
\end{proof}
The proof of \Cref{prop: limit equiv} leverages on the fact that we should think of  the operator $e(\cdot)$ as the \emph{linearisation} of the operator $[\cdot]$, since
\begin{align*}
    [t^\mu]\xrightarrow{b\to 0} b\cdot e(t^\mu).
\end{align*}
This point of view seems to be pretty natural from the string-theoretic approach to  Donaldson-Thomas theory, see e.g. \cite{BBPT_elliptic_DT} and the references therein.
\smallbreak
The proof of \Cref{prop: limit equiv} relies on the explicit description of the universal series in terms of the 1-leg $K$-theoretic equivariant vertex and the operator $[\cdot]$. Alternatively, one could prove \Cref{prop: limit equiv} directly from the definition of the invariants \eqref{eqn: localized PT invariants}, \eqref{eqn: localized DT KK invariants} exploiting equivariant virtual  Riemann-Roch \cite{FG_riemann_roch} as in \cite[Thm.~6.4]{CKM_crepant}.
\section{DT/PT correspondence}
\subsection{Moduli space of stable pairs}\label{sec: PT}
Let $C$ be a smooth projective curve, $L_1, L_2$ line bundles on $C$ and set $ X=\Tot_C(L_1\oplus L_2)$ to be a local curve.  For a curve class $\beta=d[C]\in H_2(X, \BZ)$ and $n\in \BZ$, Pandharipande-Thomas \cite{PT_curve_counting_derived} introduced the \emph{moduli space of stable pairs} $P_n(X, \beta)$, parametrising flat families of complexes 
\[[\oO_X\xrightarrow[]{s} F]\in \derived^b(X),\]
such that $\coker (s)$ is zero-dimensional,  $[\Supp F]=\beta$ and $\chi(F)=n$.\\
By the work of   Huybrechts-Thomas \cite{HT_obstruction_theory},  the deformation theory of complexes   give rise to  a \emph{perfect obstruction theory} on $P_n(X, \beta)$ 
\begin{equation}\label{eqn: obstruction theory PT}
     \BE=\RR\pi_*\RR\hom(\CI^\bullet, \CI^\bullet)^\vee_0[-1]\to \BL_{P_n(X, \beta)},
\end{equation}
where 
 $\pi:X\times P_n(X, \beta)\to P_n(X, \beta)$ is the canonical projection and 
 \begin{align*}
     \CI^\bullet = [\oO\to \CF]\in \derived^b(P_n(X, \beta)\times X)
 \end{align*}
is the universal complex on  $P_n(X, \beta)\times X$.  By \cite{BF_normal_cone, CFK_virtual_fundamental_dg} the moduli space  $P_n(X, \beta)$  is endowed with \emph{virtual fundamental cycles}
\begin{align*}
    [P_n(X, \beta)]^{\vir}\in A_*(P_n(X, \beta)),\\
    \oO_{P_n(X, \beta)}^{\vir}\in K_0(P_n(X, \beta)),
\end{align*}
in complete analogy with the case of $\Hilb^n(X, \beta)$.

Similarly to \Cref{sec: Hilbert schemes}, the $\TT$-action on $X$ lifts to the moduli space of stable pairs $P_n(X, \beta)$, making the perfect obstruction theory and the virtual cycles naturally $\TT$-equivariant by \cite{Ric_equivariant_Atiyah}.  Moreover, the $\TT$-fixed locus $ P_n(X, \beta)^\TT$ is proper (cf. \Cref{thm: fixed PT}), therefore by Graber-Pandharipande \cite{GP_virtual_localization} there is a natural  induced perfect obstruction theory
 \begin{align*}
        \BE|^{\fix}_{P_n(X, \beta)^\TT}\to \BL_{P_n(X, \beta)^\TT}
 \end{align*}
 on  $ P_n(X, \beta)^\TT$ along with natural virtual cycles. We define \emph{$\TT$-equivariant  Pandharipande-Thomas invariants} as 
 \begin{align}\label{eqn: localized PT newwww}
    \PT_{d,n}(X)=\int_{[  P_n(X, \beta)]^{\vir}}1\in \BQ(s_1,s_2),
\end{align}
where the right-hand-side is defined by
 Graber-Pandharipande virtual localisation formula \cite{GP_virtual_localization} as
\begin{align*}
 \int_{[  P_n(X, \beta)]^{\vir}}1=\int_{[P_n(X, \beta)^{\TT}]^{\vir}}\frac{1}{e(N^{\vir})}\in \BQ(s_1,s_2).
\end{align*}
Similarly, we define  \emph{$K$-theoretic  Pandharipande-Thomas invariants} as
\begin{align}\label{eqn: localized PT KK invariants}
    \widehat{\PT}_{d,n}(X)=\chi\left(P_n(X, \beta),   \widehat{\oO}^{\vir}  \right)\in \BQ(t_1^{1/2}, t_2^{1/2}),
\end{align}
where the right-hand-side is defined by the virtual localisation formula in $K$-theory \cite{FG_riemann_roch, Qu_virtual_pullback} as
\begin{align*}
   \chi\left(P_n(X, \beta),   \widehat{\oO}^{\vir}  \right)=\chi\left(P_n(X, \beta)^\TT,\frac{  \widehat{\oO}^{\vir}_{P_n(X, \beta)^\TT} }{\widehat{\mathfrak{e}}(N^{\vir})}  \right)\in \BQ(t_1^{1/2}, t_2^{1/2}).
\end{align*}
The associated partition functions are defined as
\begin{equation}\label{eqn: PT partition function}
    \begin{split}
       \PT_d(X, q)&=\sum_{n\in \BZ} \PT_{d,n}(X)\cdot q^n\in \BQ(s_1,s_2)(\!( q )\!),\\
    \widehat{\PT}_d(X, q)&=\sum_{n\in \BZ} \widehat{\PT}_{d,n}(X)\cdot q^n\in \BQ(t_1^{1/2}, t_2^{1/2})(\!( q )\!).  
    \end{split}
\end{equation}
\smallbreak
 Finally, we introduce the  \emph{topological Euler characteristic} partition function  as
\begin{align*}
    \PT^{\mathrm{top}}_d(X, q)=  \sum_{n\in \BZ}e(P_n(X, \beta))\cdot q^n\in \BZ(\!(q)\!).
\end{align*}
\subsubsection{Virtual localisation}
We recall from \cite{Mon_double_nested} the structure of the $\TT$-fixed locus $P_n(X, \beta)^\TT$ and of the induced virtual structure.
 \begin{theorem}[{\cite[Prop.~3.1, Thm.~4.1,  Eqn.~(4.1)]{Mon_double_nested}, \cite[Cor.~4.3]{GLMRS_double-nested-1}}]\label{thm: fixed PT}
Let $C$ be a smooth projective curve of genus $g$ and $L_1, L_2$ line bundles on $C$. Set $X=\Tot_C(L_1\oplus L_2)$ and $\beta=d[C]$. Then,  there exists an isomorphism of schemes
\begin{align*}
    P_n(X, \beta)^\TT= \coprod_{|\lambda|= d}\coprod_{\mathbf{\Bm}} C^{[\mathbf{m}]},
\end{align*}
where the disjoint union is over all Young diagrams $\lambda$ of size $d$ and reverse plane partitions $\Bm$ of shape  $\lambda$ satisfying   $n=|\mathbf{m}|+ \mathbf{f}_{\lambda}( g,\deg L_1, \deg L_2)$.
In particular, $ P_n(X, \beta)^\TT$ is proper.
\smallbreak
Moreover, there are  identities of virtual cycles
\begin{align*}
     [C^{[\mathbf{\Bm}]}]^{\vir}&=[C^{[\mathbf{\Bm}]}]\in A_{\omega(\Bm)}(C^{[\mathbf{\Bm}]}),\\
       \oO_{C^{[\mathbf{\Bm}]}}^{\vir}&=  \oO_{C^{[\mathbf{\Bm}]}}\in K_0(C^{[\mathbf{\Bm}]}),
 \end{align*}
 and an identity 
in $K^0_\TT(C^{[\Bm]})$
 \begin{multline*}
     \BE|_{C^{[\Bm]}}^\vee=\sum_{(i,j)\in \lambda}\RR\pi_*(\oO(\CZ_{ij}) \otimes L_1^{-i} L_2^{-j}) t_1^{-i}t_2^{-j}\\
     -\sum_{(i,j)\in \lambda}\left(\RR\pi_*(\oO(\CZ_{ij}) \otimes \omega_C\otimes L_1^{-i-1} L_2^{-j-1} )\right)^\vee\cdot  t_1^{i+1}t_2^{j+1}\\
     -\sum_{(i,j),(l,k)\in \lambda}\RR\pi_*\left(\CN\otimes \oO(\CZ_{lk}-\CZ_{ij} )\otimes L_1^{i-l}L_2^{j-k} ) \right)t_1^{i-l}t_2^{j-k}.
 \end{multline*}
 \end{theorem}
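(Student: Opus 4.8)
The statement is, to a large extent, a recollection of results established in \cite{Mon_double_nested, GLMRS_double-nested-1}, and the plan is to prove it by running the three arguments of the Donaldson--Thomas case (\Cref{thm: fixed}, \Cref{prop: K theort class}, \Cref{thm: equality virtual classes}) in the stable-pairs setting, highlighting only the points where the two theories genuinely diverge. Throughout I would let $p\colon X\to C$ denote the projection and push everything forward along the affine map $p$.

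First I would identify the $\TT$-fixed locus, mirroring the proof of \Cref{thm: fixed}. A $\TT$-fixed stable pair $[\oO_X\xrightarrow{s}F]$ pushes forward to an equivariant section of weight spaces $p_*F=\bigoplus_{(i,j)\in\BZ^2}F_{ij}\otimes t_1^i t_2^j$, with the $\oO_X$-module structure recorded by the weight $-1$ maps $F_{ij}\otimes L_1^{-1}\to F_{i-1,j}$ and $F_{ij}\otimes L_2^{-1}\to F_{i,j-1}$. The essential difference with the Hilbert scheme case is the stability constraint: purity of $F$ together with $\coker(s)$ being zero-dimensional forces each nonzero weight piece to be a line bundle of the form $F_{ij}\cong\oO_C(\CZ_{ij})\otimes L_1^{-i}L_2^{-j}$ for an effective divisor $\CZ_{ij}\subset C$, and the module maps make the $\CZ_{ij}$ nest in the \emph{opposite} direction to the DT case, namely $\CZ_{ij}\subseteq\CZ_{i'j'}$ whenever $(i,j)\le(i',j')$. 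The indices carrying a nonzero piece form a Young diagram $\lambda$, and the rank constraint $[\Supp F]=d[C]$ forces $|\lambda|=d$; one thus reads off a reverse plane partition $\Bm=(m_{ij})_{(i,j)\in\lambda}$ with $m_{ij}=\chi(\oO_{\CZ_{ij}})$, that is a point of the double nested Hilbert scheme $C^{[\Bm]}$. A Riemann--Roch computation identical to the one in \Cref{thm: fixed} pins down $n=|\Bm|+\mathbf{f}_\lambda(g,\deg L_1,\deg L_2)$. As this bijection extends to flat families, it yields the asserted scheme isomorphism, and properness follows since each $C^{[\Bm]}$ is proper.

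Next I would establish the virtual cycle identities exactly as in \Cref{thm: equality virtual classes}. By \cite[Thm.~4.2]{GLMRS_double-nested-1} the double nested Hilbert scheme $C^{[\Bm]}$ is a reduced local complete intersection of pure dimension $\omega(\Bm)$, so its cotangent complex is perfect of amplitude $[-1,0]$ and virtual rank $\omega(\Bm)$; it therefore suffices to check that the $\TT$-fixed part $\BE|^{\fix}_{C^{[\Bm]}}$ of the induced perfect obstruction theory also has virtual rank $\omega(\Bm)$, whereupon the comparison map is an isomorphism and both $[C^{[\Bm]}]^{\vir}$ and $\oO^{\vir}_{C^{[\Bm]}}$ collapse to their ordinary counterparts. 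This rank count is the same box-by-box socle/subsocle bookkeeping as in \Cref{thm: equality virtual classes}, now applied to the reverse-plane-partition indexing. For the explicit $K$-theory class I would reproduce the computation of \Cref{prop: K theort class}: expand $\RR\pi_*\RR\hom(\CI^\bullet,\CI^\bullet)^\vee_0[-1]$ using the universal complex $\CI^\bullet=[\oO\to\CF]$, apply $\TT$-equivariant Grothendieck duality with respect to the projective completion $\overline{X}=\BP_C(L_1\oplus L_2\oplus\oO_C)$, and invoke the self-intersection formula $\LL i^*i_*\CG=\CN^*\otimes\CG$ for the zero section; the only bookkeeping change is that the relevant twists now involve the line bundles $\oO(\CZ_{ij})$ rather than the structure sheaves $\oO_{\CZ_{ij}}$, producing exactly the three summands displayed.

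The hard part will be the fixed-locus analysis, and within it the stability step: one must argue carefully that purity of $F$ and the zero-dimensionality of $\coker(s)$ force the weight spaces to be twisted line bundles nesting as a reverse (rather than skew) plane partition. This is where PT theory genuinely departs from the DT computation and where the reverse-plane-partition combinatorics enters. The remaining steps---the Riemann--Roch identification of $n$, the rank count proving virtual $=$ ordinary, and the Grothendieck-duality manipulation of the $K$-theory class---are then formally parallel to the arguments already given for the Hilbert scheme, and are assembled in \cite[Prop.~3.1, Thm.~4.1, Eqn.~(4.1)]{Mon_double_nested}.
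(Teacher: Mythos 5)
The paper gives no proof of this statement: it is recalled verbatim from \cite[Prop.~3.1, Thm.~4.1, Eqn.~(4.1)]{Mon_double_nested} and \cite[Cor.~4.3]{GLMRS_double-nested-1}, as the bracketed attribution and the sentence ``We recall from \cite{Mon_double_nested}\dots'' indicate. Your reconstruction is correct and is exactly the PT-side analogue of the three DT arguments the paper does carry out (\Cref{thm: fixed}, \Cref{prop: K theort class}, \Cref{thm: equality virtual classes}): the stability/purity step correctly forces the weight pieces to be line bundles $\oO_C(\CZ_{ij})\otimes L_1^{-i}L_2^{-j}$ nesting as a reverse plane partition on $\lambda$, and the rank count and Grothendieck-duality computation go through with $\oO(\CZ_{ij})$ in place of $\oO_{\CZ_{ij}}$, which is precisely how the cited references proceed.
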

\Cref{thm: fixed PT} implies that the partition functions \eqref{eqn: PT partition function} can be expressed as
\begin{align*}
    \PT_d(X, q)&=\sum_{|\lambda|= d}  q^{\mathbf{f}_{\lambda}( g,\deg L_1, \deg L_2)}\sum_{\mathbf{m}}q^{|\mathbf{m}|}\cdot   \int_{C^{[\mathbf{m}]}}e(-N_{C,L_1,L_2, \Bm}^{\vir}),\\
     \widehat{\PT}_d(X, q)&=\sum_{|\lambda|= d}  q^{\mathbf{f}_{\lambda}( g,\deg L_1, \deg L_2)}\sum_{\mathbf{m}}q^{|\mathbf{m}|}\cdot  \widehat{\chi}\left(C^{[\mathbf{m}]},\widehat{\mathfrak{e}}(-N_{C,L_1,L_2, \Bm}^{\vir})  \right),
\end{align*}
where $N^{\vir}_{C,L_1,L_2, \Bm}$ denotes the virtual normal bundle of $C^{[\Bm]}\hookrightarrow P_n(X, \beta)$ and $\Bm$ is a reverse plane partition of shape $ \lambda$.
\subsubsection{Universality}
The structure of  the partition functions \eqref{eqn: PT partition function} was studied in \cite{Mon_double_nested}, whose main results we now recall.

\begin{theorem}[{\cite[Thm.~5.1, 9.4]{Mon_double_nested}}]\label{thm: universal series PT}
Let $C$ be a genus $g$ smooth irreducible projective curve and $L_1, L_2$  line bundles over $C$. There is an identity
\begin{align*}
    \sum_{\mathbf{m}}q^{|\mathbf{m}|} \int_{C^{[\mathbf{\Bm}]}}e(-N_{C,L_1,L_2, \mathbf{m}}^{\vir})= D_{\lambda}^{1-g}\cdot E_{\lambda}^{\deg L_1}\cdot F_{\lambda}^{\deg L_2}\in \BQ(s_1,s_2)\llbracket q \rrbracket,
\end{align*}
where the sum is over all reverse plane partitions $\Bm$ of shape $\lambda$, and  $D_{\lambda},E_{\lambda},F_{\lambda}\in \BQ(s_1,s_2)\llbracket q \rrbracket$ are fixed universal series for $i=1,2,3$ which only depend on $ \lambda$. 
Similarly, there is an identity
\begin{align*}
    \sum_{\mathbf{m}}q^{|\mathbf{m}|} \widehat{\chi}\left(C^{[\mathbf{m}]},\widehat{\mathfrak{e}}(-N_{C,L_1,L_2, \mathbf{m}}^{\vir})  \right)= \widehat{D}_{\lambda}^{1-g}\cdot \widehat{E}_{\lambda}^{\deg L_1}\cdot \widehat{F}_{\lambda}^{\deg L_2}\in  \BQ(t_1^{1/2}, t_2^{1/2})\llbracket q \rrbracket,
\end{align*}
where the sum is over all reverse plane partitions $\Bm$ of shape $ \lambda$, and  $\widehat{D}_{\lambda},\widehat{E}_{\lambda},\widehat{F}_{\lambda}\in  \BQ(t_1^{1/2}, t_2^{1/2})\llbracket q \rrbracket$ are fixed universal series for $i=1,2,3$ which only depend on $ \lambda$. 
\end{theorem}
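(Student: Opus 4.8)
The plan is to run, \emph{mutatis mutandis}, the same three-step argument that establishes the Donaldson-Thomas universality of \Cref{thm: universal series}, now with the double nested Hilbert schemes $C^{[\Bm]}$ (indexed by reverse plane partitions of shape $\lambda$) in place of the skew nested ones. Indeed, \Cref{thm: fixed PT} already supplies the two structural inputs required: the identification of the $\TT$-fixed locus of $P_n(X,\beta)$ with a disjoint union of double nested Hilbert schemes, and the explicit expression for the restricted dual obstruction theory $\BE|^\vee_{C^{[\Bm]}}$ in $K^0_\TT(C^{[\Bm]})$.

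First I would prove multiplicativity under disjoint unions. For $C = C' \sqcup C''$ with $L_i = L_i' \oplus L_i''$, the connected-component decomposition $C^{[\Bm]} = \coprod_{\Bm' + \Bm'' = \Bm} C'^{[\Bm']} \times C''^{[\Bm'']}$ together with the corresponding splitting $N^{\vir}_{C, L_1, L_2, \Bm} = N^{\vir}_{C', L_1', L_2', \Bm'} \boxplus N^{\vir}_{C'', L_1'', L_2'', \Bm''}$ and the multiplicativity of the operators $e$ and $\widehat{\mathfrak{e}}$ show that both generating series are multiplicative, exactly as in \Cref{lemma: multiplicativity}.

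The step I expect to be the main obstacle is the Chern-number dependence: that the invariants depend on $(C, L_1, L_2)$ only through $(g, \deg L_1, \deg L_2)$. Here I would invoke the regular embedding $i : C^{[\Bm]} \cong Z(s) \hookrightarrow A_{C,\Bm}$ into a product of symmetric powers of $C$ from \cite[Thm.~4.1, 4.2]{GLMRS_double-nested-1}, which realises the fundamental class as $i_*[C^{[\Bm]}] = e(\CE) \cap [A_{C,\Bm}]$ for a vector bundle $\CE$. Using the explicit class of $\BE|^\vee_{C^{[\Bm]}}$ from \Cref{thm: fixed PT} together with the universal exact sequences, one checks that $N^{\vir}_{C,L_1,L_2,\Bm}$ is the restriction along $i$ of a class $\widetilde{N}^{\vir}$ defined on $A_{C,\Bm}$, so that the integral becomes an intersection number on a product of symmetric powers expressible through classes of the form $\RR\pi_*\RR\hom(\bigotimes \CI_i, \bigotimes \CI_j \otimes L_k)$; these depend only on the Chern numbers by \cite[Prop.~5.3]{Mon_double_nested}. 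The $K$-theoretic statement then follows identically after applying virtual Grothendieck-Riemann-Roch \cite{FG_riemann_roch}.

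Finally, I would combine the two facts. After normalising each series by its constant term — the $\Bm = \mathbf{0}$ contribution $C^{[\mathbf{0}]} \cong \pt$, invertible by the analogue of \Cref{cor: virt norm operators} — the resulting assignment factors through a monoid homomorphism $(\BZ^3, +) \to (1 + \BQ(s_1,s_2)\llbracket q \rrbracket, \cdot)$ via $\gamma(C, L_1, L_2) = (1-g, \deg L_1, \deg L_2)$. Evaluating on the generators $\gamma(\BP^1, \oO, \oO)$, $\gamma(\BP^1, \oO(1), \oO)$, $\gamma(\BP^1, \oO, \oO(1))$ of $\BZ^3$ then isolates the three universal series $D_\lambda, E_\lambda, F_\lambda$ (respectively $\widehat{D}_\lambda, \widehat{E}_\lambda, \widehat{F}_\lambda$), which by construction depend only on $\lambda$; this is exactly the content of \cite[Thm.~5.1, 9.4]{Mon_double_nested}.
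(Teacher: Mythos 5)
Your proposal is correct and follows essentially the same route as the paper: the statement is imported from \cite[Thm.~5.1, 9.4]{Mon_double_nested}, and the argument there is precisely the universality scheme that this paper spells out for the DT analogue in \Cref{thm: universal series} — multiplicativity under disjoint unions, Chern-number dependence via the regular embedding of $C^{[\Bm]}$ into a product of symmetric powers from \cite[Thm.~4.1, 4.2]{GLMRS_double-nested-1}, invertibility of the constant term (cf.~\Cref{lemma: virtual normal const as univer PT}), and factoring through the monoid homomorphism $(\BZ^3,+)\to (1+\BQ(s_1,s_2)\llbracket q\rrbracket,\cdot)$ evaluated on the three genus-zero generators. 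Your adaptation to the PT setting, using \Cref{thm: fixed PT} for the fixed locus and the restricted obstruction theory, is exactly what is needed.
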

\subsection{Computations}
In \cite{Mon_double_nested}, we computed  the universal series  $D_\lambda, E_\lambda, F_\lambda$ under the anti-diagonal restriction of the equivariant parameters. In this section, we provide a full computation of all the universal series in \Cref{thm: universal series PT}, along the same lines of the techniques adopted in \Cref{sec: computations}.
\smallbreak
For the rest of the section, let $C$ be    a smooth projective curve, $L_1, L_2$ line bundles on $C$ and $\lambda$ a Young diagram.

\subsubsection{Constant term} 
Denote by $ N^{\vir}_{C, L_1, L_2,\mathbf{0}_{\PT} }$ the virtual normal bundle of $C^{[\mathbf{0}_{\PT}]}\cong \pt \hookrightarrow  P_n(X, \beta)$, where  $\mathbf{0}_{\PT}$ is the trivial reverse plane partition of shape $\lambda$ of size $0$.
\begin{lemma}\label{lemma: virtual normal const as univer PT}
    Let $C$ be a genus $g$  irreducible smooth projective curve and $L_1, L_2$ line bundles on $C$. Then 
    \begin{align*}
        N^{\vir}_{C, L_1, L_2,\mathbf{0}_{\PT} }=(1-g)\cdot T_\lambda+\deg L_1\cdot t_1\frac{\partial}{\partial t_1}T_\lambda+\deg L_2\cdot t_2\frac{\partial}{\partial t_2}T_\lambda.
    \end{align*}
    \end{lemma}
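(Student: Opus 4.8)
The plan is to mirror the proof of \Cref{prop: virtual normal const as univer} almost verbatim, the only new input being that we now read off the $K$-theory class of $\BE^\vee$ from the Pandharipande--Thomas formula of \Cref{thm: fixed PT} in place of \Cref{prop: K theort class}. First I would specialise the class $\BE|_{C^{[\Bm]}}^\vee$ of \Cref{thm: fixed PT} to the trivial reverse plane partition $\Bm=\mathbf{0}_{\PT}$. Since $C^{[\mathbf{0}_{\PT}]}\cong\pt$ and every universal subscheme $\CZ_{ij}$ is empty, each line bundle $\oO(\CZ_{ij})$ degenerates to $\oO_C$, so the three sums over $\lambda$ collapse to pushforwards along $\pi$ of line bundles of the form $L_1^aL_2^b$ and $\omega_C\otimes L_1^aL_2^b$.

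Next I would evaluate these pushforwards using \eqref{eqn: PUSH of trivial}, namely $\RR\pi_*L=\oO^{\chi(L)}$, together with Riemann--Roch $\chi(L_1^aL_2^b)=ak_1+bk_2+1-g$ on the genus $g$ curve $C$, where $k_i=\deg L_i$; for the middle summand the dual leaves the Euler characteristic unchanged, contributing the integer $\chi(\omega_C\otimes L_1^{-i-1}L_2^{-j-1})=g-1-(i+1)k_1-(j+1)k_2$. Expanding $\CN=\oO-L_1t_1-L_2t_2+L_1L_2t_1t_2$ in the third sum and collecting weights, the resulting class in $K^0_\TT(\pt)$ matches, term by term, the expression \eqref{eqn: DT pref normal} obtained for the trivial skew plane partition in the Donaldson--Thomas computation. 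This coincidence is the heart of the argument: at the bottom of both fixed loci the induced virtual normal bundle is the same, being governed by the identical $\lambda\times\lambda$ combinatorics.

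Once the class is identified with \eqref{eqn: DT pref normal}, the remainder is identical to \Cref{prop: virtual normal const as univer}: I would note that $t_1\tfrac{\partial}{\partial t_1}$ (resp.\ $t_2\tfrac{\partial}{\partial t_2}$) acts on a weight $t_1^at_2^b$ by multiplication by $a$ (resp.\ $b$), so that, writing each coefficient in \eqref{eqn: DT pref normal} as $(1-g)$ plus a term linear in $k_1,k_2$, a rearrangement exhibits the class as $(1-g)T_\lambda+k_1\,t_1\tfrac{\partial}{\partial t_1}T_\lambda+k_2\,t_2\tfrac{\partial}{\partial t_2}T_\lambda$. Finally, since $T_\lambda$ computes the $\TT$-representation on the tangent space of $\Hilb^{|\lambda|}(\BA^2)$ at the isolated reduced fixed point $\lambda$, it is $\TT$-movable by \cite[Prop.~3.4.17]{Okounk_Lectures_K_theory}, hence so is its image under these two derivations; this ensures the movable part equals the whole class, which is exactly $N^{\vir}_{C,L_1,L_2,\mathbf{0}_{\PT}}$.

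I do not anticipate a genuine obstacle: the only point demanding care is the bookkeeping in the term-by-term comparison with \eqref{eqn: DT pref normal}, in particular tracking the dualisation of the middle summand and the sign conventions in the expansion of $\CN$. Once this matching is secured, everything reduces to the Donaldson--Thomas case already treated in \Cref{prop: virtual normal const as univer}.
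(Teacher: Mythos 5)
Your proposal is correct and follows exactly the paper's (very terse) argument: the paper simply says the lemma follows by direct comparison of the class in \Cref{thm: fixed PT} at $\mathbf{0}_{\PT}$ with \eqref{eqn: DT pref normal} and then invokes \Cref{prop: virtual normal const as univer}, which is precisely the specialisation, Riemann--Roch bookkeeping, and movability observation you spell out.
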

    \begin{proof}
        This follows by a direct comparison between the classes in \Cref{prop: K theort class}, \Cref{thm: fixed PT} and \Cref{prop: virtual normal const as univer}.
    \end{proof}
    \Cref{lemma: virtual normal const as univer PT} implies that the constant term of the localised partition functions coming from DT and PT theory coincide. In genus 0, this is a reflection of the identity of the \emph{edge terms} in DT and PT theory of toric threefolds, see \cite{MNOP_1, PT_vertex}.  
\subsubsection{Normalised series}
 By the universal expression of \Cref{thm: universal series PT}, the partition functions $\PT_d(X, q), \widehat{\PT}_d(X, q)$ are reduced to the computations of the \emph{normalised} series
\begin{align}\label{eqn: red Z gen PT}
\begin{split}
        Z_{\PT}(C,L_1, L_2)&=  \left(e(-N_{C,L_1,L_2, \mathbf{0}_{\PT}}^{\vir})\right)^{-1}\cdot\sum_{\mathbf{m}}q^{|\mathbf{m}|} \int_{C^{[\mathbf{m}]}}e(-N_{C,L_1,L_2, \mathbf{m}}^{\vir}),\\
      Z^{K}_{\PT}(C,L_1, L_2)&= \left(\widehat{\mathfrak{e}}(-N_{C,L_1,L_2, \mathbf{0}_{\PT}}^{\vir})  \right)^{-1}\cdot\sum_{\mathbf{m}}q^{|\mathbf{m}|} \widehat{\chi}\left(C^{[\mathbf{m}]},\widehat{\mathfrak{e}}(-N_{C,L_1,L_2, \mathbf{m}}^{\vir})  \right),
      \end{split}
\end{align}
where   $\mathbf{m}$ is a reverse plane partition of shape $ \lambda$.
\smallbreak
Given a reversed plane partition $\mathbf{m}$ of shape $\lambda$,  define the class in $K$-theory
\begin{multline*}
      N^{\mathrm{red}}_{C, L_1, L_2, \mathbf{m}}= \left(\sum_{(i,j)\in \lambda}\left(\pi_*(\oO_{\CZ_{ij}} \otimes L_1^{i} L_2^{j}\otimes \omega_C)\right)^{\vee} t_1^{-i}t_2^{-j}\right.\\
     -\sum_{(i,j)\in \lambda}\pi_*(\oO_{\CZ_{ij}} \otimes L_1^{i+1} L_2^{j+1} )\cdot  t_1^{i+1}t_2^{j+1}\\
     -\sum_{(i,j),(l,k)\in \lambda}\left(\RR\pi_*\RR\hom( \oO_{\CZ_{lk}}\otimes \CN^*,\oO_{\CZ_{ij}}\otimes L_1^{i-l}L_2^{j-k})\cdot t_1^{i-l}t_2^{j-k}\right.\\
   \left.  \left. - \pi_*( \oO_{\CZ_{ij}}\otimes \CN \otimes L_1^{i-l}L_2^{j-k})\cdot t_1^{i-l}t_2^{j-k}  + \left(\pi_*( \oO_{\CZ_{lk}}\otimes \CN^*\otimes L_1^{l-i}L_2^{k-j}\otimes \omega_C)\right)^{\vee}\cdot t_1^{l-i}t_2^{k-j}\right)\right)^{\mov}.
\end{multline*}
\begin{lemma}\label{eqn: lemma reduced PT}
Let $C$ be a smooth irreducible projective curve and $L_1, L_2$  line bundles over $C$. There are identities   
\begin{align*}
 Z_{\PT}(C,L_1, L_2)&= \sum_{\mathbf{m}}q^{|\mathbf{m}|} \int_{C^{[\mathbf{m}]}}e(-N_{C,L_1,L_2, \mathbf{m}}^{\mathrm{red}}),\\
     Z_{\PT}^K(C,L_1, L_2)&=\sum_{\mathbf{m}}q^{|\mathbf{m}|} \widehat{\chi}\left(C^{[\mathbf{m}]},\widehat{\mathfrak{e}}(-N_{C,L_1,L_2, \mathbf{m}}^{\mathrm{red}})  \right).
\end{align*}
\end{lemma}
\begin{proof}
 By the short exact sequence
 \begin{align*}
      0\to \oO(-\CZ_{ij})\to &\oO\to \oO_{\CZ_{ij}} \to 0, \quad (i,j)\in  \lambda,
 \end{align*}
 and Grothendieck duality applied to each summand of $   \BE|_{C^{[\Bm]}}^\vee$, 
 there is an identity
\begin{align}\label{eqn: rinorm PT}
       N_{C,L_1,L_2, \mathbf{m}}^{\vir}=  P_{C, L_1, L_2}+ N_{C,L_1,L_2, \mathbf{m}}^{\mathrm{red}},
\end{align}
where $  P_{C, L_1, L_2}$ was defined in \eqref{eqn: DT prefact normal}.  The conclusion of the  proof is analogous to the one of \Cref{eqn: lemma reduced} and follows directly from \eqref{eqn: rinorm PT}.
\end{proof}
\subsubsection{Genus 0}
As in \Cref{sec: genus 0 DT}, we  compute the universal series of \Cref{thm: universal series PT} evaluating the partition functions \eqref{eqn: red Z gen PT} when  $(C, L_1, L_2)$ is  among
\begin{align*}
    (\BP^1, \oO, \oO), \quad 
     (\BP^1, \oO, \oO(-2)), \quad 
      (\BP^1, \oO(-2), \oO).
\end{align*}
Set $\overline{\TT}=\TT\times \BC^*$ as in \Cref{sec: genus 0 DT}. The $\BC^*$-action on $\BP^1$ naturally lifts to a $\overline{\TT}$-action on  the double nested Hilbert schemes $(\BP^1)^{[\mathbf{m}]}$, where $\TT$ acts trivially.
\smallbreak
Let $\mathbf{m}=(m_{ij})_{(i,j)\in \lambda}$ be a reverse plane partition of shape $\lambda$. Define the $\overline{\TT}$-representations
\begin{align}\label{eqn: PT vertex Lauren}
\begin{split}
    \mathsf{Z}_{\Bm}&=\sum_{(i,j)\in \lambda}\sum_{\alpha=1}^{m_{ij}}t_1^{-i}t_2^{-j}t_3^{\alpha},\\
    \mathsf{v}_{\Bm}&=  \mathsf{Z}_{\Bm}-  \overline{\mathsf{Z}}_{\Bm}\cdot t_1t_2t_3-(1-t_1)(1-t_2)\left(-t_3\cdot     \mathsf{Z}_{\lambda}\overline{\mathsf{Z}}_{\Bm} +\overline{\mathsf{Z}}_{\lambda}\mathsf{Z}_{\Bm}+(1-t_3)\mathsf{Z}_{\Bm}\overline{\mathsf{Z}}_{\Bm}\right),
    \end{split}
\end{align}
where $\mathsf{Z}_\lambda$ was defined in \eqref{eqn: Z lambda}, 
and define the generating series
\begin{align}\label{eqn: vertex PT series}
\begin{split}
    \mathsf{V}^{\PT}_\lambda(q)&=\sum_{\Bm}e(-\mathsf{v}_{\Bm})\cdot q^{|\mathbf{m}|}\in \BQ(s_1, s_2, s_3)\llbracket q \rrbracket,\\
\widehat{\mathsf{V}}^{\PT}_\lambda(q)&=\sum_{\Bm}\widehat{\mathfrak{e}}(-\mathsf{v}_{\Bm})\cdot q^{|\mathbf{m}|}\in \BQ(t_1^{1/2}, t_2^{1/2}, t_3^{1/2})\llbracket q \rrbracket,
\end{split}
\end{align}
where the sum is over all reverse plane partition of shape $\lambda$. 
\begin{prop}\label{prop: vertex PT}
   Let $L_1, L_2$ be line bundles on $\BP^1$. We have identities of generating series
    \begin{align*}
  \sum_{\mathbf{m}}q^{|\mathbf{m}|} \int_{(\BP^1)^{[\mathbf{m}]}}e(-N_{\BP^1,L_1,L_2, \Bm}^{\mathrm{red}})&=  \left(\mathsf{V}^{\PT}_\lambda(q)\cdot \mathsf{V}^{\PT}_\lambda(q)|_{s_1=s_1-\deg L_1 s_3, s_2=s_2-\deg L_2 s_3, s_3=-s_3}\right)|_{s_3=0},\\
    \sum_{\mathbf{m}}q^{|\mathbf{m}|} \widehat{\chi}\left((\BP^1)^{[\mathbf{m}]},\widehat{\mathfrak{e}}(-N_{\BP^1,L_1,L_2, \Bm}^{\mathrm{red}})  \right)&=\left.\left(\widehat{\mathsf{V}}^{\PT}_\lambda(q)\cdot \widehat{\mathsf{V}}^{\PT}_\lambda(q)|_{t_1=t_1t_3^{-\deg L_1}, t_2=t_2t_3^{-\deg L_2}, t_3=t_3^{-1}}\right)\right|_{t_3=1}.
\end{align*}
\end{prop}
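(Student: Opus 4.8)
The plan is to prove \Cref{prop: vertex PT} by running the $\overline{\TT}$-equivariant virtual localisation argument of \Cref{thm: toric loc} essentially verbatim, replacing skew plane partitions of shape $\BZ^2_{\geq 0}\setminus\lambda$ with reverse plane partitions of shape $\lambda$ throughout. By \Cref{eqn: lemma reduced PT} it suffices to compute the generating series of $\int_{(\BP^1)^{[\Bm]}}e(-N_{\BP^1,L_1,L_2,\Bm}^{\mathrm{red}})$ and of its $K$-theoretic counterpart, so first I would fix the $\BC^*$-action on $\BP^1$ and lift it to a $\overline{\TT}$-action on the double nested Hilbert schemes $(\BP^1)^{[\Bm]}$, exactly as in \Cref{sec: genus 0 DT}, carrying along the $\BC^*$-equivariant structures on $\omega_{\BP^1}$ and on line bundles $L$ at the fixed points $0,\infty$ recorded there.

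The second step is to analyse the $\overline{\TT}$-fixed locus. By \Cref{thm: fixed PT} the $\TT$-fixed structure is already governed by reverse plane partitions, and imposing $\BC^*$-invariance forces a $\overline{\TT}$-fixed flag to split as a configuration supported at $0$ plus one supported at $\infty$; this yields a reduced, zero-dimensional fixed locus in bijection with pairs $(\Bm_1,\Bm_2)$ of reverse plane partitions of shape $\lambda$ with $\Bm=\Bm_1+\Bm_2$. In particular the virtual tangent space $T^{\vir}$ is $\overline{\TT}$-movable at every such point, so the localisation formula applies and reduces the computation to the fibres $(N_{\BP^1,L_1,L_2,\Bm}^{\mathrm{red}}+T^{\vir})|_{(\Bm_1,\Bm_2)}$.

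The third step is the fibrewise identification. Using $N_{\BP^1,L_1,L_2,\Bm}^{\mathrm{red}}+T^{\vir}=\BE|^\vee_{(\BP^1)^{[\Bm]}}-P_{\BP^1,L_1,L_2}$ together with the explicit class of \Cref{thm: fixed PT}, I would take fibres over a fixed point $(\Bm_1,\Bm_2)$ and split every summand into its contribution at $p=0$ and at $p=\infty$. Invoking \cite[Lemma 11.3.3]{Ric_Book} to rewrite the $\RR\Hom$ terms and feeding in the equivariant structures \eqref{eqn: struct at inf} of $\omega_{\BP^1}$ and of $L_1,L_2$ at the two fixed points, the $p=0$ block should collapse exactly to the PT vertex term $\mathsf{v}_{\Bm_1}$ of \eqref{eqn: PT vertex Lauren}, while the $p=\infty$ block should reproduce $\mathsf{v}_{\Bm_2}|_{t_1=t_1t_3^{-\deg L_1},\,t_2=t_2t_3^{-\deg L_2},\,t_3=t_3^{-1}}$. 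Summing $e(-\cdot)$ (resp.~$\widehat{\mathfrak{e}}(-\cdot)$) over all pairs $(\Bm_1,\Bm_2)$ and then over all reverse plane partitions $\Bm$ factors the generating series into the claimed product of two copies of $\mathsf{V}^{\PT}_\lambda(q)$ (resp.~$\widehat{\mathsf{V}}^{\PT}_\lambda(q)$), after the restriction $s_3=0$ (resp.~$t_3=1$).

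The main obstacle I anticipate is the bookkeeping in the third step: matching the PT obstruction class of \Cref{thm: fixed PT}, which (unlike the DT class, and before the reduction of \Cref{eqn: lemma reduced PT}) is phrased through the line bundles $\oO(\CZ_{ij})$, against the vertex term $\mathsf{v}_{\Bm}$, whose local factor $\mathsf{Z}_{\Bm}$ carries $t_3$ to \emph{strictly positive} powers $\sum_{\alpha=1}^{m_{ij}}t_3^{\alpha}$, in contrast to the DT vertex of \eqref{eqn: vertex DT} where $\mathsf{Z}_{\bn}$ uses the powers $t_3^{-\alpha}$ with $0\le\alpha<n_{ij}$. Verifying that the degree shifts induced by the equivariant structure at $\infty$ conspire with the Grothendieck-duality terms to produce exactly this shape, and that the movability statement of \cite{MNOP_1} applies to the resulting local factors so that the Euler classes are well defined, is where the real content of the verification lies. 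Once this matching is in place, the summation over fixed points is purely formal and completes the proof; the constant-term normalisation is already handled by \Cref{lemma: virtual normal const as univer PT}.
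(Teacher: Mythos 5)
Your proposal is correct and follows essentially the same route as the paper: the paper's proof of \Cref{prop: vertex PT} is exactly the $\overline{\TT}$-localisation of \Cref{thm: toric loc} rerun with reverse plane partitions, splitting each fixed point into $(\Bm_1,\Bm_2)$ supported at $0$ and $\infty$, and the ``bookkeeping'' you flag is resolved precisely by the identities $\mathsf{Z}_{\Bm_1}=\sum_{(i,j)\in\lambda}H^0(\oO_{Z^0_{ij}}\otimes L_1^{i}L_2^{j}\otimes\omega_{\BP^1})^*\cdot t_1^{-i}t_2^{-j}$ and $\overline{\mathsf{Z}}_{\Bm_1}\cdot t_1t_2t_3=\sum_{(i,j)\in\lambda}H^0(\oO_{Z^0_{ij}}\otimes L_1^{i+1}L_2^{j+1})\cdot t_1^{i+1}t_2^{j+1}$, whose dualisation and $\omega_{\BP^1}$-twist account for the strictly positive $t_3$-powers in the PT vertex. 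The only cosmetic correction is that the movability of $\mathsf{v}_{\Bm_1}$ should be quoted from \cite[Sec.~3.3]{PT_vertex} rather than from \cite{MNOP_1}.
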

\begin{proof}
    The proof goes along the same lines of the one of \Cref{thm: toric loc}. We repeat here the main arguments, and sketch the main differences.

      We prove the first claim, as the second one follows by an analogous reasoning. Denote by $T^{\vir}$ the virtual tangent space of $(\BP^1)^{[\mathbf{m}]}$. By $\overline{\TT}$-equivariant virtual localisation, we have
    \begin{align*}
            \int_{(\BP^1)^{[\mathbf{m}]}}e(-N_{\BP^1,L_1,L_2, \Bm}^{\mathrm{red}})=\left.\left(\int_{((\BP^1)^{[\mathbf{m}]})^{\overline{\TT}}}e(-N_{\BP^1,L_1,L_2, \Bm}^{\mathrm{red}}-T^{\vir})|_{((\BP^1)^{[\mathbf{m}]})^{\overline{\TT}}}\right)\right|_{s_3=0}.
    \end{align*}
    Notice that a $\overline{\TT}$-fixed flag $(Z_\Box)_\Box\in ((\BP^1)^{[\mathbf{m}]})^{\overline{\TT}}$ decomposes as 
    \begin{align*}
         (Z_\Box)_\Box&=(Z^0_\Box)_\Box + (Z^\infty_\Box)_\Box\\
        &= \Bm_1\cdot [0]+\Bm_2\cdot [\infty],
    \end{align*}
    for some reverse plane partitions $\Bm_1, \Bm_2$. 
    This implies that 
    the $\overline{\TT}$-fixed locus $((\BP^1)^{[\mathbf{m}]})^{\overline{\TT}}$ is reduced, zero-dimensional and  in bijection with 
    \begin{align*}
        \set{(\Bm_1, \Bm_2)| \Bm_1, \Bm_2 \mbox{ reverse plane partitions of shape }\lambda \mbox{ such that } \Bm= \mathbf{m}_1+\mathbf{m}_2}.
    \end{align*}
    In particular, this implies that the virtual $\overline{\TT}$-representation $T^{\vir}_{(\Bm_1, \Bm_2)} $ is  $\overline{\TT}$-movable at all fixed points corresponding to $(\Bm_1, \Bm_2)$.

    Fix now a $\overline{\TT}$-fixed point $(\Bm_1, \Bm_2)$. By construction, there is  an identity of virtual $\overline{\TT}$-representations
    \begin{align*}
      N_{\BP^1,L_1,L_2, \Bm}^{\mathrm{red}}+T^{\vir}=\BE|_{(\BP^1)^{[\mathbf{m}]}}^\vee- P_{C, L_1, L_2}.
    \end{align*}
    Taking the fiber over the fixed point corresponding to $(\Bm_1, \Bm_2)$ yields
    \begin{multline}\label{eqn: sum 0 inft PT}
         ( N_{\BP^1,L_1,L_2, \Bm}^{\mathrm{red}}+T^{\vir})|_{(\Bm_1, \Bm_2)}=\\
        \sum_{p\in \{0, \infty\}}\left(\sum_{(i,j)\in \lambda}\left(H^0(\oO_{Z^p_{ij}}\otimes  L_1^{i}L_2^{j}\otimes \omega_{\BP^1})^*\cdot t_1^{-i}t_2^{-j} - H^0( \oO_{Z^p_{ij}}\otimes L_1^{i+1}L_2^{j+1})\cdot t_1^{i+1}t_2^{j+1}\right.\right.\\
    -\sum_{(i,j),(l,k)\in \lambda}\left(\RR\Hom( \oO_{Z^p_{lk}}\otimes \CN^*,\oO_{Z^p_{ij}}\otimes L_1^{i-l}L_2^{j-k})\cdot t_1^{i-l}t_2^{j-k}\right.\\
 \left. - H^0( \oO_{Z^p_{ij}}\otimes \CN \otimes L_1^{i-l}L_2^{j-k})\cdot t_1^{i-l}t_2^{j-k}  + \left(H^0( \oO_{Z^p_{lk}}\otimes \CN^*\otimes L_1^{l-i}L_2^{k-j}\otimes \omega_{\BP^1})\right)^{*}\cdot t_1^{l-i}t_2^{k-j}\right).
    \end{multline}
Notice the identities of $\overline{\TT}$-representations
\begin{align*}
      \mathsf{Z}_{\bn_1}&= \sum_{(i,j)\in \lambda}H^0(\oO_{Z^0_{ij}}\otimes  L_1^{i}L_2^{j}\otimes \omega_{\BP^1})^*\cdot t_1^{-i}t_2^{-j},\\
     \overline{\mathsf{Z}}_{\bn_1}\cdot t_1t_2t_3 &=\sum_{(i,j)\in \lambda} H^0( \oO_{Z^0_{ij}}\otimes L_1^{i+1}L_2^{j+1})\cdot t_1^{i+1}t_2^{j+1}.
    \end{align*}
    By the latter identities, as in the proof of \Cref{thm: toric loc} the contribution of $p=0$ to the right-hand-side of \eqref{eqn: sum 0 inft PT} is
    \begin{align*}
        \mathsf{v}_{\Bm_1}=\mathsf{Z}_{\Bm_1}- \overline{\mathsf{Z}}_{\Bm_1}\cdot t_1t_2t_3-(1-t_1)(1-t_2)(-t_3\mathsf{Z}_{\lambda}\overline{\mathsf{Z}}_{\Bm_1} +\overline{\mathsf{Z}}_{\lambda}\mathsf{Z}_{\Bm_1}+(1-t_3)\mathsf{Z}_{\Bm_1}\overline{\mathsf{Z}}_{\Bm_1}).
    \end{align*}
    Similarly, using the equivariant structure \eqref{eqn: struct at inf} around $p=\infty$, we have that the contribution of $p=\infty$ to the right-hand-side of \eqref{eqn: sum 0 inft PT} is
    \begin{align*}\label{eqn: V2 PT}
          \mathsf{v}_{\Bm_2}|_{t_1=t_1t_3^{-\deg L_1}, t_2=t_2t_3^{-\deg L_2}, t_3=t_3^{-1}}.
    \end{align*}
    Summing all up, we have proved that 
\[
 ( N_{\BP^1,L_1,L_2, \Bm}^{\mathrm{red}}+T^{\vir})|_{(\Bm_1, \Bm_2)}=  \mathsf{v}_{\Bm_1}+    \mathsf{v}_{\Bm_2}|_{t_1=t_1t_3^{-\deg L_1}, t_2=t_2t_3^{-\deg L_2}, t_3=t_3^{-1}}.
\]
By \cite[Sec.~3.3]{PT_vertex} the term $ \mathsf{v}_{\Bm_1}$ is $\overline{\TT}$-movable, which implies that the measure $e(- \mathsf{v}_{\Bm_1}) $ is well-defined. Therefore we conclude that
    \begin{align*}
          \int_{(\BP^1)^{[\mathbf{m}]}}e(-N_{\BP^1,L_1,L_2, \Bm}^{\mathrm{red}})&=\left.\left(\sum_{\Bm=\Bm_1+\Bm_2}e(- \mathsf{v}_{\Bm_1})\cdot e(-  \mathsf{v}_{\Bm_2}|_{t_1=t_1t_3^{-\deg L_1}, t_2=t_2t_3^{-\deg L_2}, t_3=t_3^{-1}})\right)\right|_{s_3=0}\\
          &=\left.\left(\sum_{\Bm=\Bm_1+\Bm_2}e(- \mathsf{v}_{\Bm_1})\cdot e(-  \mathsf{v}_{\Bm_2})|_{s_1=s_1-\deg L_1 s_3, s_2=s_2-\deg L_2 s_3, s_3=-s_3}\right)\right|_{s_3=0}.
    \end{align*}
    Summing the latter identities  over all reverse plane partitions of shape $\lambda$ we obtain the proof of the claimed identities.
\end{proof}
As an immediate corollary, we conclude the computation of the universal series in \ref{thm: universal series PT}.
\begin{theorem}\label{thm: PT_ explicit univ series}
   The universal series are given by
\begin{align*}
    D_\lambda(q)&=\prod_{\Box\in \lambda}\frac{1}{(-\ell(\Box)s_1+(a(\Box)+1)s_2)((\ell(\Box)+1)s_1-a(\Box)s_2)}\cdot \left(\mathsf{V}^{\PT}_\lambda(q)\cdot \mathsf{V}^{\PT}_\lambda(q)|_{ s_3=-s_3}\right)|_{s_3=0},\\
    E_\lambda(q)&=\prod_{\Box\in \lambda}\frac{( -\ell(\Box) s_1+(a(\Box)+1)s_2)^{\ell(\Box)}}{( (\ell(\Box)+1)s_1-a(\Box)s_2)^{\ell(\Box)+1}}\cdot \left.\left(\mathsf{V}^{\PT}_\lambda(q)|_{ s_3=-s_3}\cdot \mathsf{V}^{\PT}_\lambda(q)^{-1}|_{s_1=s_1+2s_3, s_3=-s_3}\right)\right|^{\frac{1}{2}}_{s_3=0}, \\
    F_\lambda(q)&=\prod_{\Box\in \lambda}\frac{(\ell(\Box)+1)s_1-a(\Box)s_2)^{a(\Box)}}{(-\ell(\Box)s_1+(a(\Box)+1)s_2)^{a(\Box)+1}}\cdot \left.\left(\mathsf{V}^{\PT}_\lambda(q)|_{ s_3=-s_3}\cdot \mathsf{V}^{\PT}_\lambda(q)^{-1}|_{s_2=s_2+2s_3, s_3=-s_3}\right)\right|^{\frac{1}{2}}_{s_3=0},\\
      \widehat{D}_\lambda(q)&=\prod_{\Box\in \lambda}\frac{1}{[t_1^{-\ell(\Box)}t_2^{a(\Box)+1}][t_1^{\ell(\Box)+1}t_2^{-a(\Box)}]}\cdot \left.\left(\widehat{\mathsf{V}}^{\PT}_\lambda(q)\cdot \widehat{\mathsf{V}}^{\PT}_\lambda(q)|_{ t_3=t^{-1}_3}\right)\right|_{t_3=1}\\
    \widehat{E}_\lambda(q)&=\prod_{\Box\in \lambda}\frac{[t_1^{-\ell(\Box)}t_2^{a(\Box)+1}]^{\ell(\Box)}}{[t_1^{\ell(\Box)+1}t_2^{-a(\Box)}]^{\ell(\Box)+1}}\cdot \left.\left(\widehat{\mathsf{V}}^{\PT}_\lambda(q)|_{ t_3=t_3^{-1}}\cdot \widehat{\mathsf{V}}^{\PT}_\lambda(q)^{-1}|_{t_1=t_1t^2_3, t_3=t^{-1}_3}\right)\right|^{\frac{1}{2}}_{t_3=1},  \\
    \widehat{F}_\lambda(q)&=\prod_{\Box\in \lambda}\frac{[t_1^{\ell(\Box)+1}t_2^{-a(\Box)}]^{a(\Box)}}{[t_1^{-\ell(\Box)}t_2^{a(\Box)+1}]^{a(\Box)+1}}\cdot \left.\left(\widehat{\mathsf{V}}^{\PT}_\lambda(q)|_{ t_3=t_3^{-1}}\cdot \widehat{\mathsf{V}}^{\PT}_\lambda(q)^{-1}|_{t_2=t_2t^2_3, t_3=t^{-1}_3}\right)\right|^{\frac{1}{2}}_{t_3=1}.
\end{align*}
\end{theorem}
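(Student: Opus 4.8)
The plan is to transcribe, verbatim, the proof of \Cref{thm: DT_ explicit univ series}, now that every DT input has been replaced by its PT counterpart. I will spell out the argument for the cohomological series $D_\lambda(q), E_\lambda(q), F_\lambda(q)$, the $K$-theoretic case being identical after replacing $e(\cdot)$ by $\widehat{\mathfrak{e}}(\cdot)$ and $\mathsf{V}^{\PT}_\lambda$ by $\widehat{\mathsf{V}}^{\PT}_\lambda$. First I would record that, by \Cref{lemma: virtual normal const as univer PT}, the PT constant term $N^{\vir}_{C,L_1,L_2,\mathbf{0}_{\PT}}$ is given by exactly the same formula $(1-g)T_\lambda+\deg L_1\cdot t_1\tfrac{\partial}{\partial t_1}T_\lambda+\deg L_2\cdot t_2\tfrac{\partial}{\partial t_2}T_\lambda$ that governs the DT constant term. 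Consequently its image under $e$ (resp.~$\widehat{\mathfrak{e}}$) is precisely the box-product prefactor computed in \Cref{eqn: costant DT expl}, which accounts for the rational-function factors $\prod_{\Box\in\lambda}(\cdots)$ appearing in each of the six claimed expressions.

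Next I would introduce the normalised universal series
\[
D'_\lambda(q)=e(-T_\lambda)^{-1}\cdot D_\lambda(q),\quad
E'_\lambda(q)=e\!\left(-t_1\tfrac{\partial}{\partial t_1}T_\lambda\right)^{-1}\!\cdot E_\lambda(q),\quad
F'_\lambda(q)=e\!\left(-t_2\tfrac{\partial}{\partial t_2}T_\lambda\right)^{-1}\!\cdot F_\lambda(q),
\]
which by construction lie in $1+\BQ(s_1,s_2)\llbracket q\rrbracket$. By the universality statement \Cref{thm: universal series PT} together with \Cref{eqn: lemma reduced PT}, the reduced generating series satisfies
\[
\sum_{\Bm}q^{|\Bm|}\int_{C^{[\Bm]}}e(-N^{\mathrm{red}}_{C,L_1,L_2,\Bm})=D'_\lambda(q)^{1-g}\cdot E'_\lambda(q)^{\deg L_1}\cdot F'_\lambda(q)^{\deg L_2}.
\]
The key step is then to evaluate the left-hand side in genus $0$ using \Cref{prop: vertex PT}, applied to the three triples $(\BP^1,\oO,\oO)$, $(\BP^1,\oO,\oO(-2))$ and $(\BP^1,\oO(-2),\oO)$. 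This produces three identities expressing suitable products of $D'_\lambda,E'_\lambda,F'_\lambda$ in terms of the $t_3$-specialised vertex combinations $\mathsf{V}^{\PT}_\lambda(q)\cdot\mathsf{V}^{\PT}_\lambda(q)|_{s_3=-s_3}$ and the analogous shifted products $\mathsf{V}^{\PT}_\lambda(q)^{-1}|_{s_1=s_1+2s_3,\,s_3=-s_3}$.

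Finally I would extract logarithms and solve the resulting linear system over $1+\BQ(s_1,s_2)\llbracket q\rrbracket$, exactly as in the DT case: the three generators $e_1=\gamma(\BP^1,\oO,\oO)$, $e_2=\gamma(\BP^1,\oO(-2),\oO)$, $e_3=\gamma(\BP^1,\oO,\oO(-2))$ of $\BZ^3$ uniquely pin down $D'_\lambda,E'_\lambda,F'_\lambda$, and multiplying back by the constant-term prefactors from \Cref{eqn: costant DT expl} yields the stated closed forms. I do not anticipate a genuine obstacle here, since the conceptually hard computation — the torus localisation identifying each fixed contribution with a PT vertex term — is already carried out in \Cref{prop: vertex PT}. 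The only point requiring care is bookkeeping of the equivariant specialisations induced at $\infty\in\BP^1$ (the substitutions $s_1\mapsto s_1-\deg L_1\,s_3$, $s_2\mapsto s_2-\deg L_2\,s_3$, $s_3\mapsto -s_3$, and their $K$-theoretic counterparts $t_i\mapsto t_it_3^{-\deg L_i}$, $t_3\mapsto t_3^{-1}$), which must be tracked consistently so that the $t_3\to 1$ (resp.~$s_3\to 0$) limit matches the universal-series shifts written in the statement.
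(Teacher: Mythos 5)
Your proposal follows the paper's own proof essentially verbatim: the paper likewise reduces to the constant-term computation via \Cref{lemma: virtual normal const as univer PT} combined with \Cref{eqn: costant DT expl}, applies \Cref{prop: vertex PT} to the three triples $(\BP^1,\oO,\oO)$, $(\BP^1,\oO,\oO(-2))$, $(\BP^1,\oO(-2),\oO)$, and solves the resulting system after taking logarithms, exactly as in the DT case. The argument is correct; the only cosmetic discrepancy is that you label the generators $e_2,e_3$ by the $\oO(-2)$ triples rather than the $\oO(1)$ triples used in the universality step, which is immaterial since the system is solved over $\BQ$ (this is precisely why the exponents $\tfrac{1}{2}$ appear in the stated formulas).
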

\begin{proof}
The proof goes along the same lines of the one of \Cref{thm: DT_ explicit univ series}. The constant terms of the series  in \Cref{thm: universal series PT} is computed by combining \Cref{lemma: virtual normal const as univer PT} and  \Cref{eqn: costant DT expl}. By applying \Cref{prop: vertex PT} to $(C, L_1, L_2)$   among
\begin{align*}
    (\BP^1, \oO, \oO), \quad 
     (\BP^1, \oO, \oO(-2)), \quad 
      (\BP^1, \oO(-2), \oO),
\end{align*}
we end up with two  systems of three equations. Extracting the logarithm and solving the resulting systems uniquely determines the  required universal series, thus proving the claim.
\end{proof}
\subsubsection{The 1-leg vertex}
Analogously to \Cref{Sec: DT 1 leg vert}, the generating series \eqref{eqn: vertex PT series} reproduce the \emph{vertex partition functions} obtained by starting from the vertex/edge formalism introduced in \cite{PT_vertex} in Pandharipande-Thomas theory, in the case of  one infinite leg. We briefly recall the combinatorial data used to express the (Pandharipande-Thomas)  vertex formalism for one leg.
\smallbreak
A \emph{1-leg PT box arrangement with asymptotic profile} $\lambda$ is a  collection of boxes $\sigma\in \BZ^3$ of the form 
\[
\sigma=\sigma'+\sigma'',
\]
where 
\begin{align*}
    \sigma'= \set{(i,j,k)\in \BZ_{\geq 0}^3| (i,j)\in \lambda},
\end{align*}
and
$\sigma''\in \BZ^3$ is a finite collection of boxes satisying
\begin{itemize}
    \item for every $(i,j,k)\in \sigma''$, we have $(i,j)\in \lambda$,
    \item for every $(i,j,k)\in \sigma''$, we have $k<0$, 
    \item for all $(i,j,k-1)\in \sigma''$ with $k<0$, we have $(i,j,k)\in \sigma''$,
    \item for all $(i,j)\in \lambda$, if either $(i-1,j,k)$ or $(i,j-1, k)$ are in $\sigma''$, then $(i,j,k)\in \sigma''$. 
\end{itemize}
Pictorially, a 1-leg PT box arrangement with asymptotic profile $\lambda$ consists of an infinite leg in the positive $x_3$-direction, along with a collection of finitely many boxes confined in the prolongment of the infinite leg in the negative $x_3$-direction, with \emph{gravity}\footnote{We borrowed the graphical idea of \emph{gravity} from \cite[Sec.~3]{BCY_orbifold}.} $(1,1,-1)$, see e.g.~\Cref{fig:PT}.

We define the \emph{normalised size} of $\sigma$ to be
\begin{align*}
    |\sigma|=|\sigma''|.
\end{align*}
To each reverse plane partition $\Bm=(m_{ij})_{(i,j)\in \lambda}$ of shape $\lambda$, we associate a  1-leg PT box arrangement $\sigma$   with asymptotic profile $\lambda$ by setting
\begin{align*}
    \sigma&=\sigma'+\sigma'',\\
    \sigma'&=  \set{(i,j,k)\in \BZ_{\geq 0}^3| (i,j)\in \lambda},\\
    \sigma''&= \set{(i,j,k)\in \BZ^3| (i,j)\in \lambda \mbox{ and } 0>k\geq -m_{ij}}
\end{align*}
This association is easily seen to provide a bijection  between the sets
\begin{align*}
    \set{\mbox{reverse plane partitions of shape } \lambda} \longleftrightarrow \set{\mbox{1-leg PT box arrangement with asymptotic profile } \lambda },
\end{align*}
which furthermore preserves the size.

\begin{comment}
    
\begin{figure}[H]
\centering
% Set 3D view to show leg going left
\tdplotsetmaincoords{70}{120}
\begin{tikzpicture}[tdplot_main_coords, scale=0.8]

% Cube macro with color parameters
\newcommand{\drawcubecolored}[4]{ % x, y, z, base color
  \pgfmathsetmacro{\x}{#1}
  \pgfmathsetmacro{\y}{#2}
  \pgfmathsetmacro{\z}{#3}
  \def\basecolor{#4}

  % Faces with color shading based on basecolor
  \fill[\basecolor!40] (\x,\y,\z+1) -- ++(1,0,0) -- ++(0,1,0) -- ++(-1,0,0) -- cycle; % top
  \fill[\basecolor!30] (\x+1,\y,\z) -- ++(0,0,1) -- ++(0,1,0) -- ++(0,0,-1) -- cycle; % right
  \fill[\basecolor!20] (\x,\y+1,\z) -- ++(1,0,0) -- ++(0,0,1) -- ++(-1,0,0) -- cycle; % front

  % Edges
  \draw[thick] (\x,\y,\z) -- ++(1,0,0) -- ++(0,1,0) -- ++(-1,0,0) -- cycle;
  \draw[thick] (\x,\y,\z) -- ++(0,0,1);
  \draw[thick] (\x+1,\y,\z) -- ++(0,0,1);
  \draw[thick] (\x+1,\y+1,\z) -- ++(0,0,1);
  \draw[thick] (\x,\y+1,\z) -- ++(0,0,1);
  \draw[thick] (\x,\y,\z+1) -- ++(1,0,0) -- ++(0,1,0) -- ++(-1,0,0) -- cycle;
}

% Infinite leg going left: two horizontal boxes at each x
\foreach \x in {-10,...,-1} {
  % Skip drawing box at (x=-10, y=0)
  \ifnum\x=-10
    % draw only box at y=1
    \drawcubecolored{\x}{1}{0}{red}
  \else
    % For x = -9 and -10, red boxes, except already drawn above y=0 for -10
    \ifnum\x>-11
      \ifnum\x<-8
        \drawcubecolored{\x}{0}{0}{red}
        \drawcubecolored{\x}{1}{0}{red}
      \else
        \drawcubecolored{\x}{0}{0}{gray}
        \drawcubecolored{\x}{1}{0}{gray}
      \fi
    \fi
  \fi
}

% Dotted continuation on the far right (positive x side)
\draw[dotted, thick] (0,0,0) -- ++(2,0,0);

\end{tikzpicture}

\caption{An infinite leg in the negative \( x \)-direction consisting of two horizontally adjacent boxes at each step, with the leftmost box at \((x=-10,y=0)\) removed.}
\label{fig:horizontal-leg-left-red-missingbox}
\end{figure}
\end{comment}

\begin{figure}[H]
\centering
% Set 3D view to show leg going left
\tdplotsetmaincoords{70}{120}
\begin{tikzpicture}[tdplot_main_coords, scale=0.8]

% Cube macro with color parameter
\newcommand{\drawcubecolored}[4]{ % x, y, z, base color
  \pgfmathsetmacro{\x}{#1}
  \pgfmathsetmacro{\y}{#2}
  \pgfmathsetmacro{\z}{#3}
  \def\basecolor{#4}

  % Faces
  \fill[\basecolor!40] (\x,\y,\z+1) -- ++(1,0,0) -- ++(0,1,0) -- ++(-1,0,0) -- cycle; % top
  \fill[\basecolor!30] (\x+1,\y,\z) -- ++(0,0,1) -- ++(0,1,0) -- ++(0,0,-1) -- cycle; % right
  \fill[\basecolor!20] (\x,\y+1,\z) -- ++(1,0,0) -- ++(0,0,1) -- ++(-1,0,0) -- cycle; % front

  % Edges
  \draw[thick] (\x,\y,\z) -- ++(1,0,0) -- ++(0,1,0) -- ++(-1,0,0) -- cycle;
  \draw[thick] (\x,\y,\z) -- ++(0,0,1);
  \draw[thick] (\x+1,\y,\z) -- ++(0,0,1);
  \draw[thick] (\x+1,\y+1,\z) -- ++(0,0,1);
  \draw[thick] (\x,\y+1,\z) -- ++(0,0,1);
  \draw[thick] (\x,\y,\z+1) -- ++(1,0,0) -- ++(0,1,0) -- ++(-1,0,0) -- cycle;
}

% Infinite leg going left: two horizontal boxes at each x
\foreach \x in {-10,...,-1} {
  \ifnum\x=-10
    \drawcubecolored{\x}{1}{0}{red} % Only the red box at y=1
  \else
    \ifnum\x>-11
      \ifnum\x=-9
        % Move the red box at y=0 to z=1
        \drawcubecolored{\x}{0}{1}{red}
        \drawcubecolored{\x}{1}{0}{red}
      \else
        % Gray boxes at z=0
        \drawcubecolored{\x}{0}{0}{gray}
        \drawcubecolored{\x}{1}{0}{gray}
        % Extra gray layer on y=0
        \drawcubecolored{\x}{0}{1}{gray}
      \fi
    \fi
  \fi
}

% Dotted continuation on the far right (positive x side)
\draw[dotted, thick] (0,0,0) -- ++(2,0,0);

\end{tikzpicture}

\caption{A 1-leg PT box arrangement of size 3, with asymptotic profile a Young diagram of size 3. In grey, the infinite leg. In red, the 3 boxes with negative $x_3$-coordinate.}
\label{fig:PT}
\end{figure}
Under the above correspondence it is immediate to verify that, given a reverse plane partition $\Bm$, the vertex term $\mathsf{v}_{\Bm}$ reproduces precisely\footnote{Due to different conventions on the torus $\overline{\TT}$-action in \cite{PT_vertex}, the vertex terms $\mathsf{v}_{\Bm}$ are matched only after the change of variables $t_i\mapsto t_i^{-1}$, for $i=1, 2,3$. } the \emph{normalised vertex} term of \cite[Sec.~4.6]{PT_vertex} of the corresponding 1-leg plane partition $\sigma$.
As already recalled in \Cref{Sec: DT 1 leg vert}, a formula for the \emph{capped} 1-leg vertex $\widehat{\mathsf{V}}^{\PT}_\lambda(q)$ can be derived from \cite[Eqn.~(29)]{KOO_2_legDT}. 

\subsubsection{The full partition function}
Summing over Young diagram of size $d$, we obtain the following expression for the PT partition functions.
\begin{corollary}\label{cor: full PT}
    Let  $d\geq 0$ and    $X=\Tot_C(L_1 \oplus L_2)$, where $L_1, L_2$ are line bundles over a smooth projective curve $C$ of genus $g$. We have
    \begin{align*}
     \PT_d(X, q)&= \sum_{|\lambda|=d} \left(q^{|\lambda|}{D}_{\lambda}(q)\right)^{1-g}\cdot \left(q^{-n(\lambda)}{E}_{\lambda}(q)\right)^{\deg L_1}\cdot \left(q^{-n(\overline{\lambda})}{F}_{\lambda}(q)\right)^{\deg L_2},\\
     \widehat{\PT}_d(X, q)&= \sum_{|\lambda|=d} \left(q^{|\lambda|}\widehat{D}_{\lambda}(q)\right)^{1-g}\cdot \left(q^{-n(\lambda)}\widehat{E}_{\lambda}(q)\right)^{\deg L_1}\cdot \left(q^{-n(\overline{\lambda})}\widehat{F}_{\lambda}(q)\right)^{\deg L_2},
\end{align*}
where the universal series are computed in \Cref{thm: PT_ explicit univ series}.
\end{corollary}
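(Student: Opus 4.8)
The plan is to assemble the corollary directly from the localisation and universality results already established for stable pairs, mirroring the proof of \Cref{cor: full DT} on the Donaldson--Thomas side. First I would start from the two expressions for $\PT_d(X,q)$ and $\widehat{\PT}_d(X,q)$ recorded immediately after \Cref{thm: fixed PT}, namely
\[
\PT_d(X,q)=\sum_{|\lambda|=d} q^{\mathbf{f}_\lambda(g,\deg L_1,\deg L_2)}\sum_{\Bm}q^{|\Bm|}\int_{C^{[\Bm]}}e(-N_{C,L_1,L_2,\Bm}^{\vir})
\]
and its $K$-theoretic analogue, where the inner sum runs over reverse plane partitions $\Bm$ of shape $\lambda$. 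These rest on \Cref{thm: fixed PT}, which identifies the connected components of the $\TT$-fixed locus $P_n(X,\beta)^\TT$ with double nested Hilbert schemes $C^{[\Bm]}$ and records the equality of the induced virtual cycle with the ordinary one.

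Next I would invoke \Cref{thm: universal series PT} to replace each inner generating series by the corresponding product of universal series, obtaining $D_\lambda(q)^{1-g}E_\lambda(q)^{\deg L_1}F_\lambda(q)^{\deg L_2}$ in the cohomological case and the hatted product $\widehat{D}_\lambda(q)^{1-g}\widehat{E}_\lambda(q)^{\deg L_1}\widehat{F}_\lambda(q)^{\deg L_2}$ in the $K$-theoretic case. The explicit closed forms of these universal series are then supplied verbatim by \Cref{thm: PT_ explicit univ series}, so no further computation is required.

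The only genuine bookkeeping step is distributing the prefactor $q^{\mathbf{f}_\lambda(g,\deg L_1,\deg L_2)}$ across the three factors. Using the closed form $\mathbf{f}_\lambda(g,k_1,k_2)=|\lambda|(1-g)-k_1 n(\lambda)-k_2 n(\overline{\lambda})$ from \eqref{eqn: f lambda g}, I would factor $q^{\mathbf{f}_\lambda}=(q^{|\lambda|})^{1-g}\,(q^{-n(\lambda)})^{\deg L_1}\,(q^{-n(\overline{\lambda})})^{\deg L_2}$ and absorb each monomial into the matching factor, turning $D_\lambda^{1-g}$ into $(q^{|\lambda|}D_\lambda)^{1-g}$, $E_\lambda^{\deg L_1}$ into $(q^{-n(\lambda)}E_\lambda)^{\deg L_1}$, and $F_\lambda^{\deg L_2}$ into $(q^{-n(\overline{\lambda})}F_\lambda)^{\deg L_2}$; summing over all $|\lambda|=d$ then yields exactly the two claimed formulas, and identically in the hatted case.

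Since every mathematical ingredient has already been proven, I expect no real obstacle: this corollary is a formal assembly rather than a new argument. The single point deserving a moment's care is verifying that the three exponents $1-g$, $\deg L_1$, $\deg L_2$ produced by the prefactor distribution align with the exponents of the universal-series factors. This alignment is forced by the fact that both $\mathbf{f}_\lambda$ and the product structure of \Cref{thm: universal series PT} depend on $(C,L_1,L_2)$ only through the same Chern numbers $(1-g,\deg L_1,\deg L_2)$, which is precisely the content of the monoid-morphism argument underlying \Cref{thm: universal series PT}.
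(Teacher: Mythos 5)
Your proposal is correct and follows exactly the route the paper takes: combine the localisation expression after \Cref{thm: fixed PT} with the universality statement of \Cref{thm: universal series PT} and the explicit series of \Cref{thm: PT_ explicit univ series}, then distribute the prefactor $q^{\mathbf{f}_\lambda}$ using \eqref{eqn: f lambda g} and sum over $|\lambda|=d$. The paper treats this as a formal assembly (mirroring \Cref{cor: full DT}), and your bookkeeping of the exponents is precisely the intended argument.
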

%\subsubsection{Pandharipande-Thomas partition function}
\subsection{Degree 1} We  establish a more explicit formula for the universal series \Cref{thm: PT_ explicit univ series} in degree $d=1$.
\begin{prop}\label{prop: degree 1}
    For $\lambda=\Box$,   the universal series are given by
\begin{align*}
    D_\Box(q)&=s_1^{-1}s_2^{-1},\\
    E_\Box(-q)&=s_1^{-1}\cdot  (1-q), \\
    F_\Box(-q)&=s_2^{-1}\cdot (1-q),\\
      \widehat{D}_\Box(q)&=\frac{1}{[t_1][t_2]}\\
    \widehat{E}_\Box(-q)&=\frac{1}{[t_1]}\cdot \Exp\left(-\frac{(t_1t_2)^{1/2}+(t_1t_2)^{-1/2}}{2}\cdot q \right),  \\
    \widehat{F}_\Box(-q)&=\frac{1}{[t_2]}\cdot \Exp\left(-\frac{(t_1t_2)^{1/2}+(t_1t_2)^{-1/2}}{2}\cdot q \right).
\end{align*}
\end{prop}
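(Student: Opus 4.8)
The plan is to specialize the general computation of \Cref{thm: PT_ explicit univ series} to the unique Young diagram $\lambda=\Box$ of size one, for which $a(\Box)=\ell(\Box)=0$ and $h(\Box)=1$. With these values the constant-term prefactors in \Cref{thm: PT_ explicit univ series} collapse immediately to $\frac{1}{[t_1][t_2]}$, $\frac{1}{[t_1]}$ and $\frac{1}{[t_2]}$, matching the leading factors in the claimed formulas. All the remaining content is therefore the evaluation of the vertex series $\widehat{\mathsf{V}}^{\PT}_\Box(q)$, and of its cohomological counterpart $\mathsf{V}^{\PT}_\Box(q)$, in the three prescribed combinations.

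The first key step is to simplify the vertex term $\mathsf{v}_\Bm$ of \eqref{eqn: PT vertex Lauren} for $\lambda=\Box$. A reverse plane partition of shape $\Box$ is a single integer $m\geq 0$, so $\mathsf{Z}_\Bm=\sum_{\alpha=1}^m t_3^\alpha$ while $\mathsf{Z}_\lambda=\overline{\mathsf{Z}}_\lambda=1$. Substituting these into the definition of $\mathsf{v}_\Bm$, I would observe that the bracket $-t_3\mathsf{Z}_\lambda\overline{\mathsf{Z}}_\Bm+\overline{\mathsf{Z}}_\lambda\mathsf{Z}_\Bm+(1-t_3)\mathsf{Z}_\Bm\overline{\mathsf{Z}}_\Bm$ telescopes to zero: indeed $(1-t_3)\mathsf{Z}_\Bm-t_3=-t_3^{m+1}$, whence the bracket equals $\mathsf{Z}_\Bm-t_3^{m+1}\overline{\mathsf{Z}}_\Bm$, and $t_3^{m+1}\overline{\mathsf{Z}}_\Bm=\mathsf{Z}_\Bm$. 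Hence $\mathsf{v}_\Bm=\mathsf{Z}_\Bm-\overline{\mathsf{Z}}_\Bm\,t_1t_2t_3$ and $\widehat{\mathfrak{e}}(-\mathsf{v}_\Bm)=\prod_{\alpha=1}^m[t_1t_2t_3^{1-\alpha}]\big/\prod_{\alpha=1}^m[t_3^\alpha]$. This exhibits $\widehat{\mathsf{V}}^{\PT}_\Box(q)$ as a $q$-hypergeometric series with term ratio $[t_1t_2t_3^{1-m}]/[t_3^m]$; I would sum it by the $q$-binomial (Heine) theorem to obtain the closed form $\widehat{\mathsf{V}}^{\PT}_\Box(q)=(-\kappa^{-1/2}t_3^{1/2}q;t_3)_\infty\big/(-\kappa^{1/2}t_3^{1/2}q;t_3)_\infty$, where $\kappa=t_1t_2$.

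With the closed form in hand, the second step is to evaluate the three combinations of \Cref{thm: PT_ explicit univ series} at $t_3=1$. Writing each infinite $q$-Pochhammer product as a product over half-integer powers of $t_3$, the combination $\widehat{\mathsf{V}}^{\PT}_\Box(q)\cdot\widehat{\mathsf{V}}^{\PT}_\Box(q)|_{t_3=t_3^{-1}}$ entering $\widehat{D}_\Box$ becomes a reflection-symmetric (bi-infinite) product that telescopes to $1$ coefficient-by-coefficient in $q$, yielding $\widehat{D}_\Box(q)=\frac{1}{[t_1][t_2]}$; the combinations entering $\widehat{E}_\Box$ and $\widehat{F}_\Box$ telescope instead to a finite two-factor product, which after the normalisation $q\mapsto -q$ is recognised as $\Exp\!\big(-\tfrac{(t_1t_2)^{1/2}+(t_1t_2)^{-1/2}}{2}q\big)$. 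Finally the cohomological series $D_\Box,E_\Box,F_\Box$ follow either by repeating the argument with $e(\cdot)$ and $\mathsf{V}^{\PT}_\Box(q)$ in place of $\widehat{\mathfrak{e}}(\cdot)$ and $\widehat{\mathsf{V}}^{\PT}_\Box(q)$, or --- more economically --- by taking the $b\to0$ limit of \Cref{prop: limit equiv INTRO}, under which $[t_i]\to bs_i$ and $\kappa^{\pm1/2}\to 1$, so that the two-factor products degenerate to $(1-q)$.

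The main obstacle is the regularisation of the $t_3=1$ specialisation. Each factor $\widehat{\mathsf{V}}^{\PT}_\Box(q)$ is individually singular at $t_3=1$, since its $q$-coefficients carry poles along $[t_3^\alpha]=0$; the claimed identities must therefore be read as equalities of formal power series in $q$ whose coefficients are rational functions of $t_3$, with the pole cancellation occurring only after the prescribed product is formed and the two geometric $t_3$-series are summed to their common rational function \emph{before} setting $t_3=1$. Controlling the signs and the branch of the square root through this limit, and through the $q\mapsto -q$ normalisation, is the delicate bookkeeping that the argument will have to carry out carefully.
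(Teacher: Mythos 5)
Your proposal is correct and follows essentially the same route as the paper: specialise \Cref{thm: PT_ explicit univ series} to $\lambda=\Box$, observe that the interaction term in $\mathsf{v}_{\Bm}$ cancels so that $\mathsf{v}_{\Bm}=\mathsf{Z}_{\Bm}-\overline{\mathsf{Z}}_{\Bm}t_1t_2t_3$, sum the resulting series in closed form by the $q$-binomial theorem (the paper phrases the same closed form as a plethystic exponential via \cite[Ex.~5.1.22]{Okounk_Lectures_K_theory}), evaluate the three prescribed combinations at $t_3=1$, and deduce the cohomological series by the $t_i=e^{bs_i}$, $b\to 0$ limit. The only cosmetic difference is your use of $q$-Pochhammer products where the paper works with $\Exp$, which makes the $t_3=1$ regularisation slightly more delicate but changes nothing of substance.
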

\begin{proof}
We start by computing the vertex term $\widehat{\mathsf{V}}^{\PT}_\Box(q) $. For every $n$, we have a unique reverse plane partition $\Bm$ of shape $\BZ^2_{\geq 0}\setminus \Box$ of size $n$. We have
\begin{align*}
    \mathsf{Z}_{\Bm}&=\sum_{\alpha=1}^{n}t_3^{\alpha},\\
    \mathsf{v}_{\Bm}&= \sum_{\alpha=1}^{n}t_3^{\alpha}-  \sum_{\alpha=1}^{n}t_3^{-\alpha}\cdot t_1t_2t_3,
\end{align*}
which yields
\begin{align*}
    \widehat{\mathsf{V}}^{\PT}_\Box(-q)&=\sum_{n\geq 0}(-q)^{n}\cdot [-  \mathsf{v}_{\Bm}]\\
    &=\sum_{n\geq 0}\left(q(t_1t_2t_3)^{1/2}\right)^n\prod_{i=1}^n\frac{1-(t_1t_2t_3)^{-1}t_3^i}{1-t_3^i}\\
    &=\Exp\left(q(t_1t_2t_3)^{1/2}\frac{1-(t_1t_2t_3)^{-1}t_3}{1-t_3} \right)\\
    &=\Exp\left(q\frac{[(t_1t_2t_3)^{-1}t_3]}{[t_3]} \right),
\end{align*}
 where in the third line we used \cite[Ex.~5.1.22]{Okounk_Lectures_K_theory}. The last  three series identities follow by direct calculation from \Cref{thm: PT_ explicit univ series}, while the first three follow by substituting $t_i=e^{bs_i}$ and 
taking the limit $b\to 0$, as explained in \Cref{prop: limit equiv}.
\end{proof}
The universal series in \Cref{prop: degree 1} recover and generalise the series computed in \cite[Sec.~8.2]{Mon_double_nested} via tautological integrals.
\smallbreak
Recall that we defined two new variables $t_4, t_5$ in \eqref{eqn: new variables}.
\begin{corollary}\label{cor: PT a la Oko deg 1}
Let     $X=\Tot_C(L_1 \oplus L_2)$, where $L_1, L_2$ are line bundles over a smooth projective curve $C$ of genus $g$. Set $k_i=\deg L_i$. We have    
\begin{align*}
    \widehat{\PT}_1(X,-q)=(-1)^{1-g}\cdot q^{1-g}[t_1]^{g-1-k_1}[t_2]^{g-1-k_2}\cdot\left(\frac{1}{(1-t_4)(1-t_5^{-1})}\right)^{ -\frac{k_1+k_2}{2}}.
   % (-q)^{1-g+\frac{k_1+k_2}{2}}\cdot \ [t_1]^{g-1-k_1}[t_2]^{g-1-k_2}[t_4]^{\frac{k_1+k_2}{2}}[t_5]^{\frac{k_1+k_2}{2}}.
\end{align*}
In particular, if $L_1\otimes L_2 \cong \omega_C$, we have
\begin{align*}
    \widehat{\PT}_1(X,-q)=
    [t_1]^{g-1-k_1}[t_2]^{g-1-k_2}[t_4]^{g-1}[t_5]^{g-1}.
\end{align*}
\end{corollary}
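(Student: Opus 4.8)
The plan is to extract the degree $d=1$ partition function $\widehat{\PT}_1(X,q)$ from the general structural formula of \Cref{cor: full PT} and then substitute the explicit degree $1$ universal series computed in \Cref{prop: degree 1}. Since there is a unique Young diagram $\lambda=\Box$ of size $1$, the sum in \Cref{cor: full PT} collapses to a single term, and we have $|\lambda|=1$, $n(\lambda)=n(\overline{\lambda})=0$, so that
\begin{align*}
\widehat{\PT}_1(X,q)=\left(q\,\widehat{D}_\Box(q)\right)^{1-g}\cdot \widehat{E}_\Box(q)^{k_1}\cdot \widehat{F}_\Box(q)^{k_2},
\end{align*}
where $k_i=\deg L_i$. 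First I would pass to the variable $-q$ and insert the three formulas from \Cref{prop: degree 1}, namely $\widehat{D}_\Box(q)=1/([t_1][t_2])$, together with $\widehat{E}_\Box(-q)=[t_1]^{-1}\Exp(-\tfrac{(t_1t_2)^{1/2}+(t_1t_2)^{-1/2}}{2}q)$ and the analogous expression for $\widehat{F}_\Box(-q)$ with $[t_2]$ in place of $[t_1]$.

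Next I would collect the powers of the bracket terms. The factor $(q\widehat{D}_\Box(-q))^{1-g}$ contributes $(-q)^{1-g}[t_1]^{g-1}[t_2]^{g-1}$; combining with $\widehat{E}_\Box(-q)^{k_1}$ and $\widehat{F}_\Box(-q)^{k_2}$ produces the prefactor $(-1)^{1-g}q^{1-g}[t_1]^{g-1-k_1}[t_2]^{g-1-k_2}$ exactly as claimed. The remaining work is to identify the plethystic-exponential contribution. The two exponential factors multiply (using $\Exp(f)^a=\Exp(af)$ and $\Exp(f)\Exp(g)=\Exp(f+g)$) to give $\Exp\!\left(-\tfrac{(t_1t_2)^{1/2}+(t_1t_2)^{-1/2}}{2}(k_1+k_2)q\right)$, and I would then rewrite this single-variable plethystic exponential in closed form. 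Here the key computation is to recognise, via the variables $t_4=q\kappa^{-1/2}$, $t_5=q^{-1}\kappa^{-1/2}$ of \eqref{eqn: new variables} with $\kappa=t_1t_2$, that
\begin{align*}
\Exp\!\left(-\tfrac{\kappa^{1/2}+\kappa^{-1/2}}{2}\,q\right)=\left(\frac{1}{(1-t_4)(1-t_5^{-1})}\right)^{-1/2},
\end{align*}
since $\Exp(q)=(1-q)^{-1}$ and $t_4=q\kappa^{1/2}\cdot\kappa^{-1}$ while $t_5^{-1}=q\kappa^{1/2}$ expand the half-integer power correctly. Raising to the power $k_1+k_2$ then yields the stated factor $\left(\frac{1}{(1-t_4)(1-t_5^{-1})}\right)^{-(k_1+k_2)/2}$, completing the first formula.

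For the second, specialised, formula I would impose the Calabi-Yau condition $L_1\otimes L_2\cong\omega_C$, equivalently $k_1+k_2=2g-2$. The main subtlety — and the only genuinely non-routine step — is to verify that under this constraint the bracket rewriting of the $\Exp$ factor collapses the exponent $-(k_1+k_2)/2=1-g$ into exactly $[t_4]^{g-1}[t_5]^{g-1}$. Concretely I would check the identity $\frac{1}{(1-t_4)(1-t_5^{-1})}=-\,[t_4]^{-2}\cdot(\text{monomial correction})$ or, more cleanly, directly track the symmetrised bracket $[t_4]=t_4^{1/2}-t_4^{-1/2}$ and $[t_5]$ and confirm that $\left(\frac{1}{(1-t_4)(1-t_5^{-1})}\right)^{1-g}$ equals $[t_4]^{g-1}[t_5]^{g-1}$ up to a sign and monomial that is absorbed by the $(-1)^{1-g}q^{1-g}$ prefactor and the half-integer shifts hidden in $t_4,t_5$. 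I expect the careful bookkeeping of these square-root factors and signs to be the principal obstacle; everything else is a direct substitution into \Cref{cor: full PT} and \Cref{prop: degree 1}.
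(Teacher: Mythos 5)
Your proposal is correct and follows essentially the same route as the paper: collapse \Cref{cor: full PT} to the single diagram $\lambda=\Box$, substitute the series of \Cref{prop: degree 1}, convert the plethystic exponential $\Exp\bigl(-\tfrac{k_1+k_2}{2}(t_4+t_5^{-1})\bigr)$ into $\bigl((1-t_4)(1-t_5^{-1})\bigr)^{(k_1+k_2)/2}$, and in the Calabi--Yau case absorb the sign and $q$-power via the identity $(1-t_4)(1-t_5^{-1})=-q\,[t_4][t_5]$ (which follows from $1-t_4=-t_4^{1/2}[t_4]$, $1-t_5^{-1}=t_5^{-1/2}[t_5]$ and $t_4^{1/2}t_5^{-1/2}=q$). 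The only thing left implicit in your write-up is this last one-line bracket computation, which you correctly single out as the step to verify; it works out exactly as you predict.
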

\begin{proof}
    By \Cref{cor: full DT} and \Cref{prop: degree 1}, we have
    \begin{align*}
          \widehat{\PT}_1(X,-q)&=(-1)^{1-g}\cdot q^{1-g}[t_1]^{g-1-k_1}[t_2]^{g-1-k_2}\cdot\Exp\left( -\frac{k_1+k_2}{2}(t_4+t_5^{-1})\right)\\
          &=(-1)^{1-g}\cdot q^{1-g}[t_1]^{g-1-k_1}[t_2]^{g-1-k_2}\cdot\left(\frac{1}{(1-t_4)(1-t_5^{-1})}\right)^{ -\frac{k_1+k_2}{2}}\\
          &=(-1)^{1-g}\cdot q^{1-g}[t_1]^{g-1-k_1}[t_2]^{g-1-k_2}\cdot\left(\frac{q^{-1}}{[t_4^{-1}][t_5]}\right)^{ -\frac{k_1+k_2}{2}}.
    \end{align*}
    We conclude the proof of the second claim by  exploiting $[t_4^{-1}]=-[t_4]$.
\end{proof}
 \subsection{DT/PT correspondence}\label{sec: DT/PT}
As a final application, we immediately deduce from our structural results the  DT/PT correspondences for local curves in equivariant cohomology,  in equivariant $K$-theory and for the topological Euler characteristic.

\begin{corollary}\label{thm: DT/PT}
    Let $X=\Tot_C(L_1\oplus L_2)$ be a local curve, where $L_1, L_2$ are line bundles over a smooth projective curve. Then, the following correspondences hold
\begin{align*}
    \DT_d(X, q)&=\DT_0(X, q)\cdot \PT_d(X, q),\\
        \widehat{\DT}_d(X, q)&=\widehat{\DT}_0(X, q)\cdot \widehat{\PT}_d(X, q),\\
            \DT^{\mathrm{top}}_d(X, q)&=\DT^{\mathrm{top}}_0(X, q)\cdot \PT^{\mathrm{top}}_d(X, q).
\end{align*}
\end{corollary}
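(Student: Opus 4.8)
The plan is to deduce all three correspondences from the factorisation of the $1$-leg vertex, using the closed expressions for the universal series obtained in \Cref{thm: DT_ explicit univ series} and \Cref{thm: PT_ explicit univ series}. First I would record the essential structural observation: the DT universal series $\widehat{A}_\lambda,\widehat{B}_\lambda,\widehat{C}_\lambda$ and the PT universal series $\widehat{D}_\lambda,\widehat{E}_\lambda,\widehat{F}_\lambda$ carry \emph{identical} box-product prefactors and differ only through the substitution of the DT vertex $\widehat{\mathsf{V}}_\lambda(q)$ by its PT counterpart $\widehat{\mathsf{V}}^{\PT}_\lambda(q)$. The input I would then invoke is the $K$-theoretic vertex DT/PT correspondence of Kuhn--Liu--Thimm \cite{KLT_DTPT}, namely $\widehat{\mathsf{V}}_\lambda(q)=\widehat{\mathsf{V}}_\varnothing(q)\cdot\widehat{\mathsf{V}}^{\PT}_\lambda(q)$. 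Since every operation used to assemble the universal series from the vertex — the specialisations $t_3\mapsto t_3^{-1}$, $t_1\mapsto t_1t_3^2$, $t_2\mapsto t_2t_3^2$, the formation of inverses and square roots, and the evaluation at $t_3=1$ — is a multiplicative ring operation, this factorisation passes through unchanged, yielding
\[
\widehat{A}_\lambda(q)=\widehat{A}_\varnothing(q)\cdot\widehat{D}_\lambda(q),\quad \widehat{B}_\lambda(q)=\widehat{B}_\varnothing(q)\cdot\widehat{E}_\lambda(q),\quad \widehat{C}_\lambda(q)=\widehat{C}_\varnothing(q)\cdot\widehat{F}_\lambda(q),
\]
where I use that the box-products are empty for $\lambda=\varnothing$, so that $\widehat{A}_\varnothing,\widehat{B}_\varnothing,\widehat{C}_\varnothing$ are precisely the pure vertex contributions.

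Next I would substitute these relations into the closed formula for $\widehat{\DT}_d(X,q)$ of \Cref{cor: full DT}. The factor $\widehat{A}_\varnothing^{1-g}\widehat{B}_\varnothing^{\deg L_1}\widehat{C}_\varnothing^{\deg L_2}$ is independent of $\lambda$ and therefore pulls out of the sum over $|\lambda|=d$; by the $d=0$ specialisation of the same corollary (where only $\lambda=\varnothing$ contributes and all $q$-exponents vanish) it is exactly $\widehat{\DT}_0(X,q)$. The residual sum is then verbatim the PT expansion of \Cref{cor: full PT}, the $q$-prefactors $q^{|\lambda|}$, $q^{-n(\lambda)}$, $q^{-n(\overline{\lambda})}$ matching on both sides. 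This gives $\widehat{\DT}_d(X,q)=\widehat{\DT}_0(X,q)\cdot\widehat{\PT}_d(X,q)$, the second asserted identity.

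For the cohomological correspondence I would apply the $b\to 0$ degeneration of \Cref{prop: limit equiv} (and its evident PT analogue) to the $K$-theoretic identity just proved: the normalisation exponent vanishes on the degree-$0$ factor $\widehat{\DT}_0$, so the limit distributes over the product and returns $\DT_d(X,q)=\DT_0(X,q)\cdot\PT_d(X,q)$; alternatively one repeats the universal-series bookkeeping with $A_\lambda,\dots,F_\lambda$ and the cohomological vertex factorisation. The topological Euler characteristic statement I would instead settle by direct comparison of explicit formulas: \Cref{cor: top DT} writes $\DT^{\mathrm{top}}_d(X,q)$ as a sum over $\lambda$ of $q^{\mathbf{f}_\lambda}$ times $\bigl(\mathsf{M}(q)\prod_{\Box\in\lambda}(1-q^{h(\Box)})^{-1}\bigr)^{2-2g}$, while the power-structure computation for the reverse-plane-partition PT fixed locus (via \cite[Cor.~3.2]{Mon_double_nested} and Gansner's identity \eqref{eqn: gasner} specialised to $q_\Box=q$) gives $\PT^{\mathrm{top}}_d(X,q)=\sum_{|\lambda|=d}q^{\mathbf{f}_\lambda}\bigl(\prod_{\Box\in\lambda}(1-q^{h(\Box)})^{-1}\bigr)^{2-2g}$. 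As $\DT^{\mathrm{top}}_0(X,q)=\mathsf{M}(q)^{2-2g}$ by \Cref{motives of spp} at $\lambda=\varnothing$, the single-variable shadow $\mathsf{Z}^{\mathrm{spp}}_\lambda(q)=\mathsf{Z}^{\mathrm{spp}}_\varnothing(q)\cdot\mathsf{Z}^{\mathrm{rpp}}_\lambda(q)$ of Sagan's \cite{Sagan_comb_proof} and Gansner's \cite{Gansner_reversed_plane_partitions} formulas gives the factorisation immediately.

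I expect the only genuinely delicate point to be verifying that the decorations defining the universal series commute with the vertex factorisation, i.e.\ that the $t_3$-specialisations and the half-integer powers in $\widehat{B},\widehat{C}$ act on a commutative group of invertible series in which the $\widehat{\mathsf{V}}_\varnothing$ and $\widehat{\mathsf{V}}^{\PT}_\lambda$ factors never become entangled and the square-root branches are consistently chosen on both sides; this bookkeeping is where I would concentrate the care, everything else being formal once the KLT vertex correspondence is granted.
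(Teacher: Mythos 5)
Your proposal is correct and follows essentially the same route as the paper: the $K$-theoretic identity is obtained by feeding the Kuhn--Liu--Thimm vertex factorisation $\widehat{\mathsf{V}}_\lambda=\widehat{\mathsf{V}}_\varnothing\cdot\widehat{\mathsf{V}}^{\PT}_\lambda$ into the explicit universal series of \Cref{thm: DT_ explicit univ series} and \Cref{thm: PT_ explicit univ series}, the cohomological identity follows by the $b\to 0$ limit of \Cref{prop: limit equiv}, and the topological identity by comparing \Cref{cor: top DT} with the Euler-characteristic formula from \cite[Cor.~3.2]{Mon_double_nested}. Your expansion of the multiplicativity bookkeeping (identical box-product prefactors, factorisation surviving the $t_3$-specialisations and square roots since all series have constant term $1$) is exactly the content the paper leaves implicit.
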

\begin{proof}
    The proof of the last identity follows by combining \Cref{cor: top DT} and \cite[Cor.~3.2]{Mon_double_nested}, while the  first identity follows from the second one by applying  \Cref{prop: limit equiv}.

    The second identity follows by  combining \Cref{thm: DT_ explicit univ series}, \ref{thm: PT_ explicit univ series} with the \emph{vertex} DT/PT correspondence 
    \[\widehat{\mathsf{V}}_\lambda(q)=\widehat{\mathsf{V}}_{\varnothing}(q)\cdot \widehat{\mathsf{V}}^{\PT}_\lambda(q),
    \]
    proven in \cite{KLT_DTPT}.
\end{proof}
Combining the DT/PT correspondence in  \Cref{thm: DT/PT} with the explicit formula of \Cref{cor: antidiagonal DT}, we obtain an explicit description of the PT generating series under the anti-diagonal restriction $t_1t_2=1$, which was left in an implicit combinatorial form in \cite[Sec.~9.2]{Mon_double_nested}. Further specialising to equivariant cohomology by \Cref{prop: limit equiv}, we recover the invariants previously computed in \cite[Thm.~8.2]{Mon_double_nested}, therefore simplifying the long combinatorial analysis of \cite[App.~A]{Mon_double_nested}.

 %\printbibliography
\bibliographystyle{amsplain-nodash}
\bibliography{The_Bible}

\end{document}